\documentclass[11pt,a4paper]{article}
\usepackage[T1]{fontenc}
\usepackage{lmodern}
\usepackage[margin=26mm]{geometry}
\usepackage{amsmath,amssymb,amsthm,mathtools,booktabs,array}
\usepackage{microtype,mathrsfs}
\usepackage[hidelinks]{hyperref}
\newcommand{\arxivref}[1]{\href{https://arxiv.org/abs/#1}{arXiv:\nolinkurl{#1}}}
\newcommand{\doiref}[1]{\href{https://doi.org/#1}{doi:\nolinkurl{#1}}}
\hypersetup{bookmarksnumbered=true,pdftitle={Gamma conjecture II and explicit Stokes data for a quadric projective bundle},
pdfauthor={Xiaobin Li},pdfsubject={Gamma conjecture II, cohomological chiral symmetry, explicit Stokes data, and spinor transfer for a quadric projective bundle},
pdfkeywords={quantum cohomology, Gamma conjecture, chiral symmetry, Fano projective bundle, spinor bundle}}
\usepackage{enumitem}
\usepackage{longtable,fancyhdr,needspace}
\numberwithin{equation}{section}
\setlist{nosep}
\newtheorem{theorem}{Theorem}[section]
\newtheorem{proposition}[theorem]{Proposition}
\newtheorem{lemma}[theorem]{Lemma}
\newtheorem{corollary}[theorem]{Corollary}
\theoremstyle{remark}\newtheorem{remark}[theorem]{Remark}
\newcommand{\Q}{\mathbb Q}
\newcommand{\C}{\mathbb C}
\newcommand{\Z}{\mathbb Z}
\newcommand{\PP}{\mathbb P}
\newcommand{\OO}{\mathcal O}
\newcommand{\Ga}{\widehat\Gamma}
\newcommand{\ch}{\operatorname{ch}}
\newcommand{\Ch}{\operatorname{Ch}}
\newcommand{\Td}{\operatorname{Td}}

\newcommand{\SR}{\mathrm{SR}}

\newcommand{\im}{\operatorname{im}}

\newcommand{\rank}{\operatorname{rank}}

\newcommand{\disc}{\operatorname{disc}}
\newcommand{\nablaq}{\nabla_{q\partial_q}}
\newcommand{\nablar}{\nabla_{r\partial_r}}
\newcommand{\Dq}{D_q}
\newcommand{\Dr}{D_r}

\title{Gamma conjecture II and explicit Stokes data\\for a quadric projective bundle}
\author{Xiaobin Li\\[2pt]\small School of Mathematics, Southwest Jiaotong University, Chengdu, China}
\date{9 October 2026}
\begin{document}
\maketitle
\begin{abstract}
We study the connection problem for the quantum differential equation of
the Fano fourfold $X=\PP_{Q^3}(\OO\oplus\OO(1))$. The problem is to
identify its canonical asymptotic solutions with exceptional objects and
compute the associated Stokes data. We use exact Mellin--Bessel integral
representations and uniform vector asymptotics on sectors wider than
$\pi$, followed by analytic continuation that preserves the Gamma
framing. Cohomological chiral symmetry and mutations of exceptional
collections put all eight sections at a common phase. This proves Gamma
conjecture II for $X$ and determines a full exceptional collection and
its Stokes and central connection matrices at the unit Novikov point and
a specified admissible phase. The chiral symmetry also gives a flat
symmetric bilinear pairing, an orthogonality identity for the Stokes
matrix, and an integral unipotent square root of the monodromy matrix
for a full turn. We obtain explicit formulas for the connection data at
the opposite phase; on the Gamma lattice, the half-turn acts by
tensoring with a square root of the anticanonical bundle. The same
spinor integral transform constructs two canonical Gamma sections for
$\PP_{Q^{2m+1}}(\OO\oplus\OO(1))$, $m\geq1$, at positive parameters
with distinct spectrum. Their sectors have opening greater than $\pi$,
and their leading constants are independent of the dimension.
\end{abstract}
\noindent\textbf{Keywords.} Quantum cohomology; Gamma conjecture;
Stokes matrix; Cohomological chiral symmetry; Exceptional collection;
Mellin--Bessel transform; Helix monodromy.

\section{Introduction and main results}\label{sec:introduction}
We study the connection problem for the quantum differential equation of
\begin{equation}\label{model}
 X=\PP_{Q^3}(\OO\oplus\OO(1)),
\end{equation}
where $Q^3\subset\PP^4$ is a smooth quadric and projectivization
parametrizes one-dimensional quotients. The question is which
exceptional objects give the canonical asymptotic solutions, and how the
resulting basis determines the Stokes and central connection matrices at
a prescribed parameter and phase. We answer this question for the
fourfold \eqref{model}, with both curve parameters retained.

Gamma conjecture II predicts that the Gamma sections of a full
exceptional collection form an asymptotic basis of the quantum
connection \cite{GGI,Integral,Iritani}. The Gamma framing fixes the
normalization at the regular singular end, while canonical asymptotics
fix it at the irregular end. Relating these normalizations requires
identifying individual flat sections and following their object labels,
logarithmic branches and signs under continuation. We carry out this
comparison for \eqref{model} and compute both connection matrices in
the resulting specified basis.

\subsection{The main result}
Write $\pi:X\to Q^3$,
$H=\pi^*c_1(\OO_{Q^3}(1))$, $\xi=c_1(\OO_\pi(1))$,
$L=\OO_X(H)$ and $U=\OO_\pi(1)$. The Novikov variables $q,r$
correspond to a line in the section defined by the quotient onto $\OO$
and to a fiber, respectively. The fourfold has Fano index two and
cohomology of rank eight; the calculation uses its entire cohomology.
Let $T$ denote the unique simple eigenvalue of $c_1(X)\star$ with
maximal real part at positive parameters, as established in
Theorem~\ref{spectral}.

Let $S$ be the rank-two spinor bundle on $Q^3$, normalized by
$c_1(S)=-H$, and put $S_0=\pi^*S$, $S_1=S_0\otimes U$.
Let $j:Q^3\hookrightarrow X$ be the section defined by the quotient
onto $\OO$. Set $V=\pi^*(\Omega^1_{\PP^4}(1)|_{Q^3})$, and let $F$
be the bundle given by the nonzero extension, unique up to scalar,
\begin{equation}\label{introextension}
 0\longrightarrow V\otimes U\longrightarrow F\longrightarrow L
 \longrightarrow0.
\end{equation}
Proposition~\ref{geometricobjects09} proves the existence and uniqueness
of this extension and identifies the corresponding mutation.

We use the Gamma framing
\[
 \mathfrak s(E)=(2\pi)^{-2}\mathcal L z^{-\mu}z^\rho
 \bigl(\widehat\Gamma_X\Ch(E)\bigr),
\]
where $\mathcal L$ is the fundamental solution normalized at large radius,
$\mu$ is the grading operator,
$\Ch(E)=\sum_p(2\pi i)^p\ch_p(E)$ and $\rho=c_1(X)\cup$.
For canonical columns ordered by decreasing
$\operatorname{Im}(e^{-i\phi}u_i)$, our Stokes convention is
\begin{equation}\label{introStokesconvention}
 S_{ij}=(y_i(e^{-\pi i}z),y_j(z))_X.
\end{equation}
The pairing is the Poincar\'e pairing, and the first argument is continued
along the indicated path. Section~\ref{sec:conventions} fixes the
remaining conventions, following \cite[Section 2.6]{GGI}.

\begin{theorem}[Gamma II and specified phase data]
\label{mainUnit09}\label{mainGammaII}\label{unitphasetheorem}
At $q=r=1$ and $\phi_0=-\arctan(1/100)$, the Gamma sections of
\begin{equation}\label{introfoundation}
 \bigl(L^{-1},j_*\OO_{Q^3}(-1)[3],\OO,S_0L[-1],S_1,F[1],L,U\bigr)
\end{equation}
form a normalized canonical asymptotic fundamental solution.
This is a full exceptional collection. If the nonreal eigenvalues of
$c_1(X)\star$ are $\pm a\pm ib$, with $a,b>0$, the exponent order is
\[
 (a+ib,-a+ib,T,2,-2,-T,a-ib,-a-ib).
\]
In convention \eqref{introStokesconvention}, its Stokes matrix is
\begin{equation}\label{unitStokes09}
 S_{\phi_0}=\begin{pmatrix}
 1&-1&5&-16&20&-65&14&19\\
 0&1&1&-4&0&-5&5&1\\
 0&0&1&-4&4&-16&5&6\\
 0&0&0&1&0&4&-4&-4\\
 0&0&0&0&1&-4&0&4\\
 0&0&0&0&0&1&-1&-6\\
 0&0&0&0&0&0&1&1\\
 0&0&0&0&0&0&0&1
 \end{pmatrix}.
\end{equation}
If $Y_\infty=\mathcal L z^{-\mu}z^\rho$ and
$Y_{\phi_0}=Y_\infty\mathcal C_{\phi_0}$, then
\[
 \mathcal C_{\phi_0}=(2\pi)^{-2}
 [\widehat\Gamma_X\Ch(B_i)]_{\mathrm{cup}},
\]
where $B_i$ run through \eqref{introfoundation} in the displayed order.
All entries are given by \eqref{charactersunit09}--\eqref{gammalogunit09}.
In particular, $X$ satisfies Gamma conjecture II.
\end{theorem}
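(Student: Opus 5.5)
The proof assembles the abstract's outline in five stages: the spectrum, explicit Gamma sections for a few objects, chiral symmetry, mutations to a common phase, and the Stokes matrix from Euler pairings.

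\emph{Spectral input.} At $q=r=1$ I start from Theorem~\ref{spectral}. The operator $c_1(X)\star$ on the rank-eight cohomology has the simple real eigenvalues $\pm T,\pm2$ and the nonreal ones $\pm a\pm ib$. Since $\phi_0=-\arctan(1/100)$ is a small negative angle, no two eigenvalues have $e^{-i\phi_0}(u_i-u_j)$ purely imaginary, so the phase is admissible. Sorting by decreasing $\operatorname{Im}(e^{-i\phi_0}u_i)$ then yields the stated exponent order. This step is a finite check once the eigenvalues are located numerically with rigorous bounds.

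\emph{Individual sections.} Next I would construct the canonical Gamma sections attached to a few eigenvalues by exact integral representations. For $L^{-1}$, $\OO$, $U$ and $L$ the $I$-function of the projective bundle gives Mellin--Bessel integrals. For the spinor objects $S_0L[-1]$ and $S_1$ the spinor integral transform plays this role. In each case I would prove uniform vector asymptotics $\mathfrak s(E)\sim e^{-u_i/z}(\psi_i+O(z))$ on a sector of opening greater than $\pi$ containing the direction $\phi_0$. The leading constant must be matched with the Gamma framing $(2\pi)^{-2}\widehat\Gamma_X\Ch(E)$, which pins down normalization and sign. Continuation along paths fixed in advance keeps the Gamma labels, logarithmic branches and orientation consistent.

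\emph{Chiral symmetry and mutations.} The cohomological chiral symmetry $z\mapsto -z$, combined with the pairing, sends the section for $u_i$ to one for $-u_i$. This fills the remaining exponents; for example $T\leftrightarrow -T$ pairs $U$ with $F[1]$ up to the duality and twist. Sections obtained at different admissible phases are then brought to the common phase $\phi_0$. Whenever an eigenvalue line is crossed, the canonical basis changes by a mutation. On the Gamma side this is the corresponding mutation of exceptional objects, using the Stokes relation together with $\chi(E,F)=(\mathfrak s(E)(e^{-\pi i}z),\mathfrak s(F)(z))$, i.e. Iritani's pairing compatibility of $\mathfrak s$. Proposition~\ref{geometricobjects09} identifies the mutation producing $F$ from $V\otimes U$ and $L$. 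Similarly, $j_*\OO_{Q^3}(-1)[3]$ should arise as a mutation of $\OO$ past $L^{-1}$ via the Koszul sequence $0\to L^{-1}\otimes U^{-1}\to\OO\to j_*\OO\to0$, with the appropriate twist.

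\emph{Fullness, Stokes matrix and Gamma II.} Fullness follows because the collection is obtained by mutations from the pullback collection $\pi^*\langle\OO(-1),\OO,S,\OO(1)\rangle\otimes\{\OO,U\}$ of Orlov's projective bundle theorem. With the basis identified, $S_{ij}=\chi(B_i,B_j)$, and the matrix \eqref{unitStokes09} is computed by Hirzebruch--Riemann--Roch from the characters \eqref{charactersunit09}. Upper unitriangularity is then a consistency check against exceptionality. The central connection matrix is immediate from $Y_{\phi_0}=Y_\infty\mathcal C_{\phi_0}$ and the definition of $\mathfrak s$. Gamma conjecture II follows by definition.

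\emph{Main obstacle.} I expect the hardest step to be the asymptotics together with the continuation bookkeeping for the nonreal exponents $\pm a\pm ib$. No single integral represents these sections directly, so their labels come only through chiral symmetry and mutations. I would first secure rigorous sectors wider than $\pi$ for the real exponents. I would then derive the complex ones by requiring the Stokes matrix to be unitriangular and consistent with $\chi$. This pins down the remaining shifts and signs, such as the $[3]$ on $j_*\OO_{Q^3}(-1)$.
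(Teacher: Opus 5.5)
\textbf{The gap: the four nonreal exponents.} Your plan has no mechanism for identifying the canonical columns at $\pm a\pm ib$, whose objects are $L^{-1}$, $j_*\OO_{Q^3}(-1)[3]$, $L$ and $U$. Through the ray crossings the gap also reaches $-T$, because $[F[1]]=[LU]-5[U]-[L]$ arises from $LU$ by crossing the columns of $U$ and $L$.

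Neither of your analytic tools reaches these exponents. Shifting the Mellin contour only produces the half-integer residues, that is, the exponentials $e^{\mp2\sqrt r/z}$. The positive real integral only sees the real saddle $T$. The values $a\pm ib$ come from complex critical points, which is why your last paragraph concedes that no integral represents these sections.

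Chiral symmetry does not help: $\mathsf D C\mathsf D=-C$ only sends $u$ to $-u$. It pairs $a+ib$ with $-a-ib$ and $a-ib$ with $-a+ib$, never a real exponent with a nonreal one.

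Your fallback is to choose the remaining objects so that the Stokes matrix is unitriangular and agrees with $\chi$. This is circular and determines nothing. For Gamma sections the flat pairing equals $\chi$ automatically, and the true Stokes matrix is unitriangular automatically. Any exceptional collection placed in the same exponent order passes both tests, for example one obtained by mutating the pair in positions $1,2$ or $7,8$. In particular these tests cannot fix shifts: replacing $B_i$ by $B_i[1]$ flips the signs of row and column $i$ and leaves the diagonal equal to $1$, so the $[3]$ on $j_*\OO_{Q^3}(-1)$ is invisible to them. Which Gamma section is the canonical column, and with which sign, is exactly the content of Gamma II and must come from analysis.

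\textbf{How the paper fills it.} The paper uses parameter continuation, which your plan lacks.
\begin{enumerate}
\item At $q_*=\epsilon^3/4$ the spectrum splits into the clusters $\pm(2+3\rho\zeta^j)+O(\epsilon^2)$ of Lemma~\ref{clusterdiscs}. The clockwise loop $q=q_*e^{-2\pi it}$ carries the vertex $T$ to $a-ib$.
\item The ray phase is rotated along with it, and \eqref{admissibleraycondition} is checked on the whole path. Canonical transport through the Laplace-dual connection then carries the canonical column $\mathfrak s(\OO)$ to the canonical column at $a-ib$.
\item The marked monodromy identity $\mathcal M_q\mathfrak s(\OO)=\mathfrak s(L)$ labels that column $L$. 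Reflection gives $L^{-1}$, and the chiral half-turn gives $U,LU,L^2U$.
\item All rays are then rotated to $\phi_0$ at $q_0=m^3/4$. At each crossing one must use the representative valid there, for example $S_1-4U$ rather than $S_1$.
\item Finally, the real path $q_0\to1$ is shown to cross exactly two walls, with coefficients $0,0;-1,1$.
\end{enumerate}

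\textbf{Smaller errors in your sketch.}
\begin{itemize}
\item The section $j(Q^3)$ has class $\xi-H$, not $H+\xi$. The relevant sequence is $0\to\OO_X\to L^{-1}U\to j_*\OO_{Q^3}(-1)\to0$, and the object is $\mathscr L_{\OO}(L^{-1}U[3])$, not a mutation of $\OO$ past $L^{-1}$.
\item $F[1]$ sits at $-T$ and is chirally paired with $\OO$; $U$ is paired with $L^{-1}$.
\item The ordered collection $(\OO(-1),\OO,S,\OO(1))$ is not exceptional, since $\operatorname{RHom}(S,\OO)=H^*(S(1))\neq0$.
\item Admissibility means $e^{-i\phi}(u_i-u_j)\notin\mathbb R$, not ``not purely imaginary''.
\end{itemize}
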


Theorem~\ref{mainUnit09} gives the Gamma II correspondence with its
connection data at a specified parameter and phase. The proof identifies
the canonical sections first, then computes their pairings by the Gamma
factorization of Riemann--Roch. This fixes the object order and signs of
the matrices. The section sheaf and the extension bundle in
\eqref{introfoundation} record the mutations along the chosen path.
Other Stokes conventions may give an inverse or transpose.

\subsection{Two spinor columns over odd-dimensional quadrics}
The spinor construction gives two Gamma sections for projective bundles
over odd-dimensional quadrics. Their asymptotics are established directly.
\begin{theorem}[Spinor transfer over odd quadric bases]\label{mainfamily}
For every odd $\nu=2m+1\geq3$ let
$X_\nu=\PP_{Q^\nu}(\OO\oplus\OO(1))$ and let $S_\nu$ be its base
spinor bundle, normalized by
$\ch(S_\nu)=2^{m+1}/(1+e^H)$. At positive $q,r$, the Gamma sections of
$\pi^*S_\nu$ and $\pi^*S_\nu\otimes U$ have full normalized vector
expansions with exponents $2\sqrt r$ and $-2\sqrt r$ on the respective
sectors
\[
 -\frac\pi2<\arg z<\frac\pi2+\frac{2\pi}{\nu+1},\qquad
 -\frac\pi2-\frac\pi\nu<\arg z<\frac\pi2.
\]
Their scalar leading constants are respectively
$-i/(2\sqrt{qr})$ and $i/(2\sqrt{qr})$ in the framing
\[
 E\longmapsto(2\pi)^{-(\nu+1)/2}\mathcal L_\nu
 z^{-\mu_\nu}z^{\rho_\nu}\widehat\Gamma_{X_\nu}\Ch(E).
\]
At points with distinct spectrum these are two canonical Gamma columns.
All orders and their parameter derivatives are uniform on positive
compact parameter sets for fixed $\nu$.
\end{theorem}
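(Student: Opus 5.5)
We will prove Theorem~\ref{mainfamily} by writing the two spinor Gamma sections as explicit Mellin--Bessel integrals and reading off their asymptotics. The underlying principle is that the Gamma framing of a pulled-back bundle factors through the base. For $E=\pi^*S_\nu$ the class $\widehat\Gamma_{X_\nu}\Ch(E)$ is the product of the relative Gamma class of $\OO\oplus\OO(1)$ and $\pi^*(\widehat\Gamma_{Q^\nu}\Ch(S_\nu))$. The normalization $\ch(S_\nu)=2^{m+1}/(1+e^H)$ makes the base factor extremely simple.

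\textbf{Step 1: the integral formula.} We write the $J$-function of $X_\nu$ as a hypergeometric modification of the $J$-function of $Q^\nu$ via the quantum Leray--Hirsch (Brown/Givental) formula. Its fiber part, in the variable $r$ and the class $\xi$, has the form $\sum_k r^{k+\xi/z}/\prod\Gamma$-factors. The fiber sum resums to a modified Bessel function in $2\sqrt r/z$; this is the source of the exponents $\pm2\sqrt r$. The base part is the quadric $J$-function with the line class twisted by the section degree, weighted by $q$.

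Pairing against the spinor class, the Mellin--Barnes representation of the Gamma-framed base solution collapses because $1/(1+e^H)$ cancels one sine factor of the quadric Gamma class, $\Gamma(1+H)^{\nu+2}/\Gamma(1+2H)$ after the reflection formula. The residual base contribution then becomes a contour integral whose integrand has at most a single family of poles. This is the ``spinor integral transform'': it expresses $\mathfrak s(\pi^*S_\nu)$ as an integral over $t$ of a base kernel times a Bessel kernel $K$ (respectively $I$ for the twisted section). Tensoring with $U$ shifts $\xi$ by one, which flips the sign of the square root, so the second column is the first continued by $\sqrt r\mapsto-\sqrt r$ with a conjugated constant.

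\textbf{Step 2: vector asymptotics.} For the full vector expansion we apply the operators $z\nabla_{q\partial_q}$ and $z\nabla_{r\partial_r}$ to the scalar integral, which generates all components. We then carry out a saddle/steepest-descent analysis uniform in the sector. The dominant contribution comes from the Bessel kernel at $2\sqrt r$. The base integral contributes only subdominant terms, since its exponential rates are $\nu\, q^{1/\nu}$-type eigenvalues whose phases lie outside the given sector. The sector endpoints $\frac\pi2+\frac{2\pi}{\nu+1}$ and $-\frac\pi2-\frac\pi\nu$ are where the first competing base exponential, spaced by angles $2\pi/(\nu+1)$ or $2\pi/\nu$ among the roots of the quadric eigenvalue equation, would overtake $e^{-2\sqrt r/z}$. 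We verify this by locating the poles crossed when rotating the contour.

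\textbf{Step 3: constants and uniformity.} The leading constant follows from the standard Bessel asymptotic $K_0(w)\sim\sqrt{\pi/2w}\,e^{-w}$ combined with the residue of the base integral at its first pole, which contributes $1/\sqrt q$. The powers of $2\pi$ in the framing cancel the $(2\pi)^{(\nu+1)/2}$ from the Gaussian integral. The dimension-independence of the constant reflects that the factor $2^{m+1}$ is exactly cancelled by the quadric Gamma integral's $2^{-m-1}$. Uniformity on compact positive parameter sets and in parameter derivatives follows because the remainder estimates depend continuously on $q,r$, and differentiation under the integral is legitimate. Canonicity at distinct spectrum follows because a solution with a full asymptotic expansion on a sector of opening greater than $\pi$ is uniquely determined.

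\textbf{Main obstacle.} We expect the hardest part to be Step 2: proving that the base integral stays subdominant uniformly up to the exact sector boundaries. We would first try rotating the Mellin contour and bounding the integrand with Stirling's formula along rays, treating each pole crossing as an explicit exponentially smaller term.
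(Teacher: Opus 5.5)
Your Step~1 agrees with the paper in outline. The reflection--duplication identity turns $2^{m+1}K_{0,\nu}(s)/(1+e^{2\pi is})$ into $e^{-\pi is}C_\nu(s)$, whose only poles to the right of the contour are simple poles at $s=k+\frac12$, and the $U$-twist is clockwise continuation in $\log r$. You still owe the marking: the Barnes integral must solve the system with no inhomogeneous residue terms, and its $s=0$ residue must reproduce $\int\widehat\Gamma_{X_\nu}\Ch(E)q^{-H}r^{-\xi}$.

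The genuine gap is Step~2, which rests on the wrong mechanism. The expansion does not come from a saddle point at which base exponentials are kept subdominant, and the poles you cross are not exponentially smaller terms. Take the scaled integrand $2K_{S,\nu}(s)(q\sqrt r)^{-s}z^{\nu s}K_s(x)$, with $x=2\sqrt r/z$, and shift the vertical contour to the right. The residue at $s=k+\frac12$ gives $z^{\nu(k+1/2)}K_{k+1/2}(x)$. By the exact half-integer formula this is $e^{-x}$ times a finite polynomial in $1/x$. So every residue carries the same exponential $e^{-2\sqrt r/z}$, and together the residues generate $\mathcal A_\nu$. The leading constant comes from the exact $K_{1/2}$, not from $K_0$ or a Gaussian integral, through $2^m\pi^{m+1/2}\sqrt\pi/(2\pi)^{m+1}=\frac12$. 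What remains is to show that the integral left on $\operatorname{Re}s=R$ is $O(|z|^{\nu R+1/2}|e^{-x}|)$ on closed subsectors, together with its parameter derivatives.

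That remainder bound is the missing idea, and Stirling's formula alone does not give it. You need a bound for $K_{R+i\tau}(x)$ that is uniform as $|\tau|\to\infty$ with $x$ complex. A rotated Laplace representation bounds it, up to polynomial factors and $|x|^{-1/2}|e^{-x}|$, by $\exp(\frac\pi2|\tau|-\tau\varphi+\max\{0,-\tau(\varphi+\theta_x)\})$, where $\theta_x=-\arg x$. This growth must be beaten by $|e^{-\pi is}C_\nu(s)|$, which up to polynomial factors is $e^{\pi\tau-(\nu+2)\pi|\tau|/2}$, together with $|z^{\nu s}|=|z|^{\nu R}e^{-\nu\theta\tau}$. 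The stated sectors are the ranges of $\theta=\arg z$ where some choice of $\varphi$ makes the net coefficient of $|\tau|$ negative. For instance, $-\frac{\nu+5}{2}\pi+(\nu+1)\theta<0$ exactly when $\theta<\frac\pi2+\frac{2\pi}{\nu+1}$.

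These endpoints do not depend on $(q,r)$, and they are not directions where a competing exponential takes over. The other exponents of $X_\nu$ couple $q$ and $r$ and move with the parameters. Inside the sector some exponentials do dominate $e^{-2\sqrt r/z}$; for $\nu=3$, $e^{-T/z}$ does so near $\arg z=\pi$. They simply have zero coefficient in the section, and only the contour estimate shows this.

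For the same reason, continuation in $r$ at fixed $z$ does not transport a sectorial expansion to $\pi^*S_\nu\otimes U$. On $-\frac\pi2-\frac\pi\nu<\arg z<0$ you must use $K_s(xe^{-\pi i})$. On $|\arg z|<\frac\pi2$ you must use $2^{m+1}Z^{(\nu)}_{\OO}-Z^{(\nu)}_{S_{\nu,0}}+2\pi i\,\mathcal I_\nu$, with the Poisson representation of $I_s$. You also need a separate pole-free bound showing that the $Z^{(\nu)}_{\OO}$ term is a polynomial times $|e^{-x}|$. Finally, the vector components come from the unipotently corrected jets, such as $(\mathscr D_q^\nu+2q\mathscr D_q-2q\mathscr D_r)h$, rather than from bare derivatives.
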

The proof in Section~\ref{sec:familytransfer} determines the Gamma
marking before applying the contour estimate in each dimension. It
requires no hypothesis on the transfer of Stokes pairings. The result
identifies two columns; a full Gamma II comparison for every $X_\nu$
would require the remaining columns and their pairings.
Proposition~\ref{familylattice} identifies the two spinor classes as
generators of the finite quotient of $K_0(X_\nu)$ by all line bundle
classes.

\subsection{Chiral symmetry and the connection matrices}
The involution $\mathsf D|_{H^{2p}(X)}=(-1)^p$ anticommutes with
$C=c_1(X)\star$ and pairs its eigenvalues $u$ and $-u$.
This is the case of index two in the standard Fano index symmetry
\cite[Remark 3.1.3]{GGI}. We call it \emph{cohomological chiral
symmetry}; the precise algebraic meaning is fixed in
Section~\ref{sec:chiral}. It relates the spectrum and the
Gamma lattice to the connection matrices with their chosen normalizations.

\begin{corollary}[Chiral compatibility of the specified data]
\label{introchiral}
Let $\mathcal B=(B_1,\ldots,B_8)$ be the collection
\eqref{introfoundation}, set $A=L\otimes U$, and let $M_A$ be the
integer matrix of $E\mapsto E\otimes A$ in this ordered $K$-basis.
Define the signed reversal $J_\epsilon$ by
\[
 J_\epsilon e_i=\epsilon_i e_{9-i},\qquad
 (\epsilon_1,\ldots,\epsilon_8)=(1,1,1,-1,-1,1,1,1).
\]
At the point and phase of Theorem~\ref{mainGammaII}, write
$S=S_{\phi_0}$ and $\mathcal C=\mathcal C_{\phi_0}$, and let $P$
be the Poincar\'e matrix in the cup frame. Then
\begin{equation}\label{introchiraldata}
 \begin{gathered}
 S^{\mathsf T}J_\epsilon S=J_\epsilon,\qquad
 \mathcal C^{\mathsf T}P\mathsf D\mathcal C=J_\epsilon,\\
 M_A=J_\epsilon S,\qquad M_A^2=S^{-\mathsf T}S,\qquad
 \mathcal C M_A=e^{\pi i\rho}\mathcal C.
 \end{gathered}
\end{equation}
The matrix $M_A$ is unipotent, with Jordan type
$J_5(1)\oplus J_3(1)$. With logarithms continued along the path, the ordered collection
$\mathcal B\otimes A$ is canonical at phase $\phi_0-\pi$, with
\[
 S_{\phi_0-\pi}=S_{\phi_0},\qquad
 \mathcal C_{\phi_0-\pi}=e^{\pi i\rho}\mathcal C_{\phi_0}.
\]
All forms and transposes in these identities are complex bilinear.
\end{corollary}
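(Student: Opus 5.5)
The plan is to derive every identity in \eqref{introchiraldata} from two structural facts. The first is that the Stokes matrix is a Gram matrix of the Euler pairing in the Gamma framing. The second is that chirality intertwines $z\mapsto -z$ with $\mathsf D$ and $e^{\pi i\rho}$. Concretely, Theorem~\ref{mainGammaII} gives
\[
 S_{ij}=\chi(B_i,B_j),
\]
the Euler form of the collection, by the Gamma factorization of Riemann--Roch. We also have $\mathcal C=(2\pi)^{-2}[\widehat\Gamma_X\Ch(B_i)]$.

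\textbf{Step 1 (the half-turn acts by $M_A$).} Since $c_1(X)=2(H+\xi)-\ldots$ and $A^{\otimes2}\cong\omega_X^{-1}$, we have $c_1(A)=c_1(X)/2$. Hence $\Ch(E\otimes A)=\Ch(E)e^{2\pi i c_1(X)/2}=e^{\pi i\rho}\Ch(E)$. Multiplication by $\widehat\Gamma_X$ commutes with cup product, so
\[
 \mathcal C M_A=e^{\pi i\rho}\mathcal C
\]
holds as soon as $M_A$ is the matrix of $\otimes A$ in the basis $[B_i]$. That matrix is integral because the $[B_i]$ form a $\Z$-basis of $K_0(X)$ (full collection).

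\textbf{Step 2 (Serre functor and the Stokes identities).} Serre duality gives $\chi(E,F)=(-1)^4\chi(F,E\otimes\omega_X)$, so
\[
 S^{\mathsf T}=S\,M_A^{-2}\quad\text{up to transposition conventions}.
\]
This is the identity $M_A^2=S^{-\mathsf T}S$, i.e.\ the helix monodromy. For $M_A=J_\epsilon S$ I would verify directly from the characters \eqref{charactersunit09} that $[B_i\otimes A]=\sum_k (J_\epsilon S)_{ki}[B_k]$. This is a finite integer computation with the explicit Chern characters. Conceptually, the dual basis of $\mathcal B$ under $\chi$ is, up to the signs $\epsilon_i$, the reversed collection tensored by $A$. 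This reflects the chiral symmetry $u\mapsto -u$ reversing the exponent order, which is the main step I would use to guess and confirm $J_\epsilon$.

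\textbf{Step 3 (the remaining relations).} Once $M_A=J_\epsilon S$ and $M_A^2=S^{-\mathsf T}S$ are known, $J_\epsilon S J_\epsilon S=S^{-\mathsf T}S$ gives $S^{\mathsf T}J_\epsilon S=J_\epsilon$, using $J_\epsilon^2=1$, which holds since $\epsilon_i\epsilon_{9-i}=1$ for this sign pattern. For the pairing identity, $(P\mathsf D)$ computes the twisted pairing $\int_X \mathsf D(a)\cup b$. Applying the Gamma Riemann--Roch, with $\mathsf D\widehat\Gamma_X=\widehat\Gamma_X^{*}$, I would show that $\mathcal C^{\mathsf T}P\mathsf D\mathcal C$ equals the matrix $\chi(B_i^\vee\otimes\cdot)$-type pairing $\chi(B_i^{\vee\vee},B_j)$ up to the twist by $e^{\pi i\rho}$. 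Combining this with Step~1 identifies it with $SM_A^{-1}$ up to signs, which is $J_\epsilon$.

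\textbf{Step 4 (Jordan type and the opposite phase).} The Jordan type of $M_A$ follows from $e^{\pi i\rho}$ via Step~1. Since $\mathcal C$ is invertible, $M_A$ is conjugate to $e^{\pi i\rho}$, which is unipotent because $\rho$ is nilpotent. Its Jordan blocks are those of $c_1(X)\cup$ on $H^*(X)$: $c_1^4\neq0$ gives a $J_5$ string, and a complementary string of length $3$ lives in the primitive part of $H^2\oplus H^4\oplus H^6$. For the opposite phase, chirality gives
\[
 \mathsf D\,y(e^{\pi i}z)
\]
as a flat section with exponents negated. Rotating $\phi_0\mapsto\phi_0-\pi$ reverses the sector. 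The continuation of the Gamma framing multiplies by $z^\rho\mapsto (e^{-\pi i}z)^\rho$, i.e.\ by $e^{\pm\pi i\rho}$, and by Step~1 this is the relabelling $B_i\mapsto B_i\otimes A$. Hence $\mathcal C_{\phi_0-\pi}=e^{\pi i\rho}\mathcal C_{\phi_0}$, and the Stokes matrix, being $\chi$ on the tensored collection, is unchanged because $\otimes A$ is an autoequivalence.

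\textbf{Main obstacle.} The hardest part is bookkeeping: getting the sign $e^{+\pi i\rho}$ versus $e^{-\pi i\rho}$ and the ordering and sign vector $\epsilon$ consistent with convention \eqref{introStokesconvention}. I would pin these down by checking one entry explicitly, for instance $B_1\otimes A=L^{-1}\otimes L\otimes U=U=B_8$ with $\epsilon_1=1$.
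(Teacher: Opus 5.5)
Your argument is correct, but it organizes the proof differently from the paper (Proposition~\ref{unitChiral} together with Corollary~\ref{oppositephase}).

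\textbf{The paper's route.} The paper's one collection-specific computation is the twisted Euler Gram matrix. Using $e^{-c_1(X)/2}\Td(X)=1-H^2/12$, it evaluates $\chi(B_i\otimes A,B_j)=\int_X\ch(B_i^\vee)\ch(B_j)(1-H^2/12)$ on the eight characters and finds $J_\epsilon$. This gives $M_A^{\mathsf T}S=J_\epsilon$. The isometry $M_A^{\mathsf T}SM_A=S$ then yields $M_A=J_\epsilon S$ and $S^{\mathsf T}J_\epsilon S=J_\epsilon$. The identity $M_A^2=S^{-\mathsf T}S$ comes out as a consequence and is checked against the helix identity. The central identity uses the same kernel, through $(\mathsf D\widehat\Gamma_X)\widehat\Gamma_X=\Ch(1-H^2/12)$.

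\textbf{Your route.} You obtain two identities that hold for any Gamma-framed full exceptional basis. First, Serre duality with $\omega_X=A^{-2}$ gives $S^{\mathsf T}=SM_A^{-2}$ exactly, so no hedge about transposition conventions is needed. Second, Gamma Riemann--Roch in the form $S=\mathcal C^{\mathsf T}P\mathsf D\,e^{\pi i\rho}\mathcal C$, combined with your Step~1, gives $\mathcal C^{\mathsf T}P\mathsf D\mathcal C=SM_A^{-1}$. The only collection-specific input is then the matrix identity $M_A=J_\epsilon S$, which you check by computing $\otimes A$ on the mutated basis.

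\textbf{Comparison.} Your route isolates exactly what is special about $\mathcal B$. The price is a basis-change computation, for example via $D_1^{-1}T_HT_UD_1$, together with reliance on the displayed $S$. The paper instead does a symmetric pairing computation with a very simple kernel. That computation also explains $J_\epsilon$ conceptually: the $\chi$-dual of $\mathcal B$ is the reversed, $A$-twisted, signed collection.

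\textbf{Corrections.} Three points should be fixed.

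First, the relations you state ``up to signs'' are exact. Since $\widehat\Gamma_X^*=\mathsf D\widehat\Gamma_X$ and $\widehat\Gamma_X^*\widehat\Gamma_X=(2\pi i)^{\deg/2}\bigl(e^{-c_1(X)/2}\Td_X\bigr)$, the factor $(2\pi)^{-4}$ from $\mathcal C$ cancels $(2\pi i)^4$. Hence $(\mathcal C^{\mathsf T}P\mathsf D\mathcal C)_{ij}=\chi(B_i\otimes A,B_j)=(SM_A^{-1})_{ij}$ with no sign ambiguity.

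Second, in Step~4 the turn goes the wrong way. The column at phase $\phi_0-\pi$ is $\mathsf D\,\mathfrak s(B_i)(e^{\pi i}z)$. In the framing:
\begin{itemize}
\item $\mathsf D\mathcal L(q,r,e^{\pi i}z)\mathsf D=\mathcal L(q,r,z)$ by degree parity, since $\deg q,\deg r$ are even;
\item $\mathsf D(e^{\pi i}z)^{-\mu}=z^{-\mu}$, because $\mathsf D=e^{\pi i\mu}$;
\item $(e^{\pi i}z)^\rho=e^{\pi i\rho}z^\rho$.
\end{itemize}
This gives $\otimes A$ and proves Lemma~\ref{halfturn} directly. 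Continuing to $e^{-\pi i}z$ instead gives $\otimes A^{-1}$ at $\phi_0+\pi$. Your proposed check $B_1\otimes A=B_8$ cannot detect this. You should also record two facts. The projected order is unchanged, since $\operatorname{Im}(e^{-i(\phi_0-\pi)}(-u_i))=\operatorname{Im}(e^{-i\phi_0}u_i)$. And $\mathsf D\psi_u$ is a normalized idempotent at $-u$. Together these ensure the Stokes matrix is computed with the same object order and normalization.

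Third, for the Jordan type, cite hard Lefschetz for the ample class $c_1(X)$ with Betti numbers $(1,2,2,2,1)$. Also note that $e^{\pi i\rho}-I$ is $\rho$ times an invertible polynomial in $\rho$, so it has the same Jordan type as $\rho$.
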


The proof is given in Propositions~\ref{chiralflat} and
\ref{unitChiral}, together with Corollary~\ref{oppositephase}.
The involution $\mathsf D$ on cohomology lifts to the operator
$s(z)\mapsto\mathsf D s(e^{\pi i}z)$ on flat sections. Its square is
the monodromy of a full turn, which acts on the Gamma lattice by tensoring
with $A^2=\omega_X^{-1}$. Thus the lift to the Gamma lattice has a
nontrivial square. The matrix identities follow from the analytic
identification of the Gamma columns, which is the essential step in
proving Gamma conjecture II.

\subsection{The strategy of Proof}
The two spinor sections are the main analytic difficulty. Gamma II is
known for the base $Q^3$, but the base and fiber contributions to the
projective bundle are analytically coupled. The two-dimensional
eigenspace corresponding to $u=\pm2\sqrt r$ is not preserved by the
full quantum connection (Theorem~\ref{aux:leakage}). These exponents
alone therefore give no reduction to a connection of rank two. Line
bundles generate a subgroup of index sixteen in $K_0(X)$
(Proposition~\ref{aux:lattice}), so the spinor classes are also needed
to recover the integral object labels.

We first determine three sections in the Gamma framing. For the structure sheaf,
a positive real integral has a unique strictly convex minimum, which
identifies the normalized section with fastest decay. For $S_0$ and $S_1$,
we match the exact Mellin--Bessel formulas to their Gamma initial data
at the regular end before estimating their asymptotics. The estimates then
control the unbounded imaginary Mellin order, together with the Bessel
order and the logarithmic branches. They give differentiable vector
expansions on sectors of opening greater than $\pi$
(Theorems~\ref{MBmarked} and \ref{twospinorcolumns}). An expansion on a single ray may
allow an exponentially smaller summand, so it need not determine a
canonical column.

We construct the line bundle sections by continuation. A clockwise loop
in the base parameter carries the section of the structure sheaf to the
$L$ section. We justify this transport using the explicit connection
after Laplace duality and uniform separation of the spectral clusters. Reflection gives $L^{-1}$, and the chiral half-turn with transported
logarithms gives $U,LU,L^2U$. Together with the spinor sections, these
produce the full exceptional collection
\[
 (L^{-1},S_0,\OO,L,S_1,U,LU,L^2U).
\]
We keep track of the Gamma label of each section throughout the continuation.

We then take the auxiliary point $q_0=(10^{-2})^3/4$, $r=1$, rotate
these rays to the common phase $\phi_0$, and follow the collection to
$q=1$. Section~\ref{sec:unitphase} explains the coordinated choice of
$q_0$ and $\phi_0$. A real parameterization of the spectrum proves that
this path crosses exactly two walls. The chronological mutations are
$L_3,L_5;L_2,L_6$, with coefficients $0,0;-1,1$
(Theorem~\ref{twowalls09}). At an earlier ray crossing, the canonical
spinor column is represented by $S_1-4U$ rather than $S_1$.
Using the class at each crossing determines the final objects, shifts
and matrix entries.

The proof thus passes from characteristic classes to canonical
sectorial solutions through three steps: identifying the spinor Gamma
markings, proving uniform asymptotics on the required sectors, and
continuing the sections to a common phase at the unit point. It uses the
existing mirror, asymptotic and mutation theories in this explicit
projective bundle calculation.

\subsection{Related works}
Dubrovin formulated a conjectural relation between semisimple quantum
cohomology, exceptional collections and monodromy data
\cite{DubrovinICM}.
Guzzetti established the Stokes--Euler correspondence for projective
spaces \cite{GuzzettiProjective}, and Ueda proved it for Grassmannians
and smooth cubic surfaces \cite{UedaGrassmannians,UedaCubic}.
Iwaki and Takahashi treated orbifold projective lines with at most
three orbifold points and positive orbifold Euler characteristic
\cite{IwakiTakahashi}.
Cruz Morales and van der Put used multisummation and the monodromy
identity to compute Stokes data for differential equations associated
with projective spaces, weighted projective spaces and certain Fano
hypersurfaces~\cite{CruzPut}.

Galkin, Golyshev and Iritani \cite{GGI} formulated the Gamma conjectures, and
Iritani \cite{Integral} constructed the Gamma integral structure. The
relation between exceptional collections, helices and monodromy data is
developed in \cite{CDGHelix}. Xiaowen Hu and Huazhong Ke prove Gamma II
for smooth quadrics \cite{HK}; Sheng computes the Stokes and central connection data for
$LG(2,4)\cong Q^3$~\cite{Sheng}. These are direct precedents on the
base. Fang compares central charges modified by the Gamma class
with integrals over SYZ T-dual branes for compact semipositive toric orbifolds \cite{FangCentral}. Fang and Peng Zhou
prove Gamma II for smooth toric Fano manifolds near the large-radius
limit by comparing thimbles and characteristic cycles \cite{FZ}.
Aleshkin, Fang and Wang use Fourier transforms to compare the central
charges of line bundles with mirror oscillatory integrals for toric Fano
orbifolds \cite[Theorems 4.5 and 5.5]{AFW}; the latter theorem
holds near the large-radius limit.
Tarasov and Varchenko identify equivariant Stokes bases for projective
spaces with exceptional bases in equivariant $K$-theory
\cite{TarasovVarchenko}. Cotti and Varchenko develop the comparison
with equivariant exceptional collections for projective spaces and
Grassmannians \cite{CV,CVSatake}. Our result concerns
the coupled fourfold \eqref{model}; its exceptional collection includes
objects other than line bundles.

Brown's toric fibration theorem and quantum Lefschetz give our
hypergeometric input \cite{Brown,CG,Givental}. Iritani and Koto construct
formal quantum $D$-module decompositions for projective bundles
\cite[Theorem 1.7]{IK},
while Hinault, Yu, Zhang and Zhang prove spectral decomposition theorems
and extend framings for formal and non-Archimedean $F$-bundles,
including uniqueness of the projective bundle decomposition map
\cite{HYYZ}. The reconstruction method of \cite{HYYZ} is also
incorporated in \cite[Section 5.8]{IK}. Iritani studies quantum cohomology through shift
operators and Fourier transforms in \cite{Fourier}.
Cotti's cyclic stratum and Borel--Laplace constructions
reconstruct solution spaces in substantially greater generality
\cite{CottiCyclic,Cotti,CottiML}.

Xu studies regularized limits of Stokes matrices for systems of
Poincar\'e rank one \cite{XuLimits} and derives connection formulas for
a higher rank analog of Painlev\'e VI \cite{XuConnection}.
Tang and Xu construct shrinking solutions from boundary data and
compute their monodromy \cite{TXBoundary}.
Lin and Xu compute Stokes matrices for a special confluent
hypergeometric system and its analogue with coefficients in an
enveloping algebra \cite{LinXu}.
Tang and Xu also express Stokes matrices through infinite matrix
products and relate the quantum construction to Yangians
\cite{TXYangians}.

Our comparison identifies the canonical sectorial continuations of
specified Gamma sections and follows their integral labels to the
chosen phase at the unit point. The general asymptotic existence
theorems with parameters in \cite{CGM} provide the analytic setting;
the identification of these labels requires the explicit integral
formulas and continuation used here.

Shen and Shoemaker study Gamma decompositions in the exceptional curve
specialization \cite{SS}; Cotti's fiberwise theory distinguishes vertical
and absolute quantum spectra \cite{CottiFiberwise}. Our connection
retains both Novikov variables. The positive integral argument follows
the mirror asymptotic approach to Gamma I \cite{GI}, with related period
comparisons in \cite{IritaniMS}. For del Pezzo surfaces of degree at
least three, including non-toric examples, Fang, Wang and Yan Zhou prove the
mirror symmetric Gamma statement to leading order for arbitrary $K$-group
insertions \cite[Theorem 1.5]{FWZ}. Exponential concentration for scalar
quantum periods is proved under a nonnegative convenient weak
Landau--Ginzburg hypothesis in \cite{LKH}; that scalar conclusion does
not identify the spinor sections or their connection constants.

The counterexamples in \cite{Revisiting} require the rightmost
eigenvalue to be distinguished from the spectral radius. Hu, Ke, Li and Su give related counterexamples to
Galkin's lower bound conjecture for toric projective bundles \cite{GalkinBound}.
Hu, Ke, Li and Su also prove propagation of the Gamma I property within a
connected $(\mathrm{SR})$-region and use it, together with additional
spectral and categorical input, to prove Gamma II for del Pezzo surfaces
\cite{Global}. Our direct transport at small parameters and explicit mutations
give an alternative to the multiphase criterion of \cite{HK}.
Convergence of the big Frobenius structure \cite{CottiConv} alone does
not supply the sector estimates needed for this comparison.

The paper is organized as follows: Section~\ref{sec:conventions} constructs the quantum connection and fixes
its Gamma and categorical conventions. Section~\ref{sec:asymptotics}
proves the integral identities and normalized asymptotics.
Section~\ref{sec:transportStokes} completes the continuation, mutations
and matrix calculation. Appendix~\ref{app:mirrorgeometry} gives a
coupled mirror realization and the local degeneration geometry;
Appendix~\ref{app:periodhelix} records quantitative period asymptotics
and the integral helix calculations. The full Gamma II proof concerns the
fourfold \eqref{model}; Section~\ref{sec:familytransfer} treats the
two spinor columns over odd quadrics. A comparison with primitive
integral thimbles, including the required tameness at infinity, lies
outside the scope of this paper.

\section{Quantum cohomology and Gamma framing}\label{sec:conventions}\label{sec:Euler}\label{sec:connection}
\subsection{Geometry and characteristic classes}
Write $\pi:X\to Q^3$, $H=\pi^*c_1(\OO_{Q^3}(1))$, and $\xi=c_1(\OO_\pi(1))$. Projectivization parametrizes rank-one quotients, so $\pi_*\OO_\pi(1)=\OO\oplus\OO(1)$. The classical ring is
\begin{equation}\label{classical}
 H^*(X,\C)=\C[H,\xi]/(H^4,\xi(\xi-H)),\qquad \int_X H^3\xi=2.
\end{equation}
The rational cohomology of $Q^3$ has one generator in each of degrees
$0,2,4,6$. The projective bundle formula doubles this rank, giving
$\dim H^*(X)=8$; the displayed ring therefore includes the entire
cohomology.

Let $\beta$ be a line in the section defined by the quotient onto $\OO$, and $f$ a fiber. The pairings are
\[
(H\cdot\beta,\xi\cdot\beta)=(1,0),\qquad
(H\cdot f,\xi\cdot f)=(0,1).
\]
The Mori cone is generated by $\beta,f$. Set $q=Q^\beta$ and $r=Q^f$. The relative Euler sequence gives
\[
 c_1(X)=2H+2\xi,
\]
so $X$ is Fano of index $2$. For $d\beta+ef$, the anticanonical degree is $2(d+e)$.

The Gamma class is
\begin{equation}\label{gamma}
 \Ga_X=\frac{\Gamma(1+H)^5}{\Gamma(1+2H)}
        \Gamma(1+\xi)\Gamma(1+\xi-H).
\end{equation}
We evaluate all formulas for characteristic classes in the classical ring
\eqref{classical}. We use the ordinary Chern character $\ch$ in
Riemann--Roch and the modified character
$\Ch(E)=\sum_j(2\pi i)^j\ch_j(E)$ in Gamma II.

We fix the classical frame
\begin{equation}\label{flatbasis}
 (b_0,\ldots,b_7)=(1,H,H^2,H^3,\xi,H\xi,H^2\xi,H^3\xi),
\end{equation}
where all products are \emph{cup products}. The basis vectors have degrees
$(0,1,2,3,1,2,3,4)$. The grading operator is
\begin{equation}\label{grading}
 \mu=\operatorname{diag}(-2,-1,0,1,-1,0,1,2).
\end{equation}
When the same monomials occur in the quotient presentation of the quantum
ring, they instead denote iterated quantum products. Section
\ref{sec:connection} gives the exact transition between the two meanings.
The Poincar\'e matrix $P_{ij}=\int_X b_i b_j$ is constant; equivalently,
$\int_X H^a\xi^b=2$ when $b\geq1$ and $a+b=4$, and is zero on homogeneous
classes of other total degree.

\subsection{Exceptional collections and the Gamma framing}
Let $S$ be the normalized rank-two spinor bundle on $Q^3$, with $c_1(S)=-H$. The standard collection is
\[
 (E_1,E_2,E_3,E_4)=(S,\OO,\OO(1),\OO(2)).
\]
It is a full exceptional collection; see Kapranov \cite{Kapranov} and \cite{HK}. Its ordinary Chern character is
\[
 \ch(S)=\frac4{1+e^H}=2-H+\frac{H^3}{12}.
\]
Orlov's projective bundle formula \cite{Orlov} gives the full collection
\begin{equation}\label{collection}
 (\pi^*E_1,\ldots,\pi^*E_4,
 \pi^*E_1\otimes\OO_\pi(1),\ldots,\pi^*E_4\otimes\OO_\pi(1)).
\end{equation}
We denote this ordered collection by $\mathcal F$ and its Euler matrix by
$G=G_X$. Identifying it with an asymptotic collection on a specified
Stokes sector requires an analytic comparison.

\begin{proposition}[Euler matrix]\label{naturalEuler}
The Euler Gram matrix of \eqref{collection} is
\[
 G_X=\begin{pmatrix}G_Q&B\\0&G_Q\end{pmatrix},\qquad
 G_Q=\begin{pmatrix}
 1&4&16&40\\0&1&5&14\\0&0&1&5\\0&0&0&1
 \end{pmatrix},\quad
 B=\begin{pmatrix}
 16&20&56&120\\4&6&19&44\\0&1&6&19\\0&0&1&6
 \end{pmatrix}.
\]
\end{proposition}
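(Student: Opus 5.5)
The Euler pairing $\chi(E,F)=\sum_i(-1)^i\dim\operatorname{Ext}^i(E,F)$ will be computed by reducing every entry to a computation on the base $Q^3$.

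\textbf{Diagonal blocks.} For pullbacks, the projection formula together with $R\pi_*\OO_X=\OO_{Q^3}$ gives
\[
\operatorname{Ext}^\bullet_X(\pi^*E,\pi^*E')=\operatorname{Ext}^\bullet_{Q^3}(E,E').
\]
Twisting both arguments by $U$ does not change the Ext groups. Hence both diagonal blocks equal $G_Q$.

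\textbf{Lower-left block.} For the block pairing $\pi^*E_i\otimes U$ against $\pi^*E_j$, we have
\[
\operatorname{Ext}^\bullet(\pi^*E_i\otimes U,\pi^*E_j)=\operatorname{Ext}^\bullet_{Q^3}(E_i,E_j\otimes R\pi_*U^{-1}),
\]
and $R\pi_*\OO_\pi(-1)=0$ on a $\PP^1$-bundle. So this block vanishes.

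\textbf{Upper-right block.} Here
\[
B_{ij}=\chi_{Q^3}\bigl(E_i,E_j\otimes(\OO\oplus\OO(1))\bigr)=\chi(E_i,E_j)+\chi(E_i,E_j(1)),
\]
since $\pi_*U=\OO\oplus\OO(1)$ with no higher direct images. Therefore $B$ is obtained from the base pairings $\chi(E_i,E_j(k))$ for $k=0,1$.

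\textbf{Base pairings.} I compute these by Hirzebruch--Riemann--Roch on $Q^3$:
\[
\chi(E,E')=\int_{Q^3}\ch(E^\vee)\ch(E')\Td(Q^3).
\]
The ingredients are $\int_{Q^3}H^3=2$ and
\[
\Td(Q^3)=\Bigl(\frac{H}{1-e^{-H}}\Bigr)^5\frac{1-e^{-2H}}{2H}.
\]
The Hilbert polynomial is $\chi(\OO(k))=(k+1)(k+2)(2k+3)/6$, giving $1,5,14,30$ for $k=0,1,2,3$. For the spinor bundle I use $\ch(S)=4/(1+e^H)$ and $S^\vee\cong S(1)$. Then $\ch(S^\vee)=4e^H/(1+e^H)$, and in particular $\ch(S^\vee(k))=4e^{(k+1)H}/(1+e^H)$.

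The pairings between line bundles are immediate. For example, $G_Q$ row 2 is $(1,5,14)$ and $B$ row 4 is $(0,0,1,6)$, using $1+5=6$. The vanishing entries such as $\chi(\OO(2),\OO(1))=0$ come from $\chi(\OO(-1))=0$.

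The spinor entries reduce to $\chi(S(k))$ and $\chi(S^\vee(k))$. I evaluate these either via the identity $\ch(S)(1+e^H)=4$ combined with the line-bundle values, or directly by expanding in $H$ to order 3. As a check, $\chi(S^\vee)=h^0(S^\vee)=4$, and $\chi(S^\vee(1))=16$, $\chi(S^\vee(2))=40$, $\chi(S^\vee(3))=80$. These give $G_Q$ row 1 as $(1,4,16,40)$ and $B$ row 1 as $(0+16,\,4+16,\,16+40,\,40+80)=(16,20,56,120)$. The entry $\chi(S,S(1))=16$ also requires $\ch(S^\vee\otimes S)$. Its value is $\chi(S^\vee\otimes S)=1$, by exceptionality, plus a twist computation, which I carry out with the same expansion.

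\textbf{Main obstacle.} The only delicate point is the bookkeeping of the spinor characters: the sign conventions $c_1(S)=-H$ and $S^\vee=S(1)$, and expanding $\ch(S^\vee\otimes S)e^H\Td$. These determine the first row and first column of both blocks. All remaining entries follow from the Hilbert polynomial.
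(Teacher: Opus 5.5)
Your route is the paper's route. You get the block shape from the projection formula, the vanishing lower block from $R\pi_*\OO_\pi(-1)=0$, and the cross block from $\pi_*\OO_\pi(1)=\OO\oplus\OO(1)$. The remaining pairings come from Hirzebruch--Riemann--Roch on $Q^3$, and your Todd class expands to the paper's $1+\frac32H+\frac{13}{12}H^2+\frac12H^3$. Your line-bundle and spinor values are correct: indeed $\chi(S(k))=\tfrac23k(k+1)(k+2)$, which gives $\chi(S^\vee(k))=\chi(S(k+1))=4,16,40,80$ for $k=0,\dots,3$, as you state.

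There is, however, a concrete error in exactly the entry you flagged as delicate and then deferred. You write $B_{11}=0+16$ and assert $\chi(S,S(1))=16$. Both summands are wrong. By definition $B_{11}=\chi(S,S)+\chi(S,S(1))$, where $\chi(S,S)=1$ by exceptionality (as you note yourself) and $\chi(S,S(1))=15$. The quickest check is to apply $\chi(S,-)$ to the spinor sequence $0\to S\to\OO^{\oplus4}\to S(1)\to0$:
\[
 \chi(S,S(1))=4\chi(S,\OO)-\chi(S,S)=16-1=15.
\]
Equivalently, $\ch(S^\vee\otimes S)=4-H^2$ and $\int_{Q^3}(4-H^2)e^H\Td(Q^3)=15$. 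Your two slips cancel. Taken at face value, however, your stated values give $1+16=17$, which contradicts the matrix. With the corrected bookkeeping, the first row of $B$ is $(1+15,\,4+16,\,16+40,\,40+80)$, and the rest of your argument goes through.
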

\begin{proof}
Use
\[
 \Td(Q^3)=1+\frac32H+\frac{13}{12}H^2+\frac12H^3,
 \qquad\int_{Q^3}H^3=2,
\]
and $\chi(A,B)=\int\ch(A^\vee)\ch(B)\Td(Q^3)$. The cross block is
\[
 B_{ij}=\chi_{Q^3}(E_i,E_j\otimes(\OO\oplus\OO(1))),
\]
since $\pi_*\OO_\pi(1)=\OO\oplus\OO(1)$. The relative $\OO(-1)$
has zero derived pushforward, so the lower block vanishes.
Substituting these data gives the entries of the matrix.
\end{proof}

Writing $\omega=2\pi\sqrt{-1}$, the two spinor Chern characters needed for
Gamma framing are
\[
 \Ch(\pi^*S)=2-\omega H+\frac{\omega^3H^3}{12},\qquad
 \Ch(\pi^*S\otimes\OO_\pi(1))
 =\left(2-\omega H+\frac{\omega^3H^3}{12}\right)e^{\omega\xi}.
\]
The Gamma-framed flat section is
\[
 \mathfrak{s}(E)=(2\pi)^{-2}\mathcal{L}(q,r,z)
 z^{-\mu}z^\rho\bigl(\Ga_X\Ch(E)\bigr),
 \qquad \rho=c_1(X)\cup,
\]
where $\mathcal{L}$ is the standard fundamental solution normalized at
the large-radius limit, with logarithms fixed on a chosen cover.
We use this Gamma integral framing \cite{Integral,GGI}, including its
common scalar, in every comparison of central connection matrices.
The two cohomology classes $\Ga_X\Ch(E)$ are therefore explicit in
the classical algebra of dimension eight. The remaining six are the
corresponding products with $e^{j\omega H}$ and
$e^{j\omega H+\omega\xi}$ for $j=0,1,2$.
These formulas specify the Gamma data at the regular end.
Section~\ref{sec:spinorMB} identifies the two spinor sections analytically.
Obtaining a compatible full set of labels for the line bundles requires
further analytic continuation. Until then, the Euler matrix above
serves only as an independently computed categorical comparison.

\subsection{Canonical columns and mutations}\label{sec:helix}
At a point with distinct eigenvalues $u_i$ of $C=c_1(X)\star$, let
$\psi_i$ be normalized idempotents. A Gamma section is called canonical
at exponent $u_i$ and phase $\phi$ if it is the corresponding column of
a sectorial fundamental solution, with full expansion
$e^{-u_i/z}(\psi_i+O(z))$ on a sector centered at $\phi$ of opening
strictly greater than $\pi$. We use the conventions and uniqueness
statements of \cite[Section 2.5]{GGI} and
\cite[Definition 2.5 and Lemma 2.6]{HK}.
A phase is admissible for the full spectrum when
$e^{-i\phi}(u_i-u_j)\notin\mathbb R$ for $i\ne j$.
The path construction also uses individual rays, which need only avoid
the other eigenvalues.

We define mutations of exceptional objects by
\begin{align}
 R_FE&=\operatorname{Cone}
 (E\to\operatorname{RHom}(E,F)^\vee\otimes F)[-1],\label{rightmutation}\\
 L_EF&=\operatorname{Cone}
 (\operatorname{RHom}(E,F)\otimes E\to F).\label{leftmutation}
\end{align}
Thus on ordered $K$-classes they act by
\begin{equation}\label{rightKmutation}
 R:(a,b)\mapsto(b,a-\chi(a,b)b),\qquad
 L:(a,b)\mapsto(b-\chi(a,b)a,a).
\end{equation}
The derived helix of a full foundation of length eight satisfies
\begin{equation}\label{helixperiod}
 F_{i+8}=\mathsf S_X^{-1}F_i=F_i\otimes\omega_X^{-1}[-4],
 \qquad \omega_X^{-1}=L^2U^2.
\end{equation}
We follow the standard helix--Serre conventions
\cite[Sections 3--5]{CDGHelix} and retain the shift at the derived level.
The foundation has length eight, whereas the Fano index is two.

For later matrix calculations one also has
\begin{equation}\label{baseKrelation}
 [\OO_{Q^3}(3)]=-4[S]+15[\OO]-11[\OO(1)]+5[\OO(2)].
\end{equation}
The Chern characters on both sides agree in every degree.
Since $K_0(Q^3)$ is free on the full exceptional foundation, injectivity
of the rational Chern character gives the integral equality.

\subsection{Mirror series and the quantum ring}
Define, for $k\in\Z$,
\[
 R_k(D,z)=
 \begin{cases}
  \displaystyle\prod_{m=1}^{k}(D+mz)^{-1},&k\geq0,\\[2pt]
  \displaystyle\prod_{m=k+1}^{0}(D+mz),&k<0.
 \end{cases}
\]
The empty product is $1$. Curve classes with negative $e-d$ are
included through the second case.

\begin{proposition}[Mirror series]\label{mirror}
After removing the divisor exponential, we use the normalization
$J=1+O(z^{-1})$. The small $J$-function is
\begin{equation}\label{I}
 J(q,r,z)=q^{H/z}r^{\xi/z}\sum_{d,e\geq0}q^d r^e
 \frac{\prod_{m=1}^{2d}(2H+mz)}{\prod_{m=1}^{d}(H+mz)^5}
 R_e(\xi,z)R_{e-d}(\xi-H,z).
\end{equation}
The mirror map is the identity.
\end{proposition}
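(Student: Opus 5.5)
The plan is to obtain \eqref{I} from Brown's toric fibration theorem \cite{Brown} applied to the split bundle $\OO\oplus\OO(1)$ over $\PP^4$, and then to pass to $Q^3$ by quantum Lefschetz \cite{CG,Givental}. First I will work on the ambient projective bundle $Y=\PP_{\PP^4}(\OO\oplus\OO(1))$ with the same conventions: $\xi$ is the relative hyperplane class and the quotient convention makes the two relative Chern roots $\xi$ and $\xi-H$. Brown's theorem writes the $I$-function of $Y$ as a sum over $(d,e)$. The base factor is $\prod_{m=1}^d(H+mz)^{-5}$ and the fiber factors are $R_e(\xi,z)R_{e-d}(\xi-H,z)$. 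The exponents come from the pairings of $d\beta+ef$ with the two toric divisors $\xi$ and $\xi-H$, namely $e$ and $e-d$. The case $k<0$ in $R_k$ is exactly Brown's numerator for negative pairings. The hypersurface $\pi^{-1}(Q^3)\subset Y$ is cut out by a section of $\pi^*\OO(2)$, which is convex and pulled back from the base. Twisting by this bundle inserts $\prod_{m=1}^{2d}(2H+mz)$, and quantum Lefschetz says the restricted series is the $I$-function of $X$. One point must be checked here: curve classes of $X$ map injectively to those of $Y$ with the same $(d,e)$, since $\beta$ is a line in $Q^3\subset\PP^4$.

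Next I will show that $I=J$, i.e.\ that the mirror map is trivial, by a degree count in $1/z$. Write $I=q^{H/z}r^{\xi/z}\sum_{d,e}q^dr^e I_{d,e}$. Each summand $I_{d,e}$ is homogeneous of cohomological degree $-\deg_{c_1}(d\beta+ef)=-2(d+e)$ when $z$ is given degree one. Every term with $(d,e)\ne0$ therefore has total degree at most $-2$. Since the cohomology classes have nonnegative degree, each such term is $O(z^{-2})$. So no $z^0$ or $z^{-1}$ corrections occur beyond the leading $1$, and $I=1+O(z^{-2})$ after removing $q^{H/z}r^{\xi/z}$. Birkhoff factorization is therefore trivial, and the mirror map is the identity. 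This is the standard index argument, and it works because the Fano index is $2\geq2$.

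The main obstacle is the negative-exponent case $e<d$, where $R_{e-d}(\xi-H,z)$ is a polynomial numerator. Two things must be checked there. First, Brown's theorem applies to the nonconvex fibration in this form. Second, these terms do not spoil the degree bound: a factor $(\xi-H)$ at $m=0$ appears whenever $e-d<0$. The degree count above still holds, since homogeneity is unaffected. Still, one should confirm that the quantum Lefschetz twist is compatible with these numerators. I expect this to follow because the twisting bundle is pulled back from the base, so Brown's theorem applies to the twisted theory directly, as in \cite{Brown}. Finally, I will fix the normalization $J=1+O(z^{-1})$ by comparing with the $(0,0)$ term, which equals $1$.
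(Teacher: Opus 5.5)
Your proposal is correct and follows the paper's argument. The paper realizes $X$ as the divisor of class $2H$ in the toric fivefold $\PP_{\PP^4}(\OO\oplus\OO(1))$ (or equivalently applies Brown's theorem over $Q^3$ with the quadric series inserted), and it reads off the factors in \eqref{I} from the curve degrees $d,e,e-d$ of the toric divisors. It then rules out a mirror-map correction by your observation that every nonzero effective class has anticanonical degree at least two.

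The one point the paper records that you leave implicit is that all of $H^*(X)$ is ambient, since the odd quadric has only ambient cohomology. This is what allows quantum Lefschetz, which only controls restricted ambient classes, to determine the whole small $J$-function.

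Your worry about negative pairings is not a real obstacle. The ambient fivefold is itself toric, so its hypergeometric series, including the numerator factors $R_{e-d}$, is standard.
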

\begin{proof}
Apply the Elezi--Brown theorem described in
Section~\ref{sec:introduction} to the split bundle and insert the quadric
hypergeometric series. Equivalently, realize $X$ as the smooth divisor
$2H$ in the toric fivefold $\PP_{\PP^4}(\OO\oplus\OO(1))$ and apply
quantum Lefschetz. The ambient toric divisor classes are five copies of
$H$, together with $\xi$ and $\xi-H$. Their respective curve degrees
$d,e,e-d$ give \eqref{I}. Since every nonzero effective class has
anticanonical degree at least two, the series has no additional
$z^{-1}$ term in the mirror map. All of $H^*(X)$ is ambient in this
realization.
\end{proof}

Put $D_q=zq\partial_q$ and $D_r=zr\partial_r$. They act on the divisor exponential as well as on the series. The following two operators annihilate \eqref{I}:
\begin{align}
 \mathcal P_q&=D_q^4-2q(2D_q+z)(D_r-D_q),\label{Pq}\\
 \mathcal P_r&=D_r(D_r-D_q)-r.\label{Pr}
\end{align}
For example, if $A_{d,e}$ is the cohomology-valued coefficient, the base recurrence is
\[
 (H+dz)^4A_{d,e}
 =2(2H+(2d-1)z)(\xi-H+(e-d+1)z)A_{d-1,e}.
\]
The fiber recurrence is
\[
 (\xi+ez)(\xi-H+(e-d)z)A_{d,e}=A_{d,e-1}.
\]
Taking missing coefficients to be zero extends these identities to the
boundaries: the required vanishing follows from $H^4=0$ and
$\xi(\xi-H)=0$.

\begin{theorem}[Quantum ring]\label{ring}
The small quantum ring is
\begin{equation}\label{ringformula}
 QH^*(X)=\C[q,r,H,\xi]\big/
 \bigl(H^4-4qH(\xi-H),\ \xi(\xi-H)-r\bigr).
\end{equation}
It is free of rank eight over $\C[q,r]$, with quantum monomial basis
\begin{equation}\label{basis}
 1,H,H^2,H^3,\xi,H\xi,H^2\xi,H^3\xi.
\end{equation}
\end{theorem}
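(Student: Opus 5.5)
The plan is to derive the two relations directly from the differential operators \eqref{Pq} and \eqref{Pr}, using the standard mechanism by which a $D$-module annihilating the $J$-function gives relations in the quantum ring. First, the symbol of $\mathcal P_r$ is $\xi(\xi-H)-r$. In the limit $z\to0$, $D_q$ and $D_r$ act on $J$ as quantum multiplication by $H$ and by $\xi$, since the mirror map is the identity (Proposition~\ref{mirror}). This gives the relation $\xi\star(\xi-H)=r$. The symbol of $\mathcal P_q$ is $H^4-2q(2H)(\xi-H)=H^4-4qH(\xi-H)$, giving the first relation.

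To make this rigorous, I will use the standard argument that if $\mathcal P(D_q,D_r,q,r,z)J=0$, then $\mathcal P(H\star,\xi\star,q,r,0)=0$ in $QH^*(X)$. The derivatives $z\partial$ of the fundamental solution realize the quantum connection, so $D_q^aD_r^bJ$ equals the fundamental solution applied to $H^{\star a}\star\xi^{\star b}+O(z)$. Because the operators are polynomial in $z$ with no negative powers and $J=1+O(z^{-1})$, the leading term is exactly the quantum monomial. Since both operators are homogeneous for the grading $\deg q=\deg r=2$ (anticanonical degrees $2$), with $\deg H=\deg\xi=1$, no lower-order corrections in $z$ contribute at $z=0$ beyond the symbol. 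This yields a surjection
\[
\C[q,r,H,\xi]/(H^4-4qH(\xi-H),\ \xi(\xi-H)-r)\to QH^*(X).
\]

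Next I will show that the quotient is free of rank eight with the basis \eqref{basis}. The two relations form a Gröbner basis for a term order refining $\deg$, with leading terms $H^4$ and $\xi^2$, which are coprime. By Buchberger's criterion the relations then form a Gröbner basis, so the standard monomials $H^a\xi^b$ with $a\le3$ and $b\le1$ span a free $\C[q,r]$-module. The map to $QH^*(X)=H^*(X)\otimes\C[q,r]$, which is free of rank eight, is surjective. Surjectivity follows because at $q=r=0$ it reduces to the classical presentation \eqref{classical}, and by graded Nakayama it holds over $\C[q,r]$. A surjection between free modules of equal finite rank over a domain is an isomorphism.

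The main obstacle is justifying the passage from the differential operator to the ring relation, and checking that $\C[q,r]$ coefficients suffice rather than power series. Polynomiality holds because the structure constants are homogeneous of bounded degree and the Novikov variables have positive degree. I will also need to confirm that the quantum monomials $H^{\star a}\star\xi$ and the cup monomials differ by an invertible triangular change over $\C[q,r]$. That is routine from the grading, since corrections lower the cohomological degree by multiples of $2$.
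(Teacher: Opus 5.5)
Your proposal is correct and follows the paper's proof. The principal symbols of \eqref{Pq}--\eqref{Pr} give the two relations. The coprime leading monomials $H^4,\xi^2$ make the monic relations a Gr\"obner basis; this needs a degree order with $\xi>H$, as the paper specifies, so that $\xi^2$ rather than $H\xi$ is the leading monomial. The unipotent triangular degree argument, which you phrase as graded Nakayama, then makes the resulting surjection between free $\C[q,r]$-modules of rank eight an isomorphism.
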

\begin{proof}
The principal-symbol relation between annihilators of $J$ and quantum
multiplication, applied to \eqref{Pq}--\eqref{Pr}, gives the two stated
relations. Regard $q,r$ as coefficients and use graded lexicographic
order with $\xi>H$. The leading monomials $H^4$ and $\xi^2$ are
relatively prime, so the two monic polynomials form a Groebner basis.
The quotient therefore has the eight standard monomials \eqref{basis}.
Their images also form a basis of quantum cohomology: corrections of
positive degree lower the ordinary cohomological degree, so the
transition matrix from classical monomials is unipotent triangular
when the monomials are ordered by degree. The surjection defined by the
relations is consequently an isomorphism.
\end{proof}

\begin{lemma}[Convergence of the small series]\label{smallconv}
For fixed $z\ne0$, after removing the divisor exponential the series
\eqref{I} and all its parameter derivatives converge absolutely and
locally uniformly for $(q,r)\in\C^2$.
\end{lemma}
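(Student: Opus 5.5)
The plan is to bound the coefficients directly. Since $H^4=0$ and $\xi(\xi-H)=0$, the classical ring is finite dimensional. Hence each coefficient $A_{d,e}(z)$ of $q^dr^e$ in \eqref{I} is a polynomial in the nilpotent classes $H,\xi$, with coefficients rational in $z$. It therefore suffices to show that the norm of $A_{d,e}$ in the finite-dimensional algebra decays faster than any geometric sequence in $(d,e)$, uniformly for $z$ in a compact subset of $\C\setminus\{0\}$. Absolute and locally uniform convergence on all of $\C^2$ then follows by comparison with $\sum|q|^d|r|^e C^{d+e}/((d!)^{2}e!)$-type majorants. Parameter derivatives only introduce polynomial factors $d^ae^b$. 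The divisor exponential is removed, so $q^{H/z}r^{\xi/z}$ plays no role.

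For the estimate, I expand each factor as a finite geometric series in the nilpotents. For $m\geq1$ and $|z|$ bounded below, $(H+mz)^{-1}=(mz)^{-1}\sum_{k\le3}(-H/(mz))^k$, and similarly for $(\xi+mz)^{-1}$ and $(\xi-H+mz)^{-1}$. Taking the operator norm of multiplication by $H$ and $\xi$ on \eqref{classical}, each inverse factor has norm at most $c/(m|z|)$. Each numerator factor $(2H+mz)$ or $(D+mz)$ with $m\le0$ has norm at most $c(1+|m||z|)$. For $e\ge d$, this gives the bound
\[
\|A_{d,e}\|\le K^{d+e}\frac{(2d)!\,|z|^{2d}}{(d!)^5|z|^{5d}\,e!\,|z|^{e}\,(e-d)!\,|z|^{e-d}},
\]
with $K$ depending only on a lower bound for $|z|$. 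The bound decays superexponentially, since $(2d)!/(d!)^5\le 4^d/(d!)^3$.

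The main obstacle is the region $e<d$. There the factor $R_{e-d}(\xi-H,z)=\prod_{m=e-d+1}^{0}(\xi-H+mz)$ grows like $(d-e)!\,|z|^{d-e}$ instead of decaying. I would treat it by comparing factorials. The denominator contains $(d!)^3\ge (d-e)!\,(d!)^2$ after absorbing $(2d)!$, so the growth is dominated and $\|A_{d,e}\|\le K^{d+e}/((d!)^2e!)$ still holds. The factor with $m=0$ is simply $(\xi-H)$, a nilpotent of bounded norm, so it does not hurt. Since each factor contributes at most a polynomial power of $|z|^{\pm1}$ times a constant, all powers of $z$ combine into $C(z)^{d+e}$, locally bounded on $\C^*$. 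The majorant series $\sum d^ae^bK^{d+e}|q|^d|r|^e/((d!)^2e!)$ converges for all $(q,r)$, and the Weierstrass M-test then gives the lemma, including termwise differentiation in $q,r$.
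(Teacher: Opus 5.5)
Your argument is correct and follows essentially the same route as the paper: you bound the coefficients directly by factorial majorants, split into the regions $e\ge d$ and $e<d$, and absorb the growth of $R_{e-d}$ into the $(d!)^5$ in the denominator. The paper first reduces to $z=1$ by homogeneity, keeps only logarithmic losses from the nilpotent expansion, and uses $(d-e-1)!\,e!\le d!$ to reach $4^d/((d!)^2(e!)^2)$; your cruder geometric constants $K^{d+e}$ and the weaker $1/((d!)^2e!)$ bound already suffice for this lemma, though they would not give the sharp rate $e^{6t}$ that the paper reuses in Lemma~\ref{aux:negative}.
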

\begin{proof}
Homogeneity reduces the proof to $z=1$. Expanding in the nilpotent
classical cohomology variables changes the factorial bounds by at most
a fixed power of $1+\log(d+e+2)$. For $e=d+n\geq d$, the scalar
factorial coefficient satisfies
\[
 \frac{(2d)!}{(d!)^5(d+n)!n!}
 \leq\frac{4^d}{(d!)^4(n!)^2}.
\]
This majorant is summable with arbitrary bounded weights
$|q|^d|r|^{d+n}$. For $e<d$, the coefficient norm is bounded by a fixed
logarithmic power times
$4^d/((d!)^2(e!)^2)$, because
\[
 \frac{(2d)!(d-e-1)!}{(d!)^5e!}
 \leq\frac{4^d}{(d!)^2(e!)^2}.
\]
The inequality follows from $(d-e-1)!/d!\leq1/e!$ and
$\binom{2d}{d}\leq4^d$.
Multiplying either majorant by any fixed polynomial in $d,e$ preserves
summability. These bounds prove convergence and justify differentiating
the series term by term in the parameters. We choose a branch of the
divisor exponential on the logarithmic cover of $(\C^*)^2$.
\end{proof}

\subsection{The connection in the cup-product frame}
We reduce the system generated by divisor operators to scalar master
functions, following the approach based on cyclic vectors in
\cite{CottiCyclic}. We give the complete first-order system and its
normalized frame for this model.

Let $f$ be any scalar component of \eqref{I}, or any scalar solution of
\eqref{Pq}--\eqref{Pr}. Put
\[
 F=(f,\Dq f,\Dq^2f,\Dq^3f,\Dr f,\Dq\Dr f,
       \Dq^2\Dr f,\Dq^3\Dr f)^{\mathsf T}.
\]
Define matrices $A_q,A_r$ by
\begin{equation}\label{jetmatq}
A_q=\begin{pmatrix}
0&1&0&0&0&0&0&0\\
0&0&1&0&0&0&0&0\\
0&0&0&1&0&0&0&0\\
0&-2qz&-4q&0&2qz&4q&0&0\\
0&0&0&0&0&1&0&0\\
0&0&0&0&0&0&1&0\\
0&0&0&0&0&0&0&1\\
2qrz&4qr&0&0&0&0&0&0
\end{pmatrix},
\end{equation}
\begin{equation}\label{jetmatr}
A_r=\begin{pmatrix}
0&0&0&0&1&0&0&0\\
0&0&0&0&0&1&0&0\\
0&0&0&0&0&0&1&0\\
0&0&0&0&0&0&0&1\\
r&0&0&0&0&1&0&0\\
0&r&0&0&0&0&1&0\\
0&0&r&0&0&0&0&1\\
2qrz&4qr&0&r&0&0&0&0
\end{pmatrix}.
\end{equation}

\begin{proposition}\label{pfaffian}
The scalar system is equivalent to
\begin{equation}\label{jetpfaff}
 \Dq F=A_qF,\qquad \Dr F=A_rF.
\end{equation}
The latter system is integrable and has rank eight on $(\C^*)^2$ for
$z\ne0$.
\end{proposition}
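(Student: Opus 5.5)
The proof has three parts: derive the system row by row from the scalar operators, check integrability by showing the commutator matrix vanishes, and get rank eight from a triangular change of frame.

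\textbf{Equivalence.} The entries of $F$ are the jets $\Dq^a\Dr^b f$ with $0\le a\le3$ and $b\in\{0,1\}$, matching the standard monomials \eqref{basis}. Most rows of $A_q$ and $A_r$ only shift an index, so they hold by definition. The nontrivial rows are the following.
\begin{itemize}
\item Row four of $A_q$ is $\mathcal P_q f=0$, rewritten as $\Dq^4 f=2q(2\Dq+z)(\Dr-\Dq)f$. To expand it, commute $q$ past $\Dq$ using $\Dq q=q(\Dq+z)$. This gives $-2qz\Dq f-4q\Dq^2f+2qz\Dr f+4q\Dq\Dr f$, which is exactly the displayed row.
\item Rows five to seven of $A_r$ use $\mathcal P_r$ in the form $\Dr^2 f=\Dq\Dr f+rf$. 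Applying $\Dq^a$ with $a\le2$ and using $\Dq r=r\Dq$ gives $\Dr(\Dq^a\Dr f)=\Dq^{a+1}\Dr f+r\Dq^a f$.
\item Row eight of $A_q$ requires $\Dq^4\Dr f$. Apply $\Dr$ to the $\mathcal P_q$ relation, commute $\Dr$ past $q$ (they commute), and eliminate every $\Dr^2$ using $\mathcal P_r$. The terms in $\Dq\Dr$ and $\Dq^2\Dr$ cancel, leaving $2qrz f+4qr\Dq f$.
\item Row eight of $A_r$ is $\Dr(\Dq^3\Dr f)=\Dq^4\Dr f+r\Dq^3 f$. It reduces to the previous row plus $r\Dq^3f$.
\end{itemize}
Conversely, given a solution $F$ of \eqref{jetpfaff}, the shift rows force $F$ to be the jet of its first component $f$. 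The nontrivial rows then return $\mathcal P_q f=0$ and $\mathcal P_r f=0$. So the two systems are equivalent.

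\textbf{Integrability.} The flatness condition is
\[
\Dq A_r-\Dr A_q+[A_q,A_r]=0,
\]
where $\Dq$ and $\Dr$ act on the matrix entries. Concretely $\Dq A_r=zq\partial_qA_r$ and $\Dr A_q=zr\partial_rA_q$. This is a finite $8\times8$ polynomial check in $q,r,z$. I would organize it in $2\times2$ blocks indexed by $b\in\{0,1\}$: $A_r$ is block-unipotent apart from the $r$ entries and the last row. The last row is the only place where the check is nontrivial, and the main obstacle is bookkeeping the $z$-linear terms from differentiating $q$ and $r$ against the commutator terms.

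A conceptual alternative avoids the full multiplication. The left ideal generated by $\mathcal P_q$ and $\mathcal P_r$ is holonomic, with a quotient of rank eight by Theorem~\ref{ring} through the principal symbols. The mixed derivative $\Dq\Dr$ computed in either order reduces to the same normal form. Equivalently, one only needs to check the last row of the compatibility, using $\Dq\mathcal P_r$ and $\Dr\mathcal P_q$ modulo the ideal. I would present the direct check of that last row together with this normal-form argument.

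\textbf{Rank.} The system is linear and integrable with coefficients polynomial in $q$, $r$ and $z$. Its singularities are at most along $qr=0$, since the coefficient of $\Dq^4$ is $1$. Hence on the simply connected cover of $(\C^*)^2$ the local solution space has dimension exactly eight for each fixed $z\ne0$.

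This can also be witnessed explicitly. The eight components of \eqref{I} in the basis \eqref{flatbasis} converge by Lemma~\ref{smallconv}, and their jets are independent. Their leading terms $q^{H/z}r^{\xi/z}$ give a Wronskian that is triangular against $z^{-\mu}$ up to a unipotent factor, so it is nonvanishing.
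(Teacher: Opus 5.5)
Your proposal is correct and follows the paper's proof essentially step for step. It uses the same row-by-row derivation, with $\Dq^4\Dr f=4qr\Dq f+2qrzf$ obtained by pushing $\Dr$ through $\mathcal P_q$ and reducing with $\mathcal P_r$. It has the same converse via the shift rows, the same direct integrability check, and rank eight from flatness together with independence of the leading logarithmic terms of the hypergeometric components.

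There is one harmless slip. The compatibility condition for $\Dq F=A_qF$, $\Dr F=A_rF$ is $\Dq A_r-\Dr A_q+A_rA_q-A_qA_r=0$, as in the paper, not the version with $+[A_q,A_r]$. It does not matter here, because $\Dq A_r=\Dr A_q$ (both equal $2qrz^2E_{70}+4qrzE_{71}$) and $[A_q,A_r]=0$ separately. For the same reason, no $z$-linear derivative terms actually need to be balanced against commutator terms.
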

\begin{proof}
All rows follow directly from differentiation except for the reductions
of $\Dq^4f$ and $\Dq^4\Dr f$. The first is \eqref{Pq}; for the second,
commute $\Dr$ through \eqref{Pq} and use
$\Dr^2-\Dq\Dr=r$ on solutions. This gives
\[
 \Dq^4\Dr f=4qr\Dq f+2qrzf.
\]
Conversely, the first rows of \eqref{jetpfaff} identify all components as
the indicated derivatives of $F_0$, and the remaining rows yield the
scalar equations. Direct multiplication gives
\[
 zq\partial_qA_r-zr\partial_rA_q+A_rA_q-A_qA_r=0.
\]
A flat rank-eight first-order system has eight locally independent
solutions. The equivalence therefore gives exactly eight independent
scalar solutions. The hypergeometric components span this space:
their leading logarithmic terms at the large-radius limit are the
eight independent coefficients of $q^{H/z}r^{\xi/z}$, and $H,\xi$
generate the classical cohomology.
\end{proof}

Let $E_{ij}$ denote the matrix unit with indices $0\leq i,j\leq7$, and set
\begin{equation}\label{gauge}
 N=-2qE_{31}+2qE_{34}+2qrE_{70},\qquad T=I+N.
\end{equation}
Here $N^2=0$ and $T^{-1}=I-N$. For $Y=T^{-1}F$, the equations become
\begin{equation}\label{flatdual}
 \Dq Y=M_qY,\qquad \Dr Y=M_rY,
 \qquad M_i=T^{-1}A_i|_{z=0}T.
\end{equation}
Indeed,
$q\partial_qT=(\partial_zA_q)T$ and
$r\partial_rT=(\partial_zA_r)T$, which remove all $z$-linear entries.

\begin{theorem}[Fixed-frame multiplication]\label{fixedframe}
In the classical frame \eqref{flatbasis}, the matrices of multiplication
by $H$ and $\xi$ are $M_H=M_q^{\mathsf T}$ and
$M_\xi=M_r^{\mathsf T}$, respectively. Equivalently, the products are
as follows:
\begin{center}
\renewcommand{\arraystretch}{1.15}
\begin{tabular}{c|l|l}
\toprule
$b$ & $H\star b$ & $\xi\star b$\\
\midrule
$1$ & $H$ & $\xi$\\
$H$ & $H^2$ & $H\xi$\\
$H^2$ & $H^3-2qH+2q\xi$ & $H^2\xi$\\
$H^3$ & $-2qH^2+2qH\xi$ & $H^3\xi$\\
$\xi$ & $H\xi$ & $r+H\xi$\\
$H\xi$ & $H^2\xi$ & $rH+H^2\xi$\\
$H^2\xi$ & $H^3\xi+2qr$ & $2qr+rH^2+H^3\xi$\\
$H^3\xi$ & $2qrH$ & $2qrH+rH^3$\\
\bottomrule
\end{tabular}
\end{center}
All the monomials in this table are classical cup products.
\end{theorem}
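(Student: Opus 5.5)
The approach is to read the quantum multiplication off the gauge-transformed Pfaffian system \eqref{flatdual}, since the small $J$-function governs the quantum $D$-module. By Proposition~\ref{pfaffian}, the jet vector $F$ of any scalar solution satisfies \eqref{jetpfaff}. After the gauge $T=I+N$ of \eqref{gauge}, the frame $Y=T^{-1}F$ satisfies $\Dq Y=M_qY$ and $\Dr Y=M_rY$, where $M_q,M_r$ are independent of $z$. For the small quantum connection in a flat frame, $zq\partial_q$ acts on the components of a solution $\langle b_i,\cdot\rangle$ through multiplication by $H\star$. So two $z$-independent connection matrices that agree with the quantum $D$-module in a frame degenerating to the classical monomials $b_i$ must be the transposes of the matrices of $H\star$ and $\xi\star$. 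The proof therefore has two parts. First, identify the frame $Y$ with the components of $J$ against the basis \eqref{flatbasis}, in the sense that $Y_i$ corresponds to pairing with $b_i$ up to the Poincar\'e duality used in $\mathcal L$. Second, compute $M_q,M_r$ explicitly.

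The computation is routine: conjugate $A_q|_{z=0}$ and $A_r|_{z=0}$ by $T$. Since $N^2=0$, $M_i=A_i^0+A_i^0N-NA_i^0-NA_i^0N$, with $A_i^0=A_i|_{z=0}$. I would compute these entries and transpose. Reading the columns of the transposes then gives the rows of the table. For example, column $2$ of $M_H$ should read $H^3-2qH+2q\xi$. This comes from row $3$ of $A_q^0$, namely $(0,0,-4q,0,0,4q,0,0)$, corrected by the gauge term $-2qE_{31}+2qE_{34}$. As a consistency check, the table should satisfy the ring relations of Theorem~\ref{ring}: $H\star H\star H\star H=4qH\star(\xi-H)$ and $\xi\star(\xi-H)=r$, with quantum monomials re-expressed in classical ones. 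I would also check associativity $H\star(\xi\star b)=\xi\star(H\star b)$, which is the $z=0$ part of the integrability identity already proven.

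The identification of the frame is the conceptual step, and I expect it to be the main obstacle. The entries of $F$ are $D_q^aD_r^bf$. Applied to $J$, these produce $z$-dependent corrections: $D_q^aD_r^bJ$ equals the quantum monomial $H^{\star a}\star\xi^{\star b}$ paired through the fundamental solution, plus terms of order $z$ times lower classes. The gauge $T$ is exactly what removes these. Because $q\partial_qT=(\partial_zA_q)T$ and similarly for $r$, the new system has no $z$-linear part, and $T|_{q=r=0}=I$. Hence $Y$ is a flat frame whose components are the $\mathcal L$-pairings with classes that reduce to $b_i$ at large radius.

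I would then argue by degree. Each $b_i$ has pure degree, $M_q$ and $M_r$ are homogeneous of degree one with $\deg q=\deg r=2$, and the divisor equation fixes the $q$- and $r$-dependence. The argument goes as follows. The Dubrovin connection in the classical frame is $z$-independent and has the same solution space spanned by components of $J$ (Proposition~\ref{pfaffian}). The gauge between it and \eqref{flatdual} is then a $z$-independent, degree-preserving transformation that is unipotent and equal to $I$ at $q=r=0$. Every homogeneous degree-preserving such transformation in these variables has entries given by polynomials in $q,r$ of positive degree multiplying lower-degree basis vectors. Compatibility with both divisor equations, meaning commutation with the flat parts, forces these entries to vanish. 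I would verify this last claim entry by entry, since only finitely many monomials $q^ar^b$ fit the degree gaps of at most four. This yields $M_H=M_q^{\mathsf T}$ and $M_\xi=M_r^{\mathsf T}$.
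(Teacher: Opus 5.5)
Your conjugation step is fine, apart from an index slip in the example. Column $2$ of $M_H$ is row $2$ of $M_q$. It comes from row $2$ of $A_q|_{z=0}$, the unit row $e_3^{\mathsf T}$, multiplied by $T$, which picks up row $3$ of $N$. The row $(0,0,-4q,0,0,4q,0,0)$ gives $H\star H^3=-2qH^2+2qH\xi$ after conjugation, not $H\star H^2$.

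\textbf{The gap: identifying the frame $Y$.} This is the actual content of the theorem, and your account of what $T$ does is wrong.
\begin{itemize}
\item $T$ does not depend on $z$, so it cannot remove $z$-dependent terms from the jets. In fact the eight jets of $J$ contain no positive powers of $z$. After removing $q^{H/z}r^{\xi/z}$, their leading terms as $z\to\infty$ are $b_0,b_1,b_2,\ b_3+2q(b_4-b_1),\ b_4,b_5,b_6,\ b_7+2qrb_0$. The two constant corrections come only from $(d,e)=(1,0)$ and $(1,1)$.
\item What $T^{-1}=I-N$ removes are the $z$-independent differences $H^{\star3}-H^3=2q(\xi-H)$ and $H^{\star3}\star\xi-H^3\xi=2qr$.
\item The identity $q\partial_qT=(\partial_zA_q)T$ only says that the \emph{connection matrix} loses its $z$-linear entries. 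It says nothing about the components of $Y$.
\item ``Reduces to $b_i$ at $q=r=0$'' cannot distinguish $b_3$ from $H^{\star3}$.
\end{itemize}

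\textbf{The closing step does not follow.} You assert that ``the gauge between it and \eqref{flatdual} is then a $z$-independent transformation.'' A gauge $G$ with $Y=GY^{\mathrm{cl}}$ need only satisfy $zq\partial_qG=M_qG-GM_H^{\mathsf T}$ and its $r$-analogue. Such equations have $z$-dependent solutions, and homogeneity does not exclude them: entries like $zq$ or $z^2r$ fit the degree gaps. By your own account (order-$z$ terms in the jets, removed by a $z$-independent $T$), $G$ would in fact depend on $z$.

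\textbf{How to repair it.} You need one more input, and either of two routes works.
\begin{itemize}
\item Follow the paper. Compute the leading terms above, conclude that the corrected jets are $q^{H/z}r^{\xi/z}(b_i+O(z^{-1}))$, and invoke uniqueness of the Birkhoff-normalized frame of the quantum $D$-module given by the mirror theorem.
\item Use the $D$-module structure directly. For a flat section $s$, each jet $D_q^aD_r^b(1,s)$ equals $(P_{a,b},s)$. Here $P_{a,b}$ is obtained by iterating $H\star$, $\xi\star$ and $zq\partial_q$, $zr\partial_r$, up to the sign convention. Hence $G=T^{-1}K$ is polynomial in $z$. Also $G\equiv I$ at $q=r=0$, since the product is classical there and the derivative terms vanish there.

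The top power of $z$ in the intertwining equation makes the top coefficient of $G$ constant in $q,r$. That coefficient is therefore zero unless it is the $z^0$ term. The $z^1$ coefficient then gives $q\partial_qG=r\partial_rG=0$. So $G=I$, which yields $M_q=M_H^{\mathsf T}$ and $M_r=M_\xi^{\mathsf T}$.
\end{itemize}
With either repair, your planned entry-by-entry degree check is unnecessary.
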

\begin{proof}
The gauge computation gives the displayed coefficients. We identify the
resulting frame with the classical frame by applying the corrected
differential operators to the cohomology-valued $J$-function. After removing $q^{H/z}r^{\xi/z}$, the leading terms as
$z\to\infty$ of the uncorrected jets are
\[
 b_0,b_1,b_2,\ b_3+2q(b_4-b_1),\ b_4,b_5,b_6,\ b_7+2qr b_0.
\]
The additional constant term in the fourth component comes only from
$(d,e)=(1,0)$. The last component receives such a term only from
$(d,e)=(1,1)$; terms with $e=0<d$ vanish after $\Dr$ because
$\xi(\xi-H)=0$ in classical cohomology. The degrees of the remaining
components exclude additional constant terms and positive powers of
$z$. Subtracting $N$ removes exactly these corrections.

The corrected jets therefore have leading terms $b_i$ followed only
by negative powers. They give the normalized frame of the quantum
$D$-module determined by the mirror theorem, through Birkhoff
normalization using divisor operators. Differentiation acts on these
solutions, indexed by the classical basis, by the transpose of
multiplication. This gives the table.
The matrices satisfy $[M_H,M_\xi]=0$,
$M_i^{\mathsf T}P=PM_i$, and
$q\partial_qM_\xi=r\partial_rM_H$.
\end{proof}

\begin{corollary}[The full small quantum connection]\label{connectionfull}
In the fixed classical frame, put $C=2(M_H+M_\xi)$ and take $\mu$ from
\eqref{grading}. The small quantum connection, in the convention for
which flat sections have exponential factor $e^{-u/z}$, is
\begin{equation}\label{qconnection}
 \nablaq=q\partial_q+\frac{M_H}{z},\qquad
 \nablar=r\partial_r+\frac{M_\xi}{z},\qquad
 \nabla_{z\partial_z}=z\partial_z-\frac C z+\mu.
\end{equation}
This connection is flat. We use the constant grading operator in the
frame obtained by unipotent normalization; it does not apply directly
in the uncorrected quantum monomial frame.
\end{corollary}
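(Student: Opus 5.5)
We need to prove Corollary~\ref{connectionfull}: the connection \eqref{qconnection} is flat, and it is the small quantum connection in the fixed classical frame.

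The plan has four steps.

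\emph{Step 1: identification.} The first two operators are the divisor part of the small quantum connection, $q\partial_q+z^{-1}H\star$ and $r\partial_r+z^{-1}\xi\star$, written with the matrices of Theorem~\ref{fixedframe}. In the convention with flat sections $e^{-u/z}$, the $z$-direction is the usual Euler-grading operator $z\partial_z-z^{-1}E\star+\mu$. For small quantum cohomology $E\star=c_1(X)\star$. Since $c_1(X)=2H+2\xi$ is linear in the divisors and quantum multiplication by a divisor class is linear in it, this operator is $2(M_H+M_\xi)=C$. It remains to verify flatness of the three commuting operators directly.

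\emph{Step 2: the divisor directions.} The parameter--parameter curvature is
\[
 [\nablaq,\nablar]=z^{-1}\bigl(q\partial_qM_\xi-r\partial_rM_H\bigr)+z^{-2}[M_H,M_\xi].
\]
Both terms vanish by the identities recorded at the end of the proof of Theorem~\ref{fixedframe}. Equivalently, this follows from the integrability relation of Proposition~\ref{pfaffian}, transported through the constant-in-$z$ gauge \eqref{gauge} and transposed.

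\emph{Step 3: the mixed directions.} For $\nabla_{z\partial_z}$ against $\nablaq$, one computes
\[
 [\nabla_{z\partial_z},\nablaq]=-z^{-1}M_H+z^{-1}[\mu,M_H]+z^{-2}\bigl(q\partial_qC-[C,M_H]\bigr).
\]
The $z^{-2}$ term reduces to $q\partial_qC=0$. This needs an argument, since $C$ depends on $q,r$.

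The reduction to degree conditions works as follows.
\begin{itemize}
\item $[C,M_H]=0$ because $M_H$ and $M_\xi$ commute.
\item The weighted-homogeneity relation $(2q\partial_q+2r\partial_r)M=[\mu,M]+M$, for $M=M_H,M_\xi$, follows from $\deg q=\deg r=2$ in $\mathbb C$-degree (anticanonical degree $2(d+e)$, halved). It is checked entrywise from the table: an entry coupling $b_j\mapsto b_i$ has $q^ar^b$ with $2a+2b=\deg b_j+1-\deg b_i$.
\item Combining this relation with $q\partial_qM_\xi=r\partial_rM_H$ gives $q\partial_qC=2q\partial_qM_H+2r\partial_rM_H=[\mu,M_H]+M_H$.
\end{itemize}
So the $z^{-2}$ coefficient is $z^{-2}([\mu,M_H]+M_H)$, and I must recheck the bookkeeping. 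Rescaling the sections by $z$ changes how the commutator of $z\partial_z$ with $z^{-1}M_H$ contributes, namely $-z^{-1}M_H$. Regrouping, flatness reduces to the single identity $q\partial_qC=[\mu,M_H]+M_H$ in the correct power of $z$. The precise placement of powers is the step to handle carefully: the commutator $[z\partial_z,z^{-1}M_H]=-z^{-1}M_H$ sits at order $z^{-1}$, and the derivative of $-C/z$ sits at order $z^{-1}$ as well. Gathering the $z^{-1}$ terms gives
\[
 -M_H+[\mu,M_H]-q\partial_q C+\dots,
\]
which, after correctly signing $[-C/z,q\partial_q]=z^{-1}q\partial_qC$, is exactly the homogeneity identity. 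The $z^{-2}$ term is $[C,M_H]=0$. The same argument applies verbatim for $r$.

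\emph{Step 4: the grading operator.} The main obstacle is justifying that $\mu$ is the constant grading operator in this frame. This holds because the frame is the unipotently normalized classical frame of Theorem~\ref{fixedframe}. I would verify directly that the tabulated $M_H,M_\xi$ satisfy the homogeneity relation entrywise, for example the entry $2qr$ in $H\star H^2\xi$, which maps degree $3$ to degree $0$ with weight $4$. This confirms the grading and hence flatness.
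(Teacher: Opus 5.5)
Your route is the paper's. The $q,r$ curvature vanishes by $[M_H,M_\xi]=0$ and $q\partial_qM_\xi=r\partial_rM_H$. The mixed curvature reduces to $q\partial_qC=M_H+[M_H,\mu]$ and $r\partial_rC=M_\xi+[M_\xi,\mu]$, which the paper checks entry by entry from the table. Deriving these from $q\partial_qC=2(q\partial_q+r\partial_r)M_H$ and $r\partial_rC=2(q\partial_q+r\partial_r)M_\xi$ together with weighted homogeneity is a nice conceptual variant.

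As written, however, the key step fails because of a sign error. The relation you state, $(2q\partial_q+2r\partial_r)M=[\mu,M]+M$, is false. Your own entry count gives the $(i,j)$ entry weight $\deg b_j+1-\deg b_i$, whereas $[\mu,M]_{ij}=(\deg b_i-\deg b_j)M_{ij}$. So the correct relation is $2(q\partial_q+r\partial_r)M=M-[\mu,M]=M+[M,\mu]$. For example, the coefficient $-2q$ of $H$ in $H\star H^2$ gives $-4q$ on the left but $0$ on your right-hand side. With your sign, the $z^{-1}$ curvature coefficient you finally reach, $-M_H+[\mu,M_H]+q\partial_qC$, equals $2[\mu,M_H]\neq0$. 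The asserted cancellation therefore does not happen.

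Your bookkeeping in Step 3 also needs correcting. The term $q\partial_qC$ comes from $[-C/z,q\partial_q]=z^{-1}q\partial_qC$ at order $z^{-1}$, not $z^{-2}$. So your first display is wrong, and so are the claims that $q\partial_qC=0$ and that the $z^{-2}$ coefficient is $[\mu,M_H]+M_H$. The only $z^{-2}$ term is $-z^{-2}[C,M_H]=0$.

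With the corrected homogeneity relation, $[\nabla_{z\partial_z},\nablaq]=z^{-1}\bigl(-M_H+[\mu,M_H]+q\partial_qC\bigr)$ vanishes exactly when $q\partial_qC=M_H+[M_H,\mu]$. The same holds with $q,H$ replaced by $r,\xi$. This is the paper's proof.
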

\begin{proof}
The $q,r$ curvature vanishes by the identities in the preceding proof.
The two remaining identities are
\[
 q\partial_q C=M_H+[M_H,\mu],\qquad
 r\partial_r C=M_\xi+[M_\xi,\mu],
\]
which follow entry by entry from the table and the degrees in
\eqref{grading}. These are exactly the mixed $z$-curvature equations.
\end{proof}

\subsection{Cohomological chiral symmetry}\label{sec:chiral}
We use the parity of the \emph{complex} cohomological degree.
The ordinary cohomology of $X$ lies entirely in even real degrees.
Define
\begin{equation}\label{paritymatrix}
 \mathsf D|_{H^{2p}(X)}=(-1)^p,\qquad
 \mathsf D=\operatorname{diag}(1,-1,1,-1,-1,1,-1,1)=e^{\pi i\mu}.
\end{equation}
The term \emph{cohomological chiral symmetry} refers here to the
anticommutation with divisor multiplication proved below.
For non-Hermitian Hamiltonians, this relation is called
\emph{sublattice symmetry}; chiral symmetry in that terminology involves
Hermitian conjugation \cite[Section II]{KSUS}.
Our forms are complex bilinear Poincar\'e forms. The following statements
make no claim about Hermitian Hamiltonians or the classification of
topological phases.

\begin{proposition}[Grading and block form]\label{chiralblocks}
At fixed $q,r$, the operator $\mathsf D$ is an automorphism of the
small quantum algebra and an isometry of its Poincar\'e form. It satisfies
\begin{equation}\label{chiralidentities}
 \mathsf D^2=I,\quad
 \mathsf D M_H\mathsf D=-M_H,\quad
 \mathsf D M_\xi\mathsf D=-M_\xi,\quad
 \mathsf D C\mathsf D=-C,\quad [\mathsf D,\mu]=0.
\end{equation}
Write $\mathcal H_\pm=\ker(\mathsf D\mp I)$. In the ordered cup bases
\[
 \mathcal H_+:(1,H^2,H\xi,H^3\xi),\qquad
 \mathcal H_-:(H,H^3,\xi,H^2\xi),
\]
the anticanonical multiplication has the block form
\begin{equation}\label{chiralblockform}
 C=\begin{pmatrix}0&K_+\\K_-&0\end{pmatrix},\qquad
 C^2=\begin{pmatrix}K_+K_-&0\\0&K_-K_+\end{pmatrix},
\end{equation}
where
\begin{equation}\label{chiralexplicitblocks}
 \begin{split}
 K_+&=\begin{pmatrix}
 0&0&2r&8qr\\2&-4q&0&2r\\2&4q&4&0\\0&2&0&4
 \end{pmatrix},\\
 K_-&=\begin{pmatrix}
 2&-4q&2r&8qr\\0&2&0&2r\\2&4q&0&0\\0&2&4&0
 \end{pmatrix}.
 \end{split}
\end{equation}
\end{proposition}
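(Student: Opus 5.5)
The proof reads everything off the multiplication table of Theorem~\ref{fixedframe} and the grading \eqref{grading}; no analytic input is needed. I will first show that $\mathsf D$ anticommutes with $M_H$ and $M_\xi$, then deduce the algebra, isometry and $\mu$ statements formally, and finally read off the blocks.

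\emph{Step 1: anticommutation.} Let $b$ be a cup monomial of complex degree $p$. By the table, every cup monomial occurring in $H\star b$ or $\xi\star b$ has degree $\equiv p+1 \pmod 2$. For example, $H\star H^2=H^3-2qH+2q\xi$ lies entirely in odd degree, and $\xi\star H^2\xi=2qr+rH^2+H^3\xi$ lies entirely in even degree. The conceptual reason is that a correction from a curve class $d\beta+ef$ lowers the degree by $c_1\cdot(d\beta+ef)=2(d+e)$, which is even. Hence $\mathsf D M_H\mathsf D=-M_H$ and $\mathsf D M_\xi\mathsf D=-M_\xi$. Since $C=2(M_H+M_\xi)$ is linear in these, $\mathsf D C\mathsf D=-C$.

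\emph{Step 2: formal consequences.} The identity $\mathsf D^2=I$ is immediate from the definition. Since $\mu$ is diagonal with entries $p-2$, we get $e^{\pi i\mu}=\operatorname{diag}((-1)^{p-2})=\mathsf D$, and $[\mathsf D,\mu]=0$ because both are diagonal.

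For the automorphism property, note that $\mathsf D$ fixes $1$ and negates $H$ and $\xi$. The quantum algebra is generated by $H$ and $\xi$ (Theorem~\ref{ring}). Step 1 reads $\mathsf D(H\star x)=-H\star\mathsf Dx=(\mathsf DH)\star(\mathsf Dx)$, and likewise for $\xi$. Induction on word length in the generators then gives $\mathsf D(a\star b)=\mathsf Da\star\mathsf Db$ for all $a,b$.

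Equivalently, the relations $H^4-4qH(\xi-H)$ and $\xi(\xi-H)-r$ are invariant under $(H,\xi)\mapsto(-H,-\xi)$. This is consistent with the fact that, by Step 1, the quantum monomial basis \eqref{basis} and the cup frame have the same parity decomposition.

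For the isometry, $P_{ij}\neq0$ only when $\deg b_i+\deg b_j=4$. In that case $(-1)^{\deg b_i+\deg b_j}=1$, so $\mathsf D^{\mathsf T}P\mathsf D=P$.

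\emph{Step 3: block form.} Because $C$ anticommutes with $\mathsf D$, it maps $\mathcal H_\pm$ into $\mathcal H_\mp$. In the stated ordered bases this gives the off-diagonal shape \eqref{chiralblockform}, and squaring gives the form of $C^2$. Here $K_-$ is the matrix of $C|_{\mathcal H_+}\colon\mathcal H_+\to\mathcal H_-$ and $K_+$ is the matrix of $C|_{\mathcal H_-}\colon\mathcal H_-\to\mathcal H_+$.

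The entries come from $C\star b=2H\star b+2\xi\star b$ applied to each basis vector. For instance, $C\star1=2H+2\xi$ gives the first column $(2,0,2,0)^{\mathsf T}$ of $K_-$. Also $C\star H=2H^2+2H\xi$ gives the first column $(0,2,2,0)^{\mathsf T}$ of $K_+$. The other six columns are computed the same way from the table.

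The only real risk is bookkeeping: keeping columns (images of basis vectors) distinct from rows, and tracking the sign of $q$-terms such as $-2qH$ in $H\star H^2$. I would verify the final blocks by checking one independent identity, for example that $K_+K_-$ and $K_-K_+$ have the same characteristic polynomial, as they must since both represent $C^2$ on the two halves.
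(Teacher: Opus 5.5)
Your proof is correct and is essentially the paper's. Both rest on the fact that the $q^dr^e$ term of a quantum product has complex degree lowered by the even number $2(d+e)$, and both read $K_\pm$ off the table of Theorem~\ref{fixedframe}; your column convention, with $K_-$ the matrix of $C\colon\mathcal H_+\to\mathcal H_-$, reproduces the stated matrices exactly. The paper runs the logic in the opposite order: it proves $\mathsf D(\alpha\star\beta)=\mathsf D\alpha\star\mathsf D\beta$ for all $\alpha,\beta$ first and deduces the anticommutation from the odd degree of $H,\xi$, whereas you check the anticommutation on the table and obtain multiplicativity by generation.

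One caution: your proposed final check is vacuous. $K_+K_-$ and $K_-K_+$ have the same characteristic polynomial for any two square matrices of equal size, so it cannot detect an error in the entries. A genuine test would be to compare $\det(\lambda I-K_+K_-)$ with \eqref{charpoly} at $u^2=\lambda$, since that polynomial was derived independently from the ring relations.
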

\begin{proof}
If $\alpha\in H^{2p}(X)$ and $\beta\in H^{2s}(X)$, the coefficient
of $q^d r^e$ in $\alpha\star\beta$ has complex degree
$p+s-2(d+e)$ by the dimension constraint. Its parity is $p+s$,
so $\mathsf D(\alpha\star\beta)=\mathsf D\alpha\star\mathsf D\beta$
without changing $q,r$. The Poincar\'e form pairs degrees summing to
four, hence $\mathsf D^{\mathsf T}P\mathsf D=P$.
Since $H,\xi$ have odd complex degree, the anticommutation identities
follow; the grading operators commute. Finally, ordering the rows and
columns of the multiplication table in Theorem~\ref{fixedframe}
according to $\mathcal H_+\oplus\mathcal H_-$ gives
\eqref{chiralexplicitblocks}.
\end{proof}

\begin{corollary}[Squared spectrum]\label{chiralspectrum}
For all $q,r$, one has
\begin{equation}\label{chiralsquaredpoly}
 \begin{split}
 \det(uI-C)&=\det(u^2I-K_+K_-),\\
 \det(\lambda I-K_+K_-)
  &=(\lambda-4r)\bigl[\lambda^3+(16q-12r)\lambda^2\\
  &\hspace{20mm}+(48r^2-1280qr)\lambda-64r(r+16q)^2\bigr].
 \end{split}
\end{equation}
The generalized eigenspaces at $u$ and $-u$ have the same Jordan
structure and are interchanged by $\mathsf D$. At a point with distinct
eigenvalues, it sends the primitive idempotent at $u$ to that at $-u$
with equal Poincar\'e norm. For $q,r>0$,
zero is not an eigenvalue.
\end{corollary}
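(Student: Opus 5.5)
The plan is to derive everything from the block form \eqref{chiralblockform} in Proposition~\ref{chiralblocks}.

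\emph{Determinant identity.} With $C=\begin{pmatrix}0&K_+\\K_-&0\end{pmatrix}$, for $u\ne0$ I will take the Schur complement of the invertible block $uI$:
\[
\det(uI-C)=\det(uI)\det\bigl(uI-K_-u^{-1}K_+\bigr)=\det(u^2I-K_-K_+).
\]
Since $\det(\lambda I-K_-K_+)=\det(\lambda I-K_+K_-)$ for square matrices, this equals $\det(u^2I-K_+K_-)$. Both sides are polynomials in $u$, so the identity extends to $u=0$.

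\emph{The cubic factor.} Next I compute $K_+K_-$ from \eqref{chiralexplicitblocks} and expand its characteristic polynomial. I expect a basis of $\mathcal H_+$ in which $K_+K_-$ becomes block triangular, exhibiting the eigenvalue $4r$ and leaving a $3\times3$ block. One candidate is the vector coming from the fiber direction, since $\xi\star\xi$ contributes $r$ and $4r=(2\sqrt r)^2$ matches the spinor exponents. Failing that, I will expand the $4\times4$ determinant directly and check the coefficients $16q-12r$, $48r^2-1280qr$ and $-64r(r+16q)^2$. This bookkeeping is the main computational obstacle, though it is entirely routine.

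\emph{Jordan structure and idempotents.} From \eqref{chiralidentities}, $\mathsf D(C-u)^k\mathsf D=(-1)^k(C+u)^k$. Hence $\mathsf D$ maps $\ker(C-u)^k$ isomorphically onto $\ker(C+u)^k$ for every $k$, so the generalized eigenspaces at $\pm u$ are swapped with identical Jordan structure.

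At a point with distinct eigenvalues, $\mathsf D$ is an algebra automorphism of $(H^*,\star)$ by Proposition~\ref{chiralblocks}. It therefore permutes the primitive idempotents. If $e_u$ spans the $u$-eigenline of $C\star$, then $C\star\mathsf De_u=-\mathsf D(C\star e_u)=-u\,\mathsf De_u$, so $\mathsf De_u=e_{-u}$. Because $\mathsf D$ is a Poincar\'e isometry, $(e_{-u},e_{-u})=(e_u,e_u)$. The normalized idempotents $\psi_{\pm u}$ thus correspond up to the sign choice of square root.

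\emph{Nonvanishing of zero.} Setting $\lambda=0$ gives
\[
\det(K_+K_-)=(-4r)\cdot\bigl(-64r(r+16q)^2\bigr)=256r^2(r+16q)^2,
\]
which is positive for $q,r>0$. Therefore $\det C=\det(-K_+K_-)\neq0$, so $0$ is not an eigenvalue of $C$.
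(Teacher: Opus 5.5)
Your proposal is correct and follows the paper's proof essentially step for step. The common route is the Schur complement for $u\ne0$ extended polynomially, direct expansion of the $4\times4$ determinant, the identity $\mathsf D(C-u)^k\mathsf D=(-1)^k(C+u)^k$ for the Jordan structure, the automorphism and isometry properties for the idempotents, and evaluation at zero giving $256r^2(r+16q)^2$. Your optional shortcut also works: $p_0=\tfrac12-H^3\xi/(4qr)\in\mathcal H_+$ is an eigenvector of $K_+K_-$ with eigenvalue $4r$, which splits off the linear factor before expanding.
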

\begin{proof}
The determinant identity follows from the block form, first for
$u\ne0$ by a Schur complement and then identically as a polynomial.
Expansion of the four-dimensional determinant gives
\eqref{chiralsquaredpoly}. Anticommutation gives
$\mathsf D(C-uI)^k=(-1)^k(C+uI)^k\mathsf D$; the remaining assertions
follow from the algebra automorphism and isometry in
Proposition~\ref{chiralblocks}. At $u=0$, the determinant is
$256r^2(r+16q)^2$, which is positive for $q,r>0$.
\end{proof}

The block form expresses the standard Fano-index spectral symmetry
\cite[Remark 3.1.3]{GGI} in the fixed cup frame. Squaring reduces the
spectral calculation to four dimensions. It does not split the quantum
connection along the parity summands, since $C$ and divisor
multiplication exchange them. The pairing $u\leftrightarrow-u$
records spectral symmetry; Theorem~\ref{spectral} separately determines
the spectral radius and distinguishes it from the rightmost eigenvalue.

\subsection{Spectrum and idempotents}\label{sec:block}
Let $C(q,r)$ denote multiplication by $2H+2\xi$.
\begin{proposition}[Characteristic polynomial]\label{char}
\begin{align}
 \det(u-C(q,r))={}&(u^2-4r)\bigl[
 u^6+(16q-12r)u^4\notag\\
 &\hspace{19mm}+(48r^2-1280qr)u^2-64r(r+16q)^2\bigr].\label{charpoly}
\end{align}
For $q,r\ne0$, the eight eigenvalues, counted with multiplicity, are
obtained as follows. Two correspond to $H=0$, $\xi=\pm\sqrt r$ and
$u=\pm2\sqrt r$. The other six satisfy
\begin{equation}\label{hroots}
 H^6+4qH^4-16q^2r=0,\quad
 \xi=H+\frac{H^3}{4q},\quad
 u=4H+\frac{H^3}{2q}.
\end{equation}
\end{proposition}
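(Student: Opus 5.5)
The characteristic polynomial \eqref{charpoly} follows at once from Corollary~\ref{chiralspectrum}: substitute $\lambda=u^2$ in \eqref{chiralsquaredpoly} and use $\det(uI-C)=\det(u^2I-K_+K_-)$. The factor $\lambda-4r$ becomes $u^2-4r$, and the cubic factor becomes the sextic in brackets. The remaining work is to describe the eight roots, which I would read off from the presentation \eqref{ringformula} rather than from the polynomial.

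For $q,r\neq0$ I would work in the spectrum of $QH^*(X)$ at fixed parameters. By Theorem~\ref{ring}, the eigenvalues of $C$, with multiplicity, are the values $u=2H+2\xi$ at the points of
\[
 \{(H,\xi):H^4=4qH(\xi-H),\ \xi(\xi-H)=r\},
\]
counted with scheme multiplicity. This holds because the ring is finite of rank eight and $C$ is multiplication by $2H+2\xi$. I would split into two cases.

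\emph{Case $H=0$.} The first relation holds automatically, and the second gives $\xi^2=r$. So $\xi=\pm\sqrt r$ and $u=\pm2\sqrt r$.

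\emph{Case $H\neq0$.} Divide the first relation by $H$ to get $\xi=H+H^3/(4q)$. Then $\xi-H=H^3/(4q)$, and the second relation becomes
\[
 \Bigl(H+\frac{H^3}{4q}\Bigr)\frac{H^3}{4q}=r,
\]
which is equivalent to $H^6+4qH^4-16q^2r=0$. Substituting gives $u=4H+H^3/(2q)$.

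This produces $2+6=8$ points. Bezout bounds the count by $4\cdot2=8$, and the affine count is exactly $8$ because the ring is free of rank eight with no points at infinity (the Gröbner basis argument). So the multiplicities add up correctly.

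The step I expect to be the main obstacle is matching multiplicities when points collide. I would handle it by continuity: the statement concerns eigenvalues counted with multiplicity, and both sides, namely the eigenvalue multiset of $C$ and the image multiset of the point scheme, vary continuously in $(q,r)$. It therefore suffices to check generic parameters, where the eight points are distinct and reduced.

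As a cross-check I would also verify directly that the six values $u=4H+H^3/(2q)$ are roots of the bracketed sextic. Eliminate $H$ from $H^6+4qH^4-16q^2r=0$ via a resultant in $H$; this should give the sextic up to a constant. This confirms that the two descriptions agree, and in particular that the $H=0$ points contribute exactly the factor $u^2-4r$.
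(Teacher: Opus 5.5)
Your proof is correct. Your description of the eight eigenvalues uses the same case split as the paper ($H=0$ versus $H^3=4q(\xi-H)$), and you handle collisions, as the paper does, by density and polynomial identity.

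The real difference is where the characteristic polynomial comes from. The paper derives \eqref{charpoly} \emph{from} the root description. It substitutes $u=4H+H^3/(2q)$ on the sextic branch, eliminates $H$, and multiplies by the factor $u^2-4r$ contributed by the two $H=0$ points. That elimination is exactly the step you call an optional cross-check. You instead take the polynomial from Corollary~\ref{chiralspectrum}, that is, from the direct $4\times4$ expansion of $\det(\lambda I-K_+K_-)$ in the chiral block form. This is legitimate, since that corollary is proved earlier and does not use the present proposition.

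The two routes buy different things. The paper's route uses only the presentation \eqref{ringformula} and explains the factorization: the quadratic factor is the $H=0$ branch, and the bracketed sextic is the eliminant of the six-point branch. Your route makes the two halves of the statement logically independent, so no elimination is needed. The price is that the factorization comes out of a matrix computation depending on the explicit blocks $K_\pm$, rather than from the branch structure.

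The multiplicity matching you expect to be the main obstacle needs no continuity argument at all. For $q,r\ne0$, the elements $H$ and $H^3-4q(\xi-H)$ have no common zero modulo $\xi(\xi-H)-r$, because such a zero would force $\xi=0$. They are therefore comaximal. So the fiber algebra splits as $\C[\xi]/(\xi^2-r)\times\C[H]/(H^6+4qH^4-16q^2r)$ at every parameter, including the caustic. The characteristic polynomial of multiplication by $2H+2\xi$ on a finite commutative algebra is $\prod_p(u-u(p))^{\operatorname{length}_p}$, which gives the multiplicities directly.
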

\begin{proof}
The first relation in \eqref{ringformula} gives the cases $H=0$ and
$H^3=4q(\xi-H)$. Combining the latter with the second relation yields
\eqref{hroots}. Substituting the expression for $u$ and multiplying by
the $H=0$ factor gives \eqref{charpoly}. As a polynomial identity, the
formula also holds when eigenvalues collide.
\end{proof}

Write $k=r/q>0$, and let $a>0$ be the unique positive root of
\begin{equation}\label{a}
 a^3+4a^2=16k.
\end{equation}
Define
\begin{equation}\label{T}
 T(q,r)=\sqrt q\,\sqrt a\left(4+\frac a2\right),\qquad
 k_0=\frac{71-17\sqrt{17}}4=0.226801091124942\ldots.
\end{equation}

\begin{theorem}[Two distinct transitions]\label{spectral}
For every $q,r>0$, $T(q,r)$ is the unique simple eigenvalue with maximal real part. The following more precise statements hold.
\begin{enumerate}[label=(\roman*)]
\item The quantum ring is semisimple exactly when $k\ne16/27$.
\item If $0<k<k_0$, its spectral radius is attained by a purely imaginary pair, not by $T$. At $k=k_0$, the peripheral spectrum is $\{T,-T,iT,-iT\}$.
\item If $k>k_0$, its peripheral spectrum is exactly $\{T,-T\}$, each simple. In particular, Property $\mathcal O$ holds at $q=r=1$.
\end{enumerate}
These statements concern the deformed operator at general $q,r$.
Failure of the index-two peripheral condition away from $(1,1)$ does
not contradict the original Conjecture $\mathcal O$ for this $X$.
\end{theorem}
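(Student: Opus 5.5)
The proof works entirely with the explicit spectrum from Proposition~\ref{char}, scaled to the ratio $k=r/q$. By homogeneity, $C(q,r)$ has the same eigenvalues as $\sqrt q\,C(1,k)$: under $H\mapsto\sqrt q\,H$ and $\xi\mapsto\sqrt q\,\xi$, the relations \eqref{hroots} scale with weight one. It therefore suffices to study the sextic $H^6+4H^4-16k=0$ with $u=4H+H^3/2$. Set $w=H^2$. The cubic $w^3+4w^2-16k=0$ has exactly one positive root, which is $a$ from \eqref{a}; this follows from Descartes' rule, since $f(w)=w^3+4w^2$ increases on $w>0$. The other two roots $w_\pm$ are either both negative real or a complex conjugate pair with negative real part, because their sum is $-4-a<0$. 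The root $H=\sqrt a$ gives $u=\sqrt a(4+a/2)=T$, and $H=-\sqrt a$ gives $-T$. The roots $H=\pm\sqrt{w_\pm}$ give the remaining four eigenvalues. Together with $\pm2\sqrt k$ from the $H=0$ branch, this accounts for all eight.

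For (i), I compute the discriminant of the whole characteristic polynomial as a function of $k$. The cubic in $w$ has a double root when $w(3w+8)=0$, that is at $w=-8/3$, which gives $16k=-64/27\cdot\ldots$. Reading this off properly: $f(-8/3)=256/27$, so $16k=256/27$ and $k=16/27$. At that value $w_+=w_-=-8/3$, and the two purely imaginary pairs collide, so the ring is not semisimple. I must also rule out the other possible collisions. The value $u^2=4k$ coinciding with a root of the cubic factor in \eqref{chiralsquaredpoly} is ruled out by substituting $\lambda=4r$: this gives a nonzero value $-64r\cdot(\ldots)$, and I expect to check it is strictly negative, e.g. $64r^3+\ldots-64r(r+16q)^2\neq0$. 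The case $u=-u$ is ruled out because $0$ is not an eigenvalue (Corollary~\ref{chiralspectrum}). Since $\lambda=u^2$ determines $u$ up to sign, distinctness of the eight eigenvalues is equivalent to distinctness of the four $\lambda$'s together with $\lambda\ne0$. This gives (i), using the fact that semisimplicity is equivalent to a simple spectrum of $C$ for this cyclic Frobenius algebra, since $C$ generates when its spectrum is simple.

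For the real-part claim, I split into two cases. If $w_\pm<0$ (which happens for $k<16/27$), the four eigenvalues $\sqrt{w}(4+w/2)$ are purely imaginary, and $\pm2\sqrt k$ is real with $2\sqrt k<T$. The last inequality reduces, via $16k=a^3+4a^2$, to $a^2(a+4)/4<a(4+a/2)^2$, which is elementary. If $w_\pm$ is a conjugate pair, I write $\sqrt{w}=x+iy$ and bound $|\operatorname{Re}u|$ by $T$. The cleanest route is probably to compare $|u|$: show $|4H+H^3/2|<T$ whenever $|H|^2=|w|$. Since $|w_+w_-|=16k/a$ and $|w|<a$ in this regime (to be checked), the triangle inequality gives $|u|\le|H|(4+|H|^2/2)<T$. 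This also handles the spectral-radius part of (iii).

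For (ii) and (iii), in the imaginary regime I compare $|u|=\sqrt{|w|}(4-|w|/2)$ with $T$ as $k$ varies, and find the crossing by setting $\sqrt{|w|}(4+w/2)$ equal to $\pm iT$. Writing $b=-w>0$ with $b$ a root of $-b^3+4b^2=16k$, the equation $b(4-b/2)^2=a(4+a/2)^2$, together with the shared $k$, is a polynomial system. Eliminating gives a quadratic in $k$ whose relevant root should be $k_0=(71-17\sqrt{17})/4$. The \textbf{main obstacle} is this elimination and the accompanying monotonicity argument: one must check that the imaginary modulus exceeds $T$ exactly for $k<k_0$, and also treat the complex-pair regime $k>16/27$, where strict inequality against $T$ is needed. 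I would verify the monotonicity by parametrizing each branch in terms of $a$ and comparing derivatives. Finally, evaluating at $q=r=1$ (so $k=1>k_0$) gives Property $\mathcal O$.
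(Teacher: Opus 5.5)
There is a genuine gap in the regime $k>16/27$, where $w_\pm$ are nonreal. This regime contains $q=r=1$, so the rightmost-eigenvalue claim there, part (iii), and Property $\mathcal O$ all depend on it.

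Your step ``$|w|<a$ in this regime (to be checked)'' is false for every such $k$. By Vieta, $a\,w_+w_-=16k=a^3+4a^2$, so $|w_\pm|^2=w_+w_-=a^2+4a>a^2$. Hence your majorant $|H|(4+|H|^2/2)$ always exceeds $T$. At $k=1$ ($a\approx1.679$) the majorant is about $9.7$, while $T\approx6.27$ and the actual modulus of the nonreal eigenvalues is about $4.66$.

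The triangle inequality $|4+w/2|\le4+|w|/2$ throws away exactly the cancellation coming from $\operatorname{Re}w_\pm=-(a+4)/2<0$, so no estimate of this type can work. You need an exact comparison. One option: $|u|^2=|w|\,|4+w/2|^2=\sqrt{a^2+4a}\,(8-a+a^2/4)$, which gives $T^4-|u|^4=\tfrac a4(9a^4+84a^3+496a^2+1024a-1024)$, positive for $a>4/3$. The paper does the equivalent computation in $y=u^2/q$. The two nonreal roots of the cubic factor in \eqref{chiralsquaredpoly} have product $64k(k+16)^2/y_0$, where $y_0=T^2/q=a(a+8)^2/4$. Then $y_0^3-64k(k+16)^2=\frac{a^2(a+8)^2}{16}(9a^4+84a^3+496a^2+1024a-1024)>0$.

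A second, smaller flaw is in (i). The equivalence ``semisimple if and only if $C$ has simple spectrum'' is false; your justification (that $C$ generates when its spectrum is simple) proves only the direction from simple spectrum to semisimplicity. In fact, at $k=16/27$ the operator $C$ is diagonalizable although the ring has two dual-number factors (Proposition~\ref{aux:wall}). Conversely, at $r=-16q$ and $r=-64q/27$ eigenvalues collide in semisimple algebras (Proposition~\ref{aux:complexloci}). So ``the imaginary pairs collide, hence the ring is not semisimple'' is a non sequitur.

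The correct reason is already in your computation. Off $H=0$, $H$ is a unit and $\xi=H+H^3/(4q)$, so that factor of the ring is $\C[H]/(H^6+4qH^4-16q^2r)$. It is nonreduced exactly when the sextic has a repeated root, i.e.\ $w_+=w_-$, i.e.\ $k=16/27$. The two $H=0$ points are reduced and disjoint from this branch. This is the paper's argument. It also spares you the step, which is not automatic, that distinct $w$-roots give distinct $\lambda=w(w+8)^2/4$.

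Your plan for (ii) matches the paper's route. The paper reduces equality of moduli to the increasing quartic $a^4+14a^3+108a^2+192a-256=0$, eliminates $a$ to get $2k^2-71k+16=0$, and fixes which side is which by letting $k\downarrow0$.
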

\begin{proof}
Set $s=H^2/q$ on the six-point branch. Then
\[
 s^3+4s^2-16k=0.
\]
After extracting its positive root $a$, the other two roots solve
\[
 s^2+(a+4)s+a(a+4)=0,
\]
with discriminant $(a+4)(4-3a)$. They are both negative when $0<k<16/27$, collide at $a=4/3$, and are conjugate nonreal roots when $k>16/27$.

When these two roots are negative, their four $u$-values are purely imaginary. Also
\[
 \frac{T^2}{q}-4k=3a^2+16a>0,
\]
so $T$ is strictly to the right of the other real eigenvalue $2\sqrt r$, and hence is simple and rightmost. The same holds at the collision.

For $k>16/27$, put $y=u^2/q$ and let $y_0=T^2/q=a(a+8)^2/4$. The other two $y$ roots are conjugate. Their product is $64k(k+16)^2/y_0$. A direct calculation gives
\[
 y_0^3-64k(k+16)^2
 =\frac{a^2(a+8)^2}{16}
 (9a^4+84a^3+496a^2+1024a-1024)>0,
\]
since $a>4/3$. Thus $T$ has strictly larger modulus than all four nonreal eigenvalues.
This proves the assertion about the rightmost eigenvalue throughout the
positive quadrant, as well as the description of the peripheral
spectrum in this range.

For $0<k<16/27$, the negative $s$ root in $(-4,-8/3)$ yields the imaginary pair of largest modulus. Equality of this modulus with $T$ occurs exactly when
\[
 a^4+14a^3+108a^2+192a-256=0.
\]
The left side is strictly increasing for $a>0$ and has exactly one positive root. Eliminating $a$ with \eqref{a} gives
\[
 2k^2-71k+16=0.
\]
The root in $(0,16/27)$ is $k_0$. As $k\downarrow0$, $T\to0$ while the largest imaginary modulus tends to $4\sqrt q$, which fixes the sides of the transition.

The two points with $H=0$ are reduced for $q,r>0$. On the remaining
branch, \eqref{hroots} has repeated roots exactly at $k=16/27$.
This branch remains disjoint from $H=0$ for $r>0$, proving the
semisimplicity criterion.
\end{proof}

\begin{proposition}\label{proj}
The element
\begin{equation}\label{p0}
 p_0=\frac12-\frac{H^3\xi}{4qr}
\end{equation}
is an idempotent for the quantum product. Its multiplication operator
$\mathcal P=M_{p_0}$ has rank two and equals
\begin{equation}\label{Poperator}
 \mathcal P=I-\frac{M_H^3M_\xi}{4qr}.
\end{equation}
Its image is spanned by $p_0$ and
\begin{equation}\label{p1}
 p_1=\xi\star p_0=\frac{\xi-H}{2}-\frac{H^3}{4q}.
\end{equation}
The multiplication on this subspace satisfies
$H\star p_i=0$, $p_0\star p_1=p_1$, and
$p_1\star p_1=rp_0$.
\end{proposition}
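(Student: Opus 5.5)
The plan is to work entirely in the quantum ring of Theorem~\ref{ring}, writing $Q:=H\star H\star H\star\xi$ for the quantum monomial. First I will read the monomials in \eqref{p0} and \eqref{p1} as \emph{cup} products, as in Theorem~\ref{fixedframe}, and convert them to quantum monomials using the table. From $H\star H^2\xi=H^3\xi+2qr$ and $H\star H\xi=H^2\xi$, the cup class $H^3\xi$ equals $Q-2qr$. Hence
\[
 p_0=\tfrac12-\frac{Q-2qr}{4qr}=1-\frac{Q}{4qr}.
\]
Since multiplication by a quantum monomial is the corresponding product of $M_H,M_\xi$, this already gives \eqref{Poperator} once $p_0\star p_0=p_0$ is known.

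Next I will use the two relations $H^4=4qH(\xi-H)$ and $\xi^2=H\xi+r$ from \eqref{ringformula} to get
\[
 H\star Q=H^4\xi=4qH(\xi^2-H\xi)=4qrH.
\]
This yields $H\star p_0=H-H=0$. It also yields $Q\star Q=H^2\xi\star(H\star Q)=4qr\,Q$, so that
\[
 p_0\star p_0=1-\frac{2Q}{4qr}+\frac{4qrQ}{16q^2r^2}=p_0.
\]
Because $H\star p_0=0$, the ideal $p_0\star QH^*(X)$ is spanned by $p_0\star\xi^k$. Since $\xi^2\star p_0=(H\xi+r)\star p_0=rp_0$, it is spanned by $p_0$ and $p_1:=\xi\star p_0$. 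This gives $H\star p_i=0$, $p_0\star p_1=p_1$ and $p_1\star p_1=\xi^2\star p_0=rp_0$ at once.

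To match \eqref{p1}, I will compute
\[
 \xi\star Q=H^3\star(H\xi+r)=4qrH+rH^3,
\]
with the quantum $H^3$, so $p_1=\xi-H-H^3/(4q)$. I then substitute the cup expression for the quantum cube, $H^3_{\mathrm{cup}}-2qH+2q\xi$, read off from the table. This gives $(\xi-H)/2-H^3_{\mathrm{cup}}/(4q)$, as claimed.

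Rank two follows because $p_0$ and $p_1$ are linearly independent: their constant and degree-one cup components are $(1/2,0)$ and $(0,(\xi-H)/2)$. The only delicate point is the bookkeeping between the cup and quantum meanings of the monomials. I expect this to be the main obstacle, since a naive quantum reading of \eqref{p0} gives $H\star p_0=-H/2\neq0$. I will therefore state the conversion explicitly before computing.
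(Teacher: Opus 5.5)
Your proposal is correct and follows the paper's proof. Both use the multiplication table to rewrite $p_0$ as $1-H^{\star3}\star\xi/(4qr)$, establish idempotence, and then obtain $p_1$ and its products by multiplying by $\xi$, using $\xi\star\xi=H\star\xi+r$ and $H\star p_0=0$.

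The paper's main idempotence argument evaluates $H^{\star3}\star\xi$ on the two spectral branches: it equals $4qr$ on the six-point branch and $0$ where $H=0$. The paper only mentions the direct route through the ring relations, which is what your identities $H\star Q=4qrH$ and $Q\star Q=4qrQ$ carry out. Your version works uniformly in $q,r$ and needs no separate remark about the nonreduced fibers at the caustic.
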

\begin{proof}
In quantum monomials the element is
$1-H^{\star3}\star\xi/(4qr)$. On the six-point branch
$H^{\star3}\star\xi=4qr$, whereas on the $H=0$ branch it is zero.
The same identity holds on the length-two local algebras at the caustic,
or directly by the relations of Theorem~\ref{ring}. Hence it is the
projector onto the two $H=0$ points. The table in
Theorem~\ref{fixedframe} gives
$H^{\star3}\star\xi=H^3\xi+2qr$ in the classical frame, proving
\eqref{p0}. Multiplying it by $\xi$ gives \eqref{p1}; a second multiplication
uses $\xi\star\xi=H\star\xi+r$ and $H\star p_0=0$.
\end{proof}

The Poincar\'e pairing on this subspace is
\begin{equation}\label{blockpairing}
 (p_0,p_0)=-\frac1{2qr},\quad
 (p_1,p_1)=-\frac1{2q},\quad (p_0,p_1)=0.
\end{equation}
Here $(a,b)=\int_Xa\cup b$, equivalently the Frobenius pairing for the
quantum algebra.

For $s^2=r$ the two primitive idempotents are
\begin{equation}\label{es}
 e_s=\tfrac12(p_0+s^{-1}p_1)
 =\frac14+\frac{\xi-H}{4s}-\frac{H^3}{8qs}-\frac{H^3\xi}{8qr}.
\end{equation}
They have $C$-eigenvalue $2s$ and square norm $-1/(4qr)$.

\begin{lemma}[Norm on the six-point branch]\label{idempotentnorms}
For a simple root $h$ of $P(t)=t^6+4qt^4-16q^2r$, the primitive
idempotent $e_h$ has square norm
\[
 (e_h,e_h)=\frac{4q}{h^4(3h^2+8q)}.
\]
\end{lemma}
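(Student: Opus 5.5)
We prove the norm formula with the standard residue description of idempotent norms in a Frobenius algebra. For a semisimple point of a Frobenius algebra $A$, write the trace form as $\theta(a)=(a,1)$. Then the primitive idempotent $e$ at a point $x$ of $\operatorname{Spec}A$ satisfies
\[
(e,e)=\theta(e)=\Bigl(\operatorname{Res}\text{-density at }x\Bigr)^{-1},
\]
so $1/(e,e)$ is the Hessian, or Jacobian, of the presentation at $x$. We use the presentation of Theorem~\ref{ring} with generators $H,\xi$ and relations $f_1=H^4-4qH(\xi-H)$ and $f_2=\xi(\xi-H)-r$. By Grothendieck local duality, the Poincar\'e pairing is a multiple of the residue pairing. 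That is,
\[
\theta(g)=\kappa\sum_x \frac{g(x)}{\det J(x)},\qquad J=\frac{\partial(f_1,f_2)}{\partial(H,\xi)},
\]
for a constant $\kappa$, and then $(e_x,e_x)=\kappa/\det J(x)$.

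First we fix $\kappa$ by comparison with the known norm on the $H=0$ branch. At $H=0$, $\xi=s$ with $s^2=r$, we have
\[
\partial_Hf_1=-4q\xi,\quad \partial_\xi f_1=0,\quad \partial_\xi f_2=2\xi-H=2s.
\]
Hence $\det J=-8qs^2=-8qr$. Comparing with $(e_s,e_s)=-1/(4qr)$ from \eqref{es} gives $\kappa=2$. As a check, $\kappa$ should equal $\int_X H^3\xi=2$, the coefficient of the top-degree monomial under the residue normalisation; this is consistent with \eqref{classical}.

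Next we compute $\det J$ at a six-point-branch point $(h,\xi)$ with $\xi-h=h^3/(4q)$. The entries are
\[
\partial_Hf_1=4h^3-4q(\xi-h)+4qh=3h^3+4qh,\qquad \partial_\xi f_1=-4qh,
\]
\[
\partial_Hf_2=-\xi,\qquad \partial_\xi f_2=2\xi-h.
\]
The determinant is therefore
\[
\det J=(3h^3+4qh)(2\xi-h)-4qh\xi .
\]
We substitute $\xi=h+h^3/(4q)$. Then $2\xi-h=h+h^3/(2q)$, and $4qh\xi=4qh^2+h^4$. Expanding gives
\[
(3h^3+4qh)\Bigl(h+\frac{h^3}{2q}\Bigr)=3h^4+\frac{3h^6}{2q}+4qh^2+2h^4 .
\]
Subtracting $4qh^2+h^4$ leaves
\[
\det J=4h^4+\frac{3h^6}{2q}=\frac{h^4(3h^2+8q)}{2q}.
\]
With $\kappa=2$ this yields $(e_h,e_h)=4q/\bigl(h^4(3h^2+8q)\bigr)$, as claimed. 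Simplicity of the root $h$ of $P$ ensures that $\det J\neq0$, so the point is reduced.

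The main obstacle is justifying the residue formula with one global constant $\kappa$. We would argue this as follows. The quantum algebra is a complete intersection $\C[H,\xi]/(f_1,f_2)$. Any Frobenius form on it is $\theta(g)=\operatorname{Res}\bigl(g\,u\,dH\,d\xi/(f_1f_2)\bigr)$ for some unit $u$ of the algebra. The Poincar\'e form is graded: $\deg q=\deg r=2$ makes the relations homogeneous, and $\theta$ has degree $-4$. This forces $u$ to be a homogeneous unit of degree zero, hence a constant.

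A safer alternative avoids the residue argument. It uses the table in Theorem~\ref{fixedframe} to write $\theta$ on the quantum basis \eqref{basis}. We then check the identity
\[
\theta(g)=2\sum_x g(x)/\det J(x)
\]
on the monomials $1,H,\dots,H^3\xi$ via Euler--Jacobi sums. Equivalently, we verify directly that $(e_h,e_h)=1/\theta$-density, using $e_h=\prod_{h'\neq h}(H-h')/(h-h')$ restricted to the six-point branch, multiplied by $1-p_0$.
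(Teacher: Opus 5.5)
Your proof is correct, but it takes a different route from the paper's proof of this lemma. The paper avoids duality theory. It writes $e_h$ as the one-variable interpolation element $H\,P(H)/\bigl(h(H-h)P'(h)\bigr)$, which vanishes at the other five sextic points and at the two $H=0$ points. It then computes the Frobenius trace directly from $\int_X H^{\star j}=0$ for $1\le j\le5$ and $\int_X H^{\star6}=8q$, giving $(e_h,e_h)=8q/(hP'(h))$. Your last suggested alternative, the Lagrange product multiplied by $1-p_0$, is essentially that argument.

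Your main route uses the two-variable Jacobian of the presentation and Grothendieck duality, so that $(e_x,e_x)=\kappa/\det J(x)$ at every reduced point. Your determinant equals $hP'(h)/(4q)$, so with $\kappa=2$ you arrive at exactly the paper's intermediate quantity $8q/(hP'(h))$. This is the algebraic counterpart of the mirror comparison in Theorem~\ref{aux:residueiso}, where the Hessian weight of $W$ at $p_h$ is the same $4q/(h^4(3h^2+8q))$. Your other alternative, checking the trace identity on the eight monomials by Euler--Jacobi sums, is how the paper establishes that comparison.

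Your route gains a uniform formula valid on both branches, and a statement that needs only reducedness of the point $x$, not semisimplicity of the whole algebra. Fixing $\kappa$ through the residue of $H^3\xi$ at $q=r=0$, as in your check, even rederives $(e_s,e_s)=-1/(4qr)$.

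The cost is the duality input, and you should make one step explicit. The argument ``$u$ is a homogeneous unit of degree zero, hence constant'' only makes sense for the relative residue over the graded base $\C[q,r]$. There $A$ is free of rank eight and the residue generates $\operatorname{Hom}_{\C[q,r]}(A,\C[q,r])$ as an $A$-module, so $u$ is unique and its components of nonzero degree must vanish. At a fixed $(q,r)$ there is no grading, so state that you run the argument in the family and then specialize. The paper's proof needs only a short trace computation in the quantum ring.
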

\begin{proof}
In the notation of quantum polynomials, the element
$H P(H)/(h(H-h)P'(h))$ equals $e_h$: it takes the value one at $H=h$
and zero at the other five roots and at the two $H=0$ points.
The quotient by $H-h$ is a polynomial. The Frobenius trace of $H^j$
vanishes for $1\leq j\leq5$, while the quantum ring relations give
$\int_X H^{\star6}=8q$. Its trace is therefore $8q/(hP'(h))$.
Idempotence identifies this trace with the square norm, giving the
formula.
\end{proof}

\section{Gamma-normalized asymptotics}\label{sec:asymptotics}\label{sec:spinorMB}
\subsection{Mellin--Bessel integral representations}
We identify the central charges from their Gamma data at the regular
singular end, then derive their asymptotics. The spinor analysis in \cite{HK}
and the explicit monodromy calculation in \cite{Sheng} concern
$Q^3$; the Bessel factor below accounts for the fiber direction.

For $E\in K_0(X)$ define the unnormalized scalar central charge
\begin{equation}\label{centraldef}
 Z_E(q,r)=\int_X \widehat\Gamma_X\Ch(E)J(q,r,-1).
\end{equation}
We initially use positive $q,r$ and real logarithms. The Poincar\'e pairing
and the standard identity $\mathcal L(-z)^*\mathcal L(z)=I$ give
\begin{equation}\label{scalarmasterGamma}
 (1,\mathfrak s(E)(q,r,z))
   =(2\pi)^{-2}z^{-2}Z_E(q/z^2,r/z^2).
\end{equation}
Logarithms on the right are \emph{transported}:
$\log(q/z^2)=\log q-2\log z$, and likewise for $r$.
Replacing these by principal logarithms after a turn would change the
object. To check \eqref{scalarmasterGamma}, use
$\mathfrak s(E)=(2\pi)^{-2}\mathcal L(z)z^{-\mu}z^\rho
\widehat\Gamma_X\Ch(E)$, move $\mathcal L$ across the pairing, and
apply the degree homogeneity $\deg q=\deg r=2$ to $J(-z)$.
This also fixes the otherwise significant factor $z^{-2}$.

Put $S_0=\pi^*S$ and $S_1=\pi^*S\otimes\OO_\pi(1)$. For a vertical
contour $\operatorname{Re}s=c$, where $0<c<1/2$, define
\begin{align}
 K_0(s)&=\frac{\Gamma(s)^5}{\Gamma(2s)},&
 C(s)&=\frac{2^{2-2s}}{\sqrt\pi}\Gamma(s)^4\Gamma(1/2-s),
 \label{MBkernels}\\
 K_S(s)&=e^{-\pi i s}C(s),&
 \mathcal K_s(r)&=2r^{-s/2}K_s(2\sqrt r).
 \label{Besselkernel}
\end{align}
The symbol $K_s$ on the right of \eqref{Besselkernel} is the modified
Bessel function, not the Mellin kernel $K_S$.
Euler reflection and duplication yield the exact identity
\begin{equation}\label{spinorfactor}
 K_S(s)=\frac{4}{1+e^{2\pi i s}}\,K_0(s).
\end{equation}
Its multiplier is the Chern character of the normalized spinor under
the substitution $H\mapsto2\pi i s$. We use this identity before
estimating the contours, since it exhibits the simple poles $s=n+1/2$
to the right of the initial contour.

\begin{theorem}[Gamma-marked Bessel transforms]\label{MBmarked}
For $q,r>0$, with the above logarithms, all integrals below converge
absolutely and
\begin{align}
 Z_{\OO}(q,r)&=\frac1{2\pi i}\int_{c-i\infty}^{c+i\infty}
        K_0(s)q^{-s}\mathcal K_s(r)\,ds,\label{MBstructure}\\
 Z_{S_0}(q,r)&=\frac1{2\pi i}\int_{c-i\infty}^{c+i\infty}
        K_S(s)q^{-s}\mathcal K_s(r)\,ds.\label{MBspinor}
\end{align}
Let $\mathcal M_r$ denote continuation with
$\log r\mapsto\log r-2\pi i$ while $\log q$ is fixed. Then
\begin{align}
 Z_{S_1}&=\mathcal M_r Z_{S_0}\notag\\
 &=4Z_{\OO}-Z_{S_0}+2\pi i\,\mathcal I(q,r),\label{spinorIidentity}\\
 \mathcal I(q,r)&=\frac1{2\pi i}\int_{c-i\infty}^{c+i\infty}
 C(s)q^{-s}r^{-s/2}I_s(2\sqrt r)\,ds.\label{Itransform}
\end{align}
These formulas represent the central charges of the specified Gamma
objects and retain their integral labels.
\end{theorem}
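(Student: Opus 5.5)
The plan is to compute $Z_E$ directly from \eqref{centraldef} and \eqref{I}, and to recognise the double series as a residue expansion of the stated Mellin--Bessel integrals. Put $z=-1$ in \eqref{I}. The factor $q^{H/z}r^{\xi/z}$ becomes $q^{-H}r^{-\xi}$, and the summand becomes a rational function of $H,\xi$. Multiplying by $\widehat\Gamma_X$ from \eqref{gamma} turns the $d$-products into Gamma quotients. Indeed $\Gamma(1+H)^5/\prod_{m=1}^d(H-m)^5$ is, up to sign, $\Gamma(1+H-d)^5$ times a reflection factor, and likewise for $\Gamma(1+2H)$, $\Gamma(1+\xi)$ and $\Gamma(1+\xi-H)$ against $R_e$ and $R_{e-d}$. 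I therefore expect
\[
 \widehat\Gamma_X J(q,r,-1)=\sum_{d,e}\frac{\Gamma(1+H-d)^5}{\Gamma(1+2H-2d)}\,
 \frac{q^{d-H}r^{e-\xi}}{\Gamma(1+\xi-e)^{-1}\cdots}
\]
with $\xi$ kept separate. This is the standard Gamma-factor form of the $I$-function.

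Next integrate over the fiber. The class $\Ch(E)$ is pulled back from $Q^3$ for $S_0$ and $\OO$. Using $\xi(\xi-H)=0$, the pushforward $\int_X(\cdot)=\int_{Q^3}\operatorname{Res}_{\xi}$ of a function $G(\xi)$ equals the difference quotient $\bigl(G(H)-G(0)\bigr)/H$, formally. The $e$-sum with $\Gamma(1+\xi-e)\Gamma(1+\xi-H-e+d)$ then becomes a Bessel-type series in $r$. Combining the $\xi=0$ and $\xi=H$ contributions should give exactly $r^{-s/2}\cdot 2K_s(2\sqrt r)$, with $s=H-d$ after the shift. This uses $K_s=\tfrac{\pi}{2\sin\pi s}(I_{-s}-I_s)$, and the difference-quotient structure is what produces the combination $I_{-s}-I_s$.

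Then express the integral over $Q^3$ as a residue at $s=0$. Since $H^4=0$, $\int_{Q^3}\Phi(H)=2\operatorname{Res}_{s=0}\Phi(s)s^{-4}\,ds$ for any germ $\Phi$. The sum over $d\ge0$ is then the sum of residues at $s=-d$ of $K_0(s)q^{-s}\mathcal K_s(r)$ once $\Ch$ is inserted with $H\mapsto 2\pi i s$. For the structure sheaf this gives \eqref{MBstructure}. For $S_0$ I would insert $\Ch(S_0)=4/(1+e^{2\pi i s})$ and use \eqref{spinorfactor}. The normalisations $(2\pi i)$ must cancel against the variable change $H=s$ versus $2\pi i s$, and I would check these constants against the leading logarithmic term. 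Closing the contour to the left requires decay of $K_0(s)q^{-s}\mathcal K_s(r)$ on left arcs avoiding the poles. Stirling's formula gives factorial decay in $|s|$, and the uniform large-order asymptotics of $K_s$ control the Bessel factor. Absolute convergence on the vertical line follows from the exponential decay $|\Gamma(c+it)|^4/|\Gamma(2c+2it)|\cdot|\Gamma(c+it)|\sim e^{-3\pi|t|/2}$ together with a polynomial bound on $K_{c+it}(2\sqrt r)$, which grows only like $e^{\pi|t|/2}$. The net decay is $e^{-\pi|t|}$; for $K_S$ the factor $e^{-\pi i s}$ costs a further $e^{\pi|t|}$, so that case needs care. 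I expect this borderline decay of the spinor integrand to be the main obstacle. I would handle it with the exact form $C(s)$, whose $\Gamma(1/2-s)$ supplies an extra $e^{-\pi|t|/2}$.

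Finally, $\mathcal M_r$ acts on $\mathcal K_s(r)$ through $r^{-s/2}\mapsto e^{\pi i s}r^{-s/2}$ and $I_{\pm s}(2\sqrt r)\mapsto e^{\pm\pi i s}\cdot(\ldots)$. Computing this gives $\mathcal M_r\mathcal K_s=e^{2\pi i s}\mathcal K_s+(\text{const})\,r^{-s/2}I_s$. Applying it to $K_S$ and using $e^{-\pi is}C(s)\bigl(e^{2\pi i s}-1\bigr)$-type identities gives $4K_0-K_S$ plus the $\mathcal I$ term, which is \eqref{spinorIidentity}. The identification $Z_{S_1}=\mathcal M_rZ_{S_0}$ follows since $\mathcal M_r$ multiplies $r^{-\xi}$ by $e^{2\pi i\xi}$, that is, tensors by $U$.
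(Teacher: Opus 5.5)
Your route is correct in outline, but it is genuinely different from the paper's. The paper never sums all the residues. It writes $\mathcal K_s$ as a Barnes integral and uses the shift ratios of $F_0(s,t)$ to show that the integral satisfies \eqref{Pq}--\eqref{Pr}. It then computes only the residue at $s=0$, with the fibre poles $t=0$ and $t=s$, to match the zero-degree polynomial $\int_X\widehat\Gamma_X\Ch(E)q^{-H}r^{-\xi}$, and concludes by uniqueness for the regular-singular recursion at $q=r=0$. The marking of $S_1$ is likewise read off from the initial polynomial of the continued solution.

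You instead resum term by term, and this works. With $s=H-d$ one has $\widehat\Gamma_{Q^3}(H)\prod_{m=1}^{2d}(2H-m)/\prod_{m=1}^{d}(H-m)^5=\tfrac12H^4K_0(s)$. Also $\Gamma(1+\xi)R_e(\xi,-1)=\xi\Gamma(\xi-e)$ and $\Gamma(1+\xi-H)R_{e-d}(\xi-H,-1)=(\xi-H)\Gamma(\xi-s-e)$. The difference quotient then gives $\sum_{n\ge0}\frac{(-1)^n}{n!}\bigl(\Gamma(-s-n)r^n+\Gamma(s-n)r^{n-s}\bigr)=\mathcal K_s(r)$. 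The $\xi=0$ branch, which needs the terms with $e<d$, produces the first half, and the $\xi=H$ branch produces the second. So the $d$-th summand of $Z_E$ is exactly $\operatorname{Res}_{s=-d}$ of the integrand.

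Your approach gives an explicit identity without appealing to the differential system or to Frobenius uniqueness at a normal crossing. It also makes $Z_{S_1}=\mathcal M_rZ_{S_0}$ immediate from $r^{-\xi}\mapsto e^{2\pi i\xi}r^{-\xi}$. The price is the all-$d$ Gamma bookkeeping and a left-closing estimate. The paper needs only one residue and one-unit contour shifts, and that argument transfers verbatim to the family $X_\nu$.

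Several details should be fixed when you write it out.

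Your displayed coefficient $\Gamma(1+H-d)^5/\Gamma(1+2H-2d)$ has a pole at $H=0$ for $d\ge1$, so it is not the right expression; use the one above.

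There is no rescaling between $H$ and $2\pi is$. The residue variable is $H$ itself, the factors $(2\pi i)^p$ in $\Ch$ produce $e^{2\pi is}$, and periodicity of $4/(1+e^{2\pi iH})$ lets you replace $H=s+d$ by $s$.

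On $\operatorname{Re}s=c$ the Bessel factor decays like $e^{-\pi|t|/2}$; it does not grow. The multiplier $4/(1+e^{2\pi is})$ is bounded on that line. So the spinor integrand is no harder than that of $\OO$, and there is no borderline case.

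For closing to the left, the bound $|\mathcal K_s(r)|\le\mathcal K_{\operatorname{Re}s}(r)\le\Gamma(-\operatorname{Re}s)$ follows at once from the integral representation and replaces uniform large-order asymptotics. The poles of the spinor multiplier at negative half-integers are cancelled by zeros of $1/\Gamma(2s)$, as $K_S=e^{-\pi is}C(s)$ shows.

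Finally, $I_{\pm s}(xe^{-\pi i})=e^{\mp\pi is}I_{\pm s}(x)$, not $e^{\pm\pi is}$. The coefficient of $r^{-s/2}I_s(2\sqrt r)$ is $2\pi ie^{\pi is}$, so that $e^{\pi is}K_S=C$ and $e^{2\pi is}K_S=4K_0-K_S$. Continuing under the integral sign needs a bound uniform in $\arg r$ along the loop. Alternatively, you can continue the residue series and close the contour for $\mathcal I$ separately.
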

\begin{proof}
For $c_t>c$, Mellin inversion of $e^{-r/v}$ in the convergent integral
$\int_0^\infty v^{-s}e^{-v-r/v}\,dv/v$ gives
\[
 \mathcal K_s(r)=\frac1{2\pi i}\int_{c_t-i\infty}^{c_t+i\infty}
       \Gamma(t)\Gamma(t-s)r^{-t}\,dt.
\]
Together with \eqref{spinorfactor} this gives double Barnes kernels
\begin{equation}\label{doubleMB}
 F_0(s,t)=\frac{\Gamma(s)^5\Gamma(t)\Gamma(t-s)}{\Gamma(2s)},
 \qquad F_S(s,t)=\frac4{1+e^{2\pi i s}}F_0(s,t).
\end{equation}
For example, take $c=1/4,c_t=3/4$. Stirling's formula bounds $F_0$
by the exponential factor
\[
 \exp\left(-\frac{3\pi}{2}|\operatorname{Im}s|
           -\frac\pi2|\operatorname{Im}t|
           -\frac\pi2|\operatorname{Im}(t-s)|\right)
\]
times a fixed polynomial. The spinor multiplier is bounded on the
chosen line. These bounds justify convergence, differentiation under the integral,
and Fubini's theorem. The Bessel identity is also recorded in \cite{DLMF}.

We verify the differential equations and the Gamma marking. The kernel
ratios are
\begin{equation}\label{Barnesratios}
 \frac{F_0(s+1,t)}{F_0(s,t)}
 =\frac{s^4}{2(2s+1)(t-s-1)},\qquad
 \frac{F_0(s,t+1)}{F_0(s,t)}=t(t-s).
\end{equation}
They remain true for $F_S$. Moving a contour by one unit gives the two
operators \eqref{Pq}--\eqref{Pr} at $z=1$.
In the $s$ shift from $c-1$ to $c$, the factor $s^4$ cancels
the only intervening pole at zero,
and the positive half-integer spinor poles are not crossed. In the $t$
shift, the factors $t(t-s)$ cancel the two intervening pole families.
Thus no inhomogeneous residue term is omitted. The system of eight jets
therefore reconstructs a solution in the fixed frame, with the
appropriate duality convention.

We next identify its leading logarithmic term at $q=r=0$. The two
leading fiber poles are $t=0$ and $t=s$, and the zero base pole gives
\begin{equation}\label{zerologresidue}
 \operatorname{Res}_{s=0}
 K_a(s)q^{-s}\{\Gamma(-s)+\Gamma(s)r^{-s}\},
 \qquad a=0,S.
\end{equation}
This is the $(q^0r^0)$ logarithmic polynomial; the other pole translates
have strictly positive curve degrees. One can first shift the $s$
contour to $-\epsilon$ with $t$ fixed, and then take the two leading
fiber residues. Uniform Stirling estimates on the intervening contours
justify the corresponding iterated regular-singular expansion.
Equivalently the kernel $F_a$ has a convergent Frobenius expansion, since
its reconstructed first-order system has only the nilpotent classical
residues at the two coordinate axes.

The projective-bundle pushforward in the classical ring is
\[
 \pi_* f(H,\xi)=\frac{f(H,H)-f(H,0)}H.
\]
Since $K_0(s)=2\widehat\Gamma_{Q^3}(s)/s^4$, expression
\eqref{zerologresidue} is exactly
\[
 \int_X\widehat\Gamma_X\Ch(E)
       e^{-H\log q-\xi\log r},\qquad E=\OO,S_0.
\]
Indeed the fiber numerator reduces to
$\Gamma(1+H)r^{-H}-\Gamma(1-H)$, divided by $H$.
This explains both the factor $2$ and the sign of each logarithm.
The regular-singular recursion fixes the connection constants: its
operators are invertible, since each is $dI+$ a nilpotent residue for a
positive integer $d$. Its eight initial logarithmic polynomials are
independent because the cup algebra is generated by $H,\xi$.
Thus agreement of the zero-degree polynomial and the differential
system proves \eqref{MBstructure}--\eqref{MBspinor}.

Under the specified clockwise turn, the two fiber branches give
\begin{equation}\label{Besselturn}
 \mathcal M_r\mathcal K_s
 =e^{2\pi i s}\mathcal K_s
   +2\pi i e^{\pi i s}r^{-s/2}I_s(2\sqrt r).
\end{equation}
This follows from
$K_s(xe^{-\pi i})=e^{\pi i s}K_s(x)+\pi i I_s(x)$
\cite[Eq.~10.34.2]{DLMF}. The zero-degree polynomial of the continued
solution has $r^{-H}$ replaced by $e^{2\pi iH}r^{-H}$ in its $\xi=H$
term, and is therefore the marking of $S_1$. Finally
$e^{2\pi i s}K_S=4K_0-K_S$ by \eqref{spinorfactor}, giving
\eqref{spinorIidentity}. The $I$ integral converges by its Poisson
representation and Stirling's formula, as detailed below. This calculation uses analytic
continuation; inserting $e^{2\pi it}$ directly in the original double
vertical integral is not justified.
\end{proof}

\begin{remark}[Scope of the analytic marking]
The proof identifies $S_0,S_1$ in the quantum Gamma lattice defined above.
Identifying the Barnes contours as primitive integral four-dimensional
thimbles of $W$ requires a separate comparison of integral structures.
\end{remark}

\subsection{A Mellin--Bessel transfer estimate}\label{sec:angulargaps}
The same contour argument applies in odd dimension. We prove the estimate
for an odd integer $\nu=2m+1\geq3$; the kernels defined above correspond
to $\nu=3$. Set
\begin{equation}\label{familykernels}
 K_{0,\nu}(s)=\frac{\Gamma(s)^{\nu+2}}{\Gamma(2s)},\qquad
 C_\nu(s)=\frac{2^{m+1-2s}}{\sqrt\pi}
       \Gamma(s)^{\nu+1}\Gamma(1/2-s),\qquad
 K_{S,\nu}(s)=e^{-\pi i s}C_\nu(s).
\end{equation}
The following estimates concern scalar transforms. Their interpretation
as Gamma sections requires the separate marking argument below.
Throughout the statement $\nu$ is fixed; no estimate uniform as
$\nu\to\infty$ is asserted.

\begin{lemma}[Order-uniform transfer bounds]\label{Mellinsectorbound}
Fix a compact subset of $q,r>0$, a real $R>0$ that is not a positive
half-integer, and put $x=2\sqrt r/z$. The absolute integrals over
$\operatorname{Re}s=R$ of the following expressions have the indicated
bounds on closed subsectors of the stated open sectors:
\begin{align}
 C_\nu(s)e^{-\pi i s}(q\sqrt r)^{-s}z^{\nu s}K_s(x):
 &\quad O_R\bigl(|z|^{\nu R+1/2}|e^{-x}|\bigr),
 &-\pi/2<\arg z<\pi/2+\frac{2\pi}{\nu+1};\label{boundS}\\
 C_\nu(s)(q\sqrt r)^{-s}z^{\nu s}K_s(xe^{-\pi i}):
 &\quad O_R\bigl(|z|^{\nu R+1/2}|e^x|\bigr),
 &-\pi/2-\frac\pi\nu<\arg z<0;\label{boundSU}\\
 C_\nu(s)(q/z^{\nu-1})^{-s}(r/z^2)^{-s/2}I_s(x):
 &\quad O_R\bigl(|z|^{\nu R+1/2}|e^x|\bigr),
 &|\arg z|<\pi/2.\label{boundI}
\end{align}
The pole-free kernel $K_{0,\nu}$ in place of $K_{S,\nu}$ in
\eqref{boundS} has the same bound for $|\arg z|<\pi/2$.
All logarithms are transported from the positive ray. The estimates
hold after a fixed number of parameter derivatives, with the resulting
finite loss of powers of $|z|$.
\end{lemma}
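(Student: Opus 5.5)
The plan is to separate the Mellin kernel from the Bessel factor and bound each uniformly along the vertical line $s=R+i\tau$. By Stirling's formula,
\[
 |\Gamma(R+i\tau)|\sim\sqrt{2\pi}\,|\tau|^{R-1/2}e^{-\pi|\tau|/2},
 \qquad
 |\Gamma(1/2-R-i\tau)|\sim\sqrt{2\pi}\,|\tau|^{-R}e^{-\pi|\tau|/2}.
\]
Hence $|C_\nu(R+i\tau)|$ is a polynomial in $|\tau|$ times $e^{-(\nu+2)\pi|\tau|/2}$. The factor $e^{-\pi is}$ contributes $e^{\pi\tau}$. With transported logarithms, $z^{\nu s}$ contributes $|z|^{\nu R}e^{-\nu\tau\arg z}$. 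Since $R$ is not a half-integer, $\Gamma(1/2-s)$ is pole-free on the line. The decisive input is then a bound for $K_s(x)$, $K_s(xe^{-\pi i})$ and $I_s(x)$ that is uniform in the \emph{imaginary order} $s=R+i\tau$ and in large $|x|$.

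For these Bessel bounds I would use the integral representations
\[
 K_s(x)=\int_0^\infty e^{-x\cosh t}\cosh(st)\,dt,
 \qquad
 I_s(x)=\frac{(x/2)^s}{\Gamma(s+1/2)\sqrt\pi}\int_{-1}^1e^{-xu}(1-u^2)^{s-1/2}\,du.
\]
The second is the Poisson form, valid for $\operatorname{Re}s>-1/2$. For $K_s$, I would rotate the $t$-contour to $t\mapsto t+i\theta$, with $\theta$ chosen according to $\arg x$ and the sign of $\tau$. This is a saddle-point deformation, which gives
\[
 |K_s(x)|\le C_R\,|x|^{-1/2}|e^{-x}|\,(1+|\tau|)^{N}e^{\theta_{\max}|\tau|},
\]
with an angular gain $\theta_{\max}$ up to $\pi/2$ in the allowed direction. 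Heuristically, the Debye form $K_s(x)\approx\sqrt{\pi/2}(x^2+s^2)^{-1/4}\exp\bigl(-\sqrt{x^2+s^2}+s\,\operatorname{arcsinh}(s/x)\bigr)$ shows the competition. For $|s|\ll|x|$ this reduces to $e^{-x}$. For $|s|\gtrsim|x|$ the growth is at most $e^{\pi|\tau|/2}$ times the factorial-type decay $|\Gamma(s)|$-like factors. The term $(\ldots)(2/x)^s$ combines with $z^{\nu s}$ into $(q\sqrt r)^{-s}$ and powers of $z$ that are controlled. The continuation $K_s(xe^{-\pi i})=e^{\pi is}K_s(x)+\pi iI_s(x)$ reduces \eqref{boundSU} to the other two cases. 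For $I_s$, the Poisson integral gives $|I_s(x)|\le C|x^s|\,|\Gamma(s+1/2)|^{-1}e^{|\operatorname{Re}x|}$ times a polynomial, provided $|\arg x|<\pi/2$.

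Next I would collect the exponents in $|\tau|$. For \eqref{boundS}, the total exponent is
\[
 -(\nu+2)\tfrac{\pi}{2}|\tau|+\pi\tau-\nu\tau\arg z+(\text{Bessel gain})\,|\tau|,
\]
and this must be negative for both signs of $\tau$. Requiring this produces exactly the interval $-\pi/2<\arg z<\pi/2+2\pi/(\nu+1)$. Here the Bessel angular gain depends on $\arg x=-\arg z$, and the factor $(2/x)^{s}$ from the small-$x$ regime contributes $+\tau\arg z$, which shifts $\nu$ to $\nu+1$ in the denominator; I would verify this threshold carefully. The same bookkeeping, without $e^{-\pi is}$ and with the branch $e^{-\pi i}$, gives $-\pi/2-\pi/\nu<\arg z<0$ for \eqref{boundSU}. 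For \eqref{boundI}, the $I_s$ Poisson bound gives the sector $|\arg z|<\pi/2$. For the pole-free kernel $K_{0,\nu}$ there is no $e^{\pi\tau}$ factor, and only $|\arg z|<\pi/2$ survives.

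On closed subsectors the exponent is $\le-\delta|\tau|$. Integrating the polynomial against $e^{-\delta|\tau|}$ then yields the prefactor $|z|^{\nu R+1/2}$, where the $1/2$ comes from $|x|^{-1/2}$, together with the factor $|e^{\mp x}|$. A fixed number of derivatives in $q,r$ only multiplies the integrand by polynomials in $s$ and by $\sqrt r$-derivatives of Bessel functions, expressed via $K_{s\pm1}$. This gives the stated finite loss of powers.

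The main obstacle I expect is the uniform Bessel estimate in the transition regime $|\tau|\asymp|x|$, near the turning point of the saddle. There the naive bounds lose the factor $|e^{-x}|$. I would try first to choose the rotation angle $\theta=\theta(\tau,\arg x)$ continuously and to bound $\operatorname{Re}(x\cosh(t+i\theta))-\operatorname{Re}x$ below by $c\,t^2|x|$. Combined with the sharp Stirling decay, this should absorb the turning-point region. The sector endpoints in the statement are exactly where this absorption fails.
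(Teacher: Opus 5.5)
\emph{Verdict: there is a genuine gap.} One step of your reduction fails, and the central order-uniform Bessel estimate is never established.

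\textbf{The reduction of \eqref{boundSU} fails where it matters.} You reduce \eqref{boundSU} through $K_s(xe^{-\pi i})=e^{\pi i s}K_s(x)+\pi i I_s(x)$. This breaks down on $-\pi/2-\pi/\nu<\arg z<-\pi/2$. There $\operatorname{Re}x<0$, so the target $|e^{x}|$ is exponentially smaller than $|e^{-x}|$. Each summand, however, has size $|x|^{-1/2}|e^{-x}|$:
\begin{itemize}
\item at fixed $s$, $K_s(x)\sim\sqrt{\pi/(2x)}\,e^{-x}$;
\item $I_s(x)$ contains the term $e^{-x+(s+1/2)\pi i}/\sqrt{2\pi x}$.
\end{itemize}
These two $e^{-x}$ contributions cancel identically in the sum. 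Bounding the summands separately therefore gives at best $O(|e^{-x}|)$; your Poisson bound $e^{|\operatorname{Re}x|}$ is itself $|e^{-x}|$ there. This range is exactly what makes the second spinor sector in Theorem~\ref{spinorscalar} wider than $\pi$. So you must estimate $K_s(xe^{-\pi i})$ directly, as a Bessel function whose argument satisfies $-\pi<\arg(xe^{-\pi i})<-\pi/2+\pi/\nu$.

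\textbf{The order-uniform Bessel bound is missing.} This bound is the whole content of the lemma, and the mechanism you sketch does not supply it.
\begin{itemize}
\item A constant shift $t\mapsto t+i\beta$ with $\beta\neq0$ misses the saddle $t=0$. Your proposed inequality $\operatorname{Re}(x\cosh(t+i\beta))-\operatorname{Re}x\geq c\,t^2|x|$ already fails at $t=0$ when $\operatorname{Re}x>0$.
\item A generic angular gain of size $\pi/2$ is not the relevant quantity. For $\tau<0$ near the right end of \eqref{boundS}, you need growth at most $e^{(\arg z-\pi/2)|\tau|}$ times $|x|^{-1/2}|e^{-x}|$, uniformly in $|x|$. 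This rate is sharp, since $K_s(x)\approx\frac12\Gamma(s)(x/2)^{-s}$ at fixed $x$ as $\tau\to-\infty$.
\end{itemize}

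The idea that removes your transition region is to factor out $e^{-x}$ exactly, using DLMF 10.32.8 with $t=1+u/x$:
\[
 K_s(x)=\frac{\sqrt\pi\,e^{-x}}{\sqrt{2x}\,\Gamma(s+1/2)}
 \int_{e^{i\varphi}\mathbb R_+}e^{-u}u^{s-1/2}\Bigl(1+\frac{u}{2x}\Bigr)^{s-1/2}\,du,
 \qquad |\varphi|<\frac\pi2,\quad |\varphi+\arg z|<\pi.
\]
On the ray, $u^{s-1/2}$ contributes $e^{-\tau\varphi}$, and $\arg(1+u/(2x))$ lies between $0$ and $\varphi+\arg z$. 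Including the factor $1/\Gamma(s+1/2)$, the $\tau$-dependence is bounded by a polynomial times $\exp(\frac\pi2|\tau|-\tau\varphi+\max\{0,-\tau(\varphi+\arg z)\})$. This holds uniformly for large $|x|$, with no turning point.

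Now choose the ray:
\begin{itemize}
\item for $\tau>0$, take $\varphi\to\pi/2$, or $\varphi=(\pi-\arg z)/2$ once $\arg z\geq\pi/2$;
\item for $\tau<0$, take $\varphi\to-\pi/2$.
\end{itemize}
This gives all the margins, including $-\frac{\nu+5}2\pi+(\nu+1)\arg z$, and hence the endpoint $\pi/2+2\pi/(\nu+1)$. The same formula applied at $xe^{-\pi i}$, with $\arg z$ replaced by $\arg z+\pi$, gives \eqref{boundSU}.

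\textbf{Two smaller points.}
\begin{itemize}
\item Your Poisson bound omits the endpoint factor $|x|^{-R-1/2}$, which is needed for the stated power of $|z|$ in \eqref{boundI}.
\item $K_{0,\nu}$ decays only like $e^{-\nu\pi|\tau|/2}$, because of the factor $1/\Gamma(2s)$. This, not the absence of $e^{-\pi is}$, is what restricts it to $|\arg z|<\pi/2$.
\end{itemize}
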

\begin{proof}
Write $s=R+i\tau$. We use the Laplace representation with the same branches:
\begin{equation}\label{rotatedK}
 K_s(x)=\frac{\sqrt\pi\,e^{-x}}{\sqrt{2x}\,\Gamma(s+1/2)}
 \int_{e^{i\varphi}\mathbb R_+}e^{-u}u^{s-1/2}
       (1+u/(2x))^{s-1/2}\,du.
\end{equation}
It follows from \cite[Eq.~10.32.8]{DLMF}, initially in the right
half-plane, followed by contour deformation. Require
$|\varphi|<\pi/2$ and $|\varphi+\theta_x|<\pi$, where
$\theta_x=-\arg x$. These inequalities keep the contour away from the
branch point $-2x$. The two signs of $\tau$ may use different contours.
After division by $\Gamma(s+1/2)$, the dependence on the order is bounded
by a polynomial in $|\tau|$ times
\begin{equation}\label{angularKbound}
 \exp\left(\frac\pi2|\tau|-\tau\varphi
                 +\max\{0,-\tau(\varphi+\theta_x)\}\right).
\end{equation}
Indeed, on $u=e^{i\varphi}t$, the argument of
$1+u/(2x)$ lies between zero and $\varphi+\theta_x$. If this latter
angle has distance at least $\eta>0$ from $\pm\pi$, then
\[
 |1+t e^{i\psi}|\geq\sin(\eta/2)(1+t),\qquad
 t\geq0,\quad |\psi|\leq\pi-\eta.
\]
Thus its positive or negative real powers have fixed polynomial
bounds, and $e^{-t\cos\varphi}$ is integrable. The remaining factor
in \eqref{rotatedK} is $O_R(|x|^{-1/2}|e^{-x}|)$.

Put $\theta=\arg z$. Stirling's formula gives
$C_\nu(R+i\tau)=\text{polynomial}\cdot
 e^{-(\nu+2)\pi|\tau|/2}$. Combining all phases with
\eqref{angularKbound}, the coefficients of $|\tau|$ for
\eqref{boundS} are
\begin{align}
 E_{0,+}&=-\frac{\nu-1}{2}\pi-\nu\theta-\varphi
                  +\max\{0,-\varphi-\theta\},\label{familygapplus}\\
 E_{0,-}&=-\frac{\nu+3}{2}\pi+\nu\theta+\varphi
                  +\max\{0,\varphi+\theta\}.\label{familygapminus}
\end{align}
For \eqref{boundSU}, the corresponding coefficients are
\[
 E_{1,+}=-\frac{\nu+1}{2}\pi-\nu\theta-\varphi
                +\max\{0,-\varphi-\theta-\pi\},\qquad
 E_{1,-}=-\frac{\nu+1}{2}\pi+\nu\theta+\varphi
                +\max\{0,\varphi+\theta+\pi\}.
\]
For $\tau>0$ choose $\varphi\to\pi/2$ on $\theta<\pi/2$ in
\eqref{boundS}, and $\varphi=(\pi-\theta)/2$ otherwise. The respective
limiting coefficients are
$-\nu\pi/2-\nu\theta$ and
$-\nu\pi/2-(\nu-1/2)\theta$, both strictly negative on the stated
interval. For $\tau<0$ choose $\varphi\to-\pi/2$. Its coefficients
on the two sides of $\theta=\pi/2$ are
\[
 -\frac{\nu+4}{2}\pi+\nu\theta,
 \qquad -\frac{\nu+5}{2}\pi+(\nu+1)\theta.
\]
The second is negative precisely below the claimed sufficient upper
endpoint $\pi/2+2\pi/(\nu+1)$.

For \eqref{boundSU}, use $\theta_x=\theta+\pi$. For positive $\tau$
choose $\varphi\to\pi/2$ when $\theta\leq-\pi/2$ and
$\varphi=-\theta/2$ otherwise. The coefficients are
$-(\nu+2)\pi/2-\nu\theta$ and
$-(\nu+1)\pi/2-(\nu-1/2)\theta$. For negative $\tau$, the choice
$\varphi\to-\pi/2$ gives respectively
$-(\nu+2)\pi/2+\nu\theta$ and
$-(\nu+1)\pi/2+(\nu+1)\theta$. All are strictly negative for
$-\pi/2-\pi/\nu<\theta<0$. Limiting rotations are taken from within
the allowed region. On a closed subsector they can be replaced by
actual rotations with one fixed negative margin, by a finite covering
of the angular interval.

For \eqref{boundI}, use the Poisson formula
\begin{equation}\label{PoissonI}
 (r/z^2)^{-s/2}I_s(x)=\frac1{\sqrt\pi\Gamma(s+1/2)}
  \int_{-1}^1e^{xu}(1-u^2)^{s-1/2}\,du.
\end{equation}
On closed subsectors of the right half-plane, the real integral is
$O_R(e^{\operatorname{Re}x}|x|^{-R-1/2})$ in absolute value.
The ratio $C_\nu(s)/\Gamma(s+1/2)$ and the factor
$z^{(\nu-1)s}$ give the two margins
$-(\nu+1)\pi/2\pm(\nu-1)\theta<0$ there.
The same $K$ calculation for $K_{0,\nu}$ gives the margins
$-\nu\pi/2\pm\nu\theta$ on $|\theta|<\pi/2$.
These bounds prove all asserted absolute-integral estimates.

To justify the contour shifts, fix a finite strip $c\leq\operatorname{Re}s\leq R$,
where $0<c<1/2$ and neither vertical boundary contains a pole. For a
closed subsector there exist $A,N,\delta>0$, $T_0\geq1$ and $z_0>0$
such that
\begin{equation}\label{highstripbound101}
 |\text{integrand}|\leq
 A(1+|\operatorname{Im}s|)^N e^{-\delta|\operatorname{Im}s|}
 |z|^{\nu\operatorname{Re}s+1/2}|e^{\mp x}|,
 \quad |\operatorname{Im}s|\geq T_0,\quad 0<|z|\leq z_0.
\end{equation}
There is no boundedness assertion at the enclosed half-integer poles.
On the compact parts of the two selected vertical boundaries, bounded
Gamma factors and \eqref{rotatedK} or \eqref{PoissonI} give the
corresponding estimate separately. Equation~\eqref{highstripbound101}
shows that the integrals over the horizontal sides of each rectangle
vanish as its height tends to infinity. Each required asymptotic order therefore
uses only a finite contour shift and finitely many residues.
Parameter differentiation introduces polynomial factors in $s$, finitely
many shifts of Bessel order, and finite powers of $z^{-1}$. We choose $R$
after fixing the number of derivatives. The strict exponential margins
then give bounds on the differentiated remainders. Each shift uses
finitely many residues, so no infinite residue sum is interchanged
with a limit.
\end{proof}

For the fourfold the substitution $\nu=3$ recovers the sectors
$(-\pi/2,\pi)$ and $(-5\pi/6,0)$, and all the shifted-contour
estimates used in the following proof. We also use the general estimate in
Theorem~\ref{familytwocolumn}, where a separate argument establishes the
Gamma marking.

\subsection{Spinor asymptotic sections}
Define positive rational numbers
\begin{equation}\label{anMB}
 a_n=\frac{(1/2)_n^4}{4^n n!},\qquad n\ge0,
\end{equation}
and for $s_0=\pm\sqrt r$ write the formal series
\begin{equation}\label{Aspinformal}
 \mathcal A(z;q,s_0)=
 \sum_{n\ge0}\frac{a_n z^{3n}}{q^n s_0^n}
     \sum_{k=0}^n\frac{(n+k)!}{k!(n-k)!(4s_0)^k}z^k.
\end{equation}
It is a formal asymptotic series; convergence is not asserted.

\begin{theorem}[Scalar connection constants and sectors]\label{spinorscalar}
Let $q,r>0$, and set $s_+=\sqrt r$, $s_-=-\sqrt r$.
The specified Gamma sections satisfy
\begin{align}
 (1,\mathfrak s(S_0))
 &\sim -\frac{i}{2\sqrt q\,s_+}
     e^{-2s_+/z}\mathcal A(z;q,s_+),
 &&-\pi/2<\arg z<\pi,\label{spinorplusasym}\\
 (1,\mathfrak s(S_1))
 &\sim -\frac{i}{2\sqrt q\,s_-}
     e^{-2s_-/z}\mathcal A(z;q,s_-),
 &&-5\pi/6<\arg z<\pi/2.\label{spinorminusasym}
\end{align}
Each statement is a full Poincar\'e expansion on closed subsectors,
uniform on compact positive parameter sets. In particular
\begin{equation}\label{firstscalartwo}
 \mathcal A(z;q,s_0)=
 1+\frac{z^3}{64qs_0}+\frac{z^4}{128qs_0^2}+O(z^6).
\end{equation}
There are no scalar corrections in orders $z,z^2,z^5$.
These are sufficient sectors for the contour proof, not an assertion
that their boundary rays are all Stokes rays or that the sectors are maximal.
\end{theorem}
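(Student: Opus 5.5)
We prove Theorem~\ref{spinorscalar} by working with the scalar central charges of Theorem~\ref{MBmarked}. By \eqref{scalarmasterGamma},
\[
(1,\mathfrak s(E))=(2\pi)^{-2}z^{-2}Z_E(q/z^2,r/z^2),
\]
with transported logarithms. We substitute $q\mapsto q/z^2$ and $r\mapsto r/z^2$ in \eqref{MBspinor}. The integrand then becomes, up to the constant $2$, the expression
\[
C(s)e^{-\pi i s}(q\sqrt r)^{-s}z^{3s}K_s(x),\qquad x=2\sqrt r/z,
\]
which is exactly the $\nu=3$ case of \eqref{boundS}. For $S_1$ we use $Z_{S_1}=\mathcal M_rZ_{S_0}$ together with \eqref{Besselturn}, which gives the kernel $C(s)(q\sqrt r)^{-s}z^{3s}K_s(xe^{-\pi i})$ of \eqref{boundSU}. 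This avoids the three-term decomposition \eqref{spinorIidentity}: the $4Z_{\OO}$ and $\mathcal I$ parts are subdominant or of the same exponential type on parts of the sector, and handling them would give only narrower sectors. The sectors $(-\pi/2,\pi)$ and $(-5\pi/6,0)$ then come from Lemma~\ref{Mellinsectorbound}.

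\textbf{Step 1: shifting the contour.} We move the contour from $\operatorname{Re}s=c$ to $\operatorname{Re}s=R$, with $R$ large and not a half-integer. This is justified by \eqref{highstripbound101}. Since $\Gamma(s)^4$ has no poles with $\operatorname{Re}s>0$, the only residues crossed are the simple poles of $\Gamma(1/2-s)$ at $s=n+1/2$, $0\le n<R-1/2$. Lemma~\ref{Mellinsectorbound} bounds the remaining integral by $O(|z|^{3R+1/2}|e^{-x}|)$, and the additional $z^{-2}$ prefactor does not affect this. The residue at $s=n+1/2$ equals
\[
\frac{2^{1-2n}(-1)^n}{\sqrt\pi\, n!}\,\Gamma(n+1/2)^4\,e^{-\pi i(n+1/2)}(q\sqrt r)^{-n-1/2}z^{3n+3/2}K_{n+1/2}(x),
\]
with the sign fixed by the orientation. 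The key feature is that half-integer Bessel functions are elementary:
\[
K_{n+1/2}(x)=\sqrt{\pi/(2x)}\,e^{-x}\sum_{k=0}^n\frac{(n+k)!}{k!(n-k)!(2x)^k}.
\]
Substituting $x=2\sqrt r/z$ turns $(2x)^{-k}$ into $z^k/(4s_+)^k$. The factor $\sqrt{\pi/2x}\,z^{3/2}z^{-2}$ combines with $q^{-1/2}r^{-1/4}$ and $e^{-\pi i/2}=-i$ to give the leading constant $-i/(2\sqrt q\,s_+)$ after the factor $(2\pi)^{-2}$ is included. The coefficient $\Gamma(n+1/2)^4/\Gamma(1/2)^4=(1/2)_n^4$ together with $(-1)^ne^{-\pi i n}=1$ and $4^{-n}$ gives $a_n$. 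Summing the residues therefore reproduces $e^{-2s_+/z}\mathcal A(z;q,s_+)$ up to order $z^{3R}$, which is the full Poincar\'e expansion \eqref{spinorplusasym}.

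\textbf{Step 2: the $S_1$ column and uniformity.} For $S_1$ the same residues appear, with $K_{n+1/2}(xe^{-\pi i})$ obtained from the elementary formula by $x\mapsto -x$ on the transported branch. This gives $e^{+x}=e^{-2s_-/z}$ and replaces $s_+$ by $s_-$ throughout, including the overall constant. The phase $e^{-\pi i s}$ is absorbed by the branch of $K_s(xe^{-\pi i})$ in \eqref{Besselturn}; this has to be tracked carefully to recover the stated sign. Uniformity on compact parameter sets, and for derivatives, follows from the final clause of Lemma~\ref{Mellinsectorbound}. The expansion \eqref{firstscalartwo} is read off from $n=0$, and from $n=1$ with $k=0,1$. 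Since the next contribution comes from $n=2$ and is of order $z^6$, there are no terms in $z,z^2,z^5$.

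\textbf{Main obstacle.} The critical step is the exact bookkeeping of constants and branches. This includes the orientation sign of each residue, the phase $e^{-\pi i(n+1/2)}$, and the branch of $\sqrt{x}$ on the rotated sector for $S_1$. Any error there changes the leading constants $\mp i/(2\sqrt q\,s_\pm)$, so I would check the $n=0$ term against a direct evaluation at real $z>0$.
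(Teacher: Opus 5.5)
Your Step~1 agrees with the paper: the contour shift, the residues $-\operatorname{Res}_{s=n+1/2}K_S=-2i\pi^{3/2}a_n$, the half-integer Bessel formula and the constant $-i/(2\sqrt q\,s_+)$ all match. So do your branch bookkeeping for $K_{n+1/2}(xe^{-\pi i})$ and your reading of \eqref{firstscalartwo}.

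\textbf{The gap: the sector for $S_1$.} Because you rely only on \eqref{boundSU}, you obtain \eqref{spinorminusasym} only on $(-5\pi/6,0)$. The theorem asserts it on $-5\pi/6<\arg z<\pi/2$. This loss matters. The interval $(-5\pi/6,0)$ has opening $5\pi/6<\pi$ and does not contain the positive real $z$-ray. It therefore cannot support the canonical-column identification in Theorem~\ref{twospinorcolumns}, which needs a sector of opening greater than $\pi$ around a phase in $(-\pi/3,0)$.

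\textbf{Why discarding \eqref{spinorIidentity} was the mistake.} The paper uses the three-term identity only on $|\arg z|<\pi/2$, where $\operatorname{Re}x>0$.
\begin{itemize}
\item There $4Z_{\OO}-Z_{S_0}$ is bounded by a polynomial times $e^{-x}$. For $Z_{\OO}$, shift the pole-free $K_0$ contour; for $Z_{S_0}$, use your Step~1. So this part is exponentially smaller than $e^{x}$.
\item The contour shift in $\mathcal I$ is justified by \eqref{boundI} through the Poisson formula \eqref{PoissonI}.
\item The $e^{x}$ parts of the residues of $I_{n+1/2}$ reproduce the same series with $s_-$; their $e^{-x}$ parts are again negligible.
\end{itemize}
Both representations are the same function, and $(-5\pi/6,0)$ overlaps $(-\pi/2,\pi/2)$. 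Their union gives the stated sector. Your objection that the decomposition "gives only narrower sectors" is true only for $\arg z<-\pi/2$, where the paper does not use it.

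\textbf{An alternative repair.} You could keep your single $K_s(xe^{-\pi i})$ representation, but then you must prove an extension of \eqref{boundSU} yourself. For $0\le\theta<\pi/2$, the lemma's choice $\varphi=-\theta/2$ in \eqref{rotatedK} is inadmissible, since it violates $|\varphi+\theta+\pi|<\pi$. You would need $\varphi\in(-\pi/2,-\theta)$. The $|\tau|$-coefficients are then $-2\pi-3\theta-\varphi$ for $\tau>0$ and $-\pi+4\theta+2\varphi$ for $\tau<0$. Both are negative on closed subsectors once $\varphi$ is taken close to $-\pi/2$.

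Either way, this step is missing from your proposal, and without it the statement is not proved.
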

\begin{proof}
The only positive poles of $K_S$ are $s=n+1/2$. The negative of their
residues is
\begin{equation}\label{spinorresidues}
 -\operatorname{Res}_{s=n+1/2}K_S(s)
 =-2i\pi^{3/2}a_n.
\end{equation}
The minus sign occurs because the initial upward contour is moved to
its right. There are no positive poles for $K_0$.
Substitute $q/z^2,r/z^2$ in \eqref{MBspinor}, keeping the logarithms;
its integrand is $2K_S(s)(q\sqrt r)^{-s}z^{3s}K_s(x)$.
Move the contour to any sufficiently large $R$.
Lemma~\ref{Mellinsectorbound} controls the new contour relative to
$e^{-x}$ uniformly throughout the first sector.
For each residue use the exact half-integer formula
\begin{equation}\label{halfK}
 K_{n+1/2}(x)=\sqrt{\frac\pi{2x}}e^{-x}
 \sum_{k=0}^n\frac{(n+k)!}{k!(n-k)!(2x)^k}.
\end{equation}
Equations \eqref{scalarmasterGamma}, \eqref{spinorresidues} and
\eqref{halfK} give \eqref{spinorplusasym}, including its constant.
Taking $R$ arbitrarily large gives the expansion to every order.

For the second object, on $-5\pi/6<\theta<0$, the continued integral
is exactly
\[
 \frac1{2\pi i}\int 2C(s)(q\sqrt r)^{-s}z^{3s}
                         K_s(xe^{-\pi i})\,ds.
\]
Now $-\operatorname{Res}C$ equals
$2\pi^{3/2}(-1)^n a_n$. The square root in \eqref{halfK}
contributes $i$ under $x\mapsto xe^{-\pi i}$, and its $k$th term
contributes $(-1)^k$. This gives the series
\eqref{spinorminusasym} with the opposite sign of $s_0$.
The second bound of Lemma~\ref{Mellinsectorbound} proves uniformity.

To cross $\theta=0$, use \eqref{spinorIidentity} and \eqref{PoissonI}.
On $|\theta|<\pi/2$, shifting the contour in $\mathcal I$ is justified
by \eqref{boundI}. Its positive half-integer residues give the $e^x$
part of the same series. The other half-integer exponential and the
terms $4Z_{\OO}-Z_{S_0}$ are bounded by a polynomial times $e^{-x}$,
and hence are exponentially smaller relative to $e^x$ on closed
subsectors. For $Z_{\OO}$ this follows by shifting its pole-free contour
to any $R>0$ and using the last assertion of the lemma.
The two sectors overlap, proving the stated union
$-5\pi/6<\theta<\pi/2$. Finally $a_1=1/64$ and the $n=1$
polynomial is $1+z/(2s_0)$, which gives \eqref{firstscalartwo}.
\end{proof}

Recall $e_s$ from \eqref{es} and put
\begin{equation}\label{signedpsispinor}
 \varphi_s=2i\sqrt q\,s\,e_s,\qquad s=\pm\sqrt r.
\end{equation}
Then $(\varphi_s,\varphi_s)=1$. We take $\sqrt q,\sqrt r>0$:
the two leading vectors are $2i\sqrt{qr}\,e_{\sqrt r}$ and
$-2i\sqrt{qr}\,e_{-\sqrt r}$. Their relative sign is fixed by the
integral constants and is retained throughout continuation.

\begin{theorem}[Two Gamma-marked asymptotic columns]\label{twospinorcolumns}
With the same sectors and uniformity as in Theorem~\ref{spinorscalar},
\begin{align}
 \mathfrak s(S_0)&=e^{-2\sqrt r/z}
       \left(\varphi_{\sqrt r}+O(z)\right),\label{Svectorplus}\\
 \mathfrak s(S_1)&=e^{2\sqrt r/z}
       \left(\varphi_{-\sqrt r}+O(z)\right).\label{Svectorminus}
\end{align}
Both estimates extend to complete formal vector expansions. At a
semisimple positive point, they characterize the spinor Gamma columns by
asymptotics on sectors of opening greater than $\pi$. The first may be
centered at a nearby admissible phase in $(0,\pi/2)$, the second at one
in $(-\pi/3,0)$. Section~\ref{sec:transportStokes} constructs a common
asymptotic foundation of eight objects.
\end{theorem}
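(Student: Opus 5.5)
The plan is to upgrade the scalar expansions of Theorem~\ref{spinorscalar} to vector expansions. All components of $\mathfrak s(E)$ are obtained from the single scalar $(1,\mathfrak s(E))$ by applying the corrected jet operators. By Proposition~\ref{pfaffian} and the gauge \eqref{gauge}, the vector $Y=T^{-1}F$ built from $f=(1,\mathfrak s(E))$ satisfies the flat system \eqref{flatdual}. Theorem~\ref{fixedframe} identifies this frame with the classical frame up to the transpose/duality convention, so $\mathfrak s(E)$ is recovered from the eight jets of $f$ through the constant Poincar\'e matrix $P$.

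Each jet $D_q^aD_r^bf$ is again a Mellin--Bessel integral, with polynomial factors in $s$ and finitely many shifts of Bessel order. The last assertion of Lemma~\ref{Mellinsectorbound} therefore gives full asymptotic expansions of all jets on the same closed subsectors. These expansions are obtained by termwise application of $D_q,D_r$ to $e^{-2s_0/z}\mathcal A(z;q,s_0)$, uniformly in parameters. Differentiability of the expansion in $q,r$ is exactly what the lemma supplies, so no separate Tauberian step is needed.

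Next I compute the leading vector. Applying $D_q=zq\partial_q$ and $D_r=zr\partial_r$ to $e^{-2s_0/z}(c+O(z))$ with $s_0=\pm\sqrt r$ gives
\[
D_qf\sim e^{-2s_0/z}\,O(z),\qquad D_rf\sim e^{-2s_0/z}\,(-s_0c+O(z)).
\]
Hence the jet vector has leading term $c\,(1,0,0,0,-s_0,0,0,-?)$, up to the $q$-dependent contribution coming from the prefactor $q^{-1/2}$, which is of order $z$. Here I must be careful: the order-$z$ corrections of $\mathcal A$ vanish through $z^2$ by \eqref{firstscalartwo}, while $D_q$ acting on $q^{-1/2}$ gives $-z/2$. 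Only $z^0$ terms matter for the leading vector, so the leading jet is $c$ times the vector of eigenvalues of $(H,\xi)$ at the idempotent, namely $(h^a\xi^b)=(1,0,0,0,s_0,0,0,0)$ evaluated at $H=0$, $\xi=s_0$, with the overall sign fixed by $D_r\leftrightarrow -\xi/z$. This is the general fact that a flat section with exponent $e^{-u/z}$ has leading vector in the $u$-eigenline of $C$. I would therefore argue abstractly: substituting $e^{-u/z}(v_0+zv_1+\dots)$ into \eqref{qconnection} forces $Cv_0=uv_0$ and $M_Hv_0=h v_0$, $M_\xi v_0=s_0v_0$. For $u=\pm2\sqrt r$ simple, this makes $v_0$ a multiple of $e_{s_0}$. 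The scalar $(1,v_0)$ equals the scalar leading constant $-i/(2\sqrt q\,s_0)$. Since $(1,e_s)=(e_s,e_s)=-1/(4qr)$, the multiple is $2i\sqrt q\,s_0$, which gives $\varphi_{s_0}$ as in \eqref{signedpsispinor}. This also confirms $(\varphi_s,\varphi_s)=1$ and the relative sign.

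Higher terms are formal recursion: away from the simple eigenvalue, the standard formal solution at an irregular singular point of Poincar\'e rank one with distinct eigenvalue $u$ is unique once $v_0$ is fixed. The computed jet expansions must therefore coincide with it, which gives complete vector expansions. For the characterization claim, the sectors $(-\pi/2,\pi)$ and $(-5\pi/6,\pi/2)$ have opening $3\pi/2$ and $4\pi/3$. They contain closed subsectors of opening $>\pi$ centered at admissible phases in $(0,\pi/2)$ and $(-\pi/3,0)$ respectively, since admissible phases are dense. The uniqueness lemma \cite[Lemma 2.6]{HK} then says a solution with such an expansion on a sector of opening $>\pi$ is the canonical column.

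The main obstacle is the uniformity issue near the boundary rays in the second sector, where the expansion is glued across $\theta=0$ via \eqref{spinorIidentity}. There the subdominant terms $4Z_{\OO}-Z_{S_0}$ must remain exponentially small after differentiation. I would handle this by invoking the derivative clause of Lemma~\ref{Mellinsectorbound} for each of the three bounds separately on overlapping closed subsectors.
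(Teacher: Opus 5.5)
Your argument is correct, and its overall structure is the paper's. You recover $\mathfrak s(E)$ from corrected jets of $h=(1,\mathfrak s(E))$ and take differentiable uniform remainders from Lemma~\ref{Mellinsectorbound}. You then pass to narrower sectors centered at admissible phases in $(0,\pi/2)$ and $(-\pi/3,0)$ and invoke \cite[Lemma 2.6]{HK}.

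The real difference is how you determine the leading vector. The paper computes it directly. With $\mathscr D_q=-zq\partial_q$ and $\mathscr D_r=-zr\partial_r$, the cup pairings $(b_j,\mathfrak s(E))$ have leading vector $a_s(1,0,0,-2qs,s,0,0,-2qr)$, and inverting $P$ gives $\varphi_s$. You instead substitute the vector expansion into \eqref{qconnection}. This gives $M_Hv_0=0$ and $M_\xi v_0=s_0v_0$, so $v_0$ lies on the simple eigenline of $e_{s_0}$. The scalar constant and $(1,e_s)=(e_s,e_s)=-1/(4qr)$ then force $v_0=\varphi_{s_0}$.

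Your route needs only the existence of a differentiable vector expansion and the simplicity of $\pm2\sqrt r$. It therefore does not depend on jet conventions and applies unchanged at the nonsemisimple point. The paper's route yields explicit cup-frame pairing formulas, which it reuses in Lemma~\ref{halfturn} and Theorem~\ref{positiveGammaall}.

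That independence matters, because your explicit jet identification contains a sign slip. The function $f$ does solve \eqref{Pq}--\eqref{Pr}, since these operators are invariant under $z\mapsto-z$. However, the pairings $(b_j,\mathfrak s(E))$ satisfy the corrected jet system in $\mathscr D_q,\mathscr D_r$, not in $D_q=zq\partial_q$, $D_r=zr\partial_r$. Consequently your $Y=T^{-1}F$ equals $\mathsf D P\mathfrak s(E)$, not $P\mathfrak s(E)$, and the phrase ``up to transpose/duality'' does not account for the parity operator $\mathsf D$.

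In addition, $(1,0,0,0,s_0,0,0,0)$ is the evaluation of the uncorrected quantum monomials, not the cup-frame pairings. The correct leading pairing vector is $c(1,0,0,-2qs_0,s_0,0,0,-2qr)$, where $c=-i/(2\sqrt q\,s_0)$ is the scalar constant. Its third and seventh entries come from the corrections $2q(\xi-H)$ and $2qr$ relating cup and quantum monomials, which is what your stalled ``$-?$'' computation was missing. Neither slip affects your final proof, because you determine $v_0$ from the connection rather than from these formulas.
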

\begin{proof}
To recover the eight components, for
$h=(1,\mathfrak s(E))$ put $\mathscr D_q=-zq\partial_q$ and
$\mathscr D_r=-zr\partial_r$. In the cup basis, the eight pairings
$y_j=(b_j,\mathfrak s(E))$ are
\begin{align*}
 y_0&=h,&y_1&=\mathscr D_qh,\\
 y_2&=\mathscr D_q^2h,&
 y_3&=(\mathscr D_q^3+2q\mathscr D_q-2q\mathscr D_r)h,\\
 y_4&=\mathscr D_rh,&y_5&=\mathscr D_q\mathscr D_rh,\\
 y_6&=\mathscr D_q^2\mathscr D_rh,&
 y_7&=(\mathscr D_q^3\mathscr D_r-2qr)h.
\end{align*}
These are the same unipotent jet correction as in
Section~\ref{sec:connection}, now with the sign required by the flat
rather than the dual equation. Invert the constant Poincar\'e matrix to
recover the vector. Differentiable uniform estimates follow from
Lemma~\ref{Mellinsectorbound} and the contour proof.

For $h\sim a_s e^{-2s/z}$, where $a_s=-i/(2\sqrt q\,s)$, the leading
pairing vector is
\[
 a_s(1,0,0,-2qs,s,0,0,-2qr).
\]
Direct inversion gives exactly \eqref{signedpsispinor}. The higher
coefficients are obtained by the same derivatives. One may, for example, take narrower sectors centered at $\pi/4$ and
$-\pi/6$, respectively; their openings can still exceed $\pi$.
At a semisimple point, avoid the finite set of inadmissible phases by
a small perturbation. Uniqueness of an asymptotic column on such a sector follows from
\cite[Definition 2.5 and Lemma 2.6]{HK}. At the nonsemisimple point,
only the two vector estimates are used; they remain valid because their
eigenvalues stay simple.
\end{proof}

\begin{remark}[Sectorial uniqueness]
The passage from a scalar asymptotic to a canonical Gamma column uses
both the corrected jets and a sector of opening greater than $\pi$.
A formal recursion alone does not determine a sectorial column:
on a narrower sector one may add an exponentially smaller flat section.
The contour estimate \eqref{angularKbound} supplies the angular control
needed for uniqueness.
\end{remark}

\subsection{Two spinor columns over every odd-dimensional quadric}
\label{sec:familytransfer}
Let $\nu=2m+1\geq3$ and write
\[
 X_\nu=\PP_{Q^\nu}(\OO\oplus\OO(1)),\qquad
 S_{\nu,0}=\pi^*S_\nu,\qquad S_{\nu,1}=S_{\nu,0}\otimes U.
\]
Here $S_\nu$ is the normalized spinor bundle on $Q^\nu$, of rank
$2^m$. We use the same notation $H,\xi,L,U$ and the same quotient convention
on $X_\nu$ as on $X=X_3$. The framing is
\[
 \mathfrak s_\nu(E)=(2\pi)^{-(m+1)}\mathcal L_\nu
 z^{-\mu_\nu}z^{\rho_\nu}\widehat\Gamma_{X_\nu}\Ch(E),\qquad
 \rho_\nu=((\nu-1)H+2\xi)\cup,
\]
where $\mu_\nu|_{H^{2p}}=p-m-1$. All constants below use this framing.

\begin{lemma}[Cyclic normalization in odd dimension]\label{familycyclic}
The classical ring is
$\C[H,\xi]/(H^{\nu+1},\xi(\xi-H))$, with
$\int_{X_\nu}H^\nu\xi=2$, and the small quantum ring is
\begin{equation}\label{familyring}
 QH^*(X_\nu)=\C[q,r,H,\xi]/
 \bigl(H^{\nu+1}-4qH(\xi-H),\ \xi(\xi-H)-r\bigr).
\end{equation}
Its rank is $2\nu+2$. For $s^2=r$, the primitive idempotent on the
branch $H=0,\xi=s$ has the following expression in the cup frame:
\begin{equation}\label{familyidempotent}
 e_{\nu,s}=\frac14+\frac{\xi-H}{4s}
                -\frac{H^\nu}{8qs}-\frac{H^\nu\xi}{8qr},
 \qquad (e_{\nu,s},e_{\nu,s})=-\frac1{4qr}.
\end{equation}
For $q,r>0$ its anticanonical eigenvalue $2s$ is simple.
\end{lemma}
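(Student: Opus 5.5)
The plan is to reduce everything to the $\nu=3$ computations, keeping $\nu$ as a parameter.

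\emph{Classical ring.} This is the projective bundle formula for $\PP_{Q^\nu}(\OO\oplus\OO(1))$. The rational cohomology of the odd quadric $Q^\nu$ is $\C[H]/(H^{\nu+1})$ with $\int_{Q^\nu}H^\nu=2$. The Grothendieck relation for $\pi_*\OO_\pi(1)=\OO\oplus\OO(1)$ is $\xi(\xi-H)=0$. Pushforward along the fiber gives $\int_{X_\nu}H^\nu\xi=2$.

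\emph{Quantum ring.} I will redo the argument of Proposition~\ref{mirror} and Theorem~\ref{ring}. Realize $X_\nu$ as a quadric divisor $2H$ in the toric manifold $\PP_{\PP^{\nu+1}}(\OO\oplus\OO(1))$. The $I$-function then has base factor $\prod_{m=1}^{2d}(2H+mz)/\prod_{m=1}^d(H+mz)^{\nu+2}$ with the same fiber factors $R_e(\xi,z)R_{e-d}(\xi-H,z)$. Since $c_1(X_\nu)=(\nu-1)H+2\xi$, every nonzero effective class has anticanonical degree at least $\min(\nu-1,2)=2$, so the mirror map is trivial. The annihilators become $D_q^{\nu+1}-2q(2D_q+z)(D_r-D_q)$ and $D_r(D_r-D_q)-r$. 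Taking principal symbols gives the relations \eqref{familyring}. The leading monomials $H^{\nu+1}$ and $\xi^2$ are coprime, so the two relations form a Gr\"obner basis. The standard monomials $H^a\xi^b$ with $a\le\nu$, $b\le1$ then give rank $2\nu+2$, and the unipotent-triangular comparison with the classical monomials goes through verbatim.

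\emph{Idempotent.} I will follow Proposition~\ref{proj}. In quantum monomials put $p_0=1-H^{\star\nu}\star\xi/(4qr)$. On the branch $H\neq0$ we have $H^{\star\nu}=4q(\xi-H)$, hence $H^{\star\nu}\star\xi=4q(\xi^2-H\xi)=4qr$; on $H=0$ the product vanishes. So $p_0$ is the projector onto the two points $H=0$, $\xi=\pm\sqrt r$. Set $p_1=\xi\star p_0$ and $e_{\nu,s}=\tfrac12(p_0+s^{-1}p_1)$. Then $\xi\star e_{\nu,s}=s\,e_{\nu,s}$ and $H\star e_{\nu,s}=0$, so the $c_1$-eigenvalue is $(\nu-1)\cdot0+2s=2s$.

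\emph{Conversion to the cup frame (main obstacle).} This is the step I expect to require the most care: it needs the quantum corrections to $H^{\star\nu}$ and $H^{\star\nu}\star\xi$. Degree counting shows that a $q^dr^e$ correction lowers the complex degree by $(\nu-1)d+2e$. I will compute these corrections from the jet gauge, generalizing \eqref{gauge}: the $(d,e)=(1,0)$ term contributes to $H^{\star\nu}$, and the $(1,1)$ term contributes a constant to $H^{\star\nu}\star\xi$. Equivalently, I will solve for them using the relation $H^{\star\nu+1}=4qH\star(\xi-H)$ together with the fact that the Poincar\'e pairing is Frobenius. I expect
\[
 H^{\star\nu}=H^\nu+2q(\xi-H)\ \text{(up to terms killed below)},\qquad H^{\star\nu}\star\xi=H^\nu\xi+2qr.
\]
Substituting, $p_0=\tfrac12-H^\nu\xi/(4qr)$ and $p_1=\tfrac{\xi-H}{2}-H^\nu/(4q)$, which yields \eqref{familyidempotent}. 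As a check, evaluate the value of $e_{\nu,s}$ on idempotents.

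\emph{Norm and simplicity.} The norm follows from $\int_{X_\nu}H^\nu\xi=2$. Only the degree-$(\nu+1)$ parts pair nontrivially: the cross terms give $2\cdot\tfrac14\cdot(-\tfrac{2}{8qr})$ from $1\cdot H^\nu\xi$, and $\tfrac{\xi-H}{4s}\cdot(-\tfrac{H^\nu}{8qs})$ gives $2\cdot(-\tfrac{2}{32qr})$. Their sum is $-\tfrac1{4qr}$, using $H^{\nu+1}=0$. A cleaner route, parallel to \eqref{blockpairing}, is $(e,e)=\int e$, which reads off the coefficient of $H^\nu\xi$ times $2$.

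For simplicity, note that on the other branch $u=(\nu-1)H+2\xi$ with $\xi=H+H^\nu/(4q)$ and $H^{\nu-1}\ne0$. Write $t=H^2/q$, so that $t^{m}(t^{m}+4\cdot\text{const})$ satisfies a real polynomial equation. Equality $u=\pm2\sqrt r$ would force $H=0$, since $u^2-4r=(\nu-1)^2H^2+4(\nu-1)H\xi+4H\xi$-type terms, all divisible by $H$, and $u^2-4r=0$ together with $H\ne0$ contradicts positivity for $q,r>0$. I would verify this by substituting $\xi$ and checking that the resulting quotient has no roots for positive parameters. Finally, $2s\neq-2s$ because $r>0$.
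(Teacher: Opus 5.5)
Your treatment of the classical ring, the presentation \eqref{familyring}, the rank, the projector $p_0$ and the norm follows the paper's argument and is correct. So does the cup-frame conversion through the jet gauge, where the $(d,e)=(1,0)$ and $(1,1)$ terms give $H^{\star\nu}=H^\nu+2q(\xi-H)$ and $H^{\star\nu}\star\xi=H^\nu\xi+2qr$.

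\textbf{Simplicity of $2s$ is not yet proved.} On the branch $H=h\neq0$, $\xi-H=v$, your factorization correctly gives $u^2-4r=h\bigl((\nu+1)^2h+4\nu v\bigr)$. The check you propose, that ``the resulting quotient has no roots for positive parameters'', is false as stated. Combined with $h^\nu=4qv$, the equation $(\nu+1)^2h+4\nu v=0$ gives $h^{\nu-1}=-(\nu+1)^2q/\nu$, which has nonreal solutions for every $q>0$.

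The contradiction appears only after you impose the second relation:
\begin{itemize}
\item With $v=-(\nu+1)^2h/(4\nu)$ you get $r=v(v+h)=(\nu^2-1)^2h^2/(16\nu^2)$.
\item So $r>0$ forces $h$ to be real.
\item Then $q=h^\nu/(4v)=-\nu h^{\nu-1}/(\nu+1)^2<0$, because $\nu-1$ is even.
\end{itemize}
This is exactly where the oddness of $\nu$ is used. The paper reaches the same contradiction by solving $u=2s$ directly, obtaining $h=4\nu s/(\nu^2-1)$ and $v=-(\nu+1)s/(\nu-1)$.

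You should also record that the two $H=0$ points are reduced. There $H^\nu-4q(\xi-H)=-4qs$ is a unit, so the first relation is locally $H=0$, and $\xi^2=r$ has simple roots. Without this, $p_0$ need not have rank two and $e_{\nu,s}$ need not be primitive.

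\textbf{The ``equivalent'' route to the corrections does not work.} The ring relation and the Frobenius property do not determine the quantum corrections. Take the map that sends $H^\nu\mapsto H^\nu+tqH$ and $H^{\nu-1}\xi\mapsto H^{\nu-1}\xi+tq(H-\xi)$ and fixes the other cup monomials. It is pairing-preserving, weight-homogeneous and unipotent, and it fixes $1,H,\xi$. Conjugating $\star$ by it preserves the presentation and the Frobenius property, but replaces $H^{\star\nu}$ by $H^\nu-tqH+2q(\xi-H)$.

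So the constants must come from the $J$-function jets, as in your primary plan. They are also needed exactly, not ``up to terms killed below'': $p_1=\xi-H-H^{\star\nu}/(4q)$ uses the full degree-one correction.
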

\begin{proof}
Quantum Lefschetz in
$\PP_{\PP^{\nu+1}}(\OO\oplus\OO(1))$, or Brown's fibration theorem
\cite{Brown,CG,Givental}, gives
\begin{equation}\label{familyJ}
 J_\nu=q^{H/z}r^{\xi/z}\sum_{d,e\geq0}q^d r^e
 \frac{\prod_{j=1}^{2d}(2H+jz)}{\prod_{j=1}^{d}(H+jz)^{\nu+2}}
 R_e(\xi,z)R_{e-d}(\xi-H,z).
\end{equation}
Odd quadrics have only ambient cohomology. The anticanonical degree of
$d\beta+ef$ is $(\nu-1)d+2e$, so the normalized series has no
mirror-coordinate correction. The two annihilators are
\begin{equation}\label{familyPDE}
 D_q^{\nu+1}-2q(2D_q+z)(D_r-D_q),\qquad
 D_r(D_r-D_q)-r.
\end{equation}
Their principal symbols give \eqref{familyring}; the two monic leading
monomials $H^{\nu+1},\xi^2$ are coprime. The quantum monomials specialize
to a classical basis by a triangular transformation, proving the rank
and the presentation.

To reconstruct vectors, we use the following normalization.
Use the $2\nu+2$ jets in two blocks,
$F=(f,D_qf,\ldots,D_q^\nu f,D_rf,D_qD_rf,\ldots,D_q^\nu D_rf)^{\mathsf T}$.
All their derivatives reduce by \eqref{familyPDE} and
\[
 D_q^{\nu+1}D_rf=4qrD_qf+2qrzf.
\]
Thus the scalar solution rank is at most $2\nu+2$; the independent
classical logarithmic coefficients of $J_\nu$ give equality. In the
ordered jet vector, indexed from zero, the triangular correction is
\[
 T_\nu=I-2qE_{\nu,1}+2qE_{\nu,\nu+1}
                  +2qrE_{2\nu+1,0},\qquad (T_\nu-I)^2=0.
\]
Direct reduction of the two jet equations shows that $T_\nu^{-1}$
removes their $z$-linear terms. To fix its geometric frame, the only
extra constant terms of the uncorrected $J_\nu$ jets are
$2q(\xi-H)$ in position $\nu$, and $2qr$ in position $2\nu+1$.
They come from $(d,e)=(1,0),(1,1)$, respectively. For $e<d$ the
coefficient has order at most
$z^{-(\nu-1)d-2e-1}$; for $e\geq d$ it has order at most
$z^{-(\nu-1)d-2e}$. The fiber jet kills the $e=0$ contributions because
$\xi(\xi-H)=0$. These estimates exclude every other constant or positive
power. Hence the corrected leading classes are precisely
$(H^j,H^j\xi)_{0\leq j\leq\nu}$.

In quantum-polynomial notation the projector onto $H=0$ is
$1-H^{\star\nu}\star\xi/(4qr)$. The normalization just proved gives
$H^{\star\nu}=H^\nu+2q(\xi-H)$ and
$H^{\star\nu}\star\xi=H^\nu\xi+2qr$ in cup classes. Multiplication
by $\xi$ gives the other generator; dividing their sum by two yields
\eqref{familyidempotent}. Its norm follows by cup integration.
To check simplicity of the exponent, another branch would satisfy
$h^\nu=4qv$, $v(v+h)=r$ and $(\nu+1)h+2v=2s$.
For $h\ne0$ these imply
\[
 h=\frac{4\nu s}{\nu^2-1},\qquad
 v=-\frac{\nu+1}{\nu-1}s.
\]
Since $\nu$ is odd, $h^\nu/(4v)<0$ for real nonzero $s$. This cannot
equal $q>0$, whereas the two $H=0$ points themselves are reduced.
\end{proof}

\begin{proposition}[Gamma marking of the family transform]\label{familymarking}
Define
\[
 Z_E^{(\nu)}=\int_{X_\nu}\widehat\Gamma_{X_\nu}\Ch(E)J_\nu(q,r,-1).
\]
For positive parameters, real logarithms and $0<c<1/2$,
\begin{align}
 Z_{\OO}^{(\nu)}&=\frac1{2\pi i}\int_{c-i\infty}^{c+i\infty}
 K_{0,\nu}(s)q^{-s}\,2r^{-s/2}K_s(2\sqrt r)\,ds,\label{familyMB0}\\
 Z_{S_{\nu,0}}^{(\nu)}&=\frac1{2\pi i}\int_{c-i\infty}^{c+i\infty}
 K_{S,\nu}(s)q^{-s}\,2r^{-s/2}K_s(2\sqrt r)\,ds.\label{familyMBS}
\end{align}
Equivalently, define the unnormalized base Gamma central charge by
\[
 Z^{Q^\nu}_{S_\nu}(Q)=\int_{Q^\nu}\widehat\Gamma_{Q^\nu}
                         \Ch(S_\nu)J_{Q^\nu}(Q,-1).
\]
Then the exact transfer is
\begin{equation}\label{familyconvolution}
 Z^{(\nu)}_{S_{\nu,0}}(q,r)=
 \int_0^\infty Z^{Q^\nu}_{S_\nu}(qv)e^{-v-r/v}\,\frac{dv}{v}.
\end{equation}
The $U$-twist is the clockwise $r$ continuation of \eqref{familyMBS}.
These identities retain the Gamma marking of the specified objects.
The unit component is
\begin{equation}\label{familymaster}
 (1,\mathfrak s_\nu(E)(q,r,z))=
 (2\pi)^{-(m+1)}z^{-(m+1)}
 Z_E^{(\nu)}(q/z^{\nu-1},r/z^2),
\end{equation}
with transported logarithms.
\end{proposition}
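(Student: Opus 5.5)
The plan is to repeat the proof of Theorem~\ref{MBmarked} with $\nu$ in place of $3$. The argument has three steps: check that both Barnes integrals solve the system \eqref{familyPDE} at $z=1$, match their zero-degree logarithmic polynomials with the Gamma data, and then invoke uniqueness for the regular-singular recursion. First, the representation of $\mathcal K_s(r)=2r^{-s/2}K_s(2\sqrt r)$ as a Mellin integral in $t$ gives the double kernels
\[
F_{0,\nu}(s,t)=\frac{\Gamma(s)^{\nu+2}\Gamma(t)\Gamma(t-s)}{\Gamma(2s)}
\]
and $F_{S,\nu}=\frac{2^{m+1}}{1+e^{2\pi is}}F_{0,\nu}$. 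The multiplier comes from the analogue of \eqref{spinorfactor}: reflection and duplication give $e^{-\pi is}C_\nu(s)=\frac{2^{m+1}}{1+e^{2\pi is}}K_{0,\nu}(s)$, which is $\ch(S_\nu)$ under $H\mapsto 2\pi is$. Stirling's formula bounds $F_{0,\nu}$ by $\exp(-\tfrac{\nu}{2}\pi|\operatorname{Im}s|-\tfrac\pi2|\operatorname{Im}t|-\tfrac\pi2|\operatorname{Im}(t-s)|)$ times a polynomial, which is stronger than for $\nu=3$. This justifies absolute convergence, Fubini and differentiation. The ratios become $F(s+1,t)/F(s,t)=s^{\nu+1}/(2(2s+1)(t-s-1))$ and $t(t-s)$, so shifting contours by one unit yields exactly \eqref{familyPDE}. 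The factor $s^{\nu+1}$ cancels the pole at $s=0$, no half-integer pole is crossed because the shift goes from $c-1$ to $c$, and no boundary residues occur.

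Second, I will identify the marking. The leading residue is $\operatorname{Res}_{s=0}K_{a,\nu}(s)q^{-s}\{\Gamma(-s)+\Gamma(s)r^{-s}\}$, and we have $K_{0,\nu}(s)=2\widehat\Gamma_{Q^\nu}(s)/s^{\nu+1}$ with $\widehat\Gamma_{Q^\nu}=\Gamma(1+H)^{\nu+2}/\Gamma(1+2H)$ evaluated at $H=s$. The pushforward $\pi_*f=(f(H,H)-f(H,0))/H$ applied to $\widehat\Gamma_{X_\nu}\Ch(E)e^{-H\log q-\xi\log r}$ therefore reproduces the residue. Here $\int_{Q^\nu}H^\nu=2$ accounts for the $2$, and the residue at a pole of order $\nu+1$ is the top-degree coefficient. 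This holds for $E=\OO$ and $E=S_{\nu,0}$. The eight-term argument of Theorem~\ref{MBmarked} has a $(2\nu+2)$-term analogue: the recursion operators are $dI+$ nilpotent, and the initial logarithmic polynomials are independent because $H,\xi$ generate the ring of Lemma~\ref{familycyclic}. Agreement of the zero-degree data then forces \eqref{familyMB0}--\eqref{familyMBS}. The central-charge side converges by the family version of Lemma~\ref{smallconv}.

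Third, the remaining identities follow quickly. For \eqref{familyconvolution}, I will insert $\mathcal K_s(r)=\int_0^\infty v^{-s}e^{-v-r/v}dv/v$ into \eqref{familyMBS} and apply Fubini. The inner $s$-integral $\frac1{2\pi i}\int K_{S,\nu}(s)(qv)^{-s}ds$ is the base central charge $Z^{Q^\nu}_{S_\nu}(qv)$; this is the same marking argument on $Q^\nu$ alone, with the $\nu+1$-st order operator $D^{\nu+1}-4Q(2D+1)$. The $U$-twist follows as in \eqref{Besselturn}: continuing $\log r\mapsto\log r-2\pi i$ replaces $r^{-H}$ by $e^{2\pi iH}r^{-H}$ in the $\xi=H$ term, which multiplies $\Ch$ by $e^{\omega\xi}$. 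Finally, \eqref{familymaster} follows from $\mathcal L(-z)^*\mathcal L(z)=I$ and homogeneity, where $\deg q=\nu-1$ and $\deg r=2$ by the anticanonical degrees. The factor $z^{-(m+1)}$ arises from $z^{-\mu_\nu}$ on the top class, and the logarithms are transported.

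I expect the main obstacle to be the rigorous justification of the residue identification for a pole of order $\nu+1$ in the iterated shifts: the $s$-contour must be moved to $-\epsilon$ uniformly in $t$ before the fiber residues are taken. The Stirling margins grow with $\nu$, so that part is easier. I will first try to verify directly that the resulting logarithmic polynomial is a Frobenius-type solution of the reconstructed first-order system.
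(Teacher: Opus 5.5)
Your proposal is correct and follows the paper's proof essentially step for step: the double Barnes kernel with the Hu--Ke spinor multiplier, the shift ratios giving \eqref{familyPDE} with the crossed poles cancelled, and the zero-residue marking via $K_{0,\nu}=2\widehat\Gamma_{Q^\nu}(s)/s^{\nu+1}$ and $\pi_*$. The later steps also match: uniqueness from the recursion operators $dI+N$ with $N$ nilpotent, Fubini plus the base marking for \eqref{familyconvolution}, the $r$-continuation multiplying the initial polynomial by $e^{2\pi i\xi}$, and homogeneity for \eqref{familymaster}.

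One harmless slip: the base operator is $D_Q^{\nu+1}-2Q(2D_Q+z)$, with coefficient $2$ rather than $4$ and taken at $z=-1$ for the central charge. This follows from the ratio $K_{0,\nu}(s+1)/K_{0,\nu}(s)=s^{\nu+1}/(2(2s+1))$.
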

\begin{proof}
Hu-Ke's spinor character formula \cite[Theorem 4.1]{HK} is
$\ch(S_\nu)=2^{m+1}/(1+e^H)$. Reflection and duplication therefore give
\begin{equation}\label{familymultiplier}
 K_{S,\nu}(s)=\frac{2^{m+1}}{1+e^{2\pi i s}}K_{0,\nu}(s).
\end{equation}
The double Mellin kernel is
$F_{0,\nu}(s,t)=\Gamma(s)^{\nu+2}\Gamma(t)\Gamma(t-s)/\Gamma(2s)$,
multiplied by the periodic factor in \eqref{familymultiplier} for the
spinor. Its shift ratios are
\[
 \frac{F_{0,\nu}(s+1,t)}{F_{0,\nu}(s,t)}
 =\frac{s^{\nu+1}}{2(2s+1)(t-s-1)},\qquad
 \frac{F_{0,\nu}(s,t+1)}{F_{0,\nu}(s,t)}=t(t-s).
\]
For $\operatorname{Re}t>\operatorname{Re}s>0$, Stirling's formula gives
absolute convergence and justifies Fubini's theorem. Shifting by one gives \eqref{familyPDE}; the
factors $s^{\nu+1}$ and $t(t-s)$ cancel the crossed poles at zero
and the two fiber families, respectively. No positive half-integer is
crossed in this step.

The initial logarithmic polynomial is the residue at $s=0$ of
\[
 K_{a,\nu}(s)q^{-s}\{\Gamma(-s)+\Gamma(s)r^{-s}\},
 \qquad a=0,S.
\]
Use $K_{0,\nu}(s)=2\widehat\Gamma_{Q^\nu}(s)/s^{\nu+1}$,
$\pi_*f(H,\xi)=(f(H,H)-f(H,0))/H$, and
\[
 \widehat\Gamma_{X_\nu}=
 \frac{\Gamma(1+H)^{\nu+2}}{\Gamma(1+2H)}
 \Gamma(1+\xi)\Gamma(1+\xi-H).
\]
The displayed residue is exactly
$\int_{X_\nu}\widehat\Gamma_{X_\nu}\Ch(E)q^{-H}r^{-\xi}$ for
$E=\OO,S_{\nu,0}$. The classical residues at the crossing of the two
coordinate axes are nilpotent. The regular-singular recursions have
invertible operators $dI+$ a nilpotent matrix for positive degrees.
Thus these $2\nu+2$ initial logarithmic coefficients fix the section
uniquely, as in the fourfold argument.
The small series converge with all derivatives: for $e=d+k$ a scalar
majorant is $4^d/((d!)^{\nu+1}(k!)^2)$, and for $e<d$ it is
$4^d/((d!)^{\nu-1}(e!)^2)$, up to fixed logarithmic factors. Both
are summable on compact parameter sets. This also justifies the
regular-singular expansions used to compare initial polynomials.
The base Mellin integral $\int K_{S,\nu}(s)Q^{-s}ds/(2\pi i)$
has the stated base Gamma marking by the same zero-residue computation,
now using $\int_{Q^\nu}H^\nu=2$. Its integral against
$e^{-v-r/v}dv/v$ converges absolutely on $\operatorname{Re}s=c$,
with real-variable majorant $v^{-c}e^{-v-r/v}$. Fubini's theorem gives
\eqref{familyconvolution}, expressing the transfer from the base to the bundle.

The Bessel continuation formula \eqref{Besselturn} now gives
\[
 Z_{S_{\nu,1}}^{(\nu)}=2^{m+1}Z_{\OO}^{(\nu)}
       -Z_{S_{\nu,0}}^{(\nu)}+2\pi i\mathcal I_\nu,
\]
where $\mathcal I_\nu$ is \eqref{Itransform} with $C$ replaced by
$C_\nu$. Its initial polynomial is multiplied by $e^{2\pi i\xi}$,
so the object is exactly $S_{\nu,0}U$.
Finally the usual symplectic identity for $\mathcal L_\nu$ and the
weights $\deg q=\nu-1$, $\deg r=2$ give \eqref{familymaster}.
This is an identification with the quantum Gamma framing, not with
an integral geometric cycle.
\end{proof}

\begin{theorem}[Two-column spinor transfer]\label{familytwocolumn}
For every fixed odd $\nu=2m+1\geq3$ and $q,r>0$, set
$\varphi_{\nu,s}=2i\sqrt q\,s\,e_{\nu,s}$ for $s=\pm\sqrt r$.
Then $(\varphi_{\nu,s},\varphi_{\nu,s})=1$, and
\begin{align}
 \mathfrak s_\nu(S_{\nu,0})
 &=e^{-2\sqrt r/z}\bigl(\varphi_{\nu,\sqrt r}+O(z)\bigr),
 &-\pi/2<\arg z<\pi/2+\frac{2\pi}{\nu+1},\label{familyplus}\\
 \mathfrak s_\nu(S_{\nu,1})
 &=e^{2\sqrt r/z}\bigl(\varphi_{\nu,-\sqrt r}+O(z)\bigr),
 &-\pi/2-\frac\pi\nu<\arg z<\pi/2.\label{familyminus}
\end{align}
Both formulas extend to full differentiable expansions on closed
subsectors, locally uniformly in the positive parameters. At a point
with distinct spectrum, they identify two canonical Gamma columns.
For $X_\nu$ with $\nu>3$, the result concerns these two columns and
does not assert Gamma II for a complete foundation.

More precisely, put $s_\pm=\pm\sqrt r$ and
\begin{equation}\label{familyseries}
 \mathcal A_\nu(z;q,s_\pm)=\sum_{k\geq0}
 \frac{(1/2)_k^{\nu+1}}{4^k k!}\frac{z^{\nu k}}{q^ks_\pm^k}
 \sum_{j=0}^k\frac{(k+j)!}{j!(k-j)!(4s_\pm)^j}z^j.
\end{equation}
The corresponding scalar expansions on the same sectors are
\begin{equation}\label{familyscalar}
 (1,\mathfrak s_\nu(S_{\nu,0/1}))
 \sim-\frac{i}{2\sqrt q\,s_\pm}e^{-2s_\pm/z}
               \mathcal A_\nu(z;q,s_\pm),
\end{equation}
where $0/1$ is paired with $+/-$, respectively. The connection constant
is independent of $\nu$ for the specified framing in each dimension.
\end{theorem}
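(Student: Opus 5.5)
The plan is to repeat the fourfold argument of Theorems~\ref{spinorscalar} and \ref{twospinorcolumns} in dimension $\nu$, with Proposition~\ref{familymarking} supplying the Gamma marking and Lemma~\ref{Mellinsectorbound} supplying the contour estimates.

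\textbf{Step 1: the $S_{\nu,0}$ sector.} Substituting $q/z^{\nu-1}$ and $r/z^2$ with transported logarithms into \eqref{familyMBS}, and using \eqref{familymaster}, the unit component becomes
\[
(2\pi)^{-(m+1)}z^{-(m+1)}\frac{1}{2\pi i}\int 2K_{S,\nu}(s)(q\sqrt r)^{-s}z^{\nu s}K_s(x)\,ds,
\qquad x=2\sqrt r/z.
\]
Shift the contour to $\operatorname{Re}s=R$, with $R$ not a half-integer. By \eqref{boundS} the remainder is $O(|z|^{\nu R+1/2}|e^{-x}|)$ on closed subsectors of $-\pi/2<\arg z<\pi/2+2\pi/(\nu+1)$. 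By \eqref{familymultiplier}, the only crossed poles are $s=k+1/2$. There $\Gamma(1/2-s)$ has residue $(-1)^{k+1}/k!$. Combining this with $e^{-\pi i s}$ and $\Gamma(k+1/2)^{\nu+1}=\pi^{(\nu+1)/2}(1/2)_k^{\nu+1}$ gives
\[
-\operatorname{Res}_{s=k+1/2}K_{S,\nu}=-i\,2^{m-2k}\pi^{m+1/2}(1/2)_k^{\nu+1}/k!.
\]
Inserting the exact half-integer Bessel formula \eqref{halfK} and collecting powers, the $2^m\pi^{m}$ factors cancel against $(2\pi)^{-(m+1)}$. The factor $\sqrt{\pi/2x}$ contributes $z^{1/2}$, which combines with $z^{-(m+1)}z^{\nu/2}$ to give $z^{0}$, since $\nu/2+1/2=m+1$. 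The $k$th residue then reproduces the $k$th term of \eqref{familyseries} with prefactor $-i/(2\sqrt q\,s_+)$. Bookkeeping these constants is where I expect the main risk of error. Nothing in the argument depends on $\nu$ except the sector and the exponent of $z$.

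\textbf{Step 2: the $S_{\nu,1}$ sector.} For $S_{\nu,1}$ on $-\pi/2-\pi/\nu<\arg z<0$, use the continued integrand $2C_\nu(s)(q\sqrt r)^{-s}z^{\nu s}K_s(xe^{-\pi i})$, which is obtained from \eqref{Besselturn} and \eqref{familymultiplier}. Estimate it by \eqref{boundSU}. Under $x\mapsto xe^{-\pi i}$ the residues acquire the factor $(-1)^k$ and the terms of \eqref{halfK} acquire $(-1)^j$. Together with the factor $i$ from the square root, this converts $s_+$ into $s_-$ and gives the same constant. To extend across $\arg z=0$ up to $\pi/2$, use the identity
\[
Z_{S_{\nu,1}}^{(\nu)}=2^{m+1}Z_{\OO}^{(\nu)}-Z_{S_{\nu,0}}^{(\nu)}+2\pi i\,\mathcal I_\nu.
\]
Estimate $\mathcal I_\nu$ by \eqref{boundI}; its half-integer residues give the $e^{x}$ series. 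The remaining terms are $O(|z|^N e^{-x})$, by the pole-free $K_{0,\nu}$ bound and the $S_{\nu,0}$ bound, and are therefore exponentially recessive on closed subsectors of $|\arg z|<\pi/2$. The two sectors overlap, which gives the stated union.

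\textbf{Step 3: vector expansion and canonicity.} Reconstruct the full vector from the scalar component through the $2\nu+2$ corrected jets of Lemma~\ref{familycyclic}, with signs appropriate to the flat equation:
\[
y_j=\mathscr D_q^jh,\qquad y_{\nu+1+j}=\mathscr D_q^j\mathscr D_rh,
\]
together with the corrections $+2q\mathscr D_q-2q\mathscr D_r$ in position $\nu$ and $-2qr$ in position $2\nu+1$. Differentiated remainders are controlled by the derivative clause of Lemma~\ref{Mellinsectorbound}.

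With $h\sim a_se^{-2s/z}$, the leading pairing vector has entries $a_s$ at $0$, $a_s s$ at $\nu+1$, $-2qsa_s$ at $\nu$, $-2qra_s$ at $2\nu+1$, and zero elsewhere. Pairing \eqref{familyidempotent} against the cup basis, using $\int H^\nu\xi=2$, gives
\[
(1,e)=-\tfrac1{4qs},\quad(\xi,e)=-\tfrac1{4q},\quad(H^\nu,e)=\tfrac1{2s},\quad(H^\nu\xi,e)=\tfrac12 .
\]
Multiplying by $\varphi_{\nu,s}=2i\sqrt q\,s\,e_{\nu,s}$ reproduces exactly this vector. The norm $(\varphi,\varphi)=-4qr\,(e,e)=1$ follows from \eqref{familyidempotent}.

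Both sectors have opening exceeding $\pi$: $\pi+2\pi/(\nu+1)$ and $\pi+\pi/\nu$. The exponents $\pm2\sqrt r$ are simple by Lemma~\ref{familycyclic}. Hence at points with distinct spectrum the uniqueness lemma of \cite[Lemma 2.6]{HK} identifies the two sections as canonical columns.
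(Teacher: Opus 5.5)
Your proposal follows the paper's proof essentially step by step: the negative residues of $K_{S,\nu}$ at $s=k+1/2$ combined with the half-integer Bessel formula, the continued $K_s(xe^{-\pi i})$ integrand on $-\pi/2-\pi/\nu<\arg z<0$ glued to the decomposition $2^{m+1}Z^{(\nu)}_{\OO}-Z^{(\nu)}_{S_{\nu,0}}+2\pi i\,\mathcal I_\nu$ on $|\arg z|<\pi/2$, the corrected jets, and the Hu--Ke uniqueness lemma. The only error is in two of your displayed pairings: using $\int_{X_\nu}H^\nu\xi=2$ and $\xi(\xi-H)=0$, one gets $(1,e_{\nu,s})=-1/(4qr)$ and $(\xi,e_{\nu,s})=-1/(4qs)$, not $-1/(4qs)$ and $-1/(4q)$; with the corrected values, $2i\sqrt q\,s\,e_{\nu,s}$ does reproduce $a_s$ and $s\,a_s$ (and $(e_{\nu,s},e_{\nu,s})=-1/(4qr)$), so your conclusion stands.
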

\begin{proof}
The negative residue of $K_{S,\nu}$ at $s=k+1/2$ is
\[
 -i\,2^m\pi^{m+1/2}\frac{(1/2)_k^{\nu+1}}{4^k k!}.
\]
Insert the scaling in \eqref{familymaster} into \eqref{familyMBS}.
Its integrand is $2K_{S,\nu}(s)(q\sqrt r)^{-s}z^{\nu s}K_s(x)$.
Shift to any fixed sufficiently large $R$ and use
Lemma~\ref{Mellinsectorbound}. The exact half-integer Bessel formula
\eqref{halfK} yields \eqref{familyseries}. At the first pole the factor
$z^{(\nu+1)/2}$ cancels the scalar framing power $z^{-(m+1)}$;
$2^m\pi^{m+1/2}\sqrt\pi/(2\pi)^{m+1}=1/2$. This gives the precise
constant in \eqref{familyscalar}.

For the second flat section, use $K_s(xe^{-\pi i})$ on
$-\pi/2-\pi/\nu<\arg z<0$. The residues of $C_\nu$, the continued
square root and the half-integer Bessel polynomial give the same
series with $s_-=-\sqrt r$. On $|\arg z|<\pi/2$, use instead
$2^{m+1}Z_{\OO}^{(\nu)}-Z_{S_{\nu,0}}^{(\nu)}+2\pi i\mathcal I_\nu$.
The Poisson representation gives its $e^x$ residue series. The remaining
terms are bounded by a polynomial times $e^{-x}$ and are therefore
exponentially smaller there. The pole-free $K_{0,\nu}$ bound proves this assertion for
$Z_{\OO}^{(\nu)}$ without assuming a separate Gamma I theorem.
The exact continuation identity identifies the two representations on their overlap.
For each required order, choose $R$ large enough to obtain the
corresponding differentiable remainder bound. This gives a full
asymptotic expansion without assuming that the formal series converges.

For the full vector put $h=(1,\mathfrak s_\nu(E))$ and
$\mathscr D_q=-zq\partial_q$, $\mathscr D_r=-zr\partial_r$.
The normalized jets in Lemma~\ref{familycyclic} give the pairings
\[
 y_{j,0}=\mathscr D_q^jh,\quad
 y_{j,1}=\mathscr D_q^j\mathscr D_rh\quad(0\leq j<\nu),
\]
\[
 y_{\nu,0}=(\mathscr D_q^\nu+2q\mathscr D_q-2q\mathscr D_r)h,
 \qquad
 y_{\nu,1}=(\mathscr D_q^\nu\mathscr D_r-2qr)h.
\]
For $h\sim a_s e^{-2s/z}$, $a_s=-i/(2\sqrt q\,s)$, their leading
values are $a_s$, $-2qs\,a_s$, $s\,a_s$, $-2qr\,a_s$ in the
positions $(0,0),(\nu,0),(0,1),(\nu,1)$, and zero elsewhere.
Cup-pairing inversion gives exactly $2i\sqrt q\,s\,e_{\nu,s}$.
The differentiable remainder bounds justify the same reconstruction at
every order. The sector openings are
$\pi+2\pi/(\nu+1)$ and $\pi+\pi/\nu$, strictly greater than $\pi$.
The canonical columns are unique at points with distinct spectrum
\cite[Definition 2.5 and Lemma 2.6]{HK}.
\end{proof}

For example, on the sixfold $X_5$ the two sufficient sectors are
$(-\pi/2,5\pi/6)$ and $(-7\pi/10,\pi/2)$, and
\begin{equation}\label{sixfoldexample}
 \mathcal A_5(z;q,s)=1+\frac{z^5}{256qs}
                         +\frac{z^6}{512qs^2}+O(z^{10}).
\end{equation}
This two-column result does not determine a Stokes basis of twelve
columns. Proposition~\ref{familylattice} below computes the size of the
missing line lattice.

\begin{corollary}[Dimension-dependent half-turn]\label{familyhalfturn}
Let $\mathsf D_\nu|_{H^{2p}}=(-1)^p$. On transported logarithms,
\begin{equation}\label{familyhalfturneq}
 \mathfrak s_\nu(E\otimes L^mU)(z)
 =(-1)^{m+1}\mathsf D_\nu
          \mathfrak s_\nu(E)(e^{\pi i}z).
\end{equation}
Two applications give the anticanonical twist
$L^{2m}U^2=\omega_{X_\nu}^{-1}$.
\end{corollary}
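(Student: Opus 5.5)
The plan is to prove \eqref{familyhalfturneq} directly from the definition of the framing $\mathfrak s_\nu$, using two ingredients. The first is the chiral symmetry of the connection on $X_\nu$. The second is the way $z^{-\mu_\nu}z^{\rho_\nu}$ transforms under $z\mapsto e^{\pi i}z$.

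First I will check that $\mathsf D_\nu$ satisfies the analogue of Proposition~\ref{chiralblocks} on $X_\nu$. The coefficient of $q^dr^e$ in $\alpha\star\beta$ has complex degree $p+s-(\nu-1)d-2e$. Since $\nu$ is odd, $\nu-1$ is even, so the parity is $p+s$. Hence $\mathsf D_\nu$ anticommutes with $M_H$, $M_\xi$ and $C_\nu$, and commutes with $\mu_\nu$.

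From \eqref{qconnection}, adapted to $X_\nu$, it follows that $s(z)\mapsto\mathsf D_\nu s(e^{\pi i}z)$ preserves flat sections. The same parity argument applied to the large-radius fundamental solution gives
\[
 \mathsf D_\nu\mathcal L_\nu(q,r,e^{\pi i}z)\mathsf D_\nu=\mathcal L_\nu(q,r,z).
\]
This holds because each coefficient of $\mathcal L_\nu$ is homogeneous, with $z$-power of the same parity as its degree shift. The divisor exponentials must be treated with transported logarithms, and that is the one place that needs care.

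Next I will move $\mathsf D_\nu$ through the remaining factors. We have
\[
 \mathsf D_\nu(e^{\pi i}z)^{-\mu_\nu}=e^{-\pi i\mu_\nu}\mathsf D_\nu z^{-\mu_\nu}.
\]
Here $e^{-\pi i\mu_\nu}$ equals $(-1)^{m+1}\mathsf D_\nu$ on $H^{2p}$, because $\mu_\nu=p-m-1$. This is the source of the sign $(-1)^{m+1}$, and the two copies of $\mathsf D_\nu$ cancel.

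For the next factor, write $(e^{\pi i}z)^{\rho_\nu}=e^{\pi i\rho_\nu}z^{\rho_\nu}$. Conjugation by $\mathsf D_\nu$ sends the nilpotent $\rho_\nu$ to $-\rho_\nu$ when it is moved past the grading, so $\mathsf D_\nu z^{-\mu}z^{\rho}=z^{-\mu}z^{-\rho}\mathsf D_\nu$. After careful bookkeeping, the net effect is
\[
 \mathsf D_\nu\mathfrak s_\nu(E)(e^{\pi i}z)
 =(-1)^{m+1}(2\pi)^{-(m+1)}\mathcal L_\nu z^{-\mu_\nu}z^{\rho_\nu}
 \,\mathsf D_\nu e^{\pi i\rho_\nu}\widehat\Gamma_{X_\nu}\Ch(E).
\]
The remaining task is the cohomological identity
\[
 \mathsf D_\nu\bigl(e^{\pi i c_1}\widehat\Gamma_{X_\nu}\Ch(E)\bigr)
 =\widehat\Gamma_{X_\nu}\Ch(E\otimes L^mU),
\]
and I expect this to be the main obstacle. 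The cancelling sign $(-1)^{m+1}$ on the other side produces the stated normalization.

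To prove the identity, note first that $\mathsf D_\nu\Ch(E)=\Ch(E^\vee)$ up to the twist convention, and $\mathsf D_\nu$ turns $\widehat\Gamma$ into the Gamma class with $c\mapsto -c$. I will then apply the reflection formula
\[
 \Gamma(1+x)\Gamma(1-x)=\frac{\pi x}{\sin \pi x}
\]
in the form of the standard identity
\[
 \widehat\Gamma_X^{\vee}\,e^{\pi i c_1}\cdot(\text{Todd-type factor})=\widehat\Gamma_X e^{\pi i c_1/2}\cdots .
\]
Writing it out literally would not be a closed formula, so I will instead use the known statement that the Gamma integral structure satisfies $\mathsf D\circ\mathfrak s(E)(e^{\pi i}z)\propto\mathfrak s(E^\vee\otimes\ldots)$. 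This gives $\mathfrak s(E^\vee)$ twisted by $\omega^{-1/2}$ in Iritani's formalism, after which I apply duality on $K_0$.

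Since this step is delicate, I will instead check it object by object. On $K_0(X_\nu)\otimes\mathbb Q$, the classes $\Ch$ of line bundles and of $S_{\nu,0}$ generate, and both sides are linear in $E$. It therefore suffices to verify the identity in the classical ring on $\Ch$ of the generators $L^aU^b$. For these, $\mathsf D_\nu e^{\omega(aH+b\xi)}=e^{-\omega(aH+b\xi)}$. The Gamma factors $\Gamma(1\pm H)$ combine through reflection, and $\Gamma(1+2H)$ through duplication. The net multiplier $e^{\omega(mH+\xi)}$ arises because $\frac12c_1=mH+\xi$.

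The second claim follows by applying \eqref{familyhalfturneq} twice. This yields
\[
 \mathfrak s_\nu(E\otimes L^{2m}U^2)=\mathsf D_\nu^2\,\mathfrak s_\nu(E)(e^{2\pi i}z),
\]
the sign appearing squared, and $L^{2m}U^2=\OO((\nu-1)H+2\xi)=\omega_{X_\nu}^{-1}$.
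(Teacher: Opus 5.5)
\textbf{Genuine gap.} Your two ingredients are correct, but the bookkeeping goes wrong immediately after them, and the ``remaining task'' you isolate is false. The ingredients are $\mathsf D_\nu\mathcal L_\nu(q,r,e^{\pi i}z)=\mathcal L_\nu(q,r,z)\mathsf D_\nu$, by parity with the even weights $\nu-1$ and $2$, and $e^{-\pi i\mu_\nu}=(-1)^{m+1}\mathsf D_\nu$.

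Once $\mathsf D_\nu$ has been moved through $\mathcal L_\nu$, the product $\mathsf D_\nu e^{-\pi i\mu_\nu}=(-1)^{m+1}$ uses it up. No $\mathsf D_\nu$ is left to move past $z^{\rho_\nu}$, and $(e^{\pi i}z)^{\rho_\nu}=z^{\rho_\nu}e^{\pi i\rho_\nu}$. So the correct outcome is
\[
 \mathsf D_\nu\mathfrak s_\nu(E)(e^{\pi i}z)=(-1)^{m+1}(2\pi)^{-(m+1)}\mathcal L_\nu z^{-\mu_\nu}z^{\rho_\nu}e^{\pi i\rho_\nu}\widehat\Gamma_{X_\nu}\Ch(E).
\]
Your display keeps an extra $\mathsf D_\nu$, which contradicts your own remark that the two copies cancel.

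The identity $\mathsf D_\nu\bigl(e^{\pi i c_1}\widehat\Gamma_{X_\nu}\Ch(E)\bigr)=\widehat\Gamma_{X_\nu}\Ch(E\otimes L^mU)$ is false.
\begin{itemize}
\item For $E=\OO$, in degree one the left side is $(\gamma_{\rm E}-\pi i)c_1$ and the right side is $(\pi i-\gamma_{\rm E})c_1$, where $c_1=c_1(X_\nu)\neq0$.
\item For $E=L^aU^b$ even the twists disagree. $\mathsf D_\nu$ turns $e^{\omega(aH+b\xi)}$ into the dual character $e^{-\omega(aH+b\xi)}$, while the right side carries $e^{\omega((a+m)H+(b+1)\xi)}$.
\end{itemize}
So your object-by-object check cannot succeed. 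Reflection and duplication relate $\widehat\Gamma_{X_\nu}\cdot\mathsf D_\nu\widehat\Gamma_{X_\nu}$ to a Todd-type class. That is the input for Riemann--Roch and the pairing $b_{\rm ch}$, not for the half-turn, and no duality enters \eqref{familyhalfturneq}.

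\textbf{The fix.} With the corrected display the proof closes at once. $e^{\pi i\rho_\nu}$ is cup product with $e^{\pi i((\nu-1)H+2\xi)}=e^{2\pi i(mH+\xi)}=\Ch(L^mU)$. Multiplicativity of $\Ch$ then gives $\mathsf D_\nu\mathfrak s_\nu(E)(e^{\pi i}z)=(-1)^{m+1}\mathfrak s_\nu(E\otimes L^mU)(z)$, which is \eqref{familyhalfturneq} because $\mathsf D_\nu^2=I$. Here $\log q,\log r$ stay fixed and $\mathcal L_\nu$ is single-valued in $z$. The only branch choice is $\log(e^{\pi i}z)=\log z+\pi i$ in $z^{\rho_\nu}$, so no special care with divisor exponentials is needed.

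\textbf{Comparison with the paper.} The corrected argument is a valid alternative to the paper's route, which works with the scalar master formula \eqref{familymaster}. There the half-turn lowers the scaled logarithms of $q$ and $r$ by $2m\pi i$ and $2\pi i$, and the Gamma marking identifies these turns with $L^m$ and $U$. The prefactor $z^{-(m+1)}$ supplies $(-1)^{m+1}$, and the corrected jets with Poincar\'e inversion produce $\mathsf D_\nu$. The direct framing computation avoids both the marking and the jet reconstruction; it is the same mechanism behind $\mathcal C_{\phi-\pi}=e^{\pi i\rho}\mathcal C_\phi$ in Corollary~\ref{oppositephase}. The paper's route instead ties the half-turn to the parameter monodromy used elsewhere. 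Your treatment of the second claim, the double application giving $\omega_{X_\nu}^{-1}$, is fine.
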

\begin{proof}
In \eqref{familymaster} a half-turn decreases the scaled logarithms of
$q$ and $r$ by $2m\pi i$ and $2\pi i$ and multiplies $z^{-(m+1)}$
by $(-1)^{m+1}$. The marking argument identifies these turns with
$L^m$ and $U$. Every corrected pairing of complex degree $p$ changes
by $(-1)^p$ when $z$ is replaced by $-z$, since both Novikov weights
are even. Pairing inversion gives \eqref{familyhalfturneq}.
The squared sign is one and $\mathsf D_\nu^2=I$.
\end{proof}

Since $c_1(X_\nu)=2mH+2\xi$, the same degree-parity argument gives
cohomological chiral symmetry in this family. The scalar factor in
\eqref{familyhalfturneq} records the dimension:
$e^{-\pi i\mu_\nu}=(-1)^{m+1}\mathsf D_\nu$.
For $\nu=3$ this is exactly the half-turn used in Section~\ref{sec:transportStokes}.
For larger $\nu$, the line twist and the available phase intervals
differ. The full calculation of the walls for eight columns in the next
section therefore applies only to the fourfold.

\subsection{The section of structure sheaf}\label{sec:markedjump}
On $(\mathbb R_{>0})^4$, with coordinates $(x,y,k,v)$, set
\begin{equation}\label{positivepotential}
 \mathscr W=x+y+\frac{qv(k+1)^2}{xyk}+v+\frac rv.
\end{equation}
Let $d\ell=d\log x\,d\log y\,d\log k\,d\log v$ on the positive real
locus, oriented in this order.

\begin{proposition}[Exact positive integral]\label{positiveintegral}
For $q,r>0$, with real logarithms,
\begin{equation}\label{positivecentral}
 Z_{\OO}(q,r)=\int_{(\mathbb R_{>0})^4}e^{-\mathscr W}\,d\ell.
\end{equation}
Consequently, for $z>0$,
\begin{equation}\label{positiveflatmaster}
 (1,\mathfrak s(\OO)(q,r,z))
 =\frac1{(2\pi)^2z^2}
   \int_{(\mathbb R_{>0})^4}e^{-\mathscr W/z}\,d\ell.
\end{equation}
These identities determine the Gamma marking. They do not identify the
real integration locus as a primitive basis element of a global
rapid decay homology lattice.
\end{proposition}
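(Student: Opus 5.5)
The plan is to reduce \eqref{positivecentral} to the Mellin--Bessel representation \eqref{MBstructure} of Theorem~\ref{MBmarked}. Once \eqref{positivecentral} holds, \eqref{positiveflatmaster} will follow from \eqref{scalarmasterGamma}.

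\emph{Step 1: the $v$ integral.} This variable enters only through $v+r/v$ and through the coupling $qv(k+1)^2/(xyk)$. I will therefore integrate out $x,y,k$ first at fixed $v$. The resulting inner integral is a function of $Q=qv$ alone,
\[
 G(Q)=\int_{(\mathbb R_{>0})^3}\exp\Bigl(-x-y-\frac{Q(k+1)^2}{xyk}\Bigr)\,d\log x\,d\log y\,d\log k .
\]
This reduces the statement to the structure-sheaf analogue of the transfer identity \eqref{familyconvolution}. I will show that $\int_0^\infty G(qv)e^{-v-r/v}\,dv/v$ equals \eqref{MBstructure}, which holds once the Mellin transform of $G$ equals $K_0(s)=\Gamma(s)^5/\Gamma(2s)$.

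\emph{Step 2: the Mellin transform of $G$.} For $0<\operatorname{Re}s$ small I compute $\int_0^\infty G(Q)Q^{s}\,dQ/Q$. Substituting $Q\mapsto Q\,xyk/(k+1)^2$ separates the variables and gives
\[
 \Gamma(s)\int_0^\infty x^{s}e^{-x}\frac{dx}x\int_0^\infty y^{s}e^{-y}\frac{dy}y\int_0^\infty\frac{k^{s}}{(1+k)^{2s}}\frac{dk}k .
\]
The first two factors are $\Gamma(s)^2$. The last is the Beta integral $B(s,s)=\Gamma(s)^2/\Gamma(2s)$. The product is exactly $K_0(s)$. Mellin inversion along $\operatorname{Re}s=c\in(0,1/2)$ then gives $G(Q)=\frac1{2\pi i}\int K_0(s)Q^{-s}\,ds$. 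The contour integral converges absolutely, since $K_0$ decays like $e^{-3\pi|\operatorname{Im}s|/2}$ by Stirling's formula.

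\emph{Step 3: the Bessel kernel.} Inserting $G$ into the $v$ integral and applying Fubini, the inner integral is $\int_0^\infty v^{-s}e^{-v-r/v}\,dv/v=2r^{-s/2}K_s(2\sqrt r)=\mathcal K_s(r)$. This is the same formula already used in the proof of Theorem~\ref{MBmarked}. The result is precisely \eqref{MBstructure}, so \eqref{positivecentral} follows.

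\emph{Step 4: the scaled identity.} For \eqref{positiveflatmaster}, I substitute $(q,r)\mapsto(q/z^2,r/z^2)$ for $z>0$ in \eqref{scalarmasterGamma}, where the logarithms stay real. The rescaling $x,y,v\mapsto x/z,\,y/z,\,v/z$ leaves $d\ell$ invariant and turns $\mathscr W(q/z^2,r/z^2)$ into $\mathscr W/z$. The coupling term $q v(k+1)^2/(xyk)$ scales by $z^{-2}\cdot z^{-1}\cdot z^{2}=z^{-1}$, as required.

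The main obstacle is justifying Fubini and the Mellin inversion near the boundary of the positive orthant. I would handle it by working on the fixed line $\operatorname{Re}s=c$ with the majorant $|Q^{-s}|$ times the separated Gamma and Beta integrands. These are absolutely integrable for $0<c<1/2$, because the Beta integral needs only $\operatorname{Re}s>0$. This also gives absolute convergence of the positive integral \eqref{positivecentral} itself. The Gamma marking is then inherited from Theorem~\ref{MBmarked}, so no separate logarithmic-residue computation is needed.
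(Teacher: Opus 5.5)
Your proposal is correct and is essentially the paper's proof. Both arguments separate the coupling term $qv(k+1)^2/(xyk)$ by a Mellin transform, producing $\Gamma(s)\cdot\Gamma(s)^2\cdot\Gamma(s)^2/\Gamma(2s)=K_0(s)$ together with $\mathcal K_s(r)$ from the $v$ integral, inherit the Gamma marking from the Barnes identity \eqref{MBstructure}, and obtain \eqref{positiveflatmaster} by the same rescaling of $x,y,v$. The one difference is the direction of the Mellin step: you compute the forward transform of the inner integral $G$ by Tonelli and then invert, whereas the paper inserts the Mellin--Barnes representation of the exponential directly and swaps integrals by absolute convergence.
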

\begin{proof}
Apply Mellin inversion to the exponential of
$qv(k+1)^2/(xyk)$, on a line $\operatorname{Re}s=c>0$. The $x$ and $y$
integrals each give $\Gamma(s)$, while
\[
 \int_0^\infty\left(\frac{k}{(k+1)^2}\right)^s\frac{dk}{k}
 =\frac{\Gamma(s)^2}{\Gamma(2s)}.
\]
The remaining $v$ integral is
$\int_0^\infty v^{-s}e^{-v-r/v}\,dv/v=\mathcal K_s(r)$.
Together with the Mellin factor $\Gamma(s)$, the resulting integrand is
exactly $K_0(s)q^{-s}\mathcal K_s(r)$ from \eqref{MBstructure}.
Absolute convergence justifies the changes in the order of integration:
before integration over $s$, the absolute values of the real integrals
are bounded by the same expressions at $s=c$, and
$|\Gamma(c+i\tau)|$ is integrable in $\tau$. Thus \eqref{positivecentral} follows from the previously proved
Gamma-marked Barnes identity. To obtain \eqref{positiveflatmaster},
replace $(q,r)$ by $(q/z^2,r/z^2)$ and make the substitutions
$(x,y,v)=(X/z,Y/z,V/z)$, keeping $k$ fixed. The logarithmic measure is
unchanged and the exponent becomes $-\mathscr W(X,Y,k,V)/z$.
Equation \eqref{scalarmasterGamma} supplies the prefactor.
\end{proof}

For $q,r>0$ let $h,v>0$ be the unique solution of
\begin{equation}\label{positivesaddlehv}
 h^3=4qv,\qquad v(v+h)=r,
 \qquad T=4h+2v.
\end{equation}
Thus $h$ is the positive root on the sextic branch and $T=T(q,r)$.
In the formulas below, $v$ refers to this critical coordinate.

\begin{theorem}[Positive saddle with fixed Gamma normalization]
\label{positiveGammaall}
On compact subsets of $\mathbb R_{>0}^2$, the scalar unit component has a
complete differentiable expansion as $z\to0^+$, beginning
\begin{equation}\label{Oscalarfirst}
 (1,\mathfrak s(\OO))=
 \frac{e^{-T/z}}{\sqrt{h^3(2h+3v)}}
 \left(1+\mathfrak c_1(h,v)z+O(z^2)\right),
\end{equation}
where
\begin{equation}\label{Ocorrection}
 \mathfrak c_1(h,v)=
 -\frac{5(2h^3+9h^2v+12hv^2+6v^3)}{4h(2h+3v)^3}.
\end{equation}
If $e_T$ is the primitive quantum idempotent of the simple eigenvalue
$T$ and $w_T=(e_T,e_T)>0$, the full vector expansion begins
\begin{equation}\label{Ovectorpositive}
 \mathfrak s(\OO)=e^{-T/z}\left(\psi_T+O(z)\right),
 \qquad \psi_T=\frac{e_T}{\sqrt{w_T}},\qquad
 w_T=\frac1{h^3(2h+3v)}.
\end{equation}
In particular the Gamma I property holds at every positive parameter
point, by a direct argument independent of global propagation. At the
unit point this is the original Gamma conjecture I.
\end{theorem}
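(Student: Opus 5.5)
The natural route is Laplace's method applied to the exact positive integral of Proposition~\ref{positiveintegral}. By \eqref{positiveflatmaster}, for $z>0$ the unit component equals $(2\pi)^{-2}z^{-2}\int e^{-\mathscr W/z}\,d\ell$. In logarithmic coordinates $(\log x,\log y,\log k,\log v)\in\mathbb R^4$ the measure becomes Lebesgue measure. I will first show that $\mathscr W$ has a unique critical point on $(\mathbb R_{>0})^4$, that it is a nondegenerate minimum, and that $\mathscr W\to+\infty$ properly at every boundary end.

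The critical equations are
\[
x=y=\frac{qv(k+1)^2}{xyk},\qquad k=1,\qquad v-\frac rv+\frac{qv(k+1)^2}{xyk}=0 .
\]
The $k$-equation $k\partial_k$ gives $(k^2-1)/k\cdot(\cdots)=0$, hence $k=1$. Putting $x=y=h$ then gives $h^3=4qv$ and $v^2+hv=r$, which is exactly \eqref{positivesaddlehv}. At this point the critical value is $\mathscr W=2h+h+v+r/v=3h+v+(v+h)=4h+2v=T$.

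Strict convexity in log coordinates is immediate. Each term of $\mathscr W$ is either a positive exponential of a linear form in the log variables or, for $(k+1)^2/k=k+2+k^{-1}$, a sum of such exponentials. Hence $\mathscr W$ is a sum of exponentials of linear forms. Its Hessian is positive definite because these linear forms span the dual space: $\log x$, $\log y$, $\log v$, $-\log v$, together with $\log v-\log x-\log y\pm\log k$, span. Coercivity follows from the same spanning property. So the critical point is the unique global minimum, and $e^{-(\mathscr W-T)/z}$ is bounded by $e^{-c\,\min(|t|,|t|^2)/z}$ in the log variable $t$, which gives exponentially small tails away from it.

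Laplace's method then yields a complete expansion
\[
(2\pi)^{-2}z^{-2}(2\pi z)^2(\det\operatorname{Hess})^{-1/2}e^{-T/z}\bigl(1+\mathfrak c_1z+\cdots\bigr).
\]
Parameter derivatives can be taken under the integral, since the critical point depends smoothly on $(q,r)$ by the implicit function theorem, and uniformity holds on compacts. The Hessian determinant computation is routine. In log coordinates the Hessian is block structured, with $k$ decoupled: $\partial^2_{\log k}$ of $qv(k+2+k^{-1})/(xyk^0)$ at $k=1$ equals $2qv/h^2=h/2$ after rescaling. The remaining $3\times3$ block in $(x,y,v)$ should produce $\det\operatorname{Hess}=h^3(2h+3v)$. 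The term $\mathfrak c_1$ comes from the standard third- and fourth-derivative formula; I would compute it by symbolic expansion and check it against \eqref{Ocorrection}.

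For the vector statement I would use the jet reconstruction of Theorem~\ref{twospinorcolumns}, with $y_j$ obtained by applying $\mathscr D_q,\mathscr D_r$ and the unipotent correction to $h$. Differentiating $e^{-T/z}$ gives leading factors $-z\,q\partial_qT/z$ and so on, i.e.\ the values of $H,\xi$ at the $T$-point on the sextic branch. The leading pairing vector is therefore $a\,(1,H,H^2,\ldots)$ evaluated at that point, with the correction terms of Theorem~\ref{fixedframe}. Inverting the Poincar\'e matrix then gives a multiple of $e_T/w_T$. With $a=(h^3(2h+3v))^{-1/2}=\sqrt{w_T}$, this equals $e_T/\sqrt{w_T}$.

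The identity $w_T=1/(h^3(2h+3v))$ must be checked via Lemma~\ref{idempotentnorms}. That lemma gives $(e_h,e_h)=4q/(h^4(3h^2+8q))$. Substituting $q=h^3/(4v)$ yields
\[
\frac{h^3/v}{h^4\,(3h^2+2h^3/v)}=\frac1{h^3(3v+2h)},
\]
which agrees. Finally, Gamma I follows because $T$ is the rightmost simple eigenvalue (Theorem~\ref{spectral}), and the section with fastest decay along $\mathbb R_{>0}$ is $\mathfrak s(\OO)$.

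The main obstacle is the uniform full-order expansion with derivatives, together with the exact constant $\mathfrak c_1$. Convexity makes the tail estimates straightforward, so the real work is the bookkeeping of higher derivatives at the critical point. I would organize it by writing $\mathscr W-T$ in centered log coordinates as an explicit sum of exponentials.
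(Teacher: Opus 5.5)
Your proposal is correct and follows the paper's proof essentially step by step. The shared steps are Laplace's method on the exact positive integral \eqref{positiveflatmaster}, strict convexity in logarithmic coordinates with unique minimum $(h,h,1,v)$ and $\det\mathcal H=h^3(2h+3v)$, the cubic--quartic Gaussian contraction \eqref{Wickfirst} for $\mathfrak c_1$, reconstruction of $\psi_T$ from the corrected jets with $w_T$ from Lemma~\ref{idempotentnorms}, and Gamma I from the simple rightmost eigenvalue $T$.

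One slip: the fact that the exponent vectors span $\mathbb R^4$ gives strict convexity but not properness. Without the $r/v$ term they still span, yet $\mathscr W$ stays bounded as $v\to0$. What properness needs is that the vectors lie in no closed half-space; this holds here, and the paper checks it by bounding the sublevel sets directly.
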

\begin{proof}
In logarithmic coordinates the potential \eqref{positivepotential} is a
sum of positive exponentials whose exponent vectors span $\mathbb R^4$.
Its Hessian is therefore positive definite everywhere. Its sublevel
sets are compact: $v+r/v$ bounds $v$ away from zero and infinity, $x+y$
bounds $x,y$ above, and the mixed term then bounds $xy$ away from zero
and bounds $k+k^{-1}$. Thus $x,y,k,v$ are all bounded away from zero and
infinity on a sublevel set. Strict convexity and properness imply a
unique minimum. Its critical equations give
$(x,y,k,v)=(h,h,1,v)$ with \eqref{positivesaddlehv}.

The logarithmic Hessian at that point is
\begin{equation}\label{positiveHessian}
 \mathcal H=\begin{pmatrix}
 2h&h&0&-h\\
 h&2h&0&-h\\
 0&0&h/2&0\\
 -h&-h&0&2h+2v
 \end{pmatrix},\qquad
 \det\mathcal H=h^3(2h+3v).
\end{equation}
The four-dimensional Laplace expansion, with constant amplitude in
logarithmic coordinates, has leading factor
$(2\pi z)^2/\sqrt{\det\mathcal H}$. The prefactor in
\eqref{positiveflatmaster} cancels $(2\pi z)^2$. The minimum, Hessian,
and all local derivatives vary smoothly on a positive compact parameter
set. Compactness gives a uniform gap off a small neighborhood of the
minimum. The unbounded tail is controlled by integrability of
$e^{-\mathscr W}$: if $\mathscr W\ge T+\delta$ there, then for small
positive $z$ its integral is bounded by a constant times
$e^{-(T+\delta)/z}$. The same estimates, with polynomial factors in the positive exponential
terms, justify parameter differentiation and the Laplace expansion to
every order. This proves the asymptotic expansion with the stated
uniformity.

The first correction follows by Gaussian contraction. The seven
exponent vectors and weights at the minimum are
\[
 \begin{gathered}
 m_1=(1,0,0,0),\ m_2=(0,1,0,0),\ m_3=(-1,-1,1,1),\\
 m_4=(-1,-1,0,1),\ m_5=(-1,-1,-1,1),\
 m_6=(0,0,0,1),\ m_7=(0,0,0,-1),\\
 (a_1,\ldots,a_7)=(h,h,h/4,h/2,h/4,v,v+h).
 \end{gathered}
\]
Write $B_{ij}=m_i^{\mathsf T}\mathcal H^{-1}m_j$. Gaussian contraction
of the quartic term and the square of the cubic term gives
\begin{equation}\label{Wickfirst}
 -\frac18\sum_i a_iB_{ii}^2
 +\frac1{12}\sum_{i,j}a_ia_jB_{ij}^3
 +\frac18\sum_{i,j}a_ia_jB_{ii}B_{jj}B_{ij}.
\end{equation}
Substituting \eqref{positiveHessian} yields \eqref{Ocorrection}.

We recover the full vector from its scalar component. Implicit differentiation in \eqref{positivesaddlehv} gives
\begin{equation}\label{hvderivatives}
 \begin{pmatrix}q\partial_qh&r\partial_rh\\
 q\partial_qv&r\partial_rv\end{pmatrix}
 =\frac1{2(2h+3v)}
 \begin{pmatrix}h(h+2v)&h(h+v)\\-hv&3v(h+v)\end{pmatrix}.
\end{equation}
In particular $q\partial_qT=h$ and $r\partial_rT=h+v$.
The corrected jets in the proof of Theorem~\ref{twospinorcolumns}
therefore give, at leading order, the evaluation of every cup-basis
class at this quantum point, multiplied by $\sqrt{w_T}$.
For a primitive idempotent the evaluation functional is
$b\mapsto(b,e_T)/w_T$. Inverting the Poincar\'e pairing consequently
recovers $e_T/\sqrt{w_T}$. Lemma~\ref{idempotentnorms} gives
\[
 w_T=\frac{4q}{h^4(3h^2+8q)}=\frac1{h^3(2h+3v)},
\]
which proves \eqref{Ovectorpositive} and its normalization.
Finally Theorem~\ref{spectral} shows that $T$ is simple and strictly
rightmost for every positive $(q,r)$, including the caustic. The vector
estimate identifies the structure-sheaf Gamma section with the
fastest-decaying line, which is the Gamma I property. Property
$\mathcal O$ holds at the unit point by the same spectral theorem.
\end{proof}

\begin{remark}[Uniqueness along the positive ray]
\label{fastestrayremark}
At a positive point with distinct spectrum, choose canonical asymptotic
fundamental solutions for small admissible phases $\epsilon$ and
$-\epsilon$. Their sectors contain the positive ray. Expanding
$\mathfrak s(\OO)$ in either basis, every column other than the $T$ column
would grow relative to $e^{-T/z}$ on that ray, since
$\operatorname{Re}u<T$. Thus \eqref{Ovectorpositive} forces all those
coefficients to vanish and fixes the remaining coefficient to one.
Uniqueness of the fastest-decaying line gives this identification;
convergence of the original real contour on a sector wider than
$\pi$ is not required. The spinor exponent $2\sqrt r$ is not rightmost,
so its canonical identification requires the wider sectors proved in
Section~\ref{sec:spinorMB}.
\end{remark}

\subsection{Reflection and parameter monodromy}
In this subsection, we take positive $q,r$ and a logarithmic branch
symmetric under complex conjugation near the positive ray. We take all
conjugations in the fixed real cup frame.

\begin{lemma}[Reflection of the Gamma framing]\label{Gammareflection}
For an algebraic vector bundle $E$ on $X$,
\begin{equation}\label{reflectionGamma}
 \mathfrak s(E^\vee)(q,r,z)
   =\overline{\mathfrak s(E)(q,r,\bar z)}.
\end{equation}
Moreover the following are identities in $K_0(X)$:
\begin{equation}\label{spinordualclasses}
 [S_0^\vee]=4[\OO]-[S_0],\qquad
 [S_1^\vee]=4[\OO]-[S_1].
\end{equation}
\end{lemma}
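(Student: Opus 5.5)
The reflection identity \eqref{reflectionGamma} will follow from comparing the two sides factor by factor in the definition of $\mathfrak s$. The fundamental solution $\mathcal L(q,r,z)$ is built from the small $J$-function, or equivalently from the fixed-frame matrices $M_H,M_\xi$ of Theorem~\ref{fixedframe}. All its coefficients are real polynomials in $q,r$, and its logarithms enter only through $q^{H/z}r^{\xi/z}$. With the conjugation-symmetric branch and $q,r>0$, this gives $\overline{\mathcal L(q,r,\bar z)}=\mathcal L(q,r,z)$ in the real cup frame. Likewise $\mu$ and $\rho$ are real, so $\overline{\bar z^{-\mu}\bar z^{\rho}v}=z^{-\mu}z^{\rho}\bar v$. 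The factor $(2\pi)^{-2}$ is real. Hence the whole identity reduces to the classical statement $\overline{\widehat\Gamma_X\Ch(E)}=\widehat\Gamma_X\Ch(E^\vee)$.

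This classical step splits into two parts. First, $\widehat\Gamma_X$ has real coefficients, since it is a power series in real Chern roots with real Taylor coefficients of $\log\Gamma$; explicitly this is visible in \eqref{gamma}. Second, $\Ch(E)=\sum_p(2\pi i)^p\ch_p(E)$ with $\ch_p(E)$ real, so its conjugate is $\sum_p(-1)^p(2\pi i)^p\ch_p(E)$. This equals $\sum_p(2\pi i)^p\ch_p(E^\vee)$ because $\ch_p(E^\vee)=(-1)^p\ch_p(E)$. No analytic input beyond real coefficients and the branch choice is needed. The main care lies in the logarithms: conjugating $z^{\rho}=e^{\rho\log z}$ must give $\bar z^{\rho}$ on the chosen symmetric branch near the positive ray. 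That is exactly why the branch hypothesis is stated.

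For \eqref{spinordualclasses}, it suffices to prove the $S_0$ case on $Q^3$ and then pull back and twist. The rank-two spinor bundle satisfies $S^\vee\cong S(1)$, since $\det S=\OO(-1)$ and $S$ has rank two. Relation \eqref{baseKrelation} is not directly this, so I would instead use the tautological sequence $0\to S\to\OO^{4}\to S^\vee\to0$ on $Q^3$. This is the spinor analogue of the universal sequence on $LG(2,4)\cong Q^3$. It gives $[S^\vee]=4[\OO]-[S]$ in $K_0(Q^3)$. As a consistency check, Chern characters give $\ch(S^\vee)=4/(1+e^{-H})=4-4/(1+e^H)=4-\ch(S)$. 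Since $K_0(Q^3)$ is free and the Chern character is injective rationally, this check alone already proves the integral identity, as in the argument after \eqref{baseKrelation}.

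Pulling back by $\pi^*$ gives $[S_0^\vee]=4[\OO]-[S_0]$ in $K_0(X)$. For $S_1=S_0\otimes U$ one has $S_1^\vee=S_0^\vee\otimes U^{-1}$, and the naive twist would give $4[U^{-1}]-[S_0U^{-1}]$. That is not the stated relation, so the statement must be checked against the definition of $S_1$. I expect the statement intends the relation with $\OO$ replaced by the corresponding twist, or else a convention in which $S_1^\vee$ is compared after the reflection $U\mapsto U^{-1}$. I would verify this through Chern characters via Orlov's decomposition, and I flag it as the one point needing attention.
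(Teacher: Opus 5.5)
The reflection identity \eqref{reflectionGamma} and the $S_0$ case of \eqref{spinordualclasses} are handled as in the paper: real coefficients of the normalized connection, of $\mu$, $\rho$ and $\widehat\Gamma_X$ in the cup frame, together with $\overline{\Ch(E)}=\Ch(E^\vee)$; and the spinor sequence with $S^\vee\cong S(1)$.

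\textbf{Gap.} The gap is the second identity $[S_1^\vee]=4[\OO]-[S_1]$. You correctly reach $[S_1^\vee]=4[U^{-1}]-[S_0U^{-1}]$. You then assert that this is not the stated relation and suggest the statement means something else. That assertion is false. The two classes coincide in $K_0(X)$.

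\textbf{Missing ingredient.} What you need is the projective-bundle relation $U^2-(1+L)U+L=0$, equivalently $LU^{-1}=1+L-U$. The paper writes $S_1^\vee=S_0^\vee\otimes U^{-1}=S_0\otimes LU^{-1}$, using $S^\vee\cong S(1)$, and computes
\[
 [S_1^\vee]=[S_0](1+L-U)=[S_0]+[S_0L]-[S_1]
 =[S_0]+\bigl(4[\OO]-[S_0]\bigr)-[S_1]=4[\OO]-[S_1].
\]
In your own form, the same relation gives $U^{-1}-1=L^{-1}(1-U)$ and $U^{-1}-U=(1+L^{-1})(1-U)$. Hence
\[
 \bigl(4[U^{-1}]-[S_0U^{-1}]\bigr)-\bigl(4[\OO]-[S_1]\bigr)
 =(1-U)\bigl(4L^{-1}-[S_0](1+L^{-1})\bigr).
\]
This vanishes because the spinor sequence gives $[S_0](1+L)=4[\OO]$.

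\textbf{Your fallback.} The Chern-character route you mention would also succeed. Using $\xi^2=\xi H$, one checks $(4-\ch S)e^{-\xi}=4-\ch(S)\,e^{\xi}$. Then $K_0(X)$ is free and the rational Chern character is injective. However, the proposal neither carries this out nor reaches the right conclusion.

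The statement is correct as given. It is exactly what justifies $Y_-^+=-\mathfrak s(S_1^\vee)=\mathfrak s(S_1)-4\mathfrak s(\OO)$ in \eqref{minuslaterals}.
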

\begin{proof}
The quantum connection and its large-radius normalization have real
coefficients in the cup frame at positive parameters. The operators
$\mu,\rho$ and $\widehat\Gamma_X$ are real. Since
$\overline{\Ch(E)}=\Ch(E^\vee)$, reflection of the normalized formula
for $\mathfrak s(E)$ proves \eqref{reflectionGamma}; logarithms are
reflected rather than reset. Equivalently, uniqueness with the reflected Gamma data at the regular
singular end gives the same identity.

For the classes, the spinor exact sequence
$0\to S\to\OO_{Q^3}^{\oplus4}\to S(1)\to0$ and
$S^\vee\cong S(1)$ give the first identity. The second uses
$LU^{-1}=1+L-U$, a consequence of the projective-bundle relation:
\[
 [S_1^\vee]=[S_0]LU^{-1}
 =[S_0](1+L-U)=4[\OO]-[S_1].
\]
The second computation is a $K$-class identity, not a proposed short
exact sequence of vector bundles on $X$.
\end{proof}

The eigenvectors $\varphi_{\sqrt r}$ and $\varphi_{-\sqrt r}$ from
\eqref{signedpsispinor} are purely imaginary in this frame. Hence the
negative of the reflected Gamma section has the \emph{same} normalized
leading eigenvector. Introduce
\begin{align}
 Y_+^+&=\mathfrak s(S_0),&
 Y_+^-&=-\mathfrak s(S_0^\vee)
       =\mathfrak s(S_0)-4\mathfrak s(\OO),\label{pluslaterals}\\
 Y_-^-&=\mathfrak s(S_1),&
 Y_-^+&=-\mathfrak s(S_1^\vee)
       =\mathfrak s(S_1)-4\mathfrak s(\OO).\label{minuslaterals}
\end{align}
The subscript indicates the exponent $\pm2\sqrt r$; the superscript
indicates a small positive or negative admissible phase. Reflection of
Theorems~\ref{spinorscalar} and \ref{twospinorcolumns} proves the sectors
\[
 \begin{array}{c|c|c}
 \text{column}&\text{angular sector}&\text{admissible centers available}\\\hline
 Y_+^+&(-\pi/2,\pi)&(0,\pi/2)\\
 Y_+^-&(-\pi,\pi/2)&(-\pi/2,0)\\
 Y_-^-&(-5\pi/6,\pi/2)&(-\pi/3,0)\\
 Y_-^+&(-\pi/2,5\pi/6)&(0,\pi/3).
 \end{array}
\]
In each row, the indicated phase can be the center of a smaller sector
whose opening is strictly greater than $\pi$. This identification uses
the full sector estimates for the spinor sections, including their
remainders; the leading term on a single ray would not suffice.

Let $\mathcal M_q$ be continuation with $\log q\mapsto\log q-2\pi i$.
\begin{proposition}[Marked parameter monodromy]\label{MBhelix}
On these Gamma-marked master functions,
\[
 \mathcal M_q Z_E=Z_{E\otimes\OO(H)},\qquad
 \mathcal M_r Z_E=Z_{E\otimes\OO_\pi(1)}.
\]
For the Bessel kernel, the second operation satisfies
\begin{equation}\label{fiberanalyticrelation}
 (\mathcal M_r-1)(\mathcal M_r-e^{2\pi i s})\mathcal K_s=0.
\end{equation}
Consequently the marked solution space satisfies
$\mathcal M_r^2-(1+\mathcal M_q)\mathcal M_r+\mathcal M_q=0$.
In the Gamma foundation of eight objects, these are the matrices $T_H,T_U$
of \eqref{twistmatrices}; a positive $z$ turn acts by
$\mathcal M_q^2\mathcal M_r^2=\mathcal R$.
\end{proposition}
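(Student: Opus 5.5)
The plan is to work with the scalar master function $Z_E(q,r)=\int_X\Ga_X\Ch(E)J(q,r,-1)$ and use its Gamma marking at the regular singular end. By \eqref{scalarmasterGamma} and the jet reconstruction in the proof of Theorem~\ref{twospinorcolumns}, the whole section $\mathfrak s(E)$ is recovered from its unit component. It therefore suffices to prove the identities for $Z_E$. The continuations $\mathcal M_q,\mathcal M_r$ commute with the corrected jet operators, since these have coefficients polynomial in $q,r$.

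For the twist identities, I will expand $J(q,r,-1)=q^{-H}r^{-\xi}\sum_{d,e}q^dr^eA_{d,e}(-1)$. The series part is single valued by Lemma~\ref{smallconv}. Under $\log q\mapsto\log q-2\pi i$, the factor $q^{-H}$ becomes $e^{2\pi iH}q^{-H}$. Since $\Ch(\OO(H))=e^{2\pi iH}$ and $\Ch$ is multiplicative, pairing against $\Ga_X\Ch(E)$ gives $\mathcal M_qZ_E=Z_{E\otimes\OO(H)}$. The argument for $\mathcal M_r$ is the same, using $e^{2\pi i\xi}=\Ch(\OO_\pi(1))$. These are the same monodromy-of-$z^\rho$ bookkeeping steps already used for $S_1$ in Theorem~\ref{MBmarked}.

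For \eqref{fiberanalyticrelation}, I will use \eqref{Besselturn}. Applying $\mathcal M_r$ to $r^{-s/2}I_s(2\sqrt r)$ multiplies it by $e^{\pi is}$, because $I_s(xe^{-\pi i})=e^{-\pi is}I_s(x)$ and $r^{-s/2}\mapsto e^{\pi is}r^{-s/2}$. I need to recheck the signs so that this factor is exactly $e^{\pi is}$, in agreement with \cite[10.34.1]{DLMF}. Writing $\mathcal M_r\mathcal K_s=e^{2\pi is}\mathcal K_s+cI'$ with $I'=r^{-s/2}I_s$, a second application gives
\[
\mathcal M_r^2\mathcal K_s=e^{4\pi is}\mathcal K_s+c(e^{2\pi is}+1)e^{\pi is}\cdots .
\]
I expect this to be consistent with the quadratic relation $(\mathcal M_r-1)(\mathcal M_r-e^{2\pi is})\mathcal K_s=0$. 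An equivalent route is to note that $\mathcal K_s$ lies in the span of the two branches $r^{-s/2}I_{\pm s}$, on which $\mathcal M_r$ acts by $1$ and $e^{2\pi is}$. Concretely, $r^{-s/2}I_{-s}(2\sqrt r)$ is a power series in $r$ times $r^{0}$, and $r^{-s/2}I_s$ is a series times $r^{-s}$. So $\mathcal M_r$ acts by $1$ and $e^{2\pi is}$ on these two branches, and the relation follows.

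To pass to the marked solution space, I will insert this into the Mellin--Barnes representation \eqref{MBstructure}. The factor $e^{2\pi is}$ multiplying $q^{-s}$ is exactly the action of $\mathcal M_q$ on the $s$-integrand. Hence the operator $\mathcal M_r^2-(1+\mathcal M_q)\mathcal M_r+\mathcal M_q$ annihilates $Z_{\OO}$ and $Z_{S_0}$, after justifying interchange with the contour integral by the Stirling bounds in the proof of Theorem~\ref{MBmarked}. Since $\mathcal M_q,\mathcal M_r$ commute and the twists of $\OO,S_0$ by $L^aU^b$ span the marked lattice, the relation holds on the whole solution space.

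This is equivalent to the $K$-theory relation $(U-1)(U-L)=0$, i.e.\ $\xi(\xi-H)=0$ lifted to $K_0$. That gives a purely algebraic cross-check: $U^2-(1+L)U+L=0$ holds in $K_0(X)$ by the projective bundle relation, and $E\mapsto E\otimes(\cdot)$ is a ring action. The matrices $T_H,T_U$ are then just tensoring by $L,U$ in the foundation basis. The positive $z$ turn multiplies $\log(q/z^2),\log(r/z^2)$ by shifts $-4\pi i$, which is $\mathcal M_q^2\mathcal M_r^2$, i.e.\ tensoring by $L^2U^2=\omega_X^{-1}$.

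The main obstacle is the branch bookkeeping in \eqref{fiberanalyticrelation}. The cleanest route is the decomposition of $\mathcal K_s$ into the two Frobenius branches, and I would use the $K$-theoretic check to confirm the signs.
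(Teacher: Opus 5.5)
Your outline follows the paper's proof. The twist identities come from the exact Gamma framing; your explicit $q^{-H}r^{-\xi}$ bookkeeping, with the single-valued series of Lemma~\ref{smallconv}, spells out its first sentence. The kernel relation comes from the Bessel continuation \eqref{Besselturn}. The $z$-turn is two clockwise turns of each scaled parameter in \eqref{scalarmasterGamma}. Two things need repair.

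\textbf{The Bessel branches.} In your first route the two factors you computed cancel. Indeed $r^{-s/2}I_s(2\sqrt r)=\sum_{k\ge0}r^k/(k!\,\Gamma(k+s+1))$ is entire in $r$, so $\mathcal M_r$ fixes it. This invariance, combined with \eqref{Besselturn}, is exactly how the paper obtains \eqref{fiberanalyticrelation}. With your eigenvalue $e^{\pi is}$, the left side of \eqref{fiberanalyticrelation} would be a nonzero multiple of $r^{-s/2}I_s(2\sqrt r)$, so the relation would fail. In your branch route the labels are swapped: $r^{-s/2}I_{-s}(2\sqrt r)$ is $r^{-s}$ times an entire series and carries $e^{2\pi is}$, while $r^{-s/2}I_s(2\sqrt r)$ carries $1$. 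This is harmless, because the two factors commute and $K_s=\pi(I_{-s}-I_s)/(2\sin\pi s)$ holds for $s\notin\Z$, in particular on the contour.

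\textbf{The interchange with the Barnes integral.} The interchange you invoke for the quadratic operator is not covered by the Stirling bounds. At positive $q,r$ on $\operatorname{Re}s=c$, the integrand of $Z_{\OO}$ decays only like $e^{-2\pi|\operatorname{Im}s|}$ up to powers. But $\mathcal M_r^2$ or $\mathcal M_q\mathcal M_r$ inserts factors of size $e^{4\pi|\operatorname{Im}s|}$ when $\operatorname{Im}s<0$. So only a single turn can be taken under the integral, and even that leaves only polynomial decay, integrable for small $c$. There are two ways to fix this.
\begin{itemize}
\item Apply only $\mathcal M_r-\mathcal M_q$ inside the integral. This yields $2\pi i$ times the Barnes integral of $K_0(s)e^{\pi is}q^{-s}r^{-s/2}I_s(2\sqrt r)$, which is $\mathcal M_r$-invariant, so the outer factor $\mathcal M_r-1$ then kills it.
\item More simply, promote your ``cross-check'' to the actual proof. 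Your twist identities hold for all integer powers, $E\mapsto Z_E$ is additive on $K_0(X)$, and $(U-1)(U-L)=0$ there. Hence $(\mathcal M_r-1)(\mathcal M_r-\mathcal M_q)Z_E=Z_{E\otimes(U-1)(U-L)}=0$, with no contour analysis at all.
\end{itemize}
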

\begin{proof}
The zero-degree logarithmic polynomial or the exact Gamma framing
identifies the two tensor operations with the specified clockwise
parameter turns. At the kernel level, put
$J_s(r)=r^{-s/2}I_s(2\sqrt r)$. It is invariant under clockwise continuation,
whereas \eqref{Besselturn} is
$\mathcal M_r\mathcal K_s=e^{2\pi is}\mathcal K_s+
2\pi i e^{\pi is}J_s$. This proves \eqref{fiberanalyticrelation}.
Multiplication by $e^{2\pi is}$ is $\mathcal M_q$ inside the integral.
Thus the Bessel continuation supplies the analytic counterpart of
$U^2-(1+L)U+L=0$, with the Gamma marking already fixed.
Finally \eqref{scalarmasterGamma} makes a positive $z$ turn two clockwise
turns in each scaled parameter; $z^{-2}$ is single-valued. The helix
matrix follows from Section~\ref{sec:helix}.
\end{proof}

The eight Gamma-marked functions are linearly independent: their classes
at the regular singular end form a full exceptional $K$-basis, and the
Chern character framing is invertible. This independence statement
concerns the quantum Gamma lattice; comparison with geometric cycles
of rapid decay remains a separate question.
Section~\ref{sec:transportStokes} proves the identification at a common phase.

\section{Canonical transport and Stokes data}\label{sec:transportStokes}\label{sec:closure}\label{sec:GammaII}

We obtain the remaining columns from the three initial ones by a parameter
loop, reflection, and a half-turn. We then bring all eight columns to a
common phase and transport the resulting exceptional collection to the
unit parameter point.
\subsection{Chiral half-turn and separated spectral clusters}
Recall the cohomological chiral involution $\mathsf D$ from
\eqref{paritymatrix}. For a section on the logarithmic cover of the
$z$-plane, define
\begin{equation}\label{chiralflatoperator}
 (\mathscr Hs)(q,r,z)=\mathsf D s(q,r,e^{\pi i}z),
 \qquad b_{\rm ch}(s,t)=(\mathsf D s(q,r,z),t(q,r,z))_X.
\end{equation}
The argument $e^{\pi i}z$ includes the indicated upper half-turn.

\begin{proposition}[Equivariance and a flat symmetric form]
\label{chiralflat}
The operator $\mathscr H$ intertwines all three operators of the small
quantum connection \eqref{qconnection}. The form $b_{\rm ch}$ is
symmetric, nondegenerate, and constant on pairs of flat sections.
Its matrix in the cup frame is $P\mathsf D=\mathsf D P$.
If
\[
 s(z)\sim e^{-u/z}\sum_{k\geq0}v_kz^k
\]
on a sector of opening greater than $\pi$ centered at $\phi$, then
\begin{equation}\label{chiralallorders}
 (\mathscr Hs)(z)\sim e^{u/z}
       \sum_{k\geq0}(-1)^k\mathsf Dv_kz^k
\end{equation}
on its translate by $-\pi$. In particular, the operator preserves
canonical normalization. On the logarithmic cover,
$\mathscr H^2s(z)=s(e^{2\pi i}z)$.
\end{proposition}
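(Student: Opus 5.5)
The plan is to verify everything directly from the explicit form \eqref{qconnection} of the connection and the identities \eqref{chiralidentities} of Proposition~\ref{chiralblocks}.

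\emph{Intertwining.} Put $w=e^{\pi i}z$, so that $z\partial_z=w\partial_w$ and $1/w=-1/z$. Suppose $s$ is flat, i.e.\ $q\partial_q s=-M_Hs/w$, with the analogous equations for $r$ and $z$. Then
\[
 q\partial_q(\mathsf Ds(w))=-\mathsf DM_Hs(w)/w=\mathsf DM_Hs(w)/z=-M_H\mathsf Ds(w)/z,
\]
where the last step uses $\mathsf DM_H\mathsf D=-M_H$. The same computation with $\mathsf DM_\xi\mathsf D=-M_\xi$ handles $r\partial_r$. For $z\partial_z$ we have
\[
 z\partial_z(\mathsf Ds(w))=\mathsf D(Cs/w-\mu s)=-\mathsf DCs/z-\mathsf D\mu s=C\mathsf Ds/z-\mu\mathsf Ds,
\]
using $\mathsf DC\mathsf D=-C$ and $[\mathsf D,\mu]=0$. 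Since the three operators of the connection are linear, this shows that $\mathscr H$ commutes with them as operators, not only on flat sections.

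\emph{The form $b_{\rm ch}$.} Its matrix is $\mathsf D^{\mathsf T}P=\mathsf DP$ because $\mathsf D$ is diagonal. From $\mathsf DP\mathsf D=P$ we get $\mathsf DP=P\mathsf D$, and then $(P\mathsf D)^{\mathsf T}=\mathsf DP=P\mathsf D$, so the matrix is symmetric. It is nondegenerate since $\mathsf D^2=I$ and $P$ is invertible.

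For constancy, the key point is that $b_{\rm ch}(s,t)$ involves $s$ and $t$ at the same $z$. So the standard flatness of the Poincaré pairing does not apply directly: that statement pairs $s(-z)$ with $t(z)$, or more precisely uses $M^{\mathsf T}P=PM$ together with $\mu^{\mathsf T}P+P\mu=0$. I would instead differentiate directly. Since
\[
 \partial\bigl(s^{\mathsf T}\mathsf DPt\bigr)=-\tfrac1z\,s^{\mathsf T}\bigl(M^{\mathsf T}\mathsf DP+\mathsf DPM\bigr)t ,
\]
and $M^{\mathsf T}\mathsf DP=-\mathsf DM^{\mathsf T}P=-\mathsf DPM$, the $q$- and $r$-derivatives vanish. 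For the $z$-derivative, the $C$-terms cancel by the same computation. The $\mu$-terms cancel because $\mu^{\mathsf T}P=-P\mu$, which follows from the degrees: $P$ pairs degrees summing to four, and $\mu$ has degree minus two. Combined with $[\mathsf D,\mu]=0$ this gives
\[
 -s^{\mathsf T}\bigl(\mu^{\mathsf T}\mathsf DP+\mathsf DP\mu\bigr)t=0 .
\]
This is the main check, and the signs must be tracked carefully. It is exactly where anticommutation converts the usual $z\mapsto-z$ pairing into a same-$z$ pairing.

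\emph{Asymptotics and the square.} Substitute $w=e^{\pi i}z$ into the expansion of $s$: $e^{-u/w}=e^{u/z}$ and $w^k=(-1)^kz^k$. This gives \eqref{chiralallorders} on the sector whose image under $z\mapsto e^{\pi i}z$ is the original one, i.e.\ the translate by $-\pi$; the opening is unchanged. By Corollary~\ref{chiralspectrum}, $\mathsf D$ sends the normalized idempotent at $u$ to a vector at $-u$ with the same Poincaré norm. Hence $\mathsf Dv_0$ is a normalized eigenvector at $-u$, so $\mathscr H$ preserves canonical normalization. Finally, $\mathscr H^2s(z)=\mathsf D^2s(e^{2\pi i}z)=s(e^{2\pi i}z)$.
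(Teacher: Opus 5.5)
Your proposal is correct and follows essentially the same route as the paper: the substitution $w=e^{\pi i}z$ with the identities of Proposition~\ref{chiralblocks} gives the intertwining, the relations $M_i^{\mathsf T}P\mathsf D+P\mathsf D M_i=0$ and $\mu^{\mathsf T}P\mathsf D+P\mathsf D\mu=0$ give constancy of $b_{\rm ch}$, and direct substitution gives the asymptotics, the preserved normalization and $\mathscr H^2$. The one loose point is that linearity alone does not justify intertwining on arbitrary sections; instead, rerun your computation with the defining formulas of the three operators in place of the flatness equations, which gives that case directly.
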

\begin{proof}
Put $w=e^{\pi i}z$. The identities \eqref{chiralidentities} show that
the sign change $1/w=-1/z$ is compensated by conjugation with $\mathsf D$
in the divisor and Euler terms. The operator $z\partial_z$ is carried to
$w\partial_w$, and $\mathsf D$ commutes with $\mu$. This proves
equivariance for the full connection, including the grading term.

For the form, divisor multiplication is self-adjoint for $P$, whereas
$\mu^{\mathsf T}P+P\mu=0$. Hence
\[
 M_i^{\mathsf T}P\mathsf D+P\mathsf D M_i=0\quad(i=H,\xi),
 \qquad \mu^{\mathsf T}P\mathsf D+P\mathsf D\mu=0.
\]
These identities show that the pairing is constant in $q,r,z$. Symmetry and
nondegeneracy follow from $\mathsf D^{\mathsf T}P=P\mathsf D$ and
$\mathsf D^2=I$. Substitution gives \eqref{chiralallorders}, and
$\mathsf D$ preserves the norm of the leading idempotent.
Applying the substitution twice gives a full turn with the logarithm
transported along it.
\end{proof}

\begin{lemma}[Marked half-turn]\label{halfturn}
Here $LU=L\otimes U$ denotes the line bundle $\OO_X(H+\xi)$.
Transporting the logarithms along the half-turn gives
\begin{equation}\label{halfturnformula}
 \mathfrak s(E\otimes LU)(q,r,z)
   =\mathsf D\,\mathfrak s(E)(q,r,e^{\pi i}z).
\end{equation}
If the section on the right is canonical before rotation at exponent
$u$ and phase $\phi$, the section on the left is canonical at exponent
$-u$ and phase $\phi-\pi$, with leading vector $\mathsf D\psi_u$.
For positive parameters, reflection also sends $E$ to $E^\vee$,
$u$ to $\bar u$, and $\phi$ to $-\phi$.
\end{lemma}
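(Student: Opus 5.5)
The plan is to prove \eqref{halfturnformula} directly from the definition of the Gamma framing $\mathfrak s(E)=(2\pi)^{-2}\mathcal L z^{-\mu}z^\rho\widehat\Gamma_X\Ch(E)$, and then read off the asymptotic statements from Proposition~\ref{chiralflat}. The check has two parts: both sides are flat sections, and they carry the same Gamma data at the regular singular end. By Proposition~\ref{chiralflat}, $\mathscr H$ intertwines the full connection \eqref{qconnection}. Hence $z\mapsto\mathsf D\,\mathfrak s(E)(q,r,e^{\pi i}z)$ is flat, and it suffices to compare Gamma data.

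For that comparison I would use the scalar master identity \eqref{scalarmasterGamma} and then reconstruct the vector from the corrected jets $y_j$, as in the proof of Theorem~\ref{twospinorcolumns}.

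\begin{itemize}
\item Under $z\mapsto e^{\pi i}z$ with transported logarithms, $\log(q/z^2)$ and $\log(r/z^2)$ each decrease by $2\pi i$. By Proposition~\ref{MBhelix}, this is the composite $\mathcal M_q\mathcal M_r$, which sends $Z_E\mapsto Z_{E\otimes L\otimes U}$.
\item The prefactor $z^{-2}$ is invariant, since $(e^{\pi i})^{-2}=1$.
\item Hence $(1,\mathfrak s(E)(e^{\pi i}z))=(1,\mathfrak s(E\otimes LU)(z))$.
\item Each corrected jet $y_j$ is a polynomial in $\mathscr D_q=-zq\partial_q$ and $\mathscr D_r$ with coefficients of even Novikov weight. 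So $y_j$ for a basis class of complex degree $p$ picks up the factor $(-1)^p$ under $z\mapsto -z$.
\item After Poincar\'e inversion, using that $\mathsf D$ is a $P$-isometry, these signs assemble into $\mathsf D$ applied to the vector. This gives \eqref{halfturnformula}.
\end{itemize}

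Equivalently, at the level of the framing one can write
\[
(e^{\pi i}z)^{-\mu}(e^{\pi i}z)^{\rho}=e^{-\pi i\mu}z^{-\mu}z^{\rho}e^{\pi i\rho}.
\]
The identity $\mathsf D\mathcal L(e^{\pi i}z)\mathsf D=\mathcal L(z)$ follows from the degree parity in Proposition~\ref{chiralblocks}. Also $e^{-\pi i\mu}=\mathsf D$, and multiplication by $e^{\pi i\rho}=e^{2\pi i(H+\xi)}$ on $\widehat\Gamma_X\Ch(E)$ gives $\widehat\Gamma_X\Ch(E\otimes LU)$.

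The canonicity claim then follows from \eqref{chiralallorders}. Suppose $\mathfrak s(E)\sim e^{-u/z}(\psi_u+O(z))$ on a sector of opening greater than $\pi$ centered at $\phi$. Then the right side of \eqref{halfturnformula} has expansion $e^{u/z}(\mathsf D\psi_u+O(z))$ on the translated sector centered at $\phi-\pi$. By Corollary~\ref{chiralspectrum}, $\mathsf D\psi_u$ is the normalized idempotent vector at $-u$, and admissibility is preserved because the spectrum is symmetric under $u\mapsto-u$. Uniqueness of canonical columns \cite[Lemma 2.6]{HK} then identifies the left side as the canonical column at $(-u,\phi-\pi)$.

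The reflection assertion is Lemma~\ref{Gammareflection}. Conjugation sends $e^{-u/z}$ at $\bar z$ to $e^{-\bar u/z}$, sends a sector centered at $\phi$ to one centered at $-\phi$, and sends $E$ to $E^\vee$.

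The main obstacle is the sign and branch bookkeeping. One must check that the upper half-turn gives exactly $\mathcal M_q\mathcal M_r$ rather than an inverse, and that no extra scalar sign $(-1)^{m+1}$ appears for $\nu=3$. The second point amounts to $e^{-\pi i\mu}=\mathsf D$, consistent with Corollary~\ref{familyhalfturn} with $m=1$, where $(-1)^{m+1}=1$. I would verify the direction on the zero-degree logarithmic polynomial, where the substitution $q^{-H}r^{-\xi}\mapsto e^{2\pi i(H+\xi)}q^{-H}r^{-\xi}$ is explicit.
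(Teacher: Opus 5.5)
Your proposal is correct and follows the paper's own proof. The paper likewise obtains the unit component from the scalar master formula \eqref{scalarmasterGamma} together with Proposition~\ref{MBhelix} (both scaled logarithms decrease by $2\pi i$, which gives $\mathcal M_q\mathcal M_r$), recovers the full vector from the $(-1)^p$ parity of the corrected jets and Poincar\'e inversion, gets the asymptotic claim by substitution, and gets reflection from Lemma~\ref{Gammareflection}. Your additional framing-level check, using $e^{-\pi i\mu}=\mathsf D$, $e^{\pi i\rho}\Ch(E)=\Ch(E\otimes LU)$ and $\mathsf D\mathcal L(-z)\mathsf D=\mathcal L(z)$, is also valid and gives a shorter equivalent verification.
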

\begin{proof}
The scalar master formula \eqref{scalarmasterGamma} gives
\[
 h_E(e^{\pi i}z)=h_{E\otimes LU}(z),\qquad h_E=(1,\mathfrak s(E)),
\]
because $(e^{\pi i}z)^{-2}=z^{-2}$, whereas each scaled parameter
logarithm decreases by $2\pi i$. Proposition~\ref{MBhelix} identifies
these two clockwise parameter turns with tensoring by $L$ and $U$.
The logarithms at the endpoint are determined by this continuation.

The corrected derivatives in the proof of
Theorem~\ref{twospinorcolumns} recover all cup pairings. A pairing with a cup class of complex
degree $p$ changes by $(-1)^p$ under $z\mapsto e^{\pi i}z$: each
correction has the same degree modulo two, since $q,r$ have degree two.
As the Poincar\'e pairing pairs degrees $p$ and $4-p$, inversion of that
pairing yields exactly \eqref{halfturnformula}. The three matrix identities above, together with the cyclicity of the
unit master function, give an equivalent proof. The asymptotic assertion follows by substitution; the
normalization is preserved because $\mathsf D$ is an isometry.
Reflection follows from Lemma~\ref{Gammareflection} with reflected
logarithms. The sector estimates are transported with the vector section;
a scalar limit along a ray would not suffice.
\end{proof}

Two applications of \eqref{halfturnformula} give tensoring by
$(LU)^{\otimes2}=\omega_X^{-1}$, the full helix action on $K_0(X)$.
Thus $\mathscr H$ is not an involution on the Gamma lattice even though
$\mathsf D^2=I$ on cohomology. A derived lift compatible with the helix normalization is the functor
$E\mapsto E\otimes LU[-2]$, which squares to
$\mathsf S_X^{-1}(E)=E\otimes\omega_X^{-1}[-4]$; the even shift
is invisible to the Gamma framing but is part of the derived statement.
As discussed in Section~\ref{sec:helix}, a single half-turn does not
shift the original exceptional foundation by four objects.

\begin{corollary}[Gamma pairing and opposite phases]\label{oppositephase}
For $A=LU$ and Gamma-framed sections, the symmetric pairing is
\begin{equation}\label{chiralgammapairing}
 b_{\rm ch}(\mathfrak s(E),\mathfrak s(F))
 =\chi(E\otimes A,F)=\chi(E,F\otimes A^{-1}).
\end{equation}
Suppose an ordered exceptional collection $\mathcal E$ gives a
canonical Gamma basis at an admissible phase $\phi$. With transported
logarithms and the same object order, $\mathcal E\otimes A$ gives a
canonical basis at $\phi-\pi$, and
\begin{equation}\label{oppositephasematrices}
 S_{\phi-\pi}=S_\phi,\qquad
 \mathcal C_{\phi-\pi}=e^{\pi i\rho}\mathcal C_\phi.
\end{equation}
The central matrices use the cup frame and the normalization at the
regular end fixed in Theorem~\ref{mainGammaII}.
\end{corollary}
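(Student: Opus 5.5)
The proof splits into a pairing formula, proved first, and a transport statement. For \eqref{chiralgammapairing}, substitute Lemma~\ref{halfturn} into the definition of $b_{\rm ch}$:
\[
 b_{\rm ch}(\mathfrak s(E),\mathfrak s(F))(z)
 =(\mathfrak s(E\otimes A)(e^{-\pi i}z),\mathfrak s(F)(z))_X .
\]
Here the half-turn is applied to the first argument; equivalently, write $\mathsf D\mathfrak s(E)(z)=\mathfrak s(E\otimes A)(e^{-\pi i}z)$ with transported logarithms. The pairing $(s(e^{-\pi i}z),t(z))_X$ of Gamma-framed sections is the standard Gamma factorization of Riemann--Roch \cite{Integral,GGI}. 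In the present normalization it equals $\chi(E',F)$, and this is exactly the identity underlying the Stokes convention \eqref{introStokesconvention}. I would verify it concretely in two steps:
\begin{enumerate}
\item Move $\mathcal L$ across using $\mathcal L(-z)^*\mathcal L(z)=I$, reducing to the pairing of $z^{-\mu}z^\rho\widehat\Gamma\Ch(E')$ at $e^{-\pi i}z$ with the same expression for $F$ at $z$.
\item Apply $\widehat\Gamma_X\widehat\Gamma_X^*=(2\pi i)^{\deg/2}\Td$ together with the $(2\pi)^{-4}$ scalar to obtain $\chi(E',F)$.
\end{enumerate}
Constancy of $b_{\rm ch}$ (Proposition~\ref{chiralflat}) means it suffices to evaluate in the regular-singular limit. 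The second equality in \eqref{chiralgammapairing} is $\chi(E\otimes A,F)=\chi(E,F\otimes A^{-1})$, immediate since $A$ is a line bundle.

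For the transport statement, take the canonical basis $y_i=\mathfrak s(E_i)$ at phase $\phi$. By Lemma~\ref{halfturn} and Proposition~\ref{chiralflat}, $y_i'(z):=\mathfrak s(E_i\otimes A)(z)=\mathsf D y_i(e^{\pi i}z)$ is canonical at exponent $-u_i$ and phase $\phi-\pi$. The asymptotics \eqref{chiralallorders} move the sector by $-\pi$ and preserve the normalized leading vector $\mathsf D\psi_{u_i}$.

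The ordering must then be checked. We have $\operatorname{Im}(e^{-i(\phi-\pi)}(-u_i))=\operatorname{Im}(e^{-i\phi}u_i)$, so decreasing order is unchanged and the same object order is admissible at $\phi-\pi$.

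For the Stokes matrix at $\phi-\pi$, compute
\[
 (y_i'(e^{-\pi i}z),y_j'(z))_X=(\mathsf D y_i(z),\mathsf D y_j(e^{\pi i}z))_X
 =(y_i(w e^{-\pi i}),y_j(w))_X,
\]
where $w=e^{\pi i}z$, using that $\mathsf D$ is an isometry. The right-hand side is $(S_\phi)_{ij}$, since the pairing of flat sections is constant and the path is the same half-turn relative to $w$. Alternatively, \eqref{chiralgammapairing} gives $\chi(E_i\otimes A,E_j\otimes A)=\chi(E_i,E_j)$ directly.

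For the central matrix, $\Ch(E\otimes A)=e^{2\pi i(H+\xi)}\Ch(E)=e^{\pi i c_1}\Ch(E)$ since $c_1=2H+2\xi$. This gives $\mathcal C_{\phi-\pi}=e^{\pi i\rho}\mathcal C_\phi$, because $\rho$ commutes with $\widehat\Gamma_X$ under cup product.

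The main obstacle is the bookkeeping of branches. The Gamma framing at $\phi-\pi$ uses logarithms transported along the half-turn, and this must match the convention ``$Y_{\phi}=Y_\infty\mathcal C_\phi$'' with $Y_\infty$ on the chosen cover. I would fix this by checking the zero-degree logarithmic polynomial as in Lemma~\ref{halfturn}. The overall signs and powers of $2\pi i$ in the Riemann--Roch factorization also need care; I would confirm them against the $(1,1)$ entry $\chi(\OO,\OO)=1$.
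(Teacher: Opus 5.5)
Your proposal is correct and follows the paper's proof essentially step by step. You evaluate the half-turn formula at $e^{-\pi i}z$ and apply the Gamma factorization of Riemann--Roch for \eqref{chiralgammapairing}, check $\operatorname{Im}(e^{-i(\phi-\pi)}(-u_i))=\operatorname{Im}(e^{-i\phi}u_i)$ for the order, use invariance of the Euler form under $\otimes A$ for $S_{\phi-\pi}=S_\phi$, and use $\Ch(A)=e^{\pi i\rho}1$ for the central matrix. These are the same steps the paper uses.

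One correction to your optional verification: the identity should read $\widehat\Gamma_X\widehat\Gamma_X^*=e^{-\pi i c_1(X)}(2\pi i)^{\deg/2}\Td_X$, as in the paper's check $(\mathsf D\widehat\Gamma_X)\widehat\Gamma_X=\Ch(1-H^2/12)$ in Proposition~\ref{unitChiral}. The factor $e^{-\pi i c_1(X)}$ must be combined with the $e^{\mp\pi i\rho}$ produced by the half-turn of $z^{-\mu}z^{\rho}$ to obtain exactly $\chi(E',F)$.
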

\begin{proof}
Evaluate \eqref{halfturnformula} at $e^{-\pi i}z$. The resulting
identity $\mathsf D\mathfrak s(E)(z)
=\mathfrak s(E\otimes A)(e^{-\pi i}z)$, followed by the
Gamma Riemann--Roch pairing \cite[Section 3.4]{GGI}, gives
\eqref{chiralgammapairing}. Its symmetry also follows from Serre
duality in dimension four and $A^2=\omega_X^{-1}$.

For the canonical basis, Lemma~\ref{halfturn} sends the exponents
$u_i$ to $-u_i$. Their projected order is unchanged, since
\[
 \operatorname{Im}(e^{-i(\phi-\pi)}(-u_i))
 =\operatorname{Im}(e^{-i\phi}u_i).
\]
The Euler form is preserved by tensoring both arguments with $A$,
which proves the Stokes identity in the stated convention. Finally,
$\Ch(A)=e^{2\pi i(H+\xi)}=e^{\pi i\rho}1$ gives the central
identity directly from the Gamma columns. To return these rotated
columns to the original phase $\phi$, one must apply the appropriate
Stokes mutations in addition to \eqref{oppositephasematrices}.
\end{proof}

Fix $r=1$. Put $\zeta=e^{2\pi i/3}$ and
\[
 q=\rho^3/4,\qquad |\rho|=\epsilon,\qquad
 v_+(h)=\sqrt{1+h^2/4}-h/2,
\]
where the square root is holomorphic near zero with value one. The
three roots of
\begin{equation}\label{clusterheq}
 h^3=\rho^3v_+(h)
\end{equation}
give the cluster near the positive center through
\begin{equation}\label{clusterueq}
 u(h)=3h+2\sqrt{1+h^2/4}.
\end{equation}
The other three values are their negatives, and the two remaining values
are exactly $2,-2$.

\begin{lemma}[Uniform root discs]\label{clusterdiscs}
For $0<\epsilon\leq 10^{-2}$ and any argument of $\rho$, equation
\eqref{clusterheq} has exactly one root $h_j$ in each disc
$|h-\rho\zeta^j|<\epsilon^2$, for $j=0,1,2$. Moreover
\begin{equation}\label{clusterbound}
 |u(h_j)-(2+3\rho\zeta^j)|<4\epsilon^2.
\end{equation}
These six values and $\pm2$ are distinct. At $\rho=\epsilon>0$ write
them as
\begin{equation}\label{endpointspectrum}
 T,\ a+ib,\ a-ib,\ -T,\ -a-ib,\ -a+ib,\ 2,\ -2.
\end{equation}
Then
\begin{equation}\label{abcintervals}
 2+2\epsilon<T<2+4\epsilon,\qquad
 2-2\epsilon<a<2-\epsilon,\qquad
 2\epsilon<b<3\epsilon.
\end{equation}
\end{lemma}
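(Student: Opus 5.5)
We will treat \eqref{clusterheq} as a fixed-point problem near each approximate root $\rho\zeta^j$ and apply Rouch\'e's theorem on each disc. Write $g(h)=h^3-\rho^3v_+(h)$ and compare it with $g_0(h)=h^3-\rho^3$, whose roots are exactly $\rho\zeta^j$. On $|h|\leq 2\epsilon$ we have $|h|\le 2\cdot10^{-2}$, so $v_+$ is holomorphic with $|v_+(h)-1|\leq |h|/2+|h|^2/8+\cdots\leq 0.51|h|$. Hence $|g-g_0|=|\rho|^3|v_+(h)-1|\leq 0.51\epsilon^3|h|\le 1.1\epsilon^4$ there. On the circle $|h-\rho\zeta^j|=\epsilon^2$, factor $g_0(h)=\prod_k(h-\rho\zeta^k)$. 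One factor has modulus $\epsilon^2$, and the other two have modulus at least $\sqrt3\epsilon-\epsilon^2$. This gives $|g_0|\ge \epsilon^2(\sqrt3\epsilon-\epsilon^2)^2\ge 2.9\epsilon^4$ for $\epsilon\le10^{-2}$. Rouch\'e's theorem then yields exactly one root of $g$ in each disc. The three discs are disjoint, since their centers are $\sqrt3\epsilon$ apart. Moreover, $g$ has exactly three roots in $|h|<2\epsilon$, by Rouch\'e against $g_0$ on $|h|=2\epsilon$, where $|g_0|\ge 7\epsilon^3$. So these are precisely the roots of the cluster.

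For \eqref{clusterbound}, expand
\[
 u(h)=2+3h+\bigl(2\sqrt{1+h^2/4}-2\bigr),
\]
whose last term is bounded by $|h|^2/4\cdot(1+o(1))\le 0.26\cdot(1.01\epsilon)^2$. Therefore
\[
 |u(h_j)-(2+3\rho\zeta^j)|\le 3\epsilon^2+0.27\epsilon^2<4\epsilon^2.
\]

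Distinctness follows from these estimates. The three positive-cluster values lie in disjoint discs of radius $4\epsilon^2$ around $2+3\rho\zeta^j$, whose centers are $3\sqrt3\epsilon$ apart. These values lie within $3\epsilon+4\epsilon^2$ of $2$, but they differ from $2$ itself: $u(h)=2$ would force $3h=2-2\sqrt{1+h^2/4}=O(h^2)$, and since $|h_j|\ge\epsilon-\epsilon^2>0$ this is impossible. Their negatives cluster near $-2$, so the positive and negative clusters are separated. The identification of the six values with the spectrum comes from \eqref{hroots} with $r=1$, $v=\xi-H$ and $v(v+h)=1$, so $v=v_+(h)$ near $h=0$; then $u=4h+2v=3h+2\sqrt{1+h^2/4}$, matching \eqref{clusterueq}.

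For $\rho=\epsilon>0$, the root $h_0$ is real by conjugation symmetry and uniqueness in its disc, so $T=u(h_0)$ is real. Refining the bound gives $T=2+3h_0+O(\epsilon^2)$ with $h_0\in(\epsilon-\epsilon^2,\epsilon)$, because $v_+<1$ forces $h_0<\epsilon$. This yields $2+2\epsilon<T<2+4\epsilon$. The roots $h_1,h_2=\bar h_1$ give $a\pm ib$ with $a\approx 2-3\epsilon/2$ and $b\approx 3\sqrt3\epsilon/2\approx2.598\epsilon$. The error $4\epsilon^2\le0.04\epsilon$ keeps $a\in(2-2\epsilon,2-\epsilon)$ and $b\in(2\epsilon,3\epsilon)$. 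The main obstacle is only bookkeeping: the numerical constants in the Rouch\'e comparison and in the Taylor remainders must be tracked explicitly so that each margin holds uniformly for all $\epsilon\le10^{-2}$. No conceptual difficulty is expected there.
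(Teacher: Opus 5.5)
Your proof is correct and follows essentially the same route as the paper: Rouch\'e against $h^3-\rho^3$ on each disc of radius $\epsilon^2$ with a linear bound on $v_+(h)-1$, then the Taylor estimate for $u$, separation of the image discs for distinctness, and conjugation symmetry at $\rho=\epsilon$ for the intervals. Your extra Rouch\'e count on $|h|=2\epsilon$ shows that the three disc roots exhaust the cluster. This slightly tightens the paper's brief identification of the full spectrum, and the paper's argument does not otherwise differ from yours.
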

\begin{proof}
For $|h|\leq 0.02$, the chosen square root has positive real part and
\[
 |v_+(h)-1|
 \leq |h|/2+|h|^2/4<|h|.
\]
On the boundary of each disc, the other two roots of
$h^3-\rho^3$ are at distance at least
$(\sqrt3-\epsilon)\epsilon$. Consequently
\[
 |h^3-\rho^3|
 \geq (\sqrt3-\epsilon)^2\epsilon^4>2\epsilon^4,
 \qquad
 |\rho^3(v_+(h)-1)|<(1+\epsilon)\epsilon^4.
\]
Rouch\'e's theorem proves the assertion for each disc. The discs are
disjoint. Since $|h|\leq\epsilon+\epsilon^2$, formula
\eqref{clusterueq} gives
\[
 |u-(2+3\rho\zeta^j)|
 \leq 3\epsilon^2+|h|^2/2<4\epsilon^2.
\]
Eliminating $v$ from $h^3=4qv$ and $v(v+h)=1$ gives the sextic already
proved for the quantum ring. The negative roots with $v=-v_+(h)$ give
$-u(h)$, so the displayed list accounts for all eight values.
Within each cluster, the distance from a vertex to the center is at least
$3\epsilon-4\epsilon^2$, and the distance between vertices is at least
$3\sqrt3\epsilon-8\epsilon^2$. Between the two clusters, the leading
separation is four, with errors of less than $7\epsilon$.
This proves distinctness. Conjugation and the positive root identify
$T,a,b$ at the positive endpoint; \eqref{abcintervals} follows from
\eqref{clusterbound}. All these bounds are uniform around the parameter circle.
\end{proof}

\subsection{The Laplace-dual connection}
\label{sec:smalldual}
We carry out the path argument directly in the explicit connection in
two parameters. This gives the required analytic continuation;
convergence of individual big Frobenius germs would not by itself
establish their gluing along a path. We use the Laplace construction of
\cite[Section 2.5]{GGI} and \cite[Section 3]{HK}, specialized to the small
quantum connection with the signs fixed below.
Cruz Morales and Milanov reformulate the Dubrovin conjecture in terms
of the second structure connection and twisted reflection
vectors~\cite{CruzMilanov}.

Write $Q_q=q\partial_q$, $Q_r=r\partial_r$, $M_q=M_H$, $M_r=M_\xi$,
and $C=2(M_H+M_\xi)$. On the locus with distinct eigenvalues, set
\[
 R_\lambda=(\lambda I-C)^{-1}.
\]
Throughout this subsection, $R_\lambda$ denotes the resolvent.
\begin{proposition}[Laplace dual of the small quantum connection]
\label{smallLaplaceDual}
The equations
\begin{equation}\label{smallLaplaceEq}
 \partial_\lambda\widehat y=-R_\lambda\mu\widehat y,
 \qquad Q_i\widehat y=M_iR_\lambda\mu\widehat y\quad(i=q,r)
\end{equation}
define an integrable meromorphic connection. Its finite poles are
logarithmic at the eight distinct values $u_j$; infinity is regular
singular. For each normalized idempotent $\psi_j$, there is a unique
holomorphic local solution of its $\lambda$ equation with
$\widehat y(u_j)=\psi_j$. It is also horizontal in the parameter directions when the normalized
idempotent is continued consistently.
\end{proposition}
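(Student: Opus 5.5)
The plan is to derive \eqref{smallLaplaceEq} as the formal Laplace transform of the small quantum connection \eqref{qconnection}, check integrability directly from the identities already available, and then analyse the local structure at each finite pole.

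\textbf{Integrability.} Write $\Omega=-R_\lambda\mu\,d\lambda+\sum_{i=q,r}M_iR_\lambda\mu\,d\log x_i$, with $x_q=q$, $x_r=r$, and verify $d\Omega+\Omega\wedge\Omega=0$. The identities needed are:
\begin{itemize}
\item $[M_q,M_r]=0$ and $[M_i,C]=0$;
\item the derivative identities $Q_iC=M_i+[M_i,\mu]$ from Corollary~\ref{connectionfull};
\item flatness in the parameters, $Q_qM_r=Q_rM_q$.
\end{itemize}
Differentiating the resolvent gives $\partial_\lambda R_\lambda=-R_\lambda^2$ and $Q_iR_\lambda=R_\lambda(Q_iC)R_\lambda$. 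Substituting $Q_iC=M_i+[M_i,\mu]$, the $(\lambda,i)$ curvature component reduces to an expression that vanishes after using $[M_i,R_\lambda]=0$ and $R_\lambda(\lambda-C)=I$. The $(q,r)$ component reduces to the symmetry $Q_qM_r=Q_rM_q$, together with the commutativity of the $M_i$ with each other and with $R_\lambda$. I would organise this as a short computation, or cite the general Laplace duality of \cite[Section 2.5]{GGI}, \cite[Section 3]{HK}, which applies because \eqref{qconnection} has exactly the required Frobenius-manifold form in the fixed frame.

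\textbf{Pole structure.} On the locus with distinct eigenvalues, $R_\lambda=\sum_j E_j/(\lambda-u_j)$, where $E_j$ is the spectral projector onto the line $\C\psi_j$. Hence the only finite poles are simple ones at the $u_j$, with residue $-E_j\mu$ in the $\lambda$ direction, and the parameter forms have at most the same simple poles. At infinity, $R_\lambda=\lambda^{-1}+O(\lambda^{-2})$, so the pole is logarithmic with residue $\mu$; in particular it is regular singular.

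\textbf{Local holomorphic solution at $u_j$.} The residue $-E_j\mu$ has rank at most one, so its eigenvalues are $0$ (with multiplicity seven) and $-(\psi_j,\mu\psi_j)$. This last number vanishes because $\mu$ is skew with respect to $P$ (Corollary~\ref{connectionfull}) and $\psi_j$ is normalized, so the residue is nilpotent of rank at most one. Its kernel contains $\psi_j$, since $E_j\mu\psi_j=(\psi_j,\mu\psi_j)\psi_j=0$. Because no eigenvalue of the residue is a positive integer, the standard Frobenius recursion $(nI+E_j\mu)\,c_n=(\text{lower terms})$ is solvable for every $n\ge1$. This gives a unique holomorphic solution for each initial value in $\ker(E_j\mu)$, in particular for $\psi_j$. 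Convergence of the resulting series follows from the usual majorant argument.

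\textbf{Horizontality.} For a holomorphic solution $\widehat y$ with $\widehat y(u_j)=\psi_j$, set $\delta_i=Q_i\widehat y-M_iR_\lambda\mu\widehat y$. By integrability, $\delta_i$ again solves the $\lambda$ equation; one must check that it is holomorphic at $u_j$ and vanishes there. Holomorphy requires care: expanding at $\lambda=u_j$, the pole of $M_iR_\lambda\mu\widehat y$ has coefficient $M_iE_j\mu\psi_j=0$, and the derivative of the moving point is absorbed by writing $\widehat y$ in the variable $\lambda-u_j$. Evaluating at $\lambda=u_j$ then gives $Q_i\psi_j$ plus explicit correction terms. These combine to zero by the standard formula for the variation of normalized idempotents, $Q_i\psi_j=\sum_k(\ldots)\psi_k$, which is expressed through the rotation coefficients of the semisimple connection.

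\textbf{Expected main obstacle.} This last identity is the delicate step, together with the sign conventions. I would first try to obtain it from the canonical-coordinate form of \eqref{qconnection}, where $\partial_{u_k}\psi_j$ is given by $\Psi$-rotation coefficients tied to $[\mu,\cdot]$. The uniqueness of the solution then identifies it with the continued normalized idempotent, provided the sign of $\psi_j$ is continued consistently along the path, as the statement requires.
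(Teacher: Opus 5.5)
Your outline is essentially the paper's proof. It uses the same curvature reduction from $[M_q,M_r]=0$, $Q_qM_r=Q_rM_q$, $Q_iC=M_i+[M_i,\mu]$ and $[M_i,R_\lambda]=0$. It has the same rank-one nilpotent residue $E_j\mu$ from skew-adjointness of $\mu$, and the same recursion with $(nI+E_j\mu)^{-1}=n^{-1}I-n^{-2}E_j\mu$. It also reduces horizontality to the vanishing of $\delta_i$ at $u_j$ via integrability and uniqueness, as the paper does. Two things need fixing.

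First, a sign. For $d\widehat y=\Omega\widehat y$ the integrability condition is $d\Omega=\Omega\wedge\Omega$, not $d\Omega+\Omega\wedge\Omega=0$. With your sign, the $(\lambda,i)$ component leaves the generally nonzero term $-2[R_\lambda\mu,M_iR_\lambda\mu]$ and does not close.

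Second, the step you flag as the main obstacle should not go through canonical coordinates. On this two-parameter slice the eight $u_k$ are not coordinates, so $\partial_{u_k}\psi_j$ only makes sense after importing a convergent big Frobenius manifold, which the paper avoids. The needed identity is elementary.

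Write $t=\lambda-u_j$ and $R_\lambda\mu=E_j\mu/t+B_0+O(t)$, with $B_0=\sum_{k\ne j}E_k\mu/(u_j-u_k)$. Write $\widehat y=\psi_j+ty_1+O(t^2)$, where $(I+E_j\mu)y_1=-B_0\psi_j$. From $[M_i,C]=0$ you get $M_iE_j=m_{ij}E_j$. Pairing the $Q_i$-derivative of $C\psi_j=u_j\psi_j$ with $\psi_j$ gives $Q_iu_j=m_{ij}$. Together these give
\[
 \delta_i\big|_{\lambda=u_j}
 =Q_i\psi_j-m_{ij}(I+E_j\mu)y_1-M_iB_0\psi_j
 =Q_i\psi_j-\sum_{k\ne j}\frac{m_{ik}-m_{ij}}{u_j-u_k}(\psi_k,\mu\psi_j)\psi_k .
\]
Now pair the same derivative with $\psi_k$ for $k\ne j$, and use $Q_iC=M_i+[M_i,\mu]$ with self-adjointness of $M_i$ and $C$. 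This gives $(u_j-u_k)(\psi_k,Q_i\psi_j)=(m_{ik}-m_{ij})(\psi_k,\mu\psi_j)$. Also $(\psi_j,Q_i\psi_j)=\frac12Q_i(\psi_j,\psi_j)=0$. Hence $\delta_i(u_j)=0$; this is exactly the identity the paper uses.

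The same relation $Q_iu_j=m_{ij}$ shows that the polar part of your $\Omega$ at $u_j$ is $-E_j\mu\,d(\lambda-u_j)/(\lambda-u_j)$. That is what ``logarithmic along the moving divisor'' requires; your pole-structure paragraph asserts it but does not check it.
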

\begin{proof}
The identities already proved for the small quantum connection are
\begin{gather*}
 [M_q,M_r]=0,\quad Q_qM_r=Q_rM_q,\quad
 Q_iC=M_i+[M_i,\mu],\quad [M_i,C]=0.
\end{gather*}
They imply integrability directly. For example, put
$A=R_\lambda\mu$, $B_i=-M_iR_\lambda\mu$.
Since $Q_iR_\lambda=R_\lambda(Q_iC)R_\lambda$ and
$\partial_\lambda R_\lambda=-R_\lambda^2$, the mixed curvature is
\[
 Q_i A-\partial_\lambda B_i+[B_i,A]
 =R_\lambda\bigl(Q_iC-M_i-[M_i,\mu]\bigr)R_\lambda\mu=0.
\]
The remaining curvature is, after the same substitutions,
\[
 Q_qB_r-Q_rB_q+[B_q,B_r]
 =-(Q_qM_r-Q_rM_q)R_\lambda\mu=0.
\]
In this reduction the terms containing $[M_i,\mu]$ cancel the matrix
commutator, using $[M_q,M_r]=[M_i,R_\lambda]=0$.
This also fixes the order of $\mu$ and the resolvent in
\eqref{smallLaplaceEq}.

Let $P_j$ be the rank-one spectral projector. The residue of the
$\lambda$ connection at $u_j$ is $N_j=P_j\mu$.
Self-adjointness of $C$ and skew-adjointness of $\mu$ imply
$P_j\mu P_j=0$, hence $N_j^2=0$ and $N_j\psi_j=0$.
Writing $t=\lambda-u_j$ and
$R_\lambda\mu=N_j/t+\sum_{k\geq0}B_kt^k$, the coefficients in the analytic
expansion $\widehat y=\sum_{n\geq0}y_nt^n$ are determined by
\begin{equation}\label{dualrecurrence}
 (nI+N_j)y_n=-\sum_{k=0}^{n-1}B_k y_{n-1-k},\qquad
 y_0=\psi_j,\quad n\geq1.
\end{equation}
The inverse is $n^{-1}I-n^{-2}N_j$. At this regular singular point, a geometric majorant proves convergence
of the recursion on every smaller disc free of the other poles, locally
uniformly in the parameters. Thus the normalized germ is unique and depends holomorphically on the
parameters.

In coordinates $(q,r,t)$ along the moving divisor, the parameter
connection matrix is
$\bigl((Q_i u_j)I-M_i\bigr)R_\lambda\mu$. Its residue vanishes because
$Q_i u_j=m_{ij}$, the eigenvalue of $M_i$ on $P_j$.
For $k\ne j$, differentiating $C\psi_j=u_j\psi_j$ gives
\[
 (\psi_k,Q_i\psi_j)
  =\frac{m_{ik}-m_{ij}}{u_j-u_k}(\psi_k,\mu\psi_j).
\]
Together with $(\psi_j,Q_i\psi_j)=0$, which follows from the constant
unit norm, this is precisely the initial tangential horizontality
condition. Integrability and uniqueness of \eqref{dualrecurrence}
then imply that the entire germ is horizontal. Finally
$R_\lambda\mu=\mu/\lambda+O(\lambda^{-2})$, proving regular
singularity at infinity.
\end{proof}

If the ray $u_j+\mathbb R_{>0}e^{i\phi}$ contains no other pole, continue
this germ along it and set
\begin{equation}\label{smallLaplaceIntegral}
 y(z)=\frac1z\int_{u_j+\mathbb R_{\geq0}e^{i\phi}}
                     \widehat y(\lambda)e^{-\lambda/z}\,d\lambda.
\end{equation}
Regular singularity at infinity bounds the growth along the ray by
powers and logarithms, so the integral converges for $|\arg z-\phi|<\pi/2$.
Integration by parts, using $(u_j-C)\widehat y(u_j)=0$, proves its
$z$ equation. Differentiation in the parameters includes the term from the moving
endpoint $-(Q_i u_j)\psi_j e^{-u_j/z}/z$; this cancels exactly because
$M_i\psi_j=(Q_i u_j)\psi_j$. The integral is therefore a flat section of the full small quantum
connection, including the parameter directions and the $z$ direction.
Watson's lemma at the analytic endpoint gives
$y=e^{-u_j/z}(\psi_j+O(z))$ with its complete expansion. A small rotation of the ray without crossing a pole gives the same
integral on overlapping sectors and extends the expansion to a sector
of opening greater than $\pi$.

It follows that if a continuously labelled $u(t)$ and a continuous
phase $\phi(t)$ satisfy
\begin{equation}\label{admissibleraycondition}
 \bigl(u(t)+\mathbb R_{>0}e^{i\phi(t)}\bigr)
 \cap\bigl(\operatorname{Spec}C(t)\setminus\{u(t)\}\bigr)=\varnothing,
\end{equation}
then the continued normalized column remains canonical with that phase.
A finite covering of the compact path by the above pole-free local
constructions proves the assertion. This is the specialization of \cite[Proposition 3.6]{HK} to the small
quantum connection; the eigenvalue need not remain rightmost. At fixed parameters the same contour deformation
keeps the column unchanged within a connected component of allowed ray
phases. Thus the construction provides the analytic continuation needed
along the parameter circle.

\subsection{Line bundle columns and an exceptional collection}
\begin{proposition}[Line columns from clockwise transport]
\label{lineLtransport}
At $(q_*,1)=(\epsilon^3/4,1)$ the Gamma section of $L$ is canonical at
$u=a-ib$ with phase $-2\pi/3$. The section of $L^{-1}$ is canonical at
$a+ib$ with phase $2\pi/3$. They admit any phase in $(-\pi,0)$ and $(0,\pi)$, respectively.
\end{proposition}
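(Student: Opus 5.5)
The plan is to transport the structure-sheaf column along a clockwise loop in $q$ and then read off the $L^{-1}$ statement by reflection. Work at $r=1$, $q=\rho^3/4$ with $\rho=\epsilon e^{i\theta}$. By Proposition~\ref{MBhelix}, the continuation $\mathcal M_q$ ($\log q\mapsto\log q-2\pi i$) sends $\mathfrak s(E)$ to $\mathfrak s(E\otimes L)$. Start from $\mathfrak s(\OO)$ at $q_*=\epsilon^3/4$. By Theorem~\ref{positiveGammaall} and Remark~\ref{fastestrayremark}, it is canonical at exponent $T$ for phases near $0$. Indeed, the ray $T+\mathbb R_{>0}$ meets no other eigenvalue, since $T$ is rightmost and its imaginary part is strictly separated from the others.

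The full clockwise loop in $q$ is $\theta$ going from $0$ to $-2\pi/3$, because $\rho^3$ makes one turn when $\theta$ moves by $2\pi/3$. Along it, Lemma~\ref{clusterdiscs} labels the continued eigenvalue as $u(h(\theta))\approx 2+3\epsilon e^{i\theta}$. This root moves continuously from the disc at $\rho\zeta^0$ to the disc at $\rho\zeta^{-1}=\rho\zeta^2$, and there it coincides with $a-ib$. The reason is that the vertex $2+3\epsilon e^{-2\pi i/3}$ lies below and to the left of $2$, so it is $a-ib$ by \eqref{abcintervals}.

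Next choose the phase path $\phi(\theta)=\theta$, so the ray from $u$ points radially outward from the cluster center $2$, in direction $e^{i\theta}$. Verify the ray condition \eqref{admissibleraycondition} uniformly from the disc estimates. Write $u\approx 2+3\rho$ with error $<4\epsilon^2$. The other two cluster points sit at angles $\theta\pm2\pi/3$ around $2$, so an outward ray from one vertex of the near-equilateral triangle misses the other two vertices and the center value $2$ with a margin of order $\epsilon$. It also misses the opposite cluster near $-2$ and the point $-2$. That cluster is at distance about $4$, in the direction opposite to the ray only when $\cos\theta<0$. I would have to check that the radial ray never heads toward $-2$ closely enough to hit it. Any ray through a point at distance of order $\epsilon$ from $2$ passes $-2$ at distance of order $\epsilon\cdot4$ unless it is nearly aligned. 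Alignment would need the direction $\theta\approx\pi$, which is outside $[-2\pi/3,0]$. So the nearest approach is at least roughly $4\sin(\pi/3)$.

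With the ray condition in hand, the continuation principle after Proposition~\ref{smallLaplaceDual} shows that the continued column stays canonical. Since the continued section is $\mathcal M_q\mathfrak s(\OO)=\mathfrak s(L)$, this gives the $L$ statement at phase $-2\pi/3$. At the fixed endpoint, rotating the phase keeps the column unchanged as long as the ray from $a-ib$ avoids the other seven eigenvalues. Using \eqref{abcintervals} and the cluster geometry, I would show that every direction $e^{i\phi}$ with $\phi\in(-\pi,0)$ points into the open lower half-plane from $a-ib$. No eigenvalue has imaginary part less than $-b$ except along that horizontal level, where only $-a-ib$ lies, to the left at $\phi=\pi$ exactly. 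Hence the whole interval $(-\pi,0)$ consists of allowed phases.

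For $L^{-1}$, apply Lemma~\ref{Gammareflection} with reflected logarithms: $\mathfrak s(L^\vee)(z)=\overline{\mathfrak s(L)(\bar z)}$. This maps exponent $a-ib$ to $a+ib$ and phase $\phi$ to $-\phi$, and preserves normalization because the eigenvector conjugates to the normalized idempotent of $\bar u$ with the same real square norm. This gives phase $2\pi/3$ and the interval $(0,\pi)$.

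The main obstacle is the uniform ray-avoidance along the whole path, in particular near the start where $u$ is rightmost and near the intermediate vertices. I would handle it with the explicit disc bounds and a margin of order $\epsilon$, together with tracking the branch labels of $h_j$ so that the endpoint is identified as $a-ib$ rather than $a+ib$.
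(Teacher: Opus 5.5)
Your proposal is correct and follows the paper's proof essentially step for step. The paper uses the same clockwise loop $\rho(t)=\epsilon e^{-2\pi it/3}$ with the radially outward phase $\phi(t)=-2\pi t/3$, and checks the ray condition with explicit margins: projections $\le-\tfrac{9}{2}\epsilon+8\epsilon^2$ for the other two vertices, $\le-3\epsilon+4\epsilon^2$ for the center $2$, and $|d+4|<7\epsilon$ for the opposite cluster, with the case split at $-\pi/3$ rather than at $\cos\theta=0$. It then concludes, as you do, with $\mathcal M_q\mathfrak s(\OO)=\mathfrak s(L)$, the minimal imaginary part of $a-ib$, and reflection.

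There are two small slips, neither of which affects the argument. First, $T$ is not separated in imaginary part from the real eigenvalues $\pm2$ and $-T$; what justifies the initial ray is that $T$ is strictly rightmost. Second, you should cite Lemma~\ref{clusterdiscs} explicitly for distinct spectrum along the whole path, since the Laplace-dual transport requires it.
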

\begin{proof}
Set
\[
 q(t)=q_*e^{-2\pi it},\quad r(t)=1,\quad
 \rho(t)=\epsilon e^{-2\pi it/3},\quad
 \alpha(t)=-2\pi t/3\qquad(0\leq t\leq1).
\]
Continue the root in the disc centered at $\rho(t)$ and denote its value
by $u_0(t)$. At $t=0$ it is $T$; at $t=1$ it is $a-ib$.
Take the ray phase $\phi(t)=\alpha(t)$. The initial column is
$\mathfrak s(\OO)$ by Theorem~\ref{positiveGammaall} and
Remark~\ref{fastestrayremark}.

For either other vertex $u_j$ in the same cluster,
\[
 \operatorname{Re}\left(e^{-i\alpha}(u_j-u_0)\right)
 \leq-\tfrac92\epsilon+8\epsilon^2<0.
\]
For its center $2$ the bound is $-3\epsilon+4\epsilon^2<0$.
For every value in the cluster near the negative center, including $-2$, its
difference $d$ from $u_0$ obeys $|d+4|<7\epsilon$. If
$-\pi/3\leq\alpha\leq0$, then
\[
 \operatorname{Re}(e^{-i\alpha}d)<-2+7\epsilon<0.
\]
If $-2\pi/3\leq\alpha\leq-\pi/3$, then
\[
 \operatorname{Im}(e^{-i\alpha}d)<-2\sqrt3+7\epsilon<0.
\]
In neither case is $d$ a positive multiple of $e^{i\alpha}$.
Thus \eqref{admissibleraycondition} is verified for the whole path.
Lemma~\ref{clusterdiscs} also verifies distinct spectrum throughout it.
The Frobenius points are therefore tame along this path; tameness of
the Landau--Ginzburg potential at infinity is not required.

We can therefore transport the canonical column. At the endpoint the logarithm of
$q$ has decreased by $2\pi i$, and the marked Galois identity
$\mathcal M_q\mathfrak s(\OO)=\mathfrak s(L)$ identifies the resulting
flat section. The leading vector retains the normalization and sign determined by
continuation from the initial point. Reflection proves
the assertion for $L^{-1}$.

At the endpoint $a-ib$ has minimal imaginary part among all eight
values (shared only by $-a-ib$). Every difference from $a-ib$ has
nonnegative imaginary part. Hence no outgoing ray in the open lower half-plane meets another value,
and the ray rotation lemma at fixed parameters allows every phase in
$(-\pi,0)$. The analogous argument in the upper half-plane proves the
last assertion for $L^{-1}$.
\end{proof}

\begin{corollary}[Six line columns with Gamma marking]\label{sixlinecolumns}
At $(q_*,1)$, the six distinct exponents and available phases are
\begin{center}
\renewcommand{\arraystretch}{1.15}
\begin{tabular}{c|c|c}
\toprule
object & exponent & available phases used here\\
\midrule
$L^{-1}$ & $a+ib$ & $(0,\pi)$\\
$\OO$ & $T$ & a neighborhood of $0$\\
$L$ & $a-ib$ & $(-\pi,0)$\\
$U$ & $-a-ib$ & $(-\pi,0)$\\
$LU$ & $-T$ & a neighborhood of $-\pi$\\
$L^2U$ & $-a+ib$ & $(-2\pi,-\pi)$\\
\bottomrule
\end{tabular}
\end{center}
Every entry denotes a canonical vector column, and hence has a full
sectorial expansion on a sector centered at its chosen phase with
opening strictly greater than $\pi$. The normalizations are fixed by
Gamma marking, reflection and \eqref{halfturnformula}.
\end{corollary}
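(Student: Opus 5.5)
The first three rows are already established, so the plan is to derive the remaining three from them by the marked half-turn of Lemma~\ref{halfturn}. The rows for $L^{-1}$, $\OO$ and $L$ are Proposition~\ref{lineLtransport} together with Theorem~\ref{positiveGammaall} and Remark~\ref{fastestrayremark}. For the structure sheaf, a small rotation at fixed parameters is needed. Since $T$ is simple and rightmost at $(q_*,1)$ by Lemma~\ref{clusterdiscs}, the positive ray and all nearby rays from $T$ avoid the other seven eigenvalues. The ray rotation statement following \eqref{smallLaplaceIntegral} then gives canonicity on a neighborhood of phase $0$.

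Lemma~\ref{halfturn} sends a column canonical at exponent $u$ and phase $\phi$ to the section of $E\otimes LU$, canonical at $-u$ and phase $\phi-\pi$. The leading vector becomes $\mathsf D\psi_u$, which has the same norm by Corollary~\ref{chiralspectrum}. The normalization is fixed by \eqref{halfturnformula}, with the logarithms transported along the upper half-turn. Applying this to the three established rows gives the other three:
\begin{itemize}
\item $L^{-1}\otimes LU=U$ at $-(a+ib)=-a-ib$, with phases $(0,\pi)-\pi=(-\pi,0)$;
\item $\OO\otimes LU=LU$ at $-T$, on a neighborhood of $-\pi$;
\item $L\otimes LU=L^2U$ at $-(a-ib)=-a+ib$, with phases $(-2\pi,-\pi)$.
\end{itemize}
The sector openings exceed $\pi$ because the half-turn translates sectors rigidly, and the full expansions carry over by \eqref{chiralallorders}. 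The six exponents are distinct by Lemma~\ref{clusterdiscs}.

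The main point to check is bookkeeping rather than analysis. The Gamma label on the left of \eqref{halfturnformula} must be exactly $E\otimes LU$, with no sign and no extra line twist. This holds because the scalar master identity $h_E(e^{\pi i}z)=h_{E\otimes LU}(z)$ uses $(e^{\pi i}z)^{-2}=z^{-2}$, and Proposition~\ref{MBhelix} identifies the two clockwise logarithm shifts with $L$ and $U$. One must also keep the phases on the logarithmic cover. The table records $(-2\pi,-\pi)$ rather than $(0,\pi)$ for $L^2U$, because the object label depends on the transported branch. Returning to the principal branch would replace the object by a helix translate, as recorded by the full-turn relation $\mathscr H^2$ of Proposition~\ref{chiralflat}.
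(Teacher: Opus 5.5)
Your proposal is correct and matches the paper's proof: the first three rows come from Proposition~\ref{lineLtransport} and the positive Gamma~I column, and the last three come from the marked half-turn \eqref{halfturnformula} applied to $L^{-1},\OO,L$. The only difference is cosmetic. You translate the full phase intervals by $-\pi$, whereas the paper translates the base phases $2\pi/3,0,-2\pi/3$ to $-\pi/3,-\pi,-5\pi/3$ and then re-rotates at fixed parameters, using that $-a-ib$ and $-a+ib$ are extremal in imaginary part; the two are equivalent because the spectrum is symmetric under $u\mapsto-u$.
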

\begin{proof}
The first three rows follow from Proposition~\ref{lineLtransport} and the
positive Gamma I column. Apply the marked half-turn to
$L^{-1},\OO,L$: their twists by $LU$ are respectively $U,LU,L^2U$.
The initial phases for the last three rows are $-\pi/3,-\pi,-5\pi/3$.
The value $-a-ib$ is a minimum of the imaginary parts and $-a+ib$ is a
maximum, so rotating the rays without crossing a pole gives the stated phase
intervals, with their indicated lifts to the logarithmic cover.
In particular $-5\pi/3$ may be increased to $-\pi-\epsilon$ without
crossing a forbidden direction. This construction changes the first quadric window. It does not identify
the canonical column of $L^2$ with that of $L^{-1}$, since these line
objects have different classes in $K_0(X)$.
\end{proof}

The eight objects used below are
\begin{equation}\label{adaptedfoundation}
 \mathcal E=(L^{-1},S_0,\OO,L,S_1,U,LU,L^2U).
\end{equation}
\begin{proposition}[Fullness of the adapted foundation]\label{adaptedfull}
The ordered collection \eqref{adaptedfoundation} is full exceptional.
\end{proposition}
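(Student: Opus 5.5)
The plan is to compare \eqref{adaptedfoundation} with Orlov's collection \eqref{collection}. The latter is full exceptional, with ordered objects
\[
(S_0,\OO,L,L^2,S_1,U,LU,L^2U),
\]
where we write $\pi^*E_i$ for the first block and $\pi^*E_i\otimes U$ for the second. The adapted collection differs from it only in the first block: $L^2$ is removed and $L^{-1}$ is placed at the front. Equivalently, the window $(S,\OO,\OO(1),\OO(2))$ on $Q^3$ is replaced by $(\OO(-1),S,\OO,\OO(1))$.

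The first step is to show that this base window is full exceptional on $Q^3$. It is the helix shift of Kapranov's collection: the left mutation of $\OO(2)$ through the whole window $\langle S,\OO,\OO(1)\rangle$ equals the Serre-dual object $\OO(2)\otimes\omega_{Q^3}[\dim]=\OO(-1)[3]$. Hence the mutated collection is full, by the standard helix property (\cite{CDGHelix}, or Bondal's theorem applied to $Q^3$). Alternatively one checks directly that $\langle\OO(-1),S,\OO,\OO(1)\rangle$ contains $\OO(2)$, using the Koszul-type resolution of $\OO(2)$ by $\OO(-1),S,\OO,\OO(1)$ coming from the spinor sequence.

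Pulling back, $\pi^*$ of this base window gives $(L^{-1},S_0,\OO,L)$. Orlov's theorem then applies with the two blocks $\pi^*D(Q^3)$ and $\pi^*D(Q^3)\otimes U$, each generated by any full collection of the base. However, Orlov's formula uses the same base window in both blocks, and our second block still carries the Kapranov window $(S_1,U,LU,L^2U)$. That is harmless for fullness, since the two semiorthogonal pieces are $\pi^*D(Q^3)$ and $\pi^*D(Q^3)\otimes U$, and each is generated by its own full base collection. It therefore remains to check exceptionality and the required semiorthogonality between the blocks.

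Within each block, $\pi^*$ is fully faithful, so exceptionality and order hold inside each block. The remaining check is that $\operatorname{RHom}(B,A)=0$ for $B$ in the second block and $A$ in the first. This reduces to $\operatorname{RHom}_{Q^3}(E,F\otimes\pi_*\OO_\pi(-1))=0$, which follows from $R\pi_*\OO_\pi(-1)=0$, the same vanishing used in Proposition~\ref{naturalEuler}.

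The main obstacle is the first step, verifying that the shifted base window remains full. I would settle it with the Serre-functor mutation argument, and cross-check at the level of $K_0$ by showing that the Gram matrix is unitriangular over $\Z$ and that the relation \eqref{baseKrelation} expresses $[\OO(2)]$, after twisting by $\OO(-1)$, in the new basis.
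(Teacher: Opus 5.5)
Your proposal is correct and matches the paper's proof: you obtain the base window $(\OO(-1),S,\OO,\OO(1))$ from Kapranov's collection by the Serre/helix mutation $\mathsf S_{Q^3}(\OO(2))=\OO(-1)[3]$, followed by a shift, and then apply Orlov's decomposition with this window in the untwisted block and Kapranov's window in the $U$-twisted block. Your explicit check of the semiorthogonality between the blocks via $R\pi_*\OO_\pi(-1)=0$ simply spells out what the paper states as independence of that semiorthogonality from the choice of foundation within each block.
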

\begin{proof}
On $Q^3$ both
\[
 (S,\OO,\OO(1),\OO(2))
 \quad\hbox{and}\quad
 (\OO(-1),S,\OO,\OO(1))
\]
are full exceptional collections. To obtain the second collection, move the last member
of the first collection left through the preceding three. The
helix--Serre formula yields
$\mathsf S_{Q^3}(\OO(2))=\OO(-1)[3]$ in the mutation convention of this
paper. Shifting this first member by $[-3]$ gives the displayed
collection; shifts preserve exceptionality and fullness.
Apply Orlov's projective bundle formula with the second collection in
the untwisted block and the first collection in the $U$-twisted block.
Semiorthogonality between the two blocks is independent of the choice
of foundation within either block. This proves \eqref{adaptedfoundation}.
\end{proof}
\subsection{A common phase and its exceptional collection}\label{sec:unitphase}
The preceding construction identifies the eight individual columns.
To compare them at the unit point, fix
\begin{equation}\label{fixedphase09}
 m=10^{-2},\qquad \phi_0=-\arctan m,\qquad
 q_0=\frac{m^3}{4}=\frac1{4\,000\,000},\qquad r=1.
\end{equation}
This choice sets $\rho_0=(4q_0)^{1/3}=m$. The root bounds above then give
the inequalities used below to determine the initial ordering. The
starting point and the phase are therefore chosen together.

Logarithms start real at $q_0$ and remain real on the return path
$q\in[q_0,1]$. We retain the displayed lift of the phase throughout, rather than
reducing it modulo $2\pi$. The Gamma columns retain their chosen leading
signs as they cross each wall.

Write $\mathscr L_EF$ and $\mathscr R_FE$ for the left and right mutations
with the cone conventions of Section~\ref{sec:conventions}. The calligraphic notation distinguishes left mutation from the line
bundle $L=\OO_X(H)$. Define the following objects in the derived category:
\begin{align}
 A_0&=\mathscr R_{\OO}S_0\cong S_0L[-1],\notag\\
 B_0&=L^{-1}U[3],& C_0&=\mathscr L_U(LU),\label{objects09}\\
 B_1&=\mathscr L_{\OO}B_0,& C_1&=\mathscr L_L C_0.\notag
\end{align}
Here $C_0,C_1$ denote objects and should be distinguished from the
anticanonical multiplication matrix $C$. Their $K$-classes are
\begin{equation}\label{objectclasses09}
\begin{gathered}
 [A_0]=[S_0]-4[\OO],\qquad [B_0]=-[L^{-1}U],\qquad
 [C_0]=[LU]-5[U],\\
 [B_1]=[\OO]-[L^{-1}U],\qquad [C_1]=[LU]-5[U]-[L].
\end{gathered}
\end{equation}

The target ordered foundation is
\begin{equation}\label{unitfoundation09}
 \mathcal B_1=(L^{-1},B_1,\OO,A_0,S_1,C_1,L,U).
\end{equation}
Its exponent order at $q=r=1$ is
\begin{equation}\label{unitorder09}
 (a+ib,-a+ib,T,2,-2,-T,a-ib,-a-ib).
\end{equation}
We prove Theorem~\ref{mainUnit09} by following this ordered collection.

We construct the columns using the spinor sector estimates of
Section~\ref{sec:spinorMB}, the positive Gamma column, and canonical
transport. We also use the mutation law for canonical columns, recalled
below with its sign and rotation convention. The resulting Euler matrix
is determined by this analytic construction. For the flat nonsymmetric pairing put
\begin{equation}\label{Stokesconvention09}
 [y_i,y_j)=(y_i(e^{-\pi i}z),y_j(z))_X,\qquad S_{ij}=[y_i,y_j).
\end{equation}
For Gamma sections this pairing equals $\chi(E_i,E_j)$ by the Gamma
factorization of Hirzebruch--Riemann--Roch. With the ordering just stated,
$S$ is upper triangular with diagonal one. These are the conventions of
\cite[Proposition 2.6.4 and Corollary 2.6.6]{GGI}; the opposite Stokes
matrix in that convention is $S_-=S^{\mathsf T}$.

More locally, when the outgoing ray of a canonical column $v$ turns
counterclockwise through a pole whose canonical column at that crossing
is $w$, its replacement is
\begin{equation}\label{leftanalytic09}
 v\longmapsto v-[w,v)w.
\end{equation}
Clockwise crossing gives $v\mapsto v-[v,w)w$. In each application below,
both $v$ and the column $w$ at the crossing have been identified as Gamma
sections. The analytic mutation law therefore allows us to compute its
coefficient from the established flat pairing. This requires the
representative of $w$ at that crossing, with its current continuation.

For an ordered exceptional pair $(a,b)$ the corresponding elementary
operations on $K$-classes are
\[
 R_i:(a,b)\mapsto(b,a-\chi(a,b)b),\qquad
 L_i:(a,b)\mapsto(b-\chi(a,b)a,a).
\]
An indexed word in these operations always refers to the \emph{current}
ordered foundation, with indices beginning at one. Each displayed list is performed from left to right.

At $q=q_0$, use $T,a,b$ from Lemma~\ref{clusterdiscs}, with
$\epsilon=m$. In particular
\begin{equation}\label{smallinequalities09}
 2+2m<T<2+4m,\qquad 2-2m<a<2-m,\qquad 2m<b<3m.
\end{equation}
The earlier results give canonical columns for $L^{-1},L,U$ in the upper,
lower and lower open half-planes of ray phases, respectively; $LU$ has
its initial phase near $-\pi$ and $L^2U$ has its initial phase below
$-\pi$. The two spinor columns and their reflected representatives have
the sectors in Section~\ref{sec:asymptotics}.

\begin{proposition}[The canonical foundation at the initial point]
\label{smallfoundation09}
At $(q_0,1)$ and phase $\phi_0$, all eight canonical columns are the
Gamma sections of
\begin{equation}\label{smallbasis09}
 \mathcal B_0=(L^{-1},\OO,A_0,B_0,L,S_1,C_0,U),
\end{equation}
in decreasing order of projected height. This is a full exceptional
collection, obtained from \eqref{adaptedfoundation} by the chronological
word
\begin{equation}\label{smallword09}
 R_2,\ L_7,\ L_6,\ L_5,\ L_4,\ L_7.
\end{equation}
\end{proposition}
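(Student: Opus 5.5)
The proof starts from the eight individually identified canonical columns at $(q_0,1)$ and follows them to the single phase $\phi_0=-\arctan m$. Corollary~\ref{sixlinecolumns} gives the line columns $L^{-1},\OO,L,U,LU,L^2U$ at their preferred phases. Theorem~\ref{twospinorcolumns} and the reflected laterals \eqref{pluslaterals}--\eqref{minuslaterals} give the spinor columns. First, the bounds \eqref{smallinequalities09} fix the exponent order at $\phi_0$ by projected height $\operatorname{Im}(e^{-i\phi_0}u)$. Since $\phi_0\approx -m$, the projected height is roughly $\operatorname{Im}u+m\operatorname{Re}u$. With $b\in(2m,3m)$, $a\approx 2$ and $T\approx 2$, this gives the order
\[
(a+ib,\,T,\,2,\,-a+ib,\,a-ib,\,-2,\,-T,\,-a-ib).
\]
This matches the positions of $L^{-1},\OO,A_0,B_0,L,S_1,C_0,U$ in \eqref{smallbasis09}. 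The explicit intervals should make each comparison strict.

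Next, for each column I rotate its ray from its available phase to $\phi_0$ and record every pole crossed. Columns whose available interval already contains $\phi_0$ remain unchanged: $L^{-1}$ (the interval $(0,\pi)$ is excluded, but it is reached within the neighborhood of $0$ only after a crossing), $\OO$, $L$, $U$, and $S_1$ in $(-\pi/3,0)$. The column $S_0$ at exponent $2$ is available only in $(0,\pi/2)$. Turning clockwise to $\phi_0$, its ray crosses the pole $T$. The clockwise law $v\mapsto v-[v,w)w$ with $w=\mathfrak s(\OO)$ and $[S_0,\OO)=\chi(S_0,\OO)=4$ (from $G_Q$) gives $\mathfrak s(S_0)-4\mathfrak s(\OO)$. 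This is exactly $Y_+^-$, of class $[A_0]$, and it realizes the step $R_2$. For $LU$ (near $-\pi$) and $L^2U$ (below $-\pi$), the rays are rotated counterclockwise up through $-\pi+\phi_0$, the lift of $\phi_0$ appropriate to the cluster near $-2$. The law $v\mapsto v-[w,v)w$ then produces the left mutations $L_7,L_6,L_5,L_4,L_7$. In particular, $L^2U$ passed through $\OO$, $L$ and $U$ should give $B_0=L^{-1}U[3]$, up to a Serre-type identification. This uses $L^2U\otimes\omega_X = L^{-1}U^{-1}$ and the helix relation \eqref{helixperiod}: the helix element $L^{-1}U\cdot$ is reached by left mutation through the block. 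Likewise, $LU$ passed through $U$ gives $C_0=\mathscr L_U(LU)$ with $\chi(U,LU)=5$. Each coefficient is computed from the Euler pairings of Proposition~\ref{naturalEuler} with the \emph{current} representatives, as required after \eqref{leftanalytic09}. The $K$-classes \eqref{objectclasses09} serve as a check.

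At every crossing I verify, from Lemma~\ref{clusterdiscs} and \eqref{smallinequalities09}, that exactly the claimed pole lies on the swept rays and no other. Where necessary, the fixed-parameter rotation statement after \eqref{admissibleraycondition} justifies the remaining rotation. Exceptionality and fullness then follow from Proposition~\ref{adaptedfull}, because mutations and shifts preserve full exceptional collections. The chronological word \eqref{smallword09} is checked by applying it to \eqref{adaptedfoundation} on $K$-classes and comparing with \eqref{smallbasis09}, using the derived cone definitions to fix shifts.

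The main obstacle is the bookkeeping for the two columns $L^2U$ and $LU$, which travel through the lower cluster across the lifted phase. There I must determine the precise chronological order of crossings, the lift of the logarithm, and the sign of each coefficient, including possible zero coefficients that merely reorder. The identification of the resulting object with $L^{-1}U[3]$, rather than a different member of the helix, must also be justified. I would first compute the pole directions $\arg(u_k-u_j)$ explicitly from the disc bounds and sort them to fix the crossing order. Only then would I evaluate the Euler coefficients.
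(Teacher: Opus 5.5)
The overall structure of your proposal matches the paper's: identify the individual columns, rotate each ray to one phase, apply the analytic mutation law with Euler pairings of the current representatives, and realize the result by a categorical word. Your exponent order and your treatment of $\OO,L,U,S_1$ are fine. Obtaining $A_0$ by turning the $S_0$ ray clockwise through $T$, with coefficient $\chi(S_0,\OO)=4$, is a valid variant of the paper's reflected lateral $Y_+^-=-\mathfrak s(S_0^\vee)$.

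\textbf{The gap is in the two columns based in the lower cluster, which carry the content of the proposition.}

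A canonical fundamental solution at $\phi_0$ needs every column canonical at $\phi_0$ itself. The phase $-\pi+\phi_0$ is the opposite phase, not a lift of $\phi_0$ (lifts differ by $2\pi$). Stopping there crosses no pole: it already lies in the $L^2U$ interval $(-2\pi,-\pi)$ and in the neighborhood of $-\pi$ available to $LU$. Your procedure would therefore return $LU$ and $L^2U$ unchanged, not $C_0$ and $B_0$.

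The paper instead does the following.
\begin{itemize}
\item It turns the ray at $-T$ counterclockwise from $-\pi$ all the way to $\phi_0$. This ray meets only $-a-ib$ (column $\mathfrak s(U)$, coefficient $5$), because $b/(T-a)>m>b/(T+a)$.
\item It turns the ray at $-a+ib$ from just below $-\pi$ to $\phi_0$. This ray meets, in order, $-T,-2,-a-ib,a-ib$, with coefficients $5,4,5,0$.
\end{itemize}
Your list ``$\OO,L,U$'' is therefore wrong. The poles $T$ and $2$ are never met, since $b/(T+a)<m$ and $b/(a+2)<m$, while $-T$ and $-2$ are met.

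Likewise $L^{-1}$ crosses nothing. Its first candidate pole $T$ lies at angle $-\arctan(b/(T-a))<\phi_0$. A crossing there, as your parenthetical suggests, would change the first column.

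\textbf{The idea you are missing is the representative at the $-2$ crossing.} That crossing occurs at an angle in $(-\pi,-\pi/2)$. There the canonical $-2$ column with leading vector $\varphi_{-\sqrt r}$ is not $\mathfrak s(S_1)$ but $-\mathfrak s(S_0\otimes LU)=\mathfrak s(S_1-4U)$. This follows from the marked half-turn of $S_0$, together with $\mathsf D\varphi_{\sqrt r}=-\varphi_{-\sqrt r}$ and $[S_0LU]=4[U]-[S_1]$. With this $w$ the coefficient is $4$, and the class ends at $L^2U-5LU-4S_1+11U=-[L^{-1}U]=[B_0]$. With $w=S_1$ the coefficient is $-40$ and the final class is wrong.

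Your categorical step is also incorrect.
\begin{itemize}
\item $L^2U\otimes\omega_X=U^{-1}$, not $L^{-1}U^{-1}$, and the Serre functor of $X$ is not the relevant one.
\item The steps $L_7,L_6,L_5$ move $L^2U$ left through $LU,U,S_1$ inside the fully faithful $U$-twisted copy of $D^b(Q^3)$. This gives $\mathsf S_{Q^3}(\OO(2))\otimes U=L^{-1}U[3]$.
\item $L_4$ is then a plain exchange with $L$ because $R\operatorname{Hom}_X(L,L^{-1}U)=R\Gamma_{Q^3}(\OO(-2)\oplus\OO(-1))=0$. This is a derived vanishing, not merely a zero Euler coefficient.
\end{itemize}
This categorical order differs from the chronological order of the ray crossings. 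The word cannot be read off the crossings one by one, as your proposal implies.
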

\begin{proof}
We first determine the canonical columns and then identify the
corresponding exceptional collection. Six columns follow directly from the preceding results. The structure sheaf remains canonical near phase
zero. Reflection gives $A_0=S_0L[-1]$ at exponent $2$ on the negative side,
and $S_1$ is canonical at exponent $-2$. The columns of $L$ and $U$ are
canonical throughout $(-\pi,0)$. For $L^{-1}$, rotate its known upper
half-plane ray down through zero to $\phi_0$. The first pole it could cross at a negative angle is $T$, at angle
$-\arctan(b/(T-a))$. This is strictly below $\phi_0$ by
\eqref{smallinequalities09}, so this last short rotation has no crossing.

For $LU$, rotate the ray based at $-T$ counterclockwise from $-\pi$ to
$\phi_0$. The only crossed pole is $-a-ib$. Its column is $\mathfrak s(U)$,
and its direction is $-\arctan(b/(T-a))$. The next possible pole $a-ib$
has direction $-\arctan(b/(T+a))$, strictly above $\phi_0$. Indeed
\[
 \frac{b}{T-a}>m,\qquad \frac{b}{T+a}<m.
\]
The coefficient in \eqref{leftanalytic09} is $\chi(U,LU)=5$; hence the
column at the target phase is $\mathfrak s(C_0)$.

The ray based at $-a+ib$ needs more care. Begin with its already marked
column $\mathfrak s(L^2U)$ at a phase just below $-\pi$, and rotate
counterclockwise to $\phi_0$. The crossed poles and the columns to use
\emph{at those crossings} are
\begin{equation}\label{upperrotationtable09}
\begin{array}{c|c|c|c}
 \text{pole}&\text{direction from }-a+ib&\text{crossing class }w&[w,v)\\\hline
 -T&-\pi+\arctan\dfrac b{T-a}&LU&5\\[3pt]
 -2&-\pi+\arctan\dfrac b{2-a}&S_1-4U&4\\[3pt]
 -a-ib&-\pi/2&U&5\\[3pt]
 a-ib&-\arctan\dfrac ba&L&0
\end{array}
\end{equation}
The last column is evaluated against the current $v$, after all preceding
rows. The directions are strictly ordered as displayed. The remaining directions at negative angles, towards $2$ and $T$, are greater than $\phi_0$
because $b/(a+2)<m$ and $b/(T+a)<m$. Both inequalities follow from
\eqref{smallinequalities09}; also $b/a>m$. Thus this is the entire list.

The second row requires the lateral representative at the crossing. At that angle, which
lies in $(-\pi,-\pi/2)$, the canonical $-2$ column with the fixed leading
vector is $\mathfrak s(S_1-4U)$. This follows from applying the marked
half-turn to $S_0$ and changing its sign: $[S_0LU]=4[U]-[S_1]$.
Using $S_1$ itself would give the wrong representative at this crossing.
At the first row $LU$ is still canonical, since its own first crossing
is much later, at $-\arctan(b/(T-a))$. At the last two rows the lower
half-plane columns $U,L$ are available.

Applying \eqref{leftanalytic09} gives in succession
\begin{align*}
 L^2U&\longmapsto L^2U-5LU\\
 &\longmapsto L^2U-5LU-4S_1+16U\\
 &\longmapsto L^2U-5LU-4S_1+11U=-L^{-1}U.
\end{align*}
The fourth coefficient is zero. The final class is $[B_0]$, so additivity
of the Gamma framing identifies the actual canonical column with
$\mathfrak s(B_0)$. The coefficients $5,4,5,0$ are obtained by pairing the identified
classes with the fixed Euler form, as justified by the analytic mutation
law.

At the chosen phase, projected heights may be divided by
$\cos\phi_0>0$. Their order is
\[
 b+ma>mT>2m>b-ma>ma-b>-2m>-mT>-b-ma.
\]
The required nontrivial comparisons are precisely the strict slope
inequalities used above. This gives \eqref{smallbasis09} in canonical
order.

For categorical fullness, perform \eqref{smallword09} on
$(L^{-1},S_0,\OO,L,S_1,U,LU,L^2U)$. The first move produces $A_0$.
The next three left mutations move $L^2U$ through $LU,U,S_1$ and yield
$L^{-1}U[3]$ by the quadric helix identity, within the fully faithful
$U$-block. The next move exchanges this object with $L$; the two objects are orthogonal in all degrees. In the nontrivial direction this reduces
to the cohomology of $\OO_{Q^3}(-2)\oplus\OO_{Q^3}(-1)$, which vanishes;
the reverse direction has the acyclic fiber factor $\OO_\pi(-1)$.
The final move is $\mathscr L_U(LU)=C_0$. Mutations and orthogonal
exchanges preserve exceptionality and fullness. This gives \eqref{smallbasis09}. The categorical mutation order differs
from the order of the individual ray crossings in
\eqref{upperrotationtable09}.
\end{proof}

\subsection{The return path and its two walls}
The following parameterization controls the entire return path to $q=1$
and determines every crossing of a phase wall.

\begin{lemma}[Positive spectral parameter]\label{realparameter09}
For $r=1$ and $0<q<27/16$, there is a unique $\tau\in(1,2)$ with
\begin{equation}\label{qoftau09}
 q=Q(\tau):=\frac{(\tau^2-1)^3}{4\tau^2}.
\end{equation}
The eight distinct critical values are $\pm T$, $\pm2$ and $\pm a\pm ib$,
where
\begin{equation}\label{spectralparam09}
 T=4\tau-\frac2\tau,\qquad
 a=\frac{(\tau+1)(2-\tau)^{3/2}}{\sqrt\tau},\qquad
 b=\frac{(\tau-1)(\tau+2)^{3/2}}{\sqrt\tau}.
\end{equation}
Here $a,b>0$. On $(1,2)$, $T,b$ increase, $a$ decreases, and the two
ratios
\begin{equation}\label{monotonewallratios09}
 \frac b{a+2},\qquad \frac b{T+a}
\end{equation}
are strictly increasing. Moreover
\begin{equation}\label{separationestimate09}
 \frac b{T-a}\geq\frac1{\sqrt3}.
\end{equation}
\end{lemma}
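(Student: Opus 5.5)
The plan is to parametrize the six-point branch by the positive critical coordinate. From \eqref{positivesaddlehv} with $r=1$, the equations are $h^3=4qv$ and $v(v+h)=1$. I will set $v=1/\tau$ and $h=\tau-1/\tau=(\tau^2-1)/\tau$, which solves $v(v+h)=1$ identically. Then $q=h^3/(4v)=(\tau^2-1)^3/(4\tau^2)$, which is \eqref{qoftau09}. The map $Q$ is strictly increasing on $(1,\infty)$, with $Q(1)=0$ and $Q(2)=27/16$. This gives existence and uniqueness of $\tau\in(1,2)$ for $0<q<27/16$; the bound $q<27/16$ is the condition $k=1/q>16/27$ in Theorem~\ref{spectral}. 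Then $T=4h+2v=4\tau-2/\tau$.

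For the nonreal values I use the proof of Theorem~\ref{spectral}. In the variable $s=H^2/q$, the positive root is $a_s=h^2/q=4\tau^2/(\tau^2-1)$. The other two roots solve $s^2+(a_s+4)s+a_s(a_s+4)=0$, and they are complex because $a_s>4/3$. Each such root $s$ gives $H=\pm\sqrt{qs}$ and $u=H(4+s/2)$, so $u^2=qs(4+s/2)^2$. I would compute $a\pm ib$ by writing $H=\sqrt q\,\sqrt s$ explicitly in terms of $\tau$. The discriminant is $(a_s+4)(4-3a_s)=-16(\tau^2+2)(2\tau^2-... )$, up to factors that I expect to collapse to $(\tau+2)(\tau-2)$-type expressions after rescaling. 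The target formulas \eqref{spectralparam09} strongly suggest that the square root of the relevant quantity factors as $(2-\tau)^{1/2}(\tau+2)^{1/2}$ divided by powers of $\tau$.

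I would therefore verify \eqref{spectralparam09} directly, in reverse. I check that $u=a+ib$ satisfies the sextic factor of \eqref{charpoly} at $r=1$ and $q=Q(\tau)$. Equivalently, the three values $T^2$, $(a+ib)^2$ and $(a-ib)^2$ should match the symmetric functions of the cubic in $\lambda=u^2$. This means checking three polynomial identities in $\tau$ and $\sqrt{(4-\tau^2)}$:
\begin{itemize}
\item $T^2+2(a^2-b^2)=12-16q$;
\item $T^2\cdot 2(a^2-b^2)+(a^2+b^2)^2=48-1280q$;
\item $T^2(a^2+b^2)^2=64(1+16q)^2$.
\end{itemize}
These are routine expansions. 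Distinctness follows from $b>0$ for $\tau>1$ and $a>0$ for $\tau<2$, together with Theorem~\ref{spectral}(i). Positivity of $a$ and $b$ is immediate from the formulas.

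Monotonicity of $T$, $a$ and $b$ comes from logarithmic derivatives. $T'=4+2/\tau^2>0$. $(\log a)'=1/(\tau+1)-3/(2(2-\tau))-1/(2\tau)$ is negative, since $1/(\tau+1)<1/2<1/(2\tau)+\dots$. $(\log b)'=1/(\tau-1)+3/(2(\tau+2))-1/(2\tau)$ is positive, since $1/(\tau-1)>1/(2\tau)$. The two ratios in \eqref{monotonewallratios09} then increase because their numerators increase. The denominator $a+2$ decreases. For $T+a$, I would show $(\log b)'>(\log(T+a))'$, which I expect to hold because $1/(\tau-1)$ dominates near $\tau=1$; on the rest of $(1,2)$ I would use crude bounds, splitting $(1,2)$ into a few subintervals. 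This comparison is the main obstacle. For \eqref{separationestimate09}, I would show $3b^2\ge(T-a)^2$. Near $\tau=1$ this is tight in the sense of Lemma~\ref{clusterdiscs}: the cluster near $2$ is an approximate equilateral triangle, giving the ratio $\tan(\pi/6)\cdot$... Hence I expect the limit as $\tau\downarrow1$ to be exactly $1/\sqrt3$, with a monotone or convexity argument, or a clearing of denominators into a polynomial with positive coefficients in $\tau-1$, giving the inequality on the whole interval.
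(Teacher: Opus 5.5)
Your parametrization $v=1/\tau$, $h=(\tau^2-1)/\tau$ is the paper's own (it sets $h/v=x$, $\tau=\sqrt{1+x}$). Checking the three Vieta identities for the cubic in $\lambda=u^2$ is equivalent to the paper's factorization $(u^2-4)(u^2-T^2)\bigl(u^4-2(a^2-b^2)u^2+(a^2+b^2)^2\bigr)$. Those identities involve only $a^2,b^2$, so they are rational in $\tau$, with no $\sqrt{4-\tau^2}$. Your treatment of $T$, $a$, $b$ and of $b/(a+2)$ also matches the paper.

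There is one slip in the route you abandon. The positive root is $h^2/q=4/(\tau^2-1)$, not $4\tau^2/(\tau^2-1)$. With the correct value the discriminant is $16\tau^2(\tau^2-4)/(\tau^2-1)^2$, negative exactly for $\tau<2$. Your value would make it negative for every $\tau>1$.

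The genuine gap is that the two nontrivial inequalities are left as expectations. For $b/(T+a)$ no subdivision is needed. Since $a'<0$, $a>0$ and $T'>0$, you have $(T'+a')/(T+a)<T'/(T+a)<T'/T$. So it suffices that $b/T$ increases; the paper phrases this as $b'(T+a)-b(T'+a')=(b'T-bT')+(b'a-ba')$ with both summands positive. That monotonicity is an exact rational inequality:
\[
 (\log b)'=\frac{2\tau^2+1}{\tau(\tau-1)(\tau+2)},\qquad
 (\log T)'=\frac{2\tau^2+1}{\tau(2\tau^2-1)},\qquad
 (2\tau^2-1)-(\tau-1)(\tau+2)=\tau^2-\tau+1>0.
\]

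For the separation estimate $b/(T-a)\geq1/\sqrt3$, you correctly see that equality is attained in the limit $\tau\downarrow1$. That is exactly why crude bounds on subintervals cannot close it near $\tau=1$. Clearing denominators in $\sqrt3\,b+a\geq T$ also does not give a polynomial directly, since the cross term $2\sqrt3\,ab$ carries $(4-\tau^2)^{3/2}$.

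What is needed is an argument sharp to first order at $\tau=1$. The paper uses that $T-a$ vanishes at $\tau=1$ and that $(T-a)'\leq9$ on $[1,2]$. Indeed $T'=4+2/\tau^2\leq6$, and $-a'=\sqrt{2-\tau}\,(2\tau^2+1)/\tau^{3/2}\leq3$ because
\[
9\tau^3-(2-\tau)(2\tau^2+1)^2=(\tau-1)(4\tau^4-4\tau^3+9\tau^2+\tau+2)\geq0 .
\]
Hence $0<T-a\leq9(\tau-1)$. On the other side, $b=(\tau-1)\sqrt{(\tau+2)^3/\tau}\geq3\sqrt3(\tau-1)$, because $(\tau+2)^3/\tau$ is increasing with value $27$ at $\tau=1$. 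Dividing gives $1/\sqrt3$.
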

\begin{proof}
For the positive saddle write $h/v=x$ in \eqref{positivesaddlehv}.
Then $v=(1+x)^{-1/2}$, $h=x(1+x)^{-1/2}$. Taking
$\tau=\sqrt{1+x}$ gives \eqref{qoftau09} and the formula for $T$.
Substitution in the characteristic polynomial yields the exact
factorization
\[
 (u^2-4)(u^2-T^2)
 \left(u^4-2(a^2-b^2)u^2+(a^2+b^2)^2\right),
\]
with $a^2=(2-\tau)^3(\tau+1)^2/\tau$ and
$b^2=(\tau-1)^2(\tau+2)^3/\tau$. Positivity fixes the square roots in
\eqref{spectralparam09}. Also
\[
 Q'=\frac{(\tau^2-1)^2(2\tau^2+1)}{2\tau^3}>0,
 \qquad Q(1)=0,\quad Q(2)=27/16.
\]
For the remaining claims, differentiation gives
\begin{align}
 \frac{a'}a&=\frac{2\tau^2+1}{\tau(\tau-2)(\tau+1)}<0,&
 \frac{b'}b&=\frac{2\tau^2+1}{\tau(\tau-1)(\tau+2)}>0,\label{abderivative09}\\
 \frac{d}{d\tau}\log\frac bT
 &=\frac{(2\tau^2+1)(\tau^2-\tau+1)}
 {\tau(\tau-1)(\tau+2)(2\tau^2-1)}>0.\notag
\end{align}
The signs of these derivatives show that the first ratio in \eqref{monotonewallratios09} is increasing.
For the second, its derivative has numerator
\[
 b'(T+a)-b(T'+a')=(b'T-bT')+(b'a-ba')>0.
\]

Finally $T'\leq6$ and $-a'\leq3$ on $[1,2]$. To verify the latter after
squaring positive quantities, use
\[
 9\tau^3-(2\tau^2+1)^2(2-\tau)
 =(\tau-1)(4\tau^4-4\tau^3+9\tau^2+\tau+2)\geq0.
\]
Also $(\tau+2)^3/\tau$ is increasing, since its derivative is
$2(\tau-1)(\tau+2)^2/\tau^2$. Thus
$b\geq3\sqrt3(\tau-1)$ and $T-a\leq9(\tau-1)$, proving
\eqref{separationestimate09}.
\end{proof}

\begin{theorem}[Exactly two walls and four pair crossings]
\label{twowalls09}
On $q\in[q_0,1]$, $r=1$, the fixed phase $\phi_0$ has exactly two
inadmissible parameter values $q_A<q_B$. They are the unique solutions
of
\begin{equation}\label{walls09}
 b=m(a+2)\quad(q=q_A),\qquad
 b=m(T+a)\quad(q=q_B).
\end{equation}
At each wall two disjoint adjacent pairs cross simultaneously. Following
the canonical foundation from \eqref{smallbasis09}, the chronological
mutation word on the return path is
\begin{equation}\label{returnword09}
 L_3,\ L_5\ ;\ L_2,\ L_6,
\end{equation}
where the semicolon separates the two walls. The corresponding Euler
coefficients are $0,0;-1,1$. Consequently the unit foundation is
\eqref{unitfoundation09}.
\end{theorem}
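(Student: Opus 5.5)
The plan is to use the real parameterization of Lemma~\ref{realparameter09} along the whole return path. For $r=1$ and $q\in[q_0,1]$ we have $\tau\in[\tau_0,\tau_1]\subset(1,2)$, where $Q(\tau_0)=q_0$ and $Q(\tau_1)=1$. The spectrum is $\{\pm T,\pm2,\pm a\pm ib\}$ and all eight values are distinct, since $1<27/16$. Inadmissibility at phase $\phi_0$ means that $e^{-i\phi_0}(u_i-u_j)\in\mathbb R$ for some $i\ne j$. Equivalently, some difference has slope $\operatorname{Im}/\operatorname{Re}=-m$, or is a real multiple of $e^{i\phi_0}$.

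\textbf{Step 1: enumerate the differences.} The spectrum is invariant under $u\mapsto-u$ and $u\mapsto\bar u$, so it suffices to list the difference directions up to these symmetries:
\[
\begin{gathered}
T\mp2,\quad 4,\quad 2T \quad \text{(real)};\qquad 2ib \quad \text{(vertical)};\\
(\pm T-a)\mp ib\ \text{combinations, with slopes } \pm\frac{b}{T-a},\ \pm\frac{b}{T+a};\\
\pm\frac{b}{2-a},\quad \pm\frac{b}{2+a};\qquad \pm\frac ba,\qquad \pm\frac{2b}{2a}=\pm\frac ba.
\end{gathered}
\]
The real and vertical differences are never parallel to $e^{i\phi_0}$, because $0<m<\infty$. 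For each remaining difference with slope $\pm\sigma$ there is a partner with slope $-\sigma$. So a wall occurs exactly when one of the functions $\sigma(\tau)$ equals $m$.

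\textbf{Step 2: locate the solutions.} Lemma~\ref{realparameter09} gives $b/(T-a)\ge1/\sqrt3>m$, so that function never equals $m$. By \eqref{abderivative09}, $b/a$ is increasing. At $\tau_0$ we have $b/a>m$ by \eqref{smallinequalities09}, so $b/a$ never equals $m$ either. The same holds for $b/(2-a)$, since $2-a<a$ near the start, or more simply $b/(2-a)>b/a$ whenever $2-a<a$; I would check $a>1$ on the interval, otherwise using monotonicity of $a$ directly.

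The two ratios in \eqref{monotonewallratios09} are strictly increasing and lie below $m$ at $\tau_0$, by \eqref{smallinequalities09}. I would then evaluate them at $q=1$ using the approximate $\tau_1$ with $Q(\tau_1)=1$ and verify that each exceeds $m$ there. This produces unique crossings $q_A$ and $q_B$. They satisfy $q_A<q_B$ because $T>2$ gives $b/(a+2)>b/(T+a)$.

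Each equation is met by two disjoint pairs, related by $u\mapsto-u$. At $q_A$ these are $(-a+ib,-2)$ and $(2,a-ib)$. At $q_B$ they are $(-a+ib,-T)$ and $(T,a-ib)$. The pairs are adjacent in projected order just before the wall, which I would confirm from the ordering inequalities of Proposition~\ref{smallfoundation09} as they deform continuously.

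\textbf{Step 3: identify the mutations.} Start from the order \eqref{smallbasis09}, with exponents
\[
(a+ib,\ T,\ 2,\ -a+ib,\ a-ib,\ -2,\ -T,\ -a-ib).
\]
The first wall swaps positions $3,4$ (exponents $2$ and $-a+ib$) and positions $5,6$ (exponents $a-ib$ and $-2$). The order then becomes
\[
(a+ib,\ T,\ -a+ib,\ 2,\ -2,\ a-ib,\ -T,\ -a-ib),
\]
and the second wall swaps positions $2,3$ and $6,7$. This reproduces \eqref{unitorder09}. Crossing in the direction of increasing $\tau$ decreases the projected height of the lower-indexed column, so the swap is a left mutation in the paper's convention. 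I would confirm this by the sign of the derivative of $b-m(a+2)$.

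The coefficients are Euler pairings:
\[
\chi(A_0,B_0)=0,\qquad \chi(L,S_1)=0,\qquad \chi(\OO,B_0)=-1,\qquad \chi(C_0,L)=1.
\]
I would compute these by Hirzebruch--Riemann--Roch from the classes \eqref{objectclasses09}, for instance $\chi(\OO,L^{-1}U)=\chi(\OO_{Q^3}(-1)\oplus\OO_{Q^3})=1$ up to sign. Applying $L_3,L_5$ and then $L_2,L_6$ gives $\mathcal B_1$ with $B_1=\mathscr L_{\OO}B_0$ and $C_1=\mathscr L_LC_0$. A zero coefficient produces only a reordering, with $A_0$ and $S_1$ unchanged.

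\textbf{Main obstacle.} The main difficulty is the rigorous endpoint evaluation at $q=1$, where $\tau_1$ is only implicitly defined. This needs certified bounds on $\tau_1$, using $Q'>0$ and a rational bracket, to confirm $b>m(T+a)$ at $\tau_1$. A second delicate point is the direction convention of the mutation at each crossing.
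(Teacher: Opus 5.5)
Your overall route matches the paper's: the real parameter $\tau$, the bound $b/(T-a)\ge 1/\sqrt3$, the monotone ratios, the small-$q$ inequalities, a rational bracket for $\tau$ at $q=1$, and Euler coefficients computed from the classes. There is, however, a genuine gap: the step that fixes the \emph{direction} of each mutation does not work as proposed. At the second wall, that direction is exactly what the theorem asserts.

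Your criterion, that the lower-indexed column loses projected height as $\tau$ increases, holds at every forward wall crossing, whether the resulting operation is $L_i$ or $R_i$. Likewise, the sign of $\frac{d}{d\tau}\bigl(b-m(a+2)\bigr)$ only confirms transversality and which value was higher before the wall. What decides the direction is which of the two colliding values lies ahead along $e^{i\phi_0}$, i.e.\ whose ray sweeps over whose pole.

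At $q_B$, $T-(-a+ib)=(T+a)(1-im)$ is a positive multiple of $e^{i\phi_0}$, so the ray based at the lower value $-a+ib$ meets the pole $T$. As $\arg\bigl(T-(-a+ib)\bigr)=-\arctan\bigl(b/(T+a)\bigr)$ decreases through $\phi_0$, the ray turns counterclockwise across the pole in the moving frame. The rule $v\mapsto v-[w,v)w$ then places $B_0-\chi(\OO,B_0)\OO=B_0+\OO$ in front of $\OO$, which is $L_2$. The pair $(a-ib,-T)$ behaves identically and gives $L_6$. This is the paper's argument.

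If the two values sat in the opposite position along the ray, you would get $R_2,R_6$ instead. That produces $(B_0,\OO+B_0)$ and $(C_0,L-C_0)$, a different foundation and a different Stokes matrix. At $q_A$ the coefficients vanish, so there the direction is immaterial.

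There are also smaller errors:
\begin{itemize}
\item Your dismissal of $b/(2-a)$ via $a>1$ fails. $a$ decreases from $2$ and is already about $0.72$ at $\tau=3/2<\tau_1$. Use $T>2$ instead, so $b/(2-a)>b/(T-a)\ge1/\sqrt3>m$. Alternatively, as the paper does, compare the heights $\pm(b+ma),\pm mT,\pm2m,\pm(b-ma)$ directly: $b+ma$ is always on top, and only $b-ma$ can move.
\item The pairs you name in Step~2 lie on lines of slope $b/(2-a)$ or $b/(T-a)$ and never collide. The colliding pairs are $\{2,-a+ib\},\{a-ib,-2\}$ at $q_A$ and $\{T,-a+ib\},\{a-ib,-T\}$ at $q_B$, which is what your Step~3 correctly uses.
\item The coefficient for $L_6$ is $\chi(L,C_0)=6-5=1$. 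By contrast, $\chi(C_0,L)=0$ by semiorthogonality.
\item The endpoint at $q=1$ is not a real obstacle: $9/5<\tau_1<19/10$ already gives $b>4$, $a<2$, $T<7$.
\end{itemize}
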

\begin{proof}
Divide each projected height by $\cos\phi_0$. The four positive heights
are
\[
 b+ma,\qquad mT,\qquad 2m,\qquad b-ma;
\]
the remaining four are their negatives. The first is always the largest:
$b-m(T-a)>0$ by \eqref{separationestimate09}, and all other comparisons
are immediate. The fourth remains positive because $b/a$ is increasing
and $b/a>m$ at $q_0$. Also $mT>2m$. Thus the only possible changes in
order are the crossings of $b-ma$ with $2m$ and $mT$, exactly
\eqref{walls09}.

At $q_0$ both ratios in \eqref{monotonewallratios09} are below $m$, by
\eqref{smallinequalities09}. At $q=1$ one has
$9/5<\tau<19/10$, obtained by substituting these rational numbers in
$Q$. In particular $b>4$, $a<2$ and $T<7$, so both ratios are greater
than $m$. Strict monotonicity proves existence, uniqueness and
transversality of each wall. Since $T>2$, the ratio $b/(T+a)$ is smaller
than $b/(a+2)$ at the same point. Hence $q_A<q_B$.

The canonical order on the three successive intervals is
\begin{align*}
 &(a+ib,T,2,-a+ib,a-ib,-2,-T,-a-ib),\\
 &(a+ib,T,-a+ib,2,-2,a-ib,-T,-a-ib),\\
 &(a+ib,-a+ib,T,2,-2,-T,a-ib,-a-ib).
\end{align*}
At the first wall the pair $(A_0,B_0)$ exchanges places with coefficient
zero, as does $(L,S_1)$. The former is orthogonal in all degrees:
up to shifts its nontrivial Hom direction reduces to
$R\Gamma_{Q^3}(S(-1)\oplus S)$, and the reverse direction again has
fiber $\OO_\pi(-1)$. For the second pair,
$R\operatorname{Hom}_X(L,S_1)=R\Gamma_{Q^3}(S(-1)\oplus S)=0$;
the reverse direction has the acyclic fiber factor $\OO_\pi(-1)$ as well. Thus these are the orthogonal
exchanges $L_3,L_5$.

After them the order of objects is
$(L^{-1},\OO,B_0,A_0,S_1,L,C_0,U)$. At the second wall the pairs
$(\OO,B_0)$ and $(L,C_0)$ cross, with
\[
 \chi(\OO,B_0)=-1,\qquad \chi(L,C_0)=1.
\]
The pole to the right of the ray based at the lower value crosses the
ray as its angle decreases through $\phi_0$. Equivalently, the ray turns
counterclockwise across the pole in the local moving frame. The
orientation in \eqref{leftanalytic09} therefore gives left mutations.
The replacements are $B_0+\OO$ and $C_0-L$. This proves
\eqref{returnword09} and gives \eqref{unitfoundation09}. Each wall has
two disjoint pairs with distinct locations, so the two operations at a
single wall commute. There are no other crossings by the height argument.
Between walls canonical transport applies on the real parameter path,
and no logarithm acquires a turn. The Gamma labels at the regular end are therefore the indicated
integral linear combinations.
\end{proof}

The walls also have the following exact rational isolating intervals:
\begin{equation}\label{rationalwallintervals09}
\begin{split}
 \frac{1007653852}{10^9}<\tau_A<\frac{1007653853}{10^9},\\
 \frac{1007742514}{10^9}<\tau_B<\frac{1007742515}{10^9}.
\end{split}
\end{equation}
To obtain these rational bounds, let
$A=a^2$, $B=b^2$, and let $c=2$ or $c=T$. All are rational functions at
a rational $\tau$. Put $D=B-m^2(A+c^2)$. If $D\leq0$, then
$b-m(a+c)<0$. If $D>0$, its sign is the sign of
$D^2-4m^4c^2A$. Substitution of the rational endpoints in
\eqref{rationalwallintervals09} gives opposite strict signs. Together
with the monotonicity established above, this proves the stated bounds.

\subsection{Geometric objects and the connection matrices}
The unit foundation includes objects other than vector bundles. The two
new objects have simple geometric descriptions that determine their
shifts as well as their classes in $K_0(X)$.

\begin{proposition}[A section sheaf and a nontrivial extension]
\label{geometricobjects09}
Let $j:Q^3\hookrightarrow X$ be the section given by the quotient onto
$\OO$, and set
\[
 V=\pi^*(\Omega^1_{\PP^4}(1)|_{Q^3}).
\]
A unique nonzero extension class, up to scalar, defines a bundle $F$ in
\begin{equation}\label{extensionF09}
 0\longrightarrow VU\longrightarrow F\longrightarrow L\longrightarrow0.
\end{equation}
The objects in \eqref{objects09} satisfy
\begin{equation}\label{geometricB1C1}
 B_1\cong j_*\OO_{Q^3}(-1)[3],\qquad C_0\cong VU[1],\qquad
 C_1\cong F[1].
\end{equation}
Thus \eqref{unitfoundation09} can also be written
\begin{equation}\label{geometricfoundation09}
 (L^{-1},j_*\OO_{Q^3}(-1)[3],\OO,S_0L[-1],S_1,F[1],L,U).
\end{equation}
\end{proposition}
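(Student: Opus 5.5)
The plan is to compute each mutation in \eqref{objects09} directly from the cone definition \eqref{leftmutation}. Every relevant $\operatorname{RHom}$ space will be evaluated with $\pi_*U=\OO\oplus\OO(1)$, $R\pi_*\OO=\OO$ and the restricted Euler sequences on $Q^3\subset\PP^4$. Each step then becomes a short cohomology computation on the quadric, and the extension $F$ appears as the cone in the last mutation.

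\emph{Existence and uniqueness of $F$.} We have $\operatorname{Ext}^1_X(L,VU)=H^1(X,V\otimes U\otimes L^{-1})$. Pushing forward along $\pi$ gives
\[
 H^1\bigl(Q^3,\Omega^1_{\PP^4}|_{Q^3}\bigr)\oplus
 H^1\bigl(Q^3,\Omega^1_{\PP^4}(1)|_{Q^3}\bigr).
\]
For the second summand, restrict $0\to\Omega^1(1)\to\OO^5\to\OO(1)\to0$ to $Q^3$. On global sections the map $\C^5\to H^0(\OO_{Q^3}(1))=\C^5$ is an isomorphism, since $Q^3$ is nondegenerate. As $H^1(\OO)=0$, all cohomology of $\Omega^1_{\PP^4}(1)|_{Q^3}$ vanishes. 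For the first summand, the twisted sequence $0\to\Omega^1|_{Q^3}\to\OO(-1)^5\to\OO\to0$ and the vanishing of all cohomology of $\OO_{Q^3}(-1)$ give $H^i(\Omega^1|_{Q^3})=\C$ for $i=1$ and $0$ otherwise. Thus $\operatorname{RHom}_X(L,VU)=\C[-1]$. The extension group is one-dimensional, so a nonzero class exists and is unique up to scalar. The middle term $F$ of the extension is locally free because both outer terms are.

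\emph{The three identifications.} For $C_0=\mathscr L_U(LU)$ we have $\operatorname{RHom}(U,LU)=R\Gamma(X,L)=H^0(Q^3,\OO(1))=\C^5$ in degree zero. The evaluation $U\otimes H^0(\OO(1))\to LU$ is $U\otimes\pi^*$ of the restricted Euler surjection, so its kernel is $VU$ and the cone is $VU[1]$. For $C_1=\mathscr L_L C_0$ the computation above gives $\operatorname{RHom}(L,VU[1])=\C$ in degree zero. The canonical map $L\to VU[1]$ is therefore the nonzero extension class, and the triangle $VU\to F\to L\to VU[1]$ identifies its cone with $F[1]$.

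For $B_1=\mathscr L_\OO B_0$ with $B_0=L^{-1}U[3]$, we use $R\Gamma(X,L^{-1}U)=R\Gamma(Q^3,\OO(-1)\oplus\OO)=\C$. Hence the evaluation map is $\OO[3]\to L^{-1}U[3]$, given by the constant section coming from the $\OO(1)$ summand of $\pi_*U$, twisted by $L^{-1}$. This section vanishes exactly where the quotient kills the $\OO(1)$-component, which is the image of $j$, and it does so transversally. Its cokernel is $j_*(j^*(L^{-1}U))=j_*\OO_{Q^3}(-1)$, using $j^*U\cong\OO$. So the cone is $j_*\OO_{Q^3}(-1)[3]$.

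\emph{Checks and the main obstacle.} I will verify each identification against the $K$-classes \eqref{objectclasses09}. For example $[j_*\OO(-1)[3]]=[\OO]-[L^{-1}U]$, and $[F[1]]=-[VU]-[L]=[LU]-5[U]-[L]$ because $[VU]=5[U]-[LU]$. Substituting into \eqref{unitfoundation09} then gives \eqref{geometricfoundation09}. The main obstacle is the orientation bookkeeping: deciding which summand of $\pi_*U$ carries the section, and confirming that its zero locus is the section defined by the quotient onto $\OO$ rather than the one defined by the quotient onto $\OO(1)$. I would settle this fibrewise, using the convention that $\PP$ parametrizes one-dimensional quotients, and then cross-check with the normal bundle $j^*U\otimes L^{-1}$ and the class computation above.
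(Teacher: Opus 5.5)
Your proposal is correct and follows essentially the same route as the paper. The paper also computes $\operatorname{RHom}_X(L,VU)=R\Gamma_{Q^3}(V_Q(-1)\oplus V_Q)=\C[-1]$ via the restricted Euler sequence, identifies $\mathscr L_U(LU)\cong VU[1]$ and $\mathscr L_L C_0\cong F[1]$ as the cone of the extension class, and obtains $B_1$ from $0\to\OO_X\to L^{-1}U\to j_*\OO_{Q^3}(-1)\to0$ for the divisor $j(Q^3)$ of class $\xi-H$, with the same orientation: the map $\pi^*\OO(1)\to U$ vanishes exactly on the quotient onto $\OO$.
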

\begin{proof}
The section $j$ is the divisor of class $\xi-H$: the map
$\pi^*\OO_{Q^3}(1)\to\OO_\pi(1)$ vanishes precisely on the quotient
onto $\OO$. Its normal bundle is $\OO_{Q^3}(-1)$. Consequently
\[
 0\longrightarrow\OO_X\longrightarrow L^{-1}U
   \longrightarrow j_*\OO_{Q^3}(-1)\longrightarrow0.
\]
Moreover $R\Gamma_X(L^{-1}U)=\C$ in degree zero, since its pushforward is
$\OO_{Q^3}(-1)\oplus\OO_{Q^3}$. Left mutation of $B_0=L^{-1}U[3]$
through $\OO$ is therefore the cone of the displayed inclusion shifted
by three, proving the assertion for $B_1$.

The restricted Euler sequence gives
$0\to V\to\OO_X^{\oplus5}\to L\to0$. Tensoring by $U$ identifies
$\mathscr L_U(LU)$ with $VU[1]$. On the base, $V_Q=\Omega^1_{\PP^4}(1)|_{Q^3}$
is acyclic: the map $H^0(\OO_{Q^3})^{\oplus5}\to H^0(\OO_{Q^3}(1))$
is an isomorphism. Its twist by $-1$ has only $H^1=\C$, from
$0\to V_Q(-1)\to\OO_{Q^3}(-1)^{\oplus5}\to\OO_{Q^3}\to0$.
All the stated vanishings for line bundles follow, for example, by
restricting the corresponding sequences on projective space to the
quadric. The projection formula then gives
\[
 R\operatorname{Hom}_X(L,VU)
 =R\Gamma_{Q^3}(V_Q(-1)\oplus V_Q),\qquad
 \operatorname{Ext}^1_X(L,VU)=\C
\]
and no other Ext groups. Thus the mutation map $L\to VU[1]$ is a nonzero
generator of that Ext group. Its cone is $F[1]$, with $F$ from
\eqref{extensionF09}. A nonzero scalar change gives an isomorphic
extension bundle. This proves \eqref{geometricB1C1}.
\end{proof}

\begin{proof}[Completion of the proof of Theorem~\ref{unitphasetheorem}]
Proposition~\ref{smallfoundation09} identifies every canonical column at
the fixed phase and the starting point. Theorem~\ref{twowalls09} then determines every wall crossing on the path
to $q=1$, including the mutation word in the chosen foundation. The
resulting eight objects remain a full exceptional collection. The Gamma
normalization and the mutation formulas fix their signs throughout the
path and at the endpoint. This proves the assertion about the canonical
basis.

Let $D_0,D_1$ be the integral column changes from the natural $K$-basis to
$\mathcal B_0,\mathcal B_1$. Both have determinant one. Their columns are
given by \eqref{objectclasses09} and
$[L^{-1}]=4[S_0]-11[\OO]+5[L]-[L^2]$; the matrix $D_1$ is displayed in
\eqref{unitbasischange}. Applying \eqref{Stokesconvention09} to
the now identified Gamma basis gives
\[
 S_{\phi_0}=D_1^{\mathsf T}GD_1,
\]
which is exactly the displayed matrix \eqref{unitStokes09}.
This matrix is at the specified unit point,
phase, object order and normalization. Negative entries reflect the
actual shifts and mutations; changing leading signs changes the matrix
by the corresponding diagonal sign conjugation. Other conventions may use the inverse or transpose; comparing them
with \eqref{unitStokes09} requires the corresponding change of convention.

For the central connection, choose the fixed frame at the regular end
$Y_\infty=\mathcal L z^{-\mu}z^\rho$ on the same logarithmic cover, so that
$Y_{\phi_0}=Y_\infty\mathcal C_{\phi_0}$. Its columns are
\begin{equation}\label{centralunit09}
 \mathcal C_{\phi_0}=(2\pi)^{-2}
 \left[\widehat\Gamma_X\Ch(B)\right]_{B\in\mathcal B_1,\,\mathrm{cup}}.
\end{equation}
The Chern characters of its ordered objects are
\begin{equation}\label{charactersunit09}
 e^{-H},\quad 1-e^{\xi-H},\quad 1,\quad
 (2-H+H^3/12)-4,\quad
 (2-H+H^3/12)e^\xi,\quad
 e^{H+\xi}-5e^\xi-e^H,\quad e^H,\quad e^\xi.
\end{equation}
To convert the ordinary Chern character to the modified one, multiply
its component of degree $p$ by $(2\pi i)^p$. In the cup algebra the Gamma factor has the compact
expression
\begin{equation}\label{gammalogunit09}
 \log\widehat\Gamma_X
 =-2\gamma_{\rm E}(H+\xi)+\zeta(2)H^2
   +\frac{\zeta(3)}3(4H^3-2H^2\xi).
\end{equation}
Its exponential is truncated in complex degree four. The fourth-degree
term in its logarithm is zero in this cup ring. Equations
\eqref{charactersunit09}--\eqref{gammalogunit09} determine all 64 entries
of \eqref{centralunit09} by multiplication in
$\C[H,\xi]/(H^4,\xi(\xi-H))$, with the common factor $(2\pi)^{-2}$
retained. These characteristic class formulas therefore determine the central
connection matrix explicitly.
\end{proof}

The determinants and full turn satisfy
\begin{equation}\label{unitdethelix09}
 \det S_{\phi_0}=1,\qquad \det\mathcal C_{\phi_0}=\frac1{16},\qquad
 S_{\phi_0}^{-\mathsf T}S_{\phi_0}=D_1^{-1}\mathcal R D_1.
\end{equation}
For the middle identity the determinant of the ordinary Chern character is
$1/16$, the sum of the cup degrees is $16$, and the eight scalar
normalizations contribute $(2\pi)^{-16}$. Multiplication by the Gamma
class has determinant one. The full turn still has Jordan type
$J_5(1)\oplus J_3(1)$. These algebraic identities are consequences of
the marked matrices; the identification of their columns is provided
by the preceding analytic argument.

\begin{proposition}[Chiral constraints on the specified matrices]
\label{unitChiral}
In the notation of Corollary~\ref{introchiral}, all identities
\eqref{introchiraldata} hold. Moreover,
\begin{equation}\label{chiralEulerequality}
 M_A^{\mathsf T}S M_A=S,\qquad
 \operatorname{rank}(M_A-I)^k=(8,6,4,2,1,0)_k\quad(0\leq k\leq5).
\end{equation}
If $Q=S^{-\mathsf T}S-I$, then $Q^5=0$ and
\begin{equation}\label{chiralrootformula}
 M_A=I+\frac12Q-\frac18Q^2+\frac1{16}Q^3-\frac5{128}Q^4.
\end{equation}
Thus the displayed expression, despite its rational coefficients, is
an integer matrix. It is the unique unipotent square root of
$S^{-\mathsf T}S$ over $\Q$.
\end{proposition}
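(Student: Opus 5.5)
The plan is to derive every identity from two facts already established: the pairing $b_{\rm ch}$ of Corollary~\ref{oppositephase}, and the identification of the columns of $\mathcal C=\mathcal C_{\phi_0}$ with Gamma sections of $\mathcal B_1$ (Theorem~\ref{mainUnit09}). Linear algebra then does the rest.

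\emph{Gram matrix of $b_{\rm ch}$.} In the basis $\mathcal B_1$, the Gram matrix of $b_{\rm ch}$ is $\mathcal C^{\mathsf T}P\mathsf D\mathcal C$. By \eqref{chiralgammapairing} its $(i,j)$ entry is $\chi(B_i\otimes A,B_j)$.

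\emph{The key structural claim.} I will show that $B_{9-i}\otimes A^{-1}$ equals $\epsilon_i B_i$ modulo the left-dual relation. More concretely, I want
\[
 \chi(B_i\otimes A,B_j)=(J_\epsilon)_{ij}.
\]
By Serre duality this is equivalent to $\chi(B_i,B_j\otimes A^{-1})=(J_\epsilon)_{ij}$. The difficulty is that tensoring by $A$ is not a reversal of the collection. So the relation should instead come from the half-turn: the exponents of $\mathcal B_1\otimes A$ at phase $\phi_0-\pi$ are the reversed negatives. Combined with canonical orthogonality, this shows that $b_{\rm ch}$ pairs the column at $u_i$ only with the column at $-u_i$, which is the column at position $9-i$ by \eqref{unitorder09}. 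The constant is the $\mathsf D$-image normalization of $\varphi$, namely $\pm1$ by Corollary~\ref{chiralspectrum}.

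Concretely, the argument runs through the flat pairing. Evaluate $b_{\rm ch}$ on two canonical columns asymptotically on the common sector. The expansion \eqref{chiralallorders} gives
\[
 b_{\rm ch}(y_i,y_j)\to(\mathsf D\psi_i,\psi_j)e^{(u_i-u_j)/z}.
\]
Flatness forces this to be constant, so it vanishes unless $u_j=-u_i$, and equals $(\mathsf D\psi_i,\psi_{j})$ otherwise. The signs $\epsilon_i$ then come from $\mathsf D\psi_{u}=\pm\psi_{-u}$ for the fixed leading signs. Rather than track these analytically, I will compute them directly from the integer Euler matrix $D_1^{\mathsf T}(\cdot)D_1$ with the twist. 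This gives $\mathcal C^{\mathsf T}P\mathsf D\mathcal C=J_\epsilon$.

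\emph{Remaining identities.} The central identity $\mathcal C M_A=e^{\pi i\rho}\mathcal C$ is just $\Ch(A)=e^{\pi i\rho}1$ read column by column. Writing $\chi(B_i\otimes A,B_j)=(M_A^{\mathsf T}S)_{ij}$, the previous step gives $M_A^{\mathsf T}S=J_\epsilon$. Since $S=D_1^{\mathsf T}GD_1$ is the Euler form, this yields $M_A=S^{-\mathsf T}J_\epsilon^{\mathsf T}$.

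The relation $M_A=J_\epsilon S$ will be obtained from this using $S^{\mathsf T}J_\epsilon S=J_\epsilon$. That orthogonality identity in turn comes from invariance of the Euler form under $\otimes A$: $M_A^{\mathsf T}SM_A=S$. Substituting gives it, and then $M_A^2=S^{-\mathsf T}S$ because $J_\epsilon^2=I$.

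\emph{Unipotent structure.} The rank sequence in \eqref{chiralEulerequality} is a finite integer computation. Jordan type $J_5\oplus J_3$ agrees with \eqref{unitdethelix09}.

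\emph{Square root.} Formula \eqref{chiralrootformula} is the binomial series for $(I+Q)^{1/2}$ truncated at $Q^5=0$; this requires $Q^5=0$ from the rank data. For uniqueness: any unipotent $N$ with $N^2=I+Q$ satisfies $N=\exp(\tfrac12\log(I+Q))$, since $\log$ is injective on unipotents. Integrality then follows because this square root coincides with the integer matrix $M_A$.

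\emph{Main obstacle.} The sign bookkeeping in $J_\epsilon$ is the hard step: why exactly positions $4,5$ carry $-1$. I would settle it by explicit evaluation of $\chi(B_i\otimes A,B_{9-i})$ with \eqref{charactersunit09} and Riemann--Roch, rather than through the analytic leading vectors.
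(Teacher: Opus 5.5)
Your outline is sound. The purely algebraic half matches the paper: $M_A^{\mathsf T}S=J_\epsilon$ plus tensor invariance gives $S^{\mathsf T}J_\epsilon S=J_\epsilon$, $M_A=J_\epsilon S$ and $M_A^2=S^{-\mathsf T}S$; then come the truncated binomial root, uniqueness via the nilpotent logarithm, and integrality from $M_A$. (Minor point: $\chi(B_i\otimes A,B_j)=\chi(B_i,B_j\otimes A^{-1})$ is tensor invariance, not Serre duality; Serre duality gives the symmetry.)

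You differ from the paper in how you get $\chi(B_i\otimes A,B_j)=(J_\epsilon)_{ij}$ and the normalization of $\mathcal C^{\mathsf T}P\mathsf D\mathcal C$. The paper uses no analysis here:
\begin{itemize}
\item With $a=H+\xi$ it shows $e^{-a}\Td(X)=1-H^2/12$. Hence $\chi(B_i\otimes A,B_j)=\int_X\ch(B_i^\vee)\ch(B_j)(1-H^2/12)$, and it evaluates all entries from \eqref{charactersunit09}.
\item It checks $\mathcal C^{\mathsf T}P\mathsf D\mathcal C$ directly from $(\mathsf D\widehat\Gamma_X)\widehat\Gamma_X=\Ch(1-H^2/12)$.
\item It gets the Jordan type from hard Lefschetz for the ample class $a$, with $e^a-I$ equal to $a$ times a unit, not from a rank computation.
\end{itemize}

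Your route explains the antidiagonal shape spectrally and cuts the Riemann--Roch work to four entries. The cost is that it rests on the analytic identification in Theorem~\ref{mainUnit09}. It also requires reading $b_{\rm ch}$ in the Gamma basis as $\mathcal C^{\mathsf T}P\mathsf D\mathcal C$, which needs $Y_\infty^{\mathsf T}P\mathsf DY_\infty=P\mathsf D$. That identity is true, because $\mathsf D\mathcal L(z)\mathsf D=\mathcal L(-z)$, $\mu$ is skew, $\rho$ is self-adjoint and $\mathsf D\rho\mathsf D=-\rho$. The paper's check is independent of all this.

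One step must be corrected. The form $b_{\rm ch}(y_i,y_j)=(\mathsf Dy_i(z),y_j(z))_X$ involves no half-turn. So \eqref{chiralallorders} is not the relevant expansion, and your exponent $e^{(u_i-u_j)/z}$ is wrong. On the common sector of opening greater than $\pi$ centered at $\phi_0$, one has $b_{\rm ch}(y_i,y_j)=e^{-(u_i+u_j)/z}\bigl((\mathsf D\psi_i,\psi_j)+O(z)\bigr)$. If $u_i+u_j\neq0$, that sector contains a direction along which the right side tends to zero, so the constant vanishes. With your exponent, constancy would single out $u_j=u_i$ instead. Moreover, $\mathscr Hy_i$ has no common sector of opening greater than $\pi$ with $y_j$.

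After this correction, or simply by computing all $64$ entries by Riemann--Roch as you also propose, the proof goes through.
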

\begin{proof}
Put $a=H+\xi$. The relative tangent line has first Chern class
$2\xi-H$, whose square is $H^2$ in the cup ring. Multiplying its
Todd class by that of $Q^3$ and then by $e^{-a}$ gives
\begin{equation}\label{chiraltodd}
 e^{-a}\Td(X)=1-\frac{H^2}{12}.
\end{equation}
Consequently, \eqref{chiralgammapairing} can be computed directly as
\begin{equation}\label{chiralRR}
 \chi(B_i\otimes A,B_j)=
 \int_X\ch(B_i^\vee)\ch(B_j)\left(1-\frac{H^2}{12}\right).
\end{equation}
Here $B_i^\vee=R\mathcal Hom(B_i,\OO_X)$ denotes the derived dual.
Substituting the eight characters \eqref{charactersunit09} shows
that all entries vanish except those with $i+j=9$. For
$(i,j)=(1,8),(2,7),(3,6),(4,5)$ the entries are respectively
$1,1,1,-1$, and the form is symmetric. Its matrix is therefore
$J_\epsilon$, with the two central negative signs as stated.
For instance, the first of these four entries is
$\int_X e^a(1-H^2/12)=30/24-6/24=1$.

Since $S_{ij}=\chi(B_i,B_j)$ and columns of $M_A$ express
$[B_i\otimes A]$ in $\mathcal B$, we obtain
\[
 M_A^{\mathsf T}S=J_\epsilon,\qquad
 M_A^{\mathsf T}S M_A=S.
\]
The second identity follows by tensoring both arguments of the Euler
pairing with $A$. Thus $J_\epsilon M_A=S$. As
$J_\epsilon^{\mathsf T}=J_\epsilon^{-1}=J_\epsilon$, this gives
$M_A=J_\epsilon S$ and $S^{\mathsf T}J_\epsilon S=J_\epsilon$.
It follows that $J_\epsilon S J_\epsilon=S^{-\mathsf T}$ and hence
$M_A^2=S^{-\mathsf T}S$, in agreement with
\eqref{unitdethelix09}.

For the central matrix, its column formula gives
$\mathcal C M_A=e^{\pi i\rho}\mathcal C$ immediately.
The normalization of the symmetric form can also be checked without
using any monodromy matrix. Equation~\eqref{gammalogunit09} gives
\[
 (\mathsf D\widehat\Gamma_X)\widehat\Gamma_X
 =\exp(2\zeta(2)H^2)=1+\frac{\pi^2}{3}H^2
 =\Ch\left(1-\frac{H^2}{12}\right).
\]
Here $\Ch$ on a cohomology class means multiplication of its complex
degree-$p$ component by $(2\pi i)^p$. Taking top degree and using
$(2\pi i)^4=(2\pi)^4$ shows that the $(i,j)$ entry of
$\mathcal C^{\mathsf T}P\mathsf D\mathcal C$ equals
\eqref{chiralRR}. The two factors $(2\pi)^{-2}$ in the Gamma columns
are therefore essential, and the result is exactly $J_\epsilon$.

Finally, under the rational Chern character, tensoring by $A$ is
multiplication by $e^a$. Since $A$ is ample, hard Lefschetz and the
Betti numbers $(1,2,2,2,1)$ give two Jordan strings, of lengths five and three, for multiplication
by $a$. Also
$a^4=15H^3\xi\ne0$. The operator $e^a-I$ is $a$ times an invertible
polynomial in $a$, so it has the same nilpotent Jordan type. This
proves the ranks and the unipotent Jordan type of $M_A$.
The finite binomial expansion of $(I+Q)^{1/2}$ gives
\eqref{chiralrootformula}; uniqueness follows from the unique
nilpotent logarithm of a unipotent matrix in characteristic zero.
Integrality follows from its identification with tensoring by $A$
in an integral exceptional basis.
\end{proof}

The signed reversal has a spectral meaning. The ordered exponents
in Theorem~\ref{mainGammaII} satisfy $u_{9-i}=-u_i$.
The flat form $b_{\rm ch}$ pairs precisely these opposite exponents;
its two central negative signs record the spinor normalizations and
shifts. Thus \eqref{introchiraldata} relates the chiral spectrum to the
Gamma marking and provides algebraic checks on both connection matrices,
once their columns have been identified by the sectorial analysis.

This proves Gamma II for the fourfold at the specified phase, with an
explicit mutation word along the return path and the analytic mutation
theorem of \cite{GGI}. For higher-dimensional bases, the result remains
the two-column theorem of Section~\ref{sec:familytransfer}. A geometric
integral mirror theorem would require further work: neither
\eqref{unitStokes09} nor the unimodularity of its Euler form identifies a
Barnes contour with a primitive thimble. Here the analytic identification
follows from the wide-sector estimates of Section~\ref{sec:spinorMB} and
the transport with Gamma normalization in Section~\ref{sec:closure}.

\appendix
\section{Mirror geometry and degeneration of the quantum connection}
\label{app:mirrorgeometry}
The coupled mirror realizes the quantum algebra and its collisions
geometrically. The constructions below concern the critical algebra
and the associated complex meromorphic connection. They do not
establish the comparison of integral cycles used in toric Gamma II
\cite{FZ}.

\subsection{The coupled critical model}\label{app:completedpotential}\label{aux:sec:mirrorcompletion}

We construct the completion using the model of the three-dimensional
quadric given by Gorbounov and Smirnov \cite{GS} and its comparison
with the canonical mirror by Pech and Rietsch \cite{PR}.
After a shift of the third coordinate, the potential is
\[
 W_Q^{\rm quad}(\alpha,\beta,\gamma)
 =\beta(1+\gamma)+\frac{Q\alpha^2}{(\alpha\beta-1)\gamma}
\]
on $\{\gamma(\alpha\beta-1)\ne0\}\subset\mathbb A^3$.
To couple this model to the fiber variable, substitute $Q=qv$ and add
$v+r/v$.
Set
\begin{equation}\label{aux:completedspace}
 Y=\{(\alpha,\beta,\gamma,v)\in
       \mathbb A^2\times(\C^*)^2:\alpha\beta-1\ne0\},
\end{equation}
and define
\begin{align}
 W_{q,r}&=\beta(1+\gamma)
 +\frac{qv\alpha^2}{(\alpha\beta-1)\gamma}+v+\frac rv,
 \label{aux:completedpotential}\\
 \Omega&=\frac{d\alpha\wedge d\beta\wedge d\gamma\wedge dv}
 {\gamma(\alpha\beta-1)v}.\label{aux:completedvolume}
\end{align}
Both are regular on $Y$, and $\Omega$ is nowhere zero. Here $q,r\ne0$.
Here \emph{completed} means the specified open enlargement of a Laurent
chart. It implies neither a proper compactification nor homological
mirror symmetry.

On $\beta\ne0$ put
\[
 x=\beta,\qquad y=\beta\gamma,\qquad k=\alpha\beta-1.
\]
This chart is $(\C^*)^4$, with inverse
$\alpha=(k+1)/x$, $\beta=x$, $\gamma=y/x$. In these coordinates
\begin{equation}\label{aux:Laurentpotential}
 L_{q,r}=x+y+\frac{qv(k+1)^2}{xyk}+v+\frac rv,
 \qquad \Omega=d\log x\wedge d\log y\wedge d\log k\wedge d\log v.
\end{equation}
The added divisor $\beta=0$ is essential for the finite critical scheme.

\begin{lemma}[Coupling of relative critical equations]\label{aux:couplinglemma}
Let $F_Q$ be a regular one-parameter family on a fixed smooth affine
variety $Z$, and put $\widetilde F=F_{qv}+v+r/v$ on $Z\times\C^*$.
Its critical algebra is the relative Jacobi algebra of $F_{qv}$, followed
by the additional relation
\[
 [Q\partial_QF_Q]+v-r/v=0.
\]
Under this description $[q\partial_q\widetilde F]=[Q\partial_QF_Q]$.
Parameter-dependent changes of coordinates do not change these classes.
\end{lemma}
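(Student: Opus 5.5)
The plan is a direct computation of the critical locus of $\widetilde F(w,v)=F_{qv}(w)+v+r/v$ on $Z\times\C^*$, where $w$ denotes coordinates on $Z$. The critical algebra is the quotient of $\mathcal O(Z\times\C^*)$ by the ideal generated by the partial derivatives in the $w$-directions and in the $v$-direction. I will treat these two groups of relations in turn.

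\emph{The $w$-derivatives.} Since $v+r/v$ does not depend on $w$, we have $\partial_w\widetilde F=(\partial_wF_Q)|_{Q=qv}$. Quotienting by these relations gives the relative Jacobi algebra $\mathcal O(Z\times\C^*_Q)/(\partial_wF_Q)$, pulled back along the substitution $Q=qv$. Because $q\ne0$ is fixed, $v\mapsto Q=qv$ is an isomorphism of $\C^*$. So this quotient is exactly the relative Jacobi algebra of the family $F_Q$ over the $Q$-line, with $v$ read as $Q/q$.

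\emph{The $v$-derivative.} The remaining generator is $v\partial_v\widetilde F$. Since $v$ is a unit, it generates the same ideal as $\partial_v\widetilde F$. By the chain rule, $v\partial_v(F_{qv})=(Q\partial_QF_Q)|_{Q=qv}$, and $v\partial_v(v+r/v)=v-r/v$. This gives the stated additional relation $[Q\partial_QF_Q]+v-r/v=0$ in the relative Jacobi algebra.

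\emph{The parameter derivative.} For the identity $[q\partial_q\widetilde F]=[Q\partial_QF_Q]$, differentiate at fixed $(w,v)$. We have $q\partial_q\bigl(F_{qv}\bigr)=(Q\partial_QF_Q)|_{Q=qv}$ because $Q=qv$ gives $q\partial_qQ=Q$, and the terms $v+r/v$ contribute nothing. The identity therefore holds already at the level of functions, hence also of classes.

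\emph{Independence of coordinates.} Let $\Phi_{q}$ be a parameter-dependent automorphism of $Z\times\C^*$. Then $q\partial_q(\widetilde F\circ\Phi)=(q\partial_q\widetilde F)\circ\Phi+\sum_i(\partial_i\widetilde F\circ\Phi)\,q\partial_q\Phi^i$. The second term lies in the pulled-back Jacobian ideal. Moreover, $\Phi^*$ identifies the Jacobian ideals themselves, because the Jacobian matrix of $\Phi$ is invertible. So the classes are unchanged. The same argument applies to changes of coordinates on $Z$ that depend on $Q$, using the $w$-Jacobian ideal.

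I expect no real obstacle. The only points needing care are:
\begin{itemize}
\item the invertibility of $v$, which lets one replace $\partial_v\widetilde F$ by $v\partial_v\widetilde F$;
\item keeping track of which variables are held fixed in $q\partial_q$ versus $Q\partial_Q$;
\item the chain-rule term in the coordinate-change step, which must be recognized as lying in the Jacobian ideal.
\end{itemize}
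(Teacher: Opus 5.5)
Your proposal is correct and follows the same route as the paper. Both quotient first by the derivatives tangent to $Z$, then impose $v\partial_v\widetilde F=0$ to get the displayed relation, and note that a parameter-dependent change of coordinates only adds a term lying in the Jacobian ideal, which vanishes in the quotient. Your version spells out the chain-rule computations that the paper leaves implicit.
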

\begin{proof}
First quotient by all derivatives in directions tangent to $Z$.
The remaining equation is $v\partial_v\widetilde F=0$, which is the
displayed relation. A change of the lift of a parameter derivative adds a
vector field tangent to $Z$ applied to $F_Q$; its class in the relative
Jacobi algebra is zero. This also proves the marking statement.
\end{proof}

\paragraph{The critical scheme.}
For an affine smooth variety, $\operatorname{Jac}(W)$ means its coordinate
ring modulo the ideal generated by the derivatives of $W$ along all
vector fields.

\begin{theorem}[A marked realization of the critical algebra]\label{aux:Jacobiiso}
Over $R=\C[q^{\pm1},r^{\pm1}]$ there is an isomorphism
\begin{equation}\label{aux:Jacobiisoeq}
 \operatorname{Jac}(W_{q,r})\cong
 R[H,\xi]/\bigl(H^4-4qH(\xi-H),\ \xi(\xi-H)-r\bigr),
 \qquad H=[\beta],\quad \xi=[\beta+v].
\end{equation}
The critical scheme is finite flat of degree eight over
$\operatorname{Spec}R$. The isomorphism respects the markings
\begin{equation}\label{aux:mirrorMarkings}
 [q\partial_qW]=H,\qquad [r\partial_rW]=\xi,
 \qquad [W]=2H+2\xi.
\end{equation}
The isomorphism holds scheme-theoretically on the nonsemisimple divisor,
including the nonreduced fibers.
\end{theorem}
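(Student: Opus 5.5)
The approach is to compute the Jacobi algebra in the Laurent chart $\beta\ne0$ of \eqref{aux:Laurentpotential} and then show that the added divisor $\beta=0$ contributes no critical points. The quadric part is handled by Lemma~\ref{aux:couplinglemma}. Take $Z=\{(\alpha,\beta,\gamma):\gamma(\alpha\beta-1)\ne0\}$ and $F_Q=W_Q^{\rm quad}$. The critical algebra of $W_{q,r}$ is then the relative Jacobi algebra of $F_{qv}$ modulo the single relation $[Q\partial_QF_Q]+v-r/v=0$.

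\emph{Step 1: the base Jacobi algebra.} First I would show, uniformly in $Q\ne0$, that the relative Jacobi algebra of $W_Q^{\rm quad}$ on $Z$ is $\C[Q^{\pm1},H]/(H^4-4Q)$ with $H=[\beta]=[Q\partial_QF_Q]$. This is the Gorbounov--Smirnov/Pech--Rietsch statement for $Q^3$, which I would verify directly.

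The equation $\partial_\alpha=0$ gives
\[
 2\alpha(\alpha\beta-1)=\alpha^2\beta,\qquad\text{i.e. } \alpha(\alpha\beta-2)=0.
\]
On the branch $\alpha=0$ the equations force $\beta=0$ and $1+\gamma=0$ with no $Q$-dependence. I expect this branch to be excluded by $\partial_\beta$ and $\partial_\gamma$ together with the constraint $\alpha\beta\ne1$; checking this carefully, including on the nonreduced locus, is the delicate point. On the main branch $\alpha\beta=2$ the remaining equations are:
\begin{itemize}
 \item $\partial_\gamma$: $\beta=Q\alpha^2/\gamma^2$;
 \item $\partial_\beta$: $1+\gamma=Q\alpha^3/\gamma$.
\end{itemize}
Eliminating the variables gives one polynomial equation in $\beta$. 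Identifying $[Q\partial_QF_Q]=Q\alpha^2/\gamma$ with $[\beta]$ should then yield the quartic relation.

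\emph{Step 2: coupling.} Substitute $Q=qv$ and append the relation $H+v-r/v=0$, which says $v^2+Hv-r=0$. Setting $\xi=H+v$, this becomes
\[
 \xi(\xi-H)=r .
\]
For the base relation I must track that the base quantum parameter is now $qv$, not $q$. Since $v=\xi-H$, the relation $H^4=4qv$ becomes $H^4=4qH(\xi-H)$ after multiplying by $H$. The $H=0$ branch then shows that the extra factor $H$ is genuinely present. So I would redo Step 1 keeping the $H=0$ points in the open set $Y$, which includes $\beta=0$. This is exactly why the divisor $\beta=0$ is essential: those two points live there.

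\emph{Step 3: markings and flatness.} The markings follow from Lemma~\ref{aux:couplinglemma}. Indeed $[q\partial_qW]=[Q\partial_QF_Q]=H$, and $[r\partial_rW]=r/v$. The coupling relation gives $r/v=H+v=\xi$. For the Euler class,
\[
 [W]=2H+2\xi,
\]
which follows from the quasi-homogeneity vector field with weights making $W$ weighted-homogeneous (Euler relation on the Jacobi algebra).

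Finite flatness of degree eight follows once the isomorphism is established. Theorem~\ref{ring} shows the target is free of rank eight via the Gröbner basis $\{H^4,\xi^2\}$. Surjectivity onto $\operatorname{Jac}$ is clear, since $\beta$ and $v$ generate after elimination. For injectivity I would check that the relations already hold on all of $Y$. The key estimate is properness of the critical locus over $\operatorname{Spec}R$, i.e.\ no critical points escape to the boundary. Granting this, the Jacobi algebra is a finite $R$-module, and comparing degrees fiberwise against the eight points of Proposition~\ref{char} gives equality scheme-theoretically, including at the nonreduced fibers over $k=16/27$.

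\emph{Main obstacle.} I expect the hardest step to be the global elimination on $Y$. It requires showing the critical ideal contains $H^4-4qH(\xi-H)$ exactly, without extra components supported near $\beta=0$ or $\alpha\beta=1$. This is best done by explicit Gröbner computation over $R$ in the variables $(\alpha,\beta,\gamma,v,\gamma^{-1},(\alpha\beta-1)^{-1},v^{-1})$.
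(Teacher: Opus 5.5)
Your Step~1 is wrong, and your announced strategy fails with it. The relative Jacobi algebra of $W_Q^{\rm quad}$ on $Z$ is not $\C[Q^{\pm1},H]/(H^4-4Q)$. It is $\C[Q^{\pm1},H]/(H^4-4QH)$, i.e.\ $H(H^3-4Q)=0$, the symbol of the base operator $D_Q^4-2Q(2D_Q+z)$.

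The branch $\alpha=0$ that you expect to exclude survives. The point $(\alpha,\beta,\gamma)=(0,0,-1)$ lies in $Z$, since there $\gamma=-1$ and $\alpha\beta-1=-1$. It is a reduced critical point with $[\beta]=0$. On the other branch $\alpha\beta=2$, your two equations give $\gamma=1$ and $\beta^3=4Q$, a cubic rather than a quartic.

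Hence the plan ``compute in the Laurent chart and show that $\beta=0$ contributes no critical points'' cannot succeed. After coupling, the $\alpha=0$ branch becomes exactly the two points $p_s=(0,0,-1,s)$ of Corollary~\ref{aux:addedpoints}, which lie on $\beta=0$. The Laurent chart alone has a Jacobi algebra of rank six only, without the factor $H$ in $H^4-4qH(\xi-H)$. Your Step~2 notices that the $H=0$ points live on $\beta=0$, but this contradicts Step~1 rather than repairing it. Also, multiplying $H^4=4qv$ by $H$ does not give $H^4=4qH(\xi-H)$; only the cubic $H^3=4qv$ does. The actual elimination, which is the whole content of the theorem, is left to an unspecified Gr\"obner computation.

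The missing idea is already in your formula for $\partial_\alpha$. On $Y$, $\partial_\alpha W$ is a unit times $\alpha(\alpha\beta-2)$, and the ideal $(\alpha,\alpha\beta-2)$ contains $-2$. By the Chinese remainder theorem the critical algebra is therefore a product of two factors:
\begin{itemize}
\item Modulo $\alpha$, the $\beta$- and $\gamma$-derivatives reduce to $1+\gamma$ and $\beta$.
\item Modulo $\alpha\beta-2$, after multiplying only by units, they generate $(\gamma-1,\ \beta^3-4Q)$.
\end{itemize}
Now put $Q=qv$ and impose the coupling relation; $[Q\partial_QF_Q]$ is $0$ on the first factor and $Q\alpha^2/\gamma=\beta$ on the second. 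This gives $R[v]/(v^2-r)\times R[\beta^{\pm1},v^{\pm1}]/(\beta^3-4qv,\ v^2+\beta v-r)$. On the quantum side, $v=\xi-H$ is a unit because $v(v+H)=r$. Since $(H,H^3-4qv)\ni 4qv$, the relation $H(H^3-4qv)$ splits comaximally into the same two rings.

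This is the paper's proof. It works in the coordinates $a=\beta$, $b=Q\alpha/\gamma$, $c=Q/\gamma$, where the split appears as $b(ab-2c)$. Every step is an equality of ideals, so the isomorphism, finite flatness of degree eight and the statement on nonreduced fibers follow at once. No properness argument is needed. Counting points fiberwise would not by itself control the multiplicities at $r/q=16/27$. Also, checking that the relations hold in $\operatorname{Jac}$ gives well-definedness of the map, not injectivity.

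Your Euler-field argument for $[W]=2H+2\xi$ is correct, with weights $-1,1,0,1$ on $\alpha,\beta,\gamma,v$ and $2$ on $q,r$. It is a valid alternative to the paper's direct evaluation $[W]=3a+v+r/v=4a+2v$.
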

\begin{proof}
Use the invertible coordinate change, with $Q=qv$,
\[
 a=\beta,\qquad b=Q\alpha/\gamma,\qquad c=Q/\gamma,
 \qquad D=ab-c.
\]
It identifies $Y$ with $\{cD v\ne0\}$ and gives
\begin{equation}\label{aux:canonicalcoupled}
 W=a+\frac{b^2}{D}+\frac{Qa}{c}+v+\frac rv,
 \qquad\Omega=\frac{da\wedge db\wedge dc\wedge dv}{cD v}.
\end{equation}
The three vertical derivatives are
\[
 W_a=1-\frac{b^3}{D^2}+\frac Qc,\qquad
 W_b=\frac{b(ab-2c)}{D^2},\qquad
 W_c=\frac{b^2}{D^2}-\frac{Qa}{c^2}.
\]
The ideals generated by $b$ and by $ab-2c$ are comaximal because $c$
is invertible. Hence the relative critical algebra splits into two
factors before imposing the $v$ equation. In the first factor, $b=0$
forces $a=0$ and $c=-Q$. In the second, $ab=2c$, and the derivative
equations force
\[
 c=D=Q,\qquad b=a^2/2,\qquad a^3=4Q.
\]
The $v$ equation in the two factors is respectively $v^2=r$ and
$v^2+av-r=0$. Consequently the full critical algebra is
\begin{equation}\label{aux:criticalsplit}
 R[v^{\pm1}]/(v^2-r)\ \times\
 R[a^{\pm1},v^{\pm1}]/(a^3-4qv,\ v^2+av-r).
\end{equation}
On the quantum side set $v=\xi-H$. The relation $v(v+H)=r$ makes $v$
a unit, and the other relation factors as $H(H^3-4qv)=0$. These two
factors are comaximal since $4qv$ is a unit. The resulting product is
exactly \eqref{aux:criticalsplit}. This proves \eqref{aux:Jacobiisoeq}, including
nonreduced fibers, and finite flatness follows from the eight-element
basis of Theorem~\ref{ring}.

At a relative critical point the canonical formula gives
$[Q\partial_QW^{\rm quad}]=[Qa/c]=a$ in both factors. Thus
$[r\partial_rW]=r/v=a+v$, and
$[W]=3a+v+r/v=4a+2v$. Lemma~\ref{aux:couplinglemma} ensures that these
computations remain valid in the original coordinates. They are exactly
\eqref{aux:mirrorMarkings}.
\end{proof}

\begin{corollary}[The two points outside the Laurent chart]\label{aux:addedpoints}
The two added critical points and their values are
\begin{equation}\label{aux:addedpointsformula}
 p_s=(0,0,-1,s),\qquad s^2=r,\qquad W(p_s)=2s.
\end{equation}
The remaining critical points, with multiplicities at a collision, are
\begin{equation}\label{aux:toruspointsformula}
 p_h=(2/h,h,1,h^3/(4q)),\qquad
 h^6+4qh^4-16q^2r=0,
 \qquad W(p_h)=4h+\frac{h^3}{2q}.
\end{equation}
In particular, the finite Jacobi algebra of the Laurent chart has rank
six, while that of $Y$ has rank eight.
\end{corollary}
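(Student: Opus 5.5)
The plan is to read both statements directly off the factorization \eqref{aux:criticalsplit} from the proof of Theorem~\ref{aux:Jacobiiso}, working in the canonical coordinates $(a,b,c,v)$ of \eqref{aux:canonicalcoupled}. Each factor will then be translated back to $(\alpha,\beta,\gamma,v)$ using $\alpha=bc/(Qa)\cdot(\ldots)$, more precisely $\beta=a$, $\gamma=Q/c$, $\alpha=b/c$, with $Q=qv$. The critical values will be computed from $[W]=4a+2v$ on the second factor and $[W]=3a+v+r/v$ on the first.

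\textbf{The first factor.} Here $b=0$, $a=0$, $c=-Q$ and $v^2=r$. Writing $v=s$ with $s^2=r$, we get $\beta=a=0$, $\gamma=Q/c=-1$ and $\alpha=b/c=0$. This gives $p_s=(0,0,-1,s)$. The value is $W(p_s)=0+0+s+r/s=2s$, which can also be checked directly from \eqref{aux:completedpotential}, since $\beta=0$ and $\alpha=0$ kill the first two terms. Because $\beta=0$, these points lie on the added divisor and outside the chart $\beta\ne0$.

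\textbf{The second factor.} Here $c=D=Q$, $b=a^2/2$, $a^3=4qv$ and $v^2+av=r$. Set $h=a$. Then $\beta=h$, $\gamma=Q/c=1$, $\alpha=b/c=h^2/(2qv)$ and $v=h^3/(4q)$, so $\alpha=2/h$. Eliminating $v$ from $v(v+h)=r$ gives $h^6/(16q^2)+h^4/(4q)=r$, which is exactly the stated sextic. The value is $4h+2v=4h+h^3/(2q)$. Since $\beta=h\ne0$ (indeed $h^3=4qv$ is a unit), these points lie in the Laurent chart. Multiplicities at collisions are inherited from the second factor being free of rank six over $R$, with basis $a^iv^j$ for $i<3$, $j<2$.

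\textbf{Rank count.} The factor of rank two supported on $\beta=0$ and the factor of rank six together give rank eight for $Y$. The Laurent chart $\{\beta\ne0\}$ is an open subscheme whose critical scheme is the complementary component, the second factor, which has rank six. The only point requiring care is that localizing at $\beta$ preserves the Jacobi ideal. This holds because $\beta$ is invertible on that component and vanishes on the other, and the two factors are comaximal. I do not expect any genuine obstacle; the main risk is bookkeeping the inverse coordinate change correctly.
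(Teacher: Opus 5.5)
Your proposal is correct and follows the paper's own argument: translate the two factors of \eqref{aux:criticalsplit} back through $\beta=a$, $\gamma=Q/c$, $\alpha=b/c$. This gives $\beta=0$ on the first factor and $\beta=h$ a unit on the second, from which the points, the sextic and the values $2s$ and $4h+h^3/(2q)$ follow.

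Your explicit localization for the rank-six claim only makes precise what the paper leaves implicit. The Jacobi ideal of the principal open set $\{\beta\ne0\}$ is the localization of that of $Y$ because vector fields localize. The fact that $\beta$ vanishes on one factor and is a unit on the other then kills the first factor and keeps the second. You should delete the stray false start ``$\alpha=bc/(Qa)\cdot(\ldots)$''.
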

\begin{proof}
Translate the two factors of \eqref{aux:criticalsplit} to the original
coordinates. On the second factor $h=a=\beta$ is a unit, so that all
six points are in the Laurent chart. On the first factor $\beta=0$,
$\alpha=0$, and $\gamma=-1$.
\end{proof}

\begin{remark}[Jacobi and Gauss--Manin ranks]
The rank statement concerns Jacobi algebras. The Gauss--Manin system of
the Laurent chart can also receive contributions from critical behavior
at infinity, so its rank need not be six. For the quadric base, the
Laurent potential has three finite critical points, yet restriction from
the completed potential induces an isomorphism of Gauss--Manin systems
of rank four \cite[Lemmas 4.1.2--4.1.3]{GS}. Counting finite critical points
therefore does not suffice to determine the Gauss--Manin rank for the
fourfold.
\end{remark}

\subsection{Residue pairing and the scalar period}
In coordinates $x_1,\ldots,x_4$ with
$\Omega=\rho\,dx_1\wedge\cdots\wedge dx_4$, define the residue trace by
\[
 \ell_B(f)=\sum_{p\in\operatorname{Crit}(W)}
 \operatorname{Res}_p
 \frac{f\rho^2\,dx_1\wedge\cdots\wedge dx_4}
 {W_{x_1}\cdots W_{x_4}}.
\]
At a Morse point this is $f(p)\rho(p)^2/\det\operatorname{Hess}_p(W)$.
The square of $\rho$ makes the expression coordinate independent, as
required by the pairing on the Jacobi classes $f\Omega$.

\begin{theorem}[Comparison of Frobenius pairings]\label{aux:residueiso}
The marked isomorphism \eqref{aux:Jacobiisoeq} identifies the pairing
$\ell_B(fg)$ with the Poincar\'e Frobenius pairing of small quantum
cohomology. It also identifies the trace at the nonsemisimple divisor,
where local Grothendieck residues replace the formula at a Morse point.
At the two added Morse points,
\begin{equation}\label{aux:addedweight}
 \frac{\rho(p_s)^2}{\det\operatorname{Hess}_{p_s}W}
 =-\frac1{4qr}.
\end{equation}
\end{theorem}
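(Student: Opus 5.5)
The plan is to reduce everything to the marked isomorphism of Theorem~\ref{aux:Jacobiiso}, so that both pairings become traces on the same finite flat $R$-algebra of rank eight. A nondegenerate Frobenius pairing on a finite algebra is determined by its trace functional $f\mapsto\ell(f)$, since $(f,g)=\ell(fg)$. It therefore suffices to show that $\ell_B$ and the Frobenius trace $\ell_X(f)=\int_X f$ (equivalently $(1,f)$) agree on a basis of the algebra. Both functionals are $R$-linear, and both are compatible with base change. For $\ell_X$ this is clear; for $\ell_B$ it follows because the critical scheme is finite flat, so the Grothendieck residue trace commutes with specialization. Hence it is enough to check equality on the Zariski-dense semisimple locus $k\ne16/27$. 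Equality then extends to the nonsemisimple divisor by continuity, or by flatness of the trace, and there the local residues automatically replace the Morse formula.

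On the semisimple locus I compare the two traces pointwise on the eight idempotents. The Frobenius trace of the primitive idempotent $e_p$ is $(e_p,e_p)$, and $\ell_B(e_p)=\rho(p)^2/\det\operatorname{Hess}_pW$. This leaves two checks.

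\begin{itemize}
\item At the two added points $p_s$, I work in the coordinates $(a,b,c,v)$ of \eqref{aux:canonicalcoupled}. There $\rho=1/(cDv)$ and the point is $a=b=0$, $c=-Q$, $D=Q$, $v=s$. The Hessian is block-diagonal up to a triangular correction, so its determinant can be computed directly:
\begin{itemize}
\item $W_{bb}=2/D$;
\item $W_{aa}=0$, while $W_{ac}=-Q/c^2$, $W_{cc}=2Qa/c^3=0$, and $W_{ab}=0$ at $b=0$;
\item the $v$ entry is $v\partial_v(v+r/v)$-type, of size $2r/v^3$ in the $dv$ coordinate, with the coupling through $Q=qv$.
\end{itemize}
Carrying this out should give \eqref{aux:addedweight}, namely $-1/(4qr)$. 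This agrees with the norm $(e_s,e_s)=-1/(4qr)$ computed after \eqref{es}.
\item At the six torus points, it is simplest to use the Laurent chart \eqref{aux:Laurentpotential}, where $\rho=1/(xykv)$ relative to $dx\,dy\,dk\,dv$. The quantity $\rho^2/\det\operatorname{Hess}$ then equals the inverse determinant of the logarithmic Hessian. At $(h,h,1,v)$ this is the matrix \eqref{positiveHessian}, with determinant $h^3(2h+3v)$. The algebraic computation producing \eqref{positiveHessian} holds at every critical point of the branch, not only the positive one, so the formula holds uniformly. Lemma~\ref{idempotentnorms}, rewritten through \eqref{positivesaddlehv} as in the proof of Theorem~\ref{positiveGammaall}, gives $(e_h,e_h)=1/(h^3(2h+3v))$, which matches.
\end{itemize}

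The main obstacle is the added-point Hessian. The coordinates $(a,b,c)$ depend on $v$ through $Q=qv$, so the $v$-derivatives pick up extra terms and need care. To handle this, I would first apply Lemma~\ref{aux:couplinglemma}-style reasoning: eliminate the vertical directions via a Schur complement, so that the determinant factors as the vertical Hessian determinant times the second derivative of the reduced function in $v$. The vertical factor at $b=0$ is $-(2/D)(Q/c^2)^2$, up to sign, which with $\rho^2$ gives a clean power of $Q$. The reduced $v$-function is $v+r/v+(\text{critical value } -Q\cdot0)$, which is exactly $v+r/v$. Its logarithmic second derivative at $v=s$ is $2$.

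Tracking the signs and the factor $v^2$ from $\rho$ then produces $-1/(4qr)$. Agreement of all eight idempotent traces proves the identification of the pairings.
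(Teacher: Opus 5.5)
Your proposal is correct and takes a genuinely different route from the paper. It also contains one arithmetic slip, described at the end.

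\emph{The paper's route.} The paper computes the residue weight at each Morse point directly. At $p_s$ it works in the original coordinates $(\alpha,\beta,\gamma,v)$, where the Hessian is $2qs\oplus\begin{psmallmatrix}0&1\\1&0\end{psmallmatrix}\oplus 2/s$, with determinant $-4q$ and $\rho(p_s)=1/s$. It then evaluates $\ell_B$ on the eight quantum monomials using the partial-fraction identity $\sum_{P(h)=0}R(h)/P'(h)=[h^5](R\bmod P)$. This gives $(0,\dots,0,2)$, which matches the quantum traces by a degree argument.

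\emph{Your route.} You test the two traces on the idempotent basis of the semisimple fibres. This lets you reuse Lemma~\ref{idempotentnorms} and the norm $(e_s,e_s)=-1/(4qr)$. Your torus-point argument is sound. The critical equations of $L_{q,r}$ force $k=1$ and $x=y=h$. So \eqref{positiveHessian} is the logarithmic Hessian at every torus critical point, and $\rho^2/\det\operatorname{Hess}=1/(h^3(2h+3v))$. After substituting $q=h^3/(4v)$, this is exactly Lemma~\ref{idempotentnorms}.

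\emph{Comparison.} Your route is shorter given the earlier lemmas. The paper's route is self-contained and uses a basis defined over $R$. Both need the same flatness and continuity step at $k=16/27$. You should state explicitly that the marked isomorphism sends the quantum idempotent with $H=h$ (respectively $H=0$, $\xi=s$) to the delta function at $p_h$ (respectively $p_s$). This follows from $H=[\beta]$, $\xi=[\beta+v]$ and Corollary~\ref{aux:addedpoints}.

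\emph{The added-point Hessian.} This is not really an obstacle: in $(\alpha,\beta,\gamma,v)$ the Hessian at $p_s$ is already block diagonal. Your Schur-complement computation in $(a,b,c,v)$ does work, and the vertical factor $-(2/D)(Q/c^2)^2=-2/Q^3$ is right. However, the reduced function $v+r/v$ satisfies $(v\partial_v)^2(v+r/v)=v+r/v$, which equals $2s$ at $v=s$, not $2$. Its ordinary second derivative there is $2/s$.

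With the correct value the computation closes in either coordinate:
\begin{itemize}
\item logarithmic coordinate: $\rho^2=1/Q^4$ and $\det=-4s/Q^3$;
\item ordinary coordinate: $\rho^2=1/(Q^4s^2)$ and $\det=-4/(Q^3s)$.
\end{itemize}
Both give $-1/(4Qs)=-1/(4qr)$. With your stated value $2$, the bookkeeping you describe would instead give $-1/(4qs)$ or $-1/(4qs^3)$.
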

\begin{proof}
The ordinary Hessian at $p_s$, in the order
$(\alpha,\beta,\gamma,v)$, is
\[
 \begin{pmatrix}
 2qs&0&0&0\\0&0&1&0\\0&1&0&0\\0&0&0&2/s
 \end{pmatrix}.
\]
Its determinant is $-4q$, and $\rho(p_s)=1/s$. This proves
\eqref{aux:addedweight} and shows that these points are Morse for all
$q,r\ne0$. At $p_h$ direct differentiation gives
\[
 \det\operatorname{Hess}_{p_h}W
 =\frac{4q(3h^2+8q)}{h^2},\qquad
 \rho(p_h)=\frac{4q}{h^3},
\]
so its residue weight is $4q/(h^4(3h^2+8q))$.

To identify the pairing, it remains to compare the traces on a basis.
Put $P(h)=h^6+4qh^4-16q^2r$. The weight on the six-point factor is
$8q/(hP'(h))$, and
\[
 h^{-1}\equiv\frac{h^5+4qh^3}{16q^2r}\pmod{P(h)}.
\]
For any polynomial $R$ the elementary partial-fraction identity
\[
 \sum_{P(h)=0}\frac{R(h)}{P'(h)}
 =[h^5]\bigl(R(h)\bmod P(h)\bigr)
\]
holds when $P$ is square-free. Apply it to
$8qh^{-1}h^i(h+h^3/(4q))^j$, for $0\le i\le3$ and $0\le j\le1$,
and include the two added points. Polynomial division gives
\begin{equation}\label{aux:residuetraces}
 \bigl(\ell_B(1),\ell_B(H),\ell_B(H^2),\ell_B(H^3),
 \ell_B(\xi),\ell_B(H\xi),\ell_B(H^2\xi),\ell_B(H^3\xi)\bigr)
 =(0,0,0,0,0,0,0,2).
\end{equation}
These are exactly the quantum traces of the eight quantum monomials.
Indeed every monomial other than the last has degree less than four;
positive curve corrections lower the cohomological degree and cannot
contribute to integration. The last has leading classical term
$H^3\xi$ and lower-degree corrections, so its integral is $2$.
Trace equality on the algebra basis proves equality of the Frobenius
pairings on all products.

Finally the critical algebra is finite flat and supported inside $Y$
for $q,r\ne0$. Local residues vary holomorphically as isolated critical
points coalesce, using a union of fixed small residue contours. Thus the
trace identity extends to the caustic. At a non-Morse point, the trace is defined by the local residue rather
than by division by the vanishing Hessian determinant.
\end{proof}

\paragraph{The scalar period.}
\begin{proposition}\label{aux:constanttermperiod}
For the potential \eqref{aux:Laurentpotential},
\begin{equation}\label{aux:CTperiod}
 \operatorname{CT}_{x,y,k,v}e^{tL_{q,r}}
 =\sum_{e\ge d\ge0}
 \frac{(2d)!}{(d!)^5e!(e-d)!}\,q^dr^e t^{2(d+e)}
 =G_{q,r}(t).
\end{equation}
This is an identity of formal power series, and hence of their entire
functions wherever the convergent exponential integral is used.
\end{proposition}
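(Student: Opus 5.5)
The plan is to expand $e^{tL_{q,r}}$ as a multinomial series in the five monomials of \eqref{aux:Laurentpotential} and take the constant term monomial by monomial. Write $L_{q,r}=x+y+M+v+r/v$ with $M=qv(k+1)^2/(xyk)$. A term of the expansion is indexed by nonnegative exponents $(n_x,n_y,d,n_v,e)$ of $x,y,M,v,r/v$, with coefficient $t^{N}/(n_x!\,n_y!\,d!\,n_v!\,e!)$, where $N=n_x+n_y+d+n_v+e$.

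\textbf{Constant-term conditions.} The constant term in $x$ and $y$ forces $n_x=n_y=d$. In $v$, the monomial $M^d v^{n_v}(r/v)^e$ has $v$-exponent $d+n_v-e$, so $n_v=e-d$. This requires $e\ge d$, which accounts for the summation range. In $k$, we need the constant term of $(k+1)^{2d}k^{-d}$, which is $\binom{2d}{d}$. Combining the factors gives
\[
 \frac{t^{2(d+e)}q^dr^e}{(d!)^2\,d!\,(e-d)!\,e!}\binom{2d}{d}
 =\frac{(2d)!}{(d!)^5e!(e-d)!}q^dr^et^{2(d+e)},
\]
with total degree $N=3d+(e-d)+e=2(d+e)$. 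This is \eqref{aux:CTperiod}.

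\textbf{Justification of term-by-term extraction.} For each fixed power of $t$, only finitely many multinomial terms contribute, because every monomial contributes at least one to $N$. So the identity holds coefficientwise as formal series in $t$ with coefficients in Laurent polynomials. The right side is entire in $t$ by the factorial bound already used in Lemma~\ref{smallconv}. For the analytic statement, use the torus integral over $|x|=|y|=|k|=|v|=1$. There $e^{tL}$ converges absolutely and uniformly, so integration commutes with the exponential series and yields the same coefficients.

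I expect no real obstacle. The only point needing care is the $k$-constant term and the bookkeeping that the $M^d$ factor contributes $d!$ only once, not twice. As a consistency check, the result matches the $J$-function of Proposition~\ref{mirror}: its $e\ge d$ coefficient at $z=1$ has scalar part $(2d)!/((d!)^5e!(e-d)!)$.
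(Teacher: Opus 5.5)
Your proposal is correct and follows the paper's own proof: the multinomial expansion of $e^{tL_{q,r}}$, the constraints forcing $d$ factors each of $x$ and $y$ and $e-d$ factors of $v$, the central binomial coefficient $\binom{2d}{d}$ from $(k+1)^{2d}k^{-d}$, and termwise integration over the compact torus for the analytic version. Your two extra remarks, that only finitely many terms contribute to each power of $t$ and that the result agrees with the scalar part of the series in Proposition~\ref{mirror}, are correct but not needed for the proof.
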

\begin{proof}
Choose $d$ factors of $qv(k+1)^2/(xyk)$. Cancellation of $x$ and $y$
requires $d$ factors of each of $x,y$. If there are $e$ factors of $r/v$,
there must be $e-d\ge0$ factors of $v$. The coefficient of $k^0$ in
$(k+1)^{2d}k^{-d}$ is $\binom{2d}{d}$. The total number of chosen factors
is $2(d+e)$. The multinomial coefficient, divided by the factorial in
the exponential, is therefore the coefficient in \eqref{aux:CTperiod}.
There are no odd total degrees. Absolute convergence on a fixed compact
real torus permits termwise integration for the analytic version.
\end{proof}

\subsection{The exponential de Rham transform}\label{app:exponentialtransform}\label{aux:sec:transform}

Integral reconstruction of quantum differential solutions for projective
bundles is established in \cite{CottiCyclic,Cotti,CottiML,IK}. Here we give
an exponential transform with a reciprocal term that matches the coupled
potential and the system in the fixed frame, and we verify its rank and
scalar generator. The theorem identifies a \emph{complex meromorphic
differential module}; the choice of integral rapid-decay cycles and a
Gamma frame requires additional work.

Let $\mathcal E$ be the rank-four cyclic quadric module, with generator
$e$ and relation
\begin{equation}\label{aux:basemaster}
 \bigl(D_Q^4-2Q(2D_Q+z)\bigr)e=0,
 \qquad D_Q=zQ\partial_Q.
\end{equation}
It is a cyclic presentation of the known quadric quantum system; the
completed base model realizes that system \cite{GS,PR}. Pull it back by
$Q=qv$, tensor with the exponential module whose connection on its
unit generator is $z\nabla=d(v+r/v)$, and take de Rham cohomology in $v$.
For clarity, work over $K=\C(q,r,z)$, with $q,r,z$ invertible, and set
\begin{equation}\label{aux:transformdef}
 \mathcal T=H^1_{\rm dR}\left(\C^*_v,
 \left.\mathcal E\right|_{Q=qv}\otimes
 \mathcal E^{(v+r/v)/z}\right).
\end{equation}
All cohomology and parameter connections in this appendix are algebraic.

\begin{lemma}[Rank doubling for a reciprocal exponential twist]\label{aux:rankdoubling}
Let $\mathcal F$ be a rank-$N$ connection on $\C^*$ whose formal slopes
at both $0$ and $\infty$ are strictly less than one. For $r,z\ne0$,
its tensor product with $\mathcal E^{(v+r/v)/z}$ has vanishing $H^0_{\rm dR}$
and $\dim H^1_{\rm dR}=2N$.
\end{lemma}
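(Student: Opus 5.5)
The plan is to reduce both claims to index computations for rank-one connections on $\C^*_v$, using Deligne's formula for the Euler characteristic of de Rham cohomology of an algebraic connection on a curve. For a connection $\mathcal G$ of rank $n$ on $\C^*=\PP^1\setminus\{0,\infty\}$, the formula reads $\chi_{\rm dR}(\C^*,\mathcal G)=n\,\chi_{\rm top}(\C^*)-\operatorname{irr}_0(\mathcal G)-\operatorname{irr}_\infty(\mathcal G)=-\operatorname{irr}_0-\operatorname{irr}_\infty$. Here $\operatorname{irr}$ is the sum of the formal slopes counted with multiplicity, that is, the irregularity. Since $H^2_{\rm dR}$ of an affine curve vanishes, the dimension of $H^1$ follows once $H^0=0$ is shown and the irregularities are computed.

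First I compute the formal slopes of $\mathcal G=\mathcal F\otimes\mathcal E^{(v+r/v)/z}$. At $\infty$, use the Levelt--Turrittin decomposition of $\mathcal F$ after a finite ramification $v=w^p$. Its exponential factors $e^{\theta_i}$ have $\theta_i$ of pole order $<p$ in $w$, because all slopes are $<1$. Tensoring with $e^{(v+r/v)/z}$ adds $w^p/z$ plus a regular term. The pole order of $w^p/z+\theta_i$ is then exactly $p$, since $z\ne0$ and no cancellation can occur, so every slope at $\infty$ becomes exactly $1$. At $0$, use the variable $1/v$; the added term $r/(zv)$ has pole order one, with $r\ne0$. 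The same argument shows every slope at $0$ becomes exactly $1$. Therefore $\operatorname{irr}_0=\operatorname{irr}_\infty=N$ and $\chi_{\rm dR}=-2N$.

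Next comes the vanishing of $H^0_{\rm dR}$. A nonzero horizontal algebraic section would generate a trivial rank-one sub-connection $\mathcal O\subset\mathcal G$, which is regular at both ends. But every slope of $\mathcal G$ is $1$ at $\infty$, and the slope filtration is compatible with subobjects, so a slope-zero subobject cannot exist. Hence $H^0=0$ and $\dim H^1=-\chi=2N$.

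I expect the main obstacle to be justifying the no-cancellation step uniformly over the base field. The slope statements are made over $K=\C(q,r,z)$, or pointwise for fixed $r,z\ne0$. Taken pointwise, this is immediate from the leading coefficients $1/z$ and $r/z$. If one works generically over $K$, I would apply the Levelt--Turrittin argument over an algebraic closure $\overline K$. Deligne's index formula holds over any field of characteristic zero, and dimensions of de Rham cohomology are invariant under field extension. Neither the index formula nor the field extension changes the count, so the same count gives $2N$.
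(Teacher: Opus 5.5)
Your proposal is correct and follows the paper's proof. The twist raises every slope at $0$ and $\infty$ to exactly one because nothing can cancel, so $\operatorname{irr}_0=\operatorname{irr}_\infty=N$, and the absence of a slope-zero (formal horizontal) part kills $H^0$. The index formula on $\C^*$ then gives $\chi=-2N$, hence $\dim H^1=2N$. The paper phrases the count mainly through the middle extension and Arinkin's Euler--Poincar\'e formula on $\PP^1$, but it explicitly records the direct formula $\chi_{\rm dR}=-\operatorname{irr}_0-\operatorname{irr}_\infty$ that you use as an equivalent route.
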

\begin{proof}
The added exponential has slope one at both ends. Since these slopes
strictly exceed those of $\mathcal F$, no leading exponential can cancel:
all $N$ twisted slopes at each end are one. Thus the two irregularities
are $N,N$, and neither end has a formal horizontal vector. The same is
true for the dual, so the middle extension has $H^0=H^2=0$.
The Euler--Poincar\'e formula for a connection on the projective line
\cite[Proposition 3.1]{Arinkin} gives
\[
 \chi=2N-\bigl((N+N)+(N+N)\bigr)=-2N.
\]
There is no regular formal summand at either puncture; therefore middle
extension and meromorphic extension have the same de Rham cohomology.
Equivalently one may use directly
$\chi_{\rm dR}(\C^*,\mathcal F\otimes\mathcal E^{(v+r/v)/z})
=-\operatorname{irr}_0-\operatorname{irr}_\infty$.
On the affine curve the de Rham complex has no cohomology above degree
one. These observations prove the assertion.
\end{proof}

\begin{theorem}[Cyclic transform and the two-parameter connection]\label{aux:transformtheorem}
The module $\mathcal T$ in \eqref{aux:transformdef} has rank eight over $K$.
Writing $c_k=[v^k e\,dv/v]$, it is generated under the $q,r$ parameter
derivations by $c_0$, and its scalar relations are precisely the
rank-eight system \textup{(\ref{Pq})}--\textup{(\ref{Pr})}. In particular it is isomorphic,
over $K$, to the scalar differential module in
Proposition~\ref{pfaffian}. With the homogeneous normalization
$f=z^{-2}c_0$, its full $q,r,z$ system, followed by the gauge
\textup{(\ref{gauge})}, is the dual quantum connection in the fixed frame.
\end{theorem}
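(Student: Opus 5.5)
The plan is to compute the de Rham cohomology of $\mathcal T$ directly. First I establish its rank by Lemma~\ref{aux:rankdoubling}; then I derive the two scalar relations satisfied by $c_0$ under the parameter derivations. For the rank, I apply the lemma with $\mathcal F=\mathcal E|_{Q=qv}$ and $N=4$. This requires checking the slopes of the pulled-back quadric module. At $v=0$ the quadric system \eqref{aux:basemaster} is regular singular, so its slope is zero. At $v=\infty$ its exponents are $Q^{1/3}$ times constants, the cube roots of $4Q$ as in Corollary~\ref{aux:addedpoints}. In $v$ these grow like $v^{1/3}$, so the slope is $1/3<1$. The lemma then gives $\dim_K\mathcal T=8$ and $H^0=0$.

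For the relations, I work with the classes $c_k=[v^ke\,dv/v]$ and the operators $D_q=zq\partial_q$ and $D_r=zr\partial_r$. Since $Q=qv$, the derivation $D_q$ acts on $v^ke$ through $D_Q$ on $e$. It acts on the exponential factor trivially, because $v+r/v$ is independent of $q$. The derivation $D_r$ multiplies by $v^{-1}$, so $D_rc_k=c_{k-1}$. The exactness relations come from $z\nabla_{v\partial_v}(v^ke)$. The twisted connection gives $z\,v\partial_v=D_Q+v-r/v+kz$ on $v^k e$, since $v\partial_v$ on $e$ equals $Q\partial_Q$. Hence in cohomology
\[
 [D_Q(v^ke)]+c_{k+1}-rc_{k-1}+kz\,c_k=0 .
\]
With $k=0$ this reads $D_qc_0=rc_{-1}-c_1$. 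Applying $D_r$, and using that $D_r$ and $D_q$ commute on these classes, I expect to obtain $D_r(D_r-D_q)c_0=rc_0$ up to the $z$-shift bookkeeping, which is \eqref{Pr}. Substituting the quadric relation \eqref{aux:basemaster} for $D_Q^4$ and writing $Q=qv$ should yield \eqref{Pq}. Here the factor $(D_r-D_q)$ appears because $v\cdot c$ corresponds to $-(D_q-D_r)c$ modulo exact forms. The homogeneous normalization $f=z^{-2}c_0$ is meant to absorb the $kz$ terms and match the factor $(2D_q+z)$ exactly. Fixing this normalization is the main obstacle: the $z$-shifts from the twisted differential and from the Euler field must align with \eqref{Pq} term by term.

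For cyclicity, the derivatives $D_q^iD_r^jc_0$ with $i\le3$ and $j\le1$ span a $K$-space annihilated by both relations, so its dimension is at most eight. Since the whole module has dimension eight, equality follows once $c_0$ generates $\mathcal T$. I argue this as follows: the classes $c_k$ together with $D_Q^i$ applied to them span $\mathcal T$, and the displayed relation expresses each $c_{k\pm1}$ through $c_k$ and its $q$-derivatives. The annihilator of $c_0$ therefore contains the left ideal generated by \eqref{Pq}–\eqref{Pr}, and that ideal has rank eight by Proposition~\ref{pfaffian}. Both have rank eight, so the annihilator equals this ideal and the module is isomorphic to the scalar module. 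Finally, I obtain the $z$-direction from homogeneity, using $\deg q=\deg r=2$ and $\deg v=2$. The gauge \eqref{gauge} then reproduces Corollary~\ref{connectionfull} exactly as in Section~\ref{sec:connection}.
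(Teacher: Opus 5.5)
Your route is the paper's: rank from Lemma~\ref{aux:rankdoubling} with $N=4$, moment relations from exactness, a surjection from the cyclic module of Proposition~\ref{pfaffian}, then homogeneity and the gauge. However, two steps are wrong as written.

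\textbf{The $q,r$ relations.} $D_r$ acts on the exponential factor by $zr\partial_r\bigl((v+r/v)/z\bigr)=r/v$, so $D_rc_k=r\,c_{k-1}$, not $c_{k-1}$. With your formula, $D_qc_0=rc_{-1}-c_1$ gives $c_1=(rD_r-D_q)c_0$ rather than $(D_r-D_q)c_0$, and \eqref{Pr} comes out wrong.

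With the correct action, exactness gives $c_{k+1}=(D_r-D_q-kz)c_k$. At $k=0$ there is no $z$-shift, so $D_r(D_r-D_q)c_0=D_rc_1=rc_0$ holds exactly. Since $D_q$ commutes with multiplication by $v$,
\[
 D_q^4c_0=\bigl[2qv(2D_Q+z)e\,dv/v\bigr]=2q(2D_q+z)c_1=2q(2D_q+z)(D_r-D_q)c_0 ,
\]
which is \eqref{Pq}. The factor $2D_q+z$ is inherited directly from \eqref{aux:basemaster}.

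The obstacle you single out therefore does not exist. Multiplication by $z^{-2}$ commutes with $D_q$ and $D_r$, so it plays no role in \eqref{Pq}--\eqref{Pr}. For cyclicity you also need $c_{k-1}=r^{-1}D_rc_k$, so $r$-derivatives are required as well as $q$-derivatives.

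\textbf{The $z$-direction.} This is the genuine gap: it is where the normalization matters, and your last two sentences do not establish it. The weight $\deg v=2$ is inconsistent. Since $(v+r/v)/z$ must have weight zero and $Q=qv$ must carry the quadric weight three, one is forced to take $\deg v=1$.

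You must fix the $Z=z\partial_z$ action on $\mathcal E$, namely $Ze=-3D_Qe/z$, which is compatible with \eqref{aux:basemaster}. Then compute
\[
 Zc_0=-\frac1z\bigl(3D_qc_0+c_1+rc_{-1}\bigr)=-\frac2z(D_q+D_r)c_0 .
\]
Hence $f=z^{-2}c_0$ satisfies $Zf=-2f-\frac2z(D_q+D_r)f$. Using $[Z,D_q]=D_q$ and $[Z,D_r]=D_r$, the jets satisfy $ZF=\mu F-\frac2z(A_q+A_r)F$. The factor $z^{-2}$ is exactly what shifts $\operatorname{diag}(i+j)$ to the grading \eqref{grading}.

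Finally, Section~\ref{sec:connection} removes the $z$-linear entries only in the $q,r$ equations. For the $z$-equation of $Y=T^{-1}F$ you need the additional homogeneity identity $\mu T-T\mu=2(q\partial_qT+r\partial_rT)$. Combined with $q\partial_qT=(\partial_zA_q)T$ and $r\partial_rT=(\partial_zA_r)T$, it cancels the $z$-independent part of $-\frac2zT^{-1}(A_q+A_r)T$. This gives $ZY=\mu Y-\frac2z(M_H+M_\xi)^{\mathsf T}Y$, the dual of \eqref{qconnection}.
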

\begin{proof}
At $v=0$ the pullback of \eqref{aux:basemaster} is regular singular.
At infinity its scalar Newton polygon has a horizontal edge of length
one and an edge of slope $1/3$ of length three. In fact, when written in the Euler operator, the nonzero coefficients of
$D_v^4-4qvD_v-2qzv$ have valuations $(0,-1),(1,-1),(4,0)$ at infinity.
Its slopes are $0,1/3,1/3,1/3$. Lemma~\ref{aux:rankdoubling} with $N=4$
therefore proves that the rank is eight. The calculation uses the
irregularities at both ends and does not depend on counting finite
critical points of the Laurent polynomial.

In the de Rham quotient, the derivative of $v^k e$ gives
\begin{equation}\label{aux:momentrecursion}
 c_{k+1}=(D_r-D_q-kz)c_k,\qquad D_r c_k=r c_{k-1}
 \quad(k\in\Z).
\end{equation}
Indeed the $v$ connection acts as $D_Q+v-r/v$ on $e$, and the derivative
of $v^k$ contributes $kz$. The $r$ connection on the exponential factor
acts as $r/v$. This proves both formulas and justifies integration by
parts algebraically without specifying a contour. Since the pulled-back
module is generated as a module over Laurent polynomials by
$e,D_Qe,D_Q^2e,D_Q^3e$, all its one-form classes are generated by the
$D_q^j c_k$. Equation \eqref{aux:momentrecursion} generates every $c_k$ from
$c_0$, using $r^{-1}D_r$ for negative powers. Thus $c_0$ is cyclic.

The base equation now yields
\[
 D_q^4c_0=2q(2D_q+z)c_1
 =2q(2D_q+z)(D_r-D_q)c_0.
\]
Also $D_r(D_r-D_q)c_0=D_r c_1=rc_0$.
Consequently there is a surjection from the rank-eight cyclic module
of Proposition~\ref{pfaffian} onto $\mathcal T$. Equal ranks make it an
isomorphism over $K$. This proves the $q,r$ assertion at the level of differential modules,
without an independence assumption on a collection of integrals.

For the $z$ assertion use the raw homogeneous base normalization
$Z e=-3D_Qe/z$, where $Z=z\partial_z$ and $[Z,D_Q]=D_Q$.
It is compatible with \eqref{aux:basemaster}. The exponential twist gives
\[
 Zc_0=-\frac1z(3D_qc_0+c_1+rc_{-1})
       =-\frac2z(D_q+D_r)c_0.
\]
Hence $f=z^{-2}c_0$ satisfies
$Zf=-2f-2(D_q+D_r)f/z$.
For the eight derivative jets $F$ of $f$ this becomes
\[
 ZF=\mu F-\frac2z(A_q+A_r)F.
\]
The gauge $T$ satisfies the exact homogeneity identity
\[
 \mu T-T\mu=2(q\partial_qT+r\partial_rT).
\]
Combining this with the cancellation of the $z$-linear entries in
Section~\ref{sec:connection} gives, for $Y=T^{-1}F$,
\[
 D_qY=M_H^{\mathsf T}Y,\quad
 D_rY=M_\xi^{\mathsf T}Y,\quad
 ZY=\mu Y-\frac{2(M_H+M_\xi)^{\mathsf T}}zY.
\]
These are exactly the equations for the dual of \textup{(\ref{qconnection})}.
\end{proof}

\begin{remark}[The remaining integral mirror comparison]\label{aux:complexboundary}
An oscillatory notation for this transform is
$\int f_B(qv,z)e^{(v+r/v)/z}\,dv/v$.
The proof constructs the algebraic de Rham module and determines its
rank. Contour admissibility, an integral basis, and Gamma normalization
are separate requirements. It also leaves open the comparison of the
full derived direct image of the fourfold potential with the iterated
module in top degree. A complete comparison must include the fourfold
Brieskorn lattice, its higher residue pairing, and its integral
rapid-decay homology. Theorem~\ref{aux:residueiso} identifies the $z=0$
Frobenius algebra, while Theorem~\ref{aux:transformtheorem} identifies a
complex meromorphic connection. The integral Stokes comparison remains
to be established.
\end{remark}

\subsection{Spectral collisions and the local critical germ}\label{app:caustics}\label{aux:sec:caustic}
\paragraph{The collision divisors.}
\begin{proposition}\label{aux:complexloci}
On $(\C^*)^2$, the quantum algebra is semisimple if and only if
$16q-27r\ne0$. The anticanonical operator has eight distinct eigenvalues
if and only if
\[
 (16q-27r)(16q+r)(64q+27r)\ne0.
\]
On the two additional divisors, eigenvalues collide while the quantum
algebra remains semisimple.
\end{proposition}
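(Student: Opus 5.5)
The plan is to treat the two claims separately, since both reduce to explicit algebra in the quantum ring of Theorem~\ref{ring}. The semisimplicity statement is already contained in Theorem~\ref{spectral}(i), once it is extended from the positive quadrant to all of $(\C^*)^2$. The quantum algebra is finite flat of rank eight by Theorem~\ref{ring} and Theorem~\ref{aux:Jacobiiso}, so it is semisimple at a point exactly when the eight critical points of the factorization \eqref{aux:criticalsplit} are reduced. The first factor $v^2=r$ is always reduced for $r\ne0$. The two factors are comaximal, since $4qv$ is a unit. Hence semisimplicity fails exactly where the six-point factor, equivalently the sextic $P(h)=h^6+4qh^4-16q^2r$, has a repeated root.

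I would compute this repeated-root condition with the substitution $s=h^2/q$ from the proof of Theorem~\ref{spectral}. This gives $s^3+4s^2-16k=0$ with $k=r/q$. Its discriminant in $s$ is a constant multiple of $k(27k-16)$, so it vanishes for $k\ne0$ exactly when $16q-27r=0$. One must also check that $s=0$ cannot be a double root, because $h\mapsto h^2$ could create multiplicity. But $s=0$ forces $16k=0$, which is excluded. I would also confirm that the map from $s$-roots to $h$-roots doubles without collisions: $h$ and $-h$ are distinct since $h\ne0$, and for $s\ne0$ the point $(h,v)$ with $v=h^3/(4q)$ is determined by $h$.

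For the distinctness of eigenvalues I would use the factorization \eqref{charpoly}, or its squared form in Corollary~\ref{chiralspectrum}, and compute in the variable $y=u^2$. Eigenvalues collide in one of three ways. First, $y=0$ would give $u=\pm u$; its determinant is $256r^2(r+16q)^2$, so this happens exactly at $16q+r=0$. Second, $y=4r$ could be a root of the cubic factor $g(y)=y^3+(16q-12r)y^2+(48r^2-1280qr)y-64r(r+16q)^2$. Third, $g$ could have a repeated root. For the second case I would substitute $y=4r$ and expect $g(4r)$ to equal a constant times $r^2(64q+27r)$ up to a power of $q$. For the third case I would show that $\operatorname{disc}_y g$ factors as a constant times $qr(16q-27r)^a(16q+r)^b$. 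Collecting the three cases gives the stated product $(16q-27r)(16q+r)(64q+27r)$. The third divisor would be cross-checked on the saddle parameterization \eqref{positivesaddlehv}, where $u=2s$ would coincide with some $4h+2v$.

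Finally, on the divisors $16q+r=0$ and $64q+27r=0$ the factor $16q-27r$ is nonzero, so the algebra stays semisimple there while eigenvalues of $C$ coincide. Distinct idempotents then share a value of $c_1\star$. The main obstacle is the discriminant of $g$: one has to factor it by hand and check that no spurious factor appears beyond $16q-27r$ and $16q+r$. Here I would first homogenize with $r=1$, which reduces this to factoring a univariate polynomial in $q$, and then rule out extra roots by direct substitution.
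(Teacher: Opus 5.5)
Your proposal is correct and is essentially the paper's argument. Semisimplicity reduces to square-freeness of the sextic on the branch $H\ne0$, together with reducedness and disjointness of the two $H=0$ points; the paper checks $P'(h)=2h^3(3h^2+8q)$ directly, where you use the cubic discriminant $256k(16-27k)$. Distinctness of eigenvalues is a discriminant computation split along the factor $u^2-4r$, and your reduction to $y=u^2$ is a mild variant of the paper's product formula $\disc(fg)=\disc(f)\disc(g)\operatorname{Res}(f,g)^2$.

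Carrying out your computation gives $g(4r)=-256qr(64q+27r)$ and $\disc_y g=2^{16}q^2r(16q-27r)^3$. Thus $16q+r$ enters only through the root $y=0$, and these pieces assemble into the paper's $\disc_u\det(u-C)=2^{80}q^8r^8(16q-27r)^6(16q+r)^2(64q+27r)^4$. Finally, the three divisors are pairwise disjoint in $(\C^*)^2$, which is what makes the closing semisimplicity claim immediate.
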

\begin{proof}
On the six-point branch, repeated roots of
$h^6+4qh^4-16q^2r$ satisfy
$2h^3(3h^2+8q)=0$. Since $q,r\ne0$, this forces
$h^2=-8q/3$ and $r=16q/27$. The two $h=0$ points of the quantum ring are
reduced: the Jacobian of the two defining relations is nonzero there.
They are distinct from the six-point branch.

For anticanonical eigenvalues, the resultant of the polynomial
\textup{(\ref{charpoly})} and its derivative gives
\begin{equation}\label{aux:fulldiscriminant}
 \disc_u\det(u-C)
 =2^{80}q^8r^8(16q-27r)^6(16q+r)^2(64q+27r)^4.
\end{equation}
This identity can also be obtained by factoring into the quadratic and
sextic terms in \textup{(\ref{charpoly})} and using
$\disc(fg)=\disc(f)\disc(g)\operatorname{Res}(f,g)^2$.
For example, the two additional factorizations are
\[
 \left.\det(u-C)\right|_{r=-16q}
 =u^2(u^2+64q)(u^4+208qu^2+32768q^2),
\]
\[
 \left.\det(u-C)\right|_{r=-64q/27}
 =\frac{(27u^2+256q)^2
 (729u^4+25488qu^2+2166784q^2)}{531441}.
\]
The already established algebra criterion shows that these collisions
occur in semisimple algebras. Consequently $C$ is diagonalizable there.
\end{proof}

\paragraph{The ramified splitting.}
Fix $q\ne0$ and put
\[
 \delta=\frac rq-\frac{16}{27}.
\]
Choose one of the two square roots $h_0$ of $-8q/3$. It labels one of the
two colliding pairs.

\begin{theorem}[Local spectral expansion]\label{aux:puiseux}
On a chosen square-root cover of a neighborhood of $\delta=0$, the two
anticanonical eigenvalues near $u_0=8h_0/3$ are
\begin{equation}\label{aux:uexpansion}
 u_\pm=h_0\left(\frac83+\frac9{16}\delta
       \ \pm\frac{27\sqrt{-1}}{64}\delta^{3/2}
       +O(\delta^2)\right).
\end{equation}
In particular,
\begin{equation}\label{aux:differenceexp}
 u_+-u_-=
 \frac{27\sqrt{-1}\,h_0}{32}\delta^{3/2}
 +O(\delta^{5/2}).
\end{equation}
A small loop around $\delta=0$ induces a transposition in each colliding
pair and fixes the remaining four points of the spectral cover.
\end{theorem}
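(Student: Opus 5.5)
We work on the six-point branch, where eigenvalues are parametrized by $u=4h+h^3/(2q)$ and $h$ runs over the roots of $P(h)=h^6+4qh^4-16q^2r$ (Proposition~\ref{char}). Set $r=q(16/27+\delta)$ and $s=h^2/q$. As in the proof of Theorem~\ref{spectral}, $s$ solves $s^3+4s^2-16k=0$ with $k=16/27+\delta$. At $\delta=0$ this cubic has the double root $s_0=-8/3$ (since $h_0^2=-8q/3$), which matches $a=4/3$, and the simple root $4/3$. The expansion will therefore come from a Puiseux expansion of the double root. We lift it to $h=h_0\sqrt{s/s_0}$ on the chosen branch of the square root, and then substitute into $u=h(4+s/2)$.

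\textbf{Expansion of $s$.} Write $s=s_0+\sigma$ and $f(s)=s^3+4s^2$. Then $f'(s_0)=3s_0^2+8s_0=0$, $f''(s_0)/2=3s_0+4=-4$ and $f'''/6=1$, so $f(s_0)+(-4\sigma^2+\sigma^3)=16k_0+16\delta$. Since $f(s_0)=16k_0$, this becomes $-4\sigma^2+\sigma^3=16\delta$. Solving gives $\sigma=\pm2i\delta^{1/2}+c\delta+O(\delta^{3/2})$, where $c$ comes from balancing the next order: $-8\sigma_1\sigma_2+\sigma_1^3=0$ at order $\delta^{3/2}$. With $\sigma_1=2i$ this yields $\sigma_2=\sigma_1^2/8=-1/2$. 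We need $\sigma$ through order $\delta^{3/2}$, because $u$ is expected to be stationary to first order in $\sigma$ at $s_0$.

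\textbf{Substitution into $u$.} We have $u=h_0\,g(s)$ with $g(s)=\sqrt{s/s_0}\,(4+s/2)$. We check $g(s_0)=8/3$. Then $g'(s_0)=\tfrac{1}{2s_0}(4+s_0/2)+\tfrac12=\tfrac{1}{2s_0}\cdot\tfrac83+\tfrac12=0$, so the $\delta^{1/2}$ term cancels. This is the key mechanism, and it explains why the splitting in \eqref{aux:uexpansion} starts at order $\delta^{3/2}$. Expanding $g$ to third order in $\sigma$ and inserting the series for $\sigma$, the $\delta$ coefficient is $g''(s_0)\sigma_1^2/2$, which we expect to equal $9/16$. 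The $\delta^{3/2}$ coefficient is $g''\sigma_1\sigma_2+g'''\sigma_1^3/6$. It is odd in $\sigma_1$, which produces the $\pm$ and the stated value $27i/64$. The $\delta^2$ term is even and therefore common to both eigenvalues. This gives \eqref{aux:differenceexp} with an $O(\delta^{5/2})$ error, since odd terms appear only in half-integer orders.

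\textbf{Monodromy.} The $\pm$ is the sign of $\delta^{1/2}$, so a loop around $\delta=0$ swaps $u_\pm$. The same holds for the pair at $-h_0$, which is obtained by $u\mapsto-u$. The other four eigenvalues are analytic near $\delta=0$: they are $\pm2\sqrt r$ and the two values coming from the simple root $s=4/3$, and $\sqrt r$ is analytic near $r=16q/27\ne0$. Hence these four points are fixed by the loop.

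The main obstacle is the bookkeeping of the third-order Taylor coefficients of $g$, together with the sign and branch conventions relating $h_0$ to $\sqrt{s/s_0}$. We will double-check the coefficients $9/16$ and $27/64$ by a direct substitution into $P(h)$.
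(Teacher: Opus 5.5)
Your proposal is correct and follows the paper's proof essentially step for step. The paper also shifts to $s=-8/3+\sigma$ to get $\sigma^3-4\sigma^2=16\delta$, solves by a Puiseux series in $\delta^{1/2}$ with $\sigma_1=\pm2\sqrt{-1}$ and $\sigma_2=-1/2$, notes that the linear coefficient of $\sqrt{1-3\sigma/8}\,(8/3+\sigma/2)$ vanishes, and uses parity under $\delta^{1/2}\mapsto-\delta^{1/2}$; your coefficients also check out, since $g''(s_0)=-9/32$ and $g'''(s_0)=-27/256$ give $9/16$ and $27\sqrt{-1}/64$. One small slip: because $g'(s_0)=0$, you need $\sigma$ only through order $\delta$, not $\delta^{3/2}$, for the $\delta^{3/2}$ coefficient, as your own formula $g''\sigma_1\sigma_2+g'''\sigma_1^3/6$ already shows.
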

\begin{proof}
Write $s=h^2/q=-8/3+w$. The cubic equation for $s$ becomes exactly
\[
 w^3-4w^2-16\delta=0.
\]
Set $\delta=t^2$. Solving recursively at $w=0$ gives
\[
 w_\pm=\pm2\sqrt{-1}\,t-\frac12t^2
              \mp\frac{5\sqrt{-1}}{16}t^3+O(t^4).
\]
The corresponding $h$ and $u$ are
\[
 h=h_0\sqrt{1-\tfrac38w},\qquad
 u=h_0\sqrt{1-\tfrac38w}\left(\frac83+\frac12w\right).
\]
The coefficient of $w$ in the second expression vanishes. Expanding it
through order $t^3$ gives \eqref{aux:uexpansion}. Since the two branches are
interchanged by $t\mapsto-t$, their difference is odd in $t$; the first
nonzero odd term is of degree three, which gives the sharper remainder
in \eqref{aux:differenceexp}. The other roots are simple at $\delta=0$ and
therefore have trivial local permutation. Repeating the argument at
$-h_0$ gives the second transposition.
\end{proof}

\begin{remark}[Spectral scaling near the caustic]
The relative exponential is
$\exp(-(u_+-u_-)/z)$. Formula \eqref{aux:differenceexp} identifies the natural
balance $\delta=O(z^{2/3})$ near the caustic. This spectral scaling alone does not construct a uniform Airy
parametrix. The permutation of eigenvalues also leaves the monodromy of
a normalized flat basis and its Stokes multipliers undetermined. We
establish the local $A_2$ germ separately in Theorem~\ref{aux:A2germ} below.
Applying the global analytic coalescence theory of \cite{CDGCoal} requires
additional hypotheses beyond the exponent $3/2$.
\end{remark}

\paragraph{The transverse critical germ.}
The completed potential gives a geometric local model for the exponent
in Theorem~\ref{aux:puiseux}.

\begin{theorem}[Two transverse $A_2$ singularities]\label{aux:A2germ}
Fix $q\ne0$ and put $\delta=r/q-16/27$. At $\delta=0$ the potential
\eqref{aux:completedpotential} has exactly two non-Morse critical points,
labelled by $h_0^2=-8q/3$. Each has Hessian corank one and is an $A_2$
critical germ with a transverse one-parameter unfolding. More precisely,
a holomorphic splitting off of three nondegenerate quadratic variables
leaves a one-variable function $g(\eta,\delta)$ with
\begin{equation}\label{aux:A2derivatives}
 g_\eta(0,0)=g_{\eta\eta}(0,0)=0,\qquad
 g_{\eta\eta\eta}(0,0)=\frac3{h_0^2},\qquad
 g_{\eta\delta}(0,0)=\frac{27}{32}.
\end{equation}
After further holomorphic changes of coordinate and subtraction of a
function of the parameter alone, this is the usual $A_2$ family
$\eta^3/3-\tau(\delta)\eta$, with $\tau'(0)\ne0$.
\end{theorem}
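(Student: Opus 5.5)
The plan is to work entirely in the canonical coordinates $(a,b,c,v)$ from the proof of Theorem~\ref{aux:Jacobiiso}. There
\[
 W=a+\frac{b^2}{D}+\frac{Qa}{c}+v+\frac rv,\qquad Q=qv,\quad D=ab-c,
\]
and $\Omega$ is nowhere zero on the chart. The change of coordinates is holomorphic and invertible, so statements about corank, Milnor number and $A_2$ type transfer back to \eqref{aux:completedpotential}. Since $r=q(16/27+\delta)$ with $q$ fixed, the unfolding direction is $\partial_\delta W=q\,\partial_rW=q/v$.

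\textbf{Step 1: locating the degenerate points and their type.} By \eqref{aux:criticalsplit}, the critical points of the first factor are the Morse points $p_s$; their Hessian was computed in Theorem~\ref{aux:residueiso} and is invertible. On the second factor the critical points are $c=D=Q$, $b=a^2/2$, $a^3=4qv$, $v^2+av=r$, with $a=h$. Proposition~\ref{aux:complexloci} shows that at $\delta=0$ the only repeated roots are $h=\pm h_0$ with $h_0^2=-8q/3$, each a double root of the sextic, while the other four points stay simple. The isomorphism \eqref{aux:Jacobiisoeq} holds scheme-theoretically, so the local Jacobi algebra at each of the two degenerate points has length exactly two. Hence the Milnor number is $2$. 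A critical germ of corank $\ge2$ has Milnor number $\ge4$, so the Hessian has corank one. The splitting lemma then gives $W\simeq Q_3(y)+g(\eta)$ with $Q_3$ a nondegenerate quadratic form in three variables and $\mu(g)=2$. This forces $g_\eta=g_{\eta\eta}=0$ and $g_{\eta\eta\eta}\neq0$, which is the $A_2$ type.

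\textbf{Step 2: the parametric splitting.} I will apply the splitting lemma with the parameter $\delta$. Let $\kappa$ be the kernel vector of the $4\times4$ Hessian at $p_{h_0}$. I normalize it by prescribing one of its components, choosing the component so that the constants come out as in \eqref{aux:A2derivatives}, and then set $\eta$ to be the coordinate along $\kappa$. With this choice
\[
 g_{\eta\eta\eta}=W'''(\kappa,\kappa,\kappa)-3\,W''(\kappa,\kappa,\cdot)\,\mathcal H_\perp^{-1}\,W''(\kappa,\kappa,\cdot),
\]
where $\mathcal H_\perp$ is the Hessian restricted to a complement of $\kappa$. At $\delta=0$ the point is critical in all variables. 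Therefore the first-order mixed derivative has no correction from eliminating the quadratic variables: $g_{\eta\delta}=\partial_\kappa(q/v)=-q\kappa_v/v^2$, evaluated at $v=h_0^3/(4q)$. Once $\kappa$ is computed, these are finite rational-function evaluations in $h_0,q$, simplified using $h_0^2=-8q/3$.

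\textbf{Step 3: the standard form.} Because $g_{\eta\delta}\ne0$ and $g_{\eta\eta\eta}\ne0$, the family is a versal unfolding of $A_2$. The standard argument completes this: translate $\eta$ to kill the quadratic term, subtract the critical-value function of $\delta$, rescale, and obtain $\eta^3/3-\tau(\delta)\eta$ with $\tau'(0)$ proportional to $g_{\eta\delta}/(g_{\eta\eta\eta})^{1/3}\ne0$.

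\textbf{Main obstacle and consistency check.} The hard step is Step 2: computing $\kappa$ and the cubic correction cleanly, and making sure the normalization of $\eta$ really yields the two displayed constants simultaneously. I will check the result against Theorem~\ref{aux:puiseux}. For the local model $g=A\eta^3/6-B\delta\eta$, with $A=g_{\eta\eta\eta}$ and $B=-g_{\eta\delta}$, the critical-value difference is
\[
 \tfrac{4}{3}B\,\delta\,(2B\delta/A)^{1/2}.
\]
With $A=3/h_0^2$ and $B=\mp27/32$ this gives magnitude $\tfrac{27}{32}\,|h_0|\,\delta^{3/2}$ up to a root of unity. This matches \eqref{aux:differenceexp}, which is a nontrivial test of both constants together.
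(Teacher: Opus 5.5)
Your skeleton matches the paper's proof. You find the kernel vector of the Hessian at $p_{h_0}$, split off three Morse variables, read $g_{\eta\eta\eta}$ and $g_{\eta\delta}$ off directional derivatives along the kernel (with $\partial_\delta W=q/v$), and invoke versality.

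Your Step~1 is a legitimate and cleaner substitute for the paper's explicit Hessian and $3\times3$ minor. The scheme-theoretic isomorphism of Theorem~\ref{aux:Jacobiiso}, together with Proposition~\ref{aux:wall}, gives Milnor number $2$ at each of the two points. That forces corank one and a nonzero cubic term.

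Your mixed-derivative formula is also correct, but the reason is that $\kappa$ lies in the kernel of the full Hessian, not mere criticality. Indeed $g_{\eta\delta}(0,0)=D_\kappa\partial_\delta W+D^2W(\kappa,y_\delta)$, and the second term vanishes because $\kappa\in\ker D^2W$.

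The genuine error is your formula for $g_{\eta\eta\eta}$: there is no correction at cubic order.
\begin{itemize}
\item Write $x(\eta)=p+\eta\kappa+y(\eta)$, with $y$ in the complement solving $D_{e_i}W(x(\eta))=0$. Differentiating at $\eta=0$ gives $D^2W(e_i,\kappa+y'(0))=0$.
\item Since $D^2W(e_i,\kappa)=0$ and the Hessian is nondegenerate on the complement, $y'(0)=0$.
\item Hence $g(\eta,0)=W(p+\eta\kappa)+O(\eta^4)$, and $g_{\eta\eta\eta}(0,0)=D_\kappa^3W$ exactly. This is the paper's remark that the Hessian kills $n$.
\end{itemize}
Reading your $W''(\kappa,\kappa,\cdot)$ as $W'''(\kappa,\kappa,\cdot)$, the term $-3\,W'''(\kappa,\kappa,\cdot)\,\mathcal H_\perp^{-1}W'''(\kappa,\kappa,\cdot)$ is the Schur-complement correction to the \emph{quartic} coefficient. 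It does not vanish here. For $h_0=1$, with the $(a,b,c)$ coordinate hyperplane as complement, it equals $-21/2$. Your formula would then give $-15/2$ instead of $3$, and your Puiseux check would fail.

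Note also that the only freedom is $\kappa\mapsto\lambda\kappa$, which sends $(g_{\eta\eta\eta},g_{\eta\delta})$ to $(\lambda^3g_{\eta\eta\eta},\lambda g_{\eta\delta})$. So one constant is fixed by the normalization and the other must be computed; you cannot tune $\kappa$ to produce both.

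Take $\kappa_v=1$, as the paper does with $n=(h_0^{-2},-\tfrac12,0,1)$ in $(\alpha,\beta,\gamma,v)$. Since $v$ is common to both charts, in your coordinates $\kappa=(-\tfrac12,-\tfrac{h_0}2,-\tfrac{3h_0^2}8,1)$. Then:
\begin{itemize}
\item $g_{\eta\delta}=-q/v^2=27/32$.
\item Along $p+t\kappa$ one has $qv=c$, so $qva/c$ is linear. The summands $b^2/D$, $qva/c$ and $r/v$ contribute $-\tfrac{15}{4h_0^2}$, $0$ and $\tfrac{27}{4h_0^2}$ to the third derivative.
\item This gives $g_{\eta\eta\eta}=3/h_0^2$.
\end{itemize}
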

\begin{proof}
At the point $p_{h_0}$ write $h=h_0$. Then
$q=-3h^2/8$, $r=-2h^2/9$, $v=-2h/3$.
In the coordinates $(\alpha,\beta,\gamma,v)$ the Hessian is
\[
 \begin{pmatrix}
 h^3/2&h&0&0\\
 h&8/h&3&3/h\\
 0&3&2h&3/2\\
 0&3/h&3/2&3/(2h)
 \end{pmatrix}.
\]
Its upper left $3\times3$ minor is $3h^3/2\ne0$, and its kernel is
spanned by $n=(h^{-2},-1/2,0,1)^{\mathsf T}$. Thus its rank is three.
Choose the kernel coordinate $\eta$ with derivative vector $n$, and
eliminate three complementary critical equations by the holomorphic
implicit function theorem. The holomorphic Morse splitting lemma
separates their nondegenerate quadratic part. At the origin the cubic
coefficient in the reduced function is the third directional derivative
along $n$, because the Hessian kills $n$. Direct differentiation gives
\[
 D_n^3W\big|_{p_h}=3/h^2.
\]
At fixed $q$, $\partial_\delta W=q/v$. The mixed derivative of the
reduced function is therefore
\[
 D_n(q/v)\big|_{p_h}=27/32;
\]
terms from the eliminated variables vanish after pairing with the
vector in the kernel of the Hessian. These calculations prove \eqref{aux:A2derivatives}.
A one-variable germ with a nonzero cubic term and a nonzero coefficient
of $\delta\eta$ is the miniversal $A_2$ unfolding, up to a
term depending only on the parameter and holomorphic changes of
coordinates. This follows also by
Weierstrass preparation of $g_\eta$ and removal of its linear term.
The other critical points have nonzero Hessian determinant by
Theorem~\ref{aux:residueiso} and the sextic discriminant computation.
\end{proof}

The reduced Taylor expansion can be written without choosing a normal form:
\[
 g(\eta,\delta)=\phi(\delta)
 +\frac{27}{32}\delta\eta+\frac{\eta^3}{2h_0^2}
 +O(\eta^4,\delta\eta^2,\delta^2\eta).
\]
Its two critical points satisfy
\[
 \eta_\pm=\pm\frac{3\sqrt{-1}h_0}{4}\delta^{1/2}+O(\delta).
\]
The difference of their critical values is
\[
 \frac{27\sqrt{-1}h_0}{32}\delta^{3/2}+O(\delta^{5/2}),
\]
in agreement with \eqref{aux:differenceexp}. This establishes the local singularity of Airy type. A uniform global
Airy approximation, with specified rapid-decay cycles and Stokes
constants, would require further analysis.

\begin{proposition}[The positive nonsemisimple locus]\label{aux:wall}
At $r=16q/27$,
\begin{equation}\label{aux:wallchar}
 \det(u-C)=
 \left(u^2-\frac{784q}{27}\right)
 \left(u^2-\frac{64q}{27}\right)
 \left(u^2+\frac{512q}{27}\right)^2.
\end{equation}
The quantum ring is isomorphic to
\[
 \C^4\times\C[\epsilon]/(\epsilon^2)\times\C[\epsilon]/(\epsilon^2),
\]
although the multiplication operator $C$ is diagonalizable. Hence
nonsemisimplicity of the ring does not force this operator to have a
nontrivial Jordan block.
\end{proposition}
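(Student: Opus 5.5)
The plan is to specialize everything to the line $r=16q/27$. First I substitute $r=16q/27$ into the characteristic polynomial \eqref{charpoly}. The quadratic factor becomes $u^2-4r=u^2-64q/27$. For the sextic factor, I use the parameterization in the proof of Theorem~\ref{spectral}: with $k=16/27$, the positive root of $a^3+4a^2=16k$ is $a=4/3$, since $64/27+64/9=256/27$. The other two $s$-roots collide at $s=-8/3$. With $y=u^2/q$, one has $y_0=a(a+8)^2/4=784/27$. At the double root $s=-8/3$, the relation $u=4H+H^3/(2q)=H(4+s/2)$ gives $u^2=qs(4+s/2)^2=-(8/3)(8/3)^2q=-512q/27$. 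This yields \eqref{aux:wallchar}. As a check, I would compare the constant term against $-64r(r+16q)^2$ at $r=16q/27$.

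For the ring structure, I use the factorization \eqref{aux:criticalsplit}, which is equivalently the splitting of Theorem~\ref{ring} into the $H=0$ factor and the sextic factor. The $H=0$ factor is $\C[v]/(v^2-r)\cong\C^2$, since it is reduced for $r\ne0$. The sextic factor is $\C[h]/(h^6+4qh^4-16q^2r)$. By Proposition~\ref{aux:complexloci}, on this divisor its only repeated roots are $h^2=-8q/3$. Each of these is exactly a double root, because $P''(h)\neq0$ there; explicitly, $P''=30h^4+48qh^2$ evaluates to $30\cdot 64q^2/9-128q^2\ne0$. Hence the sextic factor is $\C^2\times(\C[\epsilon]/\epsilon^2)^2$, and the two $\pm2\sqrt r$ points together with the two simple sextic roots give $\C^4$ overall.

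For diagonalizability, $C$ is multiplication by $c_1=2H+2\xi$, and I analyze it on each local factor. On a factor $\C[\epsilon]/\epsilon^2$ at $h_0$, the element $u=h(4+h^2/(2q))$ has derivative $du/dh=4+3h^2/(2q)=4-4=0$ at $h_0^2=-8q/3$. So $u-u(h_0)\in(\epsilon^2)=0$, and $C$ acts as the scalar $u(h_0)$ on that factor. The same vanishing $g'=0$ appears in the proof of Theorem~\ref{aux:puiseux}, where the coefficient of $w$ vanishes. I then check that the two nilpotent factors have equal eigenvalue $u(h_0)$ with $u^2=-512q/27$ (the values $\pm$ correspond to $\pm h_0$). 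This accounts for the squared factor. Since $C$ acts semisimply on each factor, $C$ is diagonalizable while the ring has nilpotents.

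The main obstacle is being careful that the multiplication operator $C$ acts as a scalar on each length-two factor, and not merely that its eigenvalue is repeated. This rests entirely on the derivative identity $4+3h_0^2/(2q)=0$, which I would verify directly. The remaining steps are routine substitutions.
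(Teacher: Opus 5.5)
Your proposal is correct and follows essentially the same route as the paper: in both, the decisive point is that $du/dH=4+3h_0^2/(2q)=0$ at the double roots $h_0^2=-8q/3$, so $C$ acts as a scalar on each dual-number factor. Your explicit checks fill in details the paper leaves implicit: the $s$-parametrization giving $u^2=784q/27$ and $u^2=-512q/27$, and $P''(h_0)\neq0$, which shows each of these roots has multiplicity exactly two. One small wording fix: the two dual-number blocks carry the opposite eigenvalues $u(h_0)$ and $-u(h_0)$, not equal ones, each with multiplicity two, and together they give $(u^2+512q/27)^2$.
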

\begin{proof}
The repeated points on \textup{(\ref{hroots})} have $H^2=-8q/3$. In either length-two local algebra, $u=4H+H^3/(2q)$ has derivative
\[
 \frac{du}{dH}=4+\frac{3H^2}{2q}=0.
\]
It is therefore constant modulo the square of the local parameter and acts as a scalar on each dual-number block. The remaining four points are simple. This proves the assertion and \eqref{aux:wallchar}.
\end{proof}
\subsection{Local braids and coalescence}\label{aux:sec:braids}

Fix $q\ne0$ and vary $\kappa=r/q$. Off the three collision values
$16/27,-16,-64/27$, the eight eigenvalues give a map to the configuration
space of eight unordered points of $\mathbb C$. A local meridian gives
an element of its braid group, up to conjugacy after choosing a path
from a global base point. We call a counterclockwise exchange of two
points a positive half-twist. The topological sign is fixed independently
of the convention for its action on exceptional objects.

\begin{theorem}[The three local braids of critical values]
\label{aux:localbraids}
The local braids of the anticanonical critical values are as follows.
\begin{center}
\renewcommand{\arraystretch}{1.12}
\small
\begin{tabular}{c|c|c|c}
\toprule
$\kappa$ & type & local braid in separated pairs & discriminant order\\
\midrule
$16/27$ & two $A_2$ critical germs & two commuting cubes & $6$\\
$-16$ & one pair of coincident values & one square & $2$\\
$-64/27$ & two pairs of coincident values & two commuting squares & $4$\\
\bottomrule
\end{tabular}
\end{center}
Here ``cube'' and ``square'' mean the third and second powers of the
half-twist in the corresponding two-point disc. Thus the first meridian is conjugate to a product of cubes of disjoint
half-twists; its permutation records only the two transpositions. The other two meridians are pure braids:
their critical points stay distinct.
\end{theorem}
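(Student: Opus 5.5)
The plan is to handle each collision value separately. At each one I will compute the local behaviour of the colliding eigenvalues in a small meridian loop, and then convert the winding of their difference into a power of a half-twist. The local braid of two points $u_+,u_-$ is determined by the winding number of $u_+-u_-$ around zero along the loop. A winding of $n/2$ full turns, meaning the difference argument changes by $n\pi$, gives the $n$th power of the positive half-twist. All other eigenvalues stay simple and separated on a small disc, so they contribute trivially. The discriminant order is checked against \eqref{aux:fulldiscriminant}: since $\disc=\prod(u_i-u_j)^2$, a pair with $u_+-u_-\sim c\,\delta^{k/2}$ contributes order $k$ in $\delta$. Summing over the pairs must reproduce the exponents $6,2,4$.

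\textbf{The value $\kappa=16/27$.} I use Theorem~\ref{aux:puiseux}. There \eqref{aux:differenceexp} gives $u_+-u_-\sim c\,\delta^{3/2}$ with $c\ne0$. A counterclockwise loop in $\delta$ rotates the difference by $3\pi$, which is the cube of a half-twist. This happens in each of the two pairs labelled by $\pm h_0$. The two pairs sit near the distinct centres $\pm 8h_0/3$, so the braid lies in disjoint discs and the two factors commute. The order is $2\cdot3=6$, matching $(16q-27r)^6$. The permutation is two transpositions, consistent with Theorem~\ref{aux:puiseux}. The $A_2$ germ of Theorem~\ref{aux:A2germ} supplies the geometric type.

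\textbf{The values $-16$ and $-64/27$.} Proposition~\ref{aux:complexloci} says the algebra is semisimple there, so the critical points stay distinct. The colliding eigenvalues are therefore holomorphic in $\kappa$ near the collision: they are values of the potential at distinct Morse points that depend holomorphically on the parameter. Hence $u_+-u_-=c\,(\kappa-\kappa_0)^k+\dots$ with an integer $k\ge1$, and the discriminant order of each pair is $2k$.

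At $\kappa=-16$ the factorization shows a single double root $u=0$. This is the $H=0$ pair $\pm2\sqrt r$, whose values are $\pm2\sqrt{-16q}$, together with two of the sextic values meeting at $0$. The factorization indicates that the doubling of the root at $u=0$ comes from one pair, and the total order $2$ forces $k=1$. The difference then winds once around zero, which is a half-twist squared. It is a pure braid because the permutation is trivial: the points are holomorphically labelled.

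At $-64/27$ there are two pairs at $u=\pm16\sqrt{-q/27}$, with total order $4$, so each pair has $k=1$. This gives two squares in disjoint discs, and they commute.

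\textbf{Main obstacle.} The delicate point is confirming that each collision is exactly simple, $k=1$, and that only the stated pairs collide. I will read this off the discriminant exponents in \eqref{aux:fulldiscriminant} rather than from the factorizations alone. The factorizations show which roots are repeated, and the exponent divided by twice the number of pairs gives $k$. At $-16$ I also need to check that the chiral symmetry $u\mapsto-u$ does not produce a second pair. The factorization $u^2(u^2+64q)(u^4+208qu^2+32768q^2)$ shows only $u=0$ is repeated, so this is settled. Finally, the sign convention (positive versus negative half-twist) follows from counterclockwise $\delta$ giving a counterclockwise rotation of the difference, since $c\,\delta^{k/2}$ has positive exponent.
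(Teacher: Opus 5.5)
Your proposal is correct. At $\kappa=16/27$ it follows the paper's argument: the Puiseux expansion \eqref{aux:differenceexp} makes the difference rotate by $3\pi$ in each of the two disjoint clusters.

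At $\kappa=-16$ and $\kappa=-64/27$ you take a different route.
\begin{itemize}
\item The paper computes the leading coefficients directly. At $\kappa=-16$ it finds $u_h-u_{-h}=\frac h8\delta+O(\delta^2)$ with $h^2=-8q$. At $\kappa=-64/27$ it finds a sextic--quadratic difference $\frac{27h}{64}\delta+O(\delta^2)$ with $h^2=-16q/3$. It then uses \eqref{aux:fulldiscriminant} only to check the orders $6,2,4$.
\item You use semisimplicity to get holomorphic eigenvalues, so each colliding difference is $c(\kappa-\kappa_0)^k+\cdots$. You then read off $k=1$ from the exponents $2$ and $4$ of $(16q+r)$ and $(64q+27r)$, together with the number of repeated roots in the two factorizations.
\end{itemize}
This is valid. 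It is shorter and avoids the derivative computations. However, the braid exponents then rest entirely on the global resultant formula. Your route also gives no leading coefficients and does not say which critical points collide. The paper's local computation does not depend on the resultant, and it recovers the discriminant exponents as an independent check.

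At $-64/27$ the exponent count also rules out any further collision: the two visible pairs contribute at least $2$ each, and the total is $4$. So you do not need a separate check that the quartic factor has simple roots.

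One correction. At $\kappa=-16$ the double root $u=0$ does not involve the $H=0$ pair. Those values are $\pm2\sqrt r=\pm8\sqrt{-q}$, from the factor $u^2+64q$, and they stay simple. The two values meeting at $0$ are $u(h)$ and $u(-h)=-u(h)$ on the sextic branch with $h^2=-8q$, since $u=h\bigl(4+h^2/(2q)\bigr)$. Your sentence conflates the two. The slip does not affect the argument, because you only use that exactly one pair collides there, and the factorization gives that.
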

\begin{proof}
For two unordered values $u_1,u_2$, the squared difference
$d=(u_1-u_2)^2$ is a local coordinate on the quotient by interchange.
The winding number of $d$ around zero is the exponent of the half-twist.
At $\kappa=16/27$, Theorem~\ref{aux:puiseux} gives
$d=c\delta^3(1+O(\delta))$, $c\ne0$, in each of the two disjoint clusters.
This proves the two cubes. The other four values stay in disjoint discs,
so no additional local braid is present.

At $\kappa=-16$, the two colliding values are obtained from $h$ and
$-h$, where $h^2=-8q$; their common value is zero. On the sextic branch
regard
\[
 \kappa(h)=\frac{h^6+4qh^4}{16q^3},\qquad
 u(h)=4h+\frac{h^3}{2q}.
\]
At this point $\kappa'(h)\ne0$ and
$du/d\kappa=h/16$. With $\delta=\kappa+16$ the difference of the two
values is
\begin{equation}\label{aux:Maxwellone}
 u_h(\kappa)-u_{-h}(\kappa)=\frac h8\delta+O(\delta^2).
\end{equation}
Its square has order two, proving the single pure half-twist square.

At $\kappa=-64/27$ put $h^2=-16q/3$. The sextic branch at $h$ collides
with the quadratic branch $2\sqrt r$, with $\sqrt r=2h/3$ at the
collision. Both values are $4h/3$. Their respective derivatives are
$9h/64$ and $-9h/32$. For $\delta=\kappa+64/27$,
\begin{equation}\label{aux:Maxwelltwo}
 u_h(\kappa)-u_{\mathrm{quad}}(\kappa)
       =\frac{27h}{64}\delta+O(\delta^2).
\end{equation}
The opposite pair gives the same order. The two distinct common values
lie in disjoint discs, giving two commuting squares. Squaring each
local value difference also yields the orders $6,2,4$ of the full
discriminant, in agreement with \eqref{aux:fulldiscriminant}.
\end{proof}

\begin{remark}[From local braids to categorical labels]
The theorem determines local conjugacy classes of braids. To express
them as global words in a fixed foundation of eight objects, one must
also choose paths from a reference parameter point and signs for the
normalized solutions. At an $A_2$ point, the cube and a single mutation
have the same transposition, but this does not identify their actions.
Relating these local braids to changes of the asymptotic collection with
its Gamma marking remains part of the Gamma II comparison along those
paths.
\end{remark}

\paragraph{Parameter monodromy.}
\begin{proposition}\label{aux:trivialparameterholonomy}
For fixed $z\ne0$, the ordinary local holonomy of the quantum connection
around a sufficiently small meridian of any of the three collision
divisors is the identity.
\end{proposition}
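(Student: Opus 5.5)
The plan is to read off the local holonomy from the structure of the connection \eqref{qconnection} itself. In the fixed classical frame the coefficients $M_H$ and $M_\xi$ are polynomial in $q,r$ by Theorem~\ref{fixedframe}. The $q,r$ part of the connection, $d+z^{-1}(M_H\,d\log q+M_\xi\,d\log r)$, is therefore holomorphic on all of $(\C^*)^2$. The collision divisors $16q-27r=0$, $16q+r=0$ and $64q+27r=0$ are not poles of this connection. They are only loci where the spectral data of $C$ degenerate.

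First I fix $z\ne0$ and a point $p_0$ of a collision divisor $\Delta$ in $(\C^*)^2$. I then choose a small polydisc $B$ around $p_0$ contained in $(\C^*)^2$. The connection is flat by Corollary~\ref{connectionfull}, and its coefficients are holomorphic on the simply connected set $B$. Hence it admits a holomorphic fundamental solution $\Phi$ on all of $B$. Concretely, $\Phi$ can be taken as the restriction of $\mathcal L z^{-\mu}z^\rho$ continued from the large-radius region, or obtained by solving the integrable Pfaffian system \eqref{jetpfaff} on $B$.

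Next I take a sufficiently small meridian $\gamma$ of $\Delta$. By this I mean a loop in $B\setminus\Delta$ that links $\Delta$ once and is contractible in $B$. Parallel transport along $\gamma$ is computed by the single-valued solution $\Phi$. The transport is therefore $\Phi(\gamma(1))\Phi(\gamma(0))^{-1}=I$.

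Equivalently, the flat sections extend holomorphically across $\Delta$. The possible multivaluedness coming from $\log q$ and $\log r$ is invisible here, because $\gamma$ does not wind around the coordinate axes $q=0$ or $r=0$. I would make this point explicit, since the meridian must be chosen inside a small ball that stays away from the axes.

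The step needing care is a conceptual one, not a computation. The nontrivial braids of Theorem~\ref{aux:localbraids} and the Puiseux branching of Theorem~\ref{aux:puiseux} concern the eigenvalues and the normalized idempotents. These objects are not single-valued. A canonical asymptotic basis $(\psi_i)$ can undergo permutations and sign changes, and the Laplace-dual connection of Proposition~\ref{smallLaplaceDual} has poles moving with $u_j$. I would add a remark that these phenomena affect the Stokes data and the labelling of the asymptotic columns. They do not affect the ordinary holonomy of $\nabla$, which sees only the holomorphic coefficients in the cup frame.

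At the nonsemisimple divisor $16q=27r$ one might worry about an apparent singularity in a diagonalizing gauge. The statement is made in the fixed cup frame, however, where no such gauge is used. So the argument applies uniformly to all three divisors, and no case split is needed.
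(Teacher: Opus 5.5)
Your proposal is correct and is essentially the paper's proof. Both arguments rest on the same facts: the polynomial entries of $M_H,M_\xi$ make the connection form holomorphic on a small simply connected neighborhood of a divisor point with $q,r\ne0$, flatness then yields a single-valued fundamental matrix on that neighborhood, and the meridian contracts there. Your extra remark, separating ordinary holonomy from the braiding of eigenvalues and idempotents, matches the paper's own comment immediately after the proposition.
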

\begin{proof}
The matrices $M_H,M_\xi$ are polynomial in $q,r$. The connection form
$M_H\,dq/(qz)+M_\xi\,dr/(rz)$ is holomorphic on a small simply connected
neighborhood of a point of any collision divisor with $q,r\ne0$.
It is flat and therefore has a holomorphic fundamental matrix on that
neighborhood. A meridian contracts inside it, proving the assertion.
\end{proof}
The meridian is noncontractible in the locus with \emph{distinct eigenvalues},
which is why its spectral braid can be nontrivial. Asymptotic eigenbases and their Stokes comparisons can be multivalued
even though the local fundamental matrix is holomorphic. This distinction also separates
these parameter loops from the $z$-turn in
Proposition~\ref{Gammamonodromy}.

\paragraph{The grading residue at a semisimple collision.}
The two negative collision values leave the quantum algebra semisimple.
Consequently the primitive idempotents and their nonzero square norms
vary holomorphically there, after choosing a local square root for each
normalization. Let $\Psi$ be the resulting orthonormal idempotent frame,
and set
\[
 \mathsf U=\Psi^{-1}C\Psi=\operatorname{diag}(u_i),\quad
 \Xi=\Psi^{-1}M_\xi\Psi=\operatorname{diag}(\xi_i),\quad
 V=\Psi^{-1}\mu\Psi,\quad
 A_r=\Psi^{-1}(r\partial_r\Psi).
\]
Here $\mathsf U$ denotes the diagonal eigenvalue matrix; the line bundle
class is denoted by $U$.

\begin{proposition}[Divisibility of the off-diagonal grading residue]
\label{aux:residuevanishing}
For each pair that coalesces at $\kappa=-16$ or $-64/27$, one has
\begin{equation}\label{aux:Vdivisibility}
 (\xi_i-\xi_j)V_{ij}=(u_j-u_i)(A_r)_{ij},\qquad
 V_{ij}=O(u_i-u_j).
\end{equation}
The same assertion holds with $i,j$ interchanged. In particular these
off-diagonal grading residues vanish on the respective collision divisor.
\end{proposition}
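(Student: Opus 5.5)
The plan is to derive \eqref{aux:Vdivisibility} from the mixed compatibility equation between the $r$-direction and the Euler operator, written in the orthonormal idempotent frame $\Psi$. Corollary~\ref{connectionfull} gives the identity
\[
 r\partial_r C=M_\xi+[M_\xi,\mu].
\]
Conjugating by $\Psi$ and using $r\partial_r(\Psi^{-1}C\Psi)=\Psi^{-1}(r\partial_rC)\Psi+[\mathsf U,A_r]$ turns it into
\[
 r\partial_r\mathsf U-[\mathsf U,A_r]=\Xi+[\Xi,V].
\]
Here $\Psi^{-1}r\partial_r\Psi=A_r$, and $r\partial_r\Psi^{-1}=-A_r\Psi^{-1}$.

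Next take the $(i,j)$ entry with $i\ne j$. The left side gives $-(u_i-u_j)(A_r)_{ij}$. On the right, $\Xi$ is diagonal, so the entry is $(\xi_i-\xi_j)V_{ij}$. This is exactly the first identity, $(\xi_i-\xi_j)V_{ij}=(u_j-u_i)(A_r)_{ij}$. Interchanging $i,j$ gives the same equation, so the symmetric statement needs no extra work. This identity is valid at every point of a neighbourhood of the collision, because semisimplicity of the algebra there (Proposition~\ref{aux:complexloci}) makes $\Psi$, $\Xi$, $V$ and $A_r$ holomorphic after choosing local square roots of the norms.

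To get the divisibility $V_{ij}=O(u_i-u_j)$, I need $\xi_i-\xi_j\ne0$ at the collision, and the holomorphy of $A_r$. Holomorphy of $A_r$ follows from holomorphy of $\Psi$, since the idempotents of a semisimple algebra vary holomorphically. The nonvanishing of $\xi_i-\xi_j$ is the key step. Because the ring is semisimple, the two colliding idempotents are distinct points of the spectrum of the algebra. Since $H$ and $\xi$ generate the algebra, their joint values $(h_i,\xi_i)$ must differ at these two points.

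For $\kappa=-16$ the colliding points come from $h$ and $-h$ on the sextic branch, where $h^2=-8q$, and $\xi=h+h^3/(4q)=h(1-2)=-h$. So $\xi_i-\xi_j=-2h\ne0$. For $\kappa=-64/27$ the colliding points are the sextic point $h$, with $h^2=-16q/3$, and the quadratic point with $H=0$, $\xi=\sqrt r=2h/3$. The sextic branch gives $\xi=h+h^3/(4q)=h-\tfrac43 h=-h/3$, which differs from $2h/3$.

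Given this, dividing the identity by $\xi_i-\xi_j$ near the collision gives $V_{ij}=(u_j-u_i)(A_r)_{ij}/(\xi_i-\xi_j)$. This is holomorphic and divisible by $u_i-u_j$, so it vanishes on the divisor.

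The main obstacle I expect is justifying that $\Psi$, and hence $A_r$, stays holomorphic across the collision even though $C$ has a repeated eigenvalue. The point is to label the idempotents by points of the reduced critical scheme, which stay separated, rather than by eigenvalues of $C$. Once that is done, the remaining steps are the algebra and the two explicit $\xi$-value checks above.
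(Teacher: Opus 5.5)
Your proposal is correct and follows essentially the same route as the paper: you conjugate the mixed flatness identity $r\partial_rC=M_\xi+[M_\xi,\mu]$ into the idempotent frame, read off the off-diagonal entries, and divide by $\xi_i-\xi_j$. You check this is nonzero exactly as the paper does ($-h$ versus $h$ at $\kappa=-16$, and $-h/3$ versus $2h/3$ at $\kappa=-64/27$), and your remark about labelling the idempotents by points of the reduced spectrum supplies the holomorphy of $\Psi$ that the paper takes from semisimplicity. In fact your general argument already suffices without the explicit values: $u_i=u_j$ means $h_i+\xi_i=h_j+\xi_j$, so distinct points force $\xi_i\neq\xi_j$.
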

\begin{proof}
Differentiate $C=\Psi\mathsf U\Psi^{-1}$ and use the mixed flatness
identity $r\partial_rC=M_\xi+[M_\xi,\mu]$. Its off-diagonal entries in
the $\Psi$ frame give the first equality in \eqref{aux:Vdivisibility}.
At $\kappa=-16$ the colliding sextic branches have $\xi=-h,h$, so the
difference is nonzero. At $\kappa=-64/27$ a colliding sextic/quadratic
pair has $\xi=-h/3,2h/3$, again distinct. Thus division by
$\xi_i-\xi_j$ is holomorphic in a neighborhood, proving the assertion.
The opposite pair has the same property.
\end{proof}

The vanishing of these residues provides an input for holomorphic
coalescence analysis. Cotti--Dubrovin--Guzzetti \cite{CDGCoal} relate such
conditions to the vanishing of Stokes entries; see also Sabbah's
formulation \cite{SabbahCDG}. To apply their results, stated in independent
eigenvalue coordinates, to this slice in two parameters, one must verify
the required deformation and formal extension hypotheses. The residue
calculation alone therefore gives no conclusion about all Stokes
entries. At the positive $A_2$ divisor, primitive idempotents cease to be
holomorphic, so the proof above does not apply.
Xu studies regularized Stokes limits under degeneration of the
irregular data \cite{XuLimits}; Tang and Xu describe boundary
asymptotics of the isomonodromy equations~\cite{TXBoundary}.

\subsection{Differential coupling at the quadratic branches}\label{app:coupledblock}\label{aux:sec:block}
All cup product expressions in this appendix refer to the fixed frame
\textup{(\ref{flatbasis})}. We work on $(\C^*)^2$, and choose a square root of $r$
only when distinguishing the two one-dimensional branches.

\paragraph{The projector.}
\begin{proposition}\label{aux:proj}
The element
\begin{equation}\label{aux:p0}
 p_0=\frac12-\frac{H^3\xi}{4qr}
\end{equation}
is an idempotent for the quantum product. Its multiplication operator
$\mathcal P=M_{p_0}$ has rank two and equals
\begin{equation}\label{aux:Poperator}
 \mathcal P=I-\frac{M_H^3M_\xi}{4qr}.
\end{equation}
Its image is spanned by $p_0$ and
\begin{equation}\label{aux:p1}
 p_1=\xi\star p_0=\frac{\xi-H}{2}-\frac{H^3}{4q}.
\end{equation}
The multiplication on this subspace satisfies
$H\star p_i=0$, $p_0\star p_1=p_1$, and
$p_1\star p_1=rp_0$.
\end{proposition}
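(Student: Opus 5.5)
The plan is to work entirely in the small quantum ring of Theorem~\ref{ring}, using the explicit cup-frame multiplication table of Theorem~\ref{fixedframe}. The statement repeats Proposition~\ref{proj}, so the same argument applies. I would first check idempotence in quantum-polynomial form. By the table, $H^{\star3}\star\xi = H^3\xi+2qr$ in cup classes. Hence \eqref{aux:p0} equals the quantum polynomial $1-H^{\star3}\star\xi/(4qr)$, and its multiplication operator is \eqref{aux:Poperator}.

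Next I would use the spectral decomposition of Proposition~\ref{char}, or argue algebraically without semisimplicity. The relation $H^4=4qH(\xi-H)$ together with $\xi(\xi-H)=r$ gives
\[
 H^{\star4}\star\xi = 4qH\star\xi\star(\xi-H) = 4qrH .
\]
Set $e=H^{\star3}\star\xi/(4qr)$. Then
\[
 e\star e=\frac{H^{\star6}\star\xi^{\star2}}{16q^2r^2}
 =\frac{H^{\star2}\star\xi\star(4qrH)}{16q^2r^2}
 =\frac{H^{\star3}\star\xi}{4qr}=e .
\]
Here the middle step rewrites $H^{\star6}\star\xi^{\star2}$ as $H^{\star2}\star\xi\star(H^{\star4}\star\xi)$. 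So $e$ is idempotent, and hence so is $p_0=1-e$. In addition
\[
 H\star e=\frac{H^{\star4}\star\xi}{4qr}=H ,
\]
so $H\star p_0=0$. This identity holds scheme-theoretically, including at the caustic.

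For the rank, the image of $\mathcal P$ is $p_0\star QH^*$. Since $H$ kills $p_0$, it is spanned by $p_0\star\xi^{j}$. The relation $\xi\star\xi=H\star\xi+r$ reduces this to $p_0$ and $p_0\star\xi$. These two vectors are independent, since $p_0$ has a nonzero degree-zero component and $p_1$ has a degree-one component. The rank is exactly two because the complementary idempotent $e$ has rank six: it is the projection onto the six-point branch of Proposition~\ref{char}, or one counts directly that $QH^*/(H)$ has rank two over $\C[q,r]$.

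Finally I would compute $p_1=\xi\star p_0$ from the table. It gives $\xi\star H^3\xi=2qrH+rH^3$, so
\[
 p_1=\frac{\xi}{2}-\frac{2qrH+rH^3}{4qr}=\frac{\xi-H}{2}-\frac{H^3}{4q}.
\]
The remaining relations are formal. First, $p_0\star p_1=\xi\star p_0^2=p_1$. Second, $p_1\star p_1=\xi^{\star2}\star p_0=(H\star\xi+r)\star p_0=rp_0$, using $H\star p_0=0$.

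The only step needing care is keeping the cup-product and quantum-product meanings of the monomials separate, as the paper stresses. I would therefore carry out every identity in quantum polynomials and convert to cup classes only through the table.
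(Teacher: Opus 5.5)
Your proposal is correct and follows the paper's proof. You use the same rewriting $p_0=1-H^{\star3}\star\xi/(4qr)$ from the table, the same computation of $p_1=\xi\star p_0$, and the same use of $\xi\star\xi=H\star\xi+r$ together with $H\star p_0=0$. Your algebraic verification via $H^{\star4}\star\xi=4qrH$ is the ``directly by the relations of Theorem~\ref{ring}'' alternative the paper mentions instead of evaluating on the two spectral branches, and it covers the caustic without a separate local-algebra remark. The relation $H\star p_1=0$, which you leave unstated, follows at once from $H\star p_1=\xi\star(H\star p_0)$.
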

\begin{proof}
In quantum monomials the element is
$1-H^{\star3}\star\xi/(4qr)$. On the six-point branch
$H^{\star3}\star\xi=4qr$, whereas on the $H=0$ branch it is zero.
The same identity holds on the length-two local algebras at the caustic,
or directly by the relations of Theorem~\ref{ring}. Hence it is the
projector onto the two $H=0$ points. The table in
Theorem~\ref{fixedframe} gives
$H^{\star3}\star\xi=H^3\xi+2qr$ in the classical frame, proving
\eqref{aux:p0}. Multiplying it by $\xi$ gives \eqref{aux:p1}; a second multiplication
uses $\xi\star\xi=H\star\xi+r$ and $H\star p_0=0$.
\end{proof}

The Poincar\'e pairing on this subspace is particularly simple:
\begin{equation}\label{aux:blockpairing}
 (p_0,p_0)=-\frac1{2qr},\quad
 (p_1,p_1)=-\frac1{2q},\quad (p_0,p_1)=0.
\end{equation}
Here $(a,b)=\int_Xa\cup b$, equivalently the Frobenius pairing for the
quantum algebra.

\paragraph{Second fundamental forms.}
For $i=q,r$ define the second fundamental form of $\im\mathcal P$ by
\[
 \mathcal B_i(s)=(I-\mathcal P)\nabla_{i\partial_i}s,
 \qquad s\in\im\mathcal P.
\]
Since both divisor multiplication operators preserve this image, their
$z^{-1}$ terms vanish after applying $I-\mathcal P$. Consequently the
forms do not depend on $z$.

\begin{theorem}[Nonhorizontal quadratic block]\label{aux:leakage}
Put
\[
 v_0=\frac14+\frac{H^3\xi}{8qr},\qquad
 v_1=\frac{\xi-H}{4}+\frac{H^3}{8q}.
\]
Then $v_0,v_1\in\ker\mathcal P$ are linearly independent, and
\begin{equation}\label{aux:secondforms}
 \begin{array}{c|cc}
 &p_0&p_1\\\hline
 \mathcal B_q&v_0&v_1\\
 \mathcal B_r&v_0&0
 \end{array}
\end{equation}
throughout $(\C^*)^2$. In particular,
$\rank\mathcal B_q=2$ and $\rank\mathcal B_r=1$.
The rank-two spectral subbundle is therefore not a subconnection.
\end{theorem}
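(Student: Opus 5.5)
The second fundamental forms will be computed by direct substitution. For $s\in\im\mathcal P$ write $s=\mathcal P s$; then
\[
 \nabla_{i\partial_i}s=(i\partial_i\mathcal P)s+\mathcal P(\nabla_{i\partial_i}s),
\]
because multiplication by $M_H,M_\xi$ commutes with $\mathcal P$. Applying $I-\mathcal P$ gives
\[
 \mathcal B_i(s)=(I-\mathcal P)(i\partial_i\mathcal P)s.
\]
So only the parameter derivative of the projector matters, and this explains the $z$-independence. Since $p_0,p_1$ span the image, it suffices to compute $(I-\mathcal P)\,i\partial_i p_j$ for $j=0,1$. In the fixed cup frame the connection acts on a frame-dependent vector $s(q,r)$ by $i\partial_i s+M_i s/z$. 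The $M_i$ term stays in the image and dies under $I-\mathcal P$, so
\[
 \mathcal B_i(p_j)=(I-\mathcal P)(i\partial_i p_j).
\]

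Next I would differentiate the explicit formulas \eqref{aux:p0}--\eqref{aux:p1}:
\[
 q\partial_q p_0=\frac{H^3\xi}{4qr},\qquad r\partial_r p_0=\frac{H^3\xi}{4qr},
\]
\[
 q\partial_q p_1=\frac{H^3}{4q},\qquad r\partial_r p_1=0.
\]
The vanishing of $r\partial_r p_1$ already gives the zero entry in \eqref{aux:secondforms}. It remains to project $w_0=H^3\xi/(4qr)$ and $w_1=H^3/(4q)$ onto $\ker\mathcal P$.

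The key observation is
\[
 w_0=\tfrac12-p_0,\qquad w_1=\tfrac{\xi-H}{2}-p_1.
\]
Then $(I-\mathcal P)w_0=\tfrac12(I-\mathcal P)1=\tfrac12(1-p_0)$, since $\mathcal P$ is quantum multiplication by $p_0$ and $p_0\star 1=p_0$. Also $\tfrac12(1-p_0)=\tfrac14+\tfrac{H^3\xi}{8qr}=v_0$. Similarly,
\[
 (I-\mathcal P)w_1=\tfrac12(I-\mathcal P)(\xi-H)=\tfrac12\bigl((\xi-H)-p_0\star(\xi-H)\bigr).
\]
Here $p_0\star(\xi-H)=p_0\star\xi=p_1$, using $H\star p_0=0$ and $p_1=\xi\star p_0$. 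This gives
\[
 \tfrac12\Bigl(\xi-H-\tfrac{\xi-H}{2}+\tfrac{H^3}{4q}\Bigr)=v_1.
\]
Both $v_0,v_1$ then lie in $\ker\mathcal P$ automatically, as images of $I-\mathcal P$. Their linear independence is visible in the cup frame: $v_0$ has components only on $1$ and $H^3\xi$, while $v_1$ has components only on $\xi,H,H^3$.

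The rank statements follow at once: $\mathcal B_q$ maps the basis $(p_0,p_1)$ to the independent pair $(v_0,v_1)$, and $\mathcal B_r$ has image spanned by $v_0$. Since $\mathcal B_q\neq0$, the image of $\mathcal P$ is not preserved by $\nabla_{q\partial_q}$, so it is not a subconnection. The only step requiring care is the justification that $\nabla$ commutes past $\mathcal P$ up to $\partial\mathcal P$. I would check this with the fixed-frame table of Theorem~\ref{fixedframe}, which shows that $\mathcal P$ is a polynomial in $M_H,M_\xi$ and hence commutes with both. Everything else is short algebra in the quantum ring. The computations hold on all of $(\C^*)^2$ because the formulas are polynomial in $q^{\pm1},r^{\pm1}$.
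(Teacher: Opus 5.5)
Your proposal is correct and follows the same route as the paper. You differentiate the explicit formulas for $p_0,p_1$, observe that the $z^{-1}$ terms are killed by $I-\mathcal P$, apply the projection, and read off independence from disjoint cup supports.

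The only difference is in how you compute the projection, and your version is slightly neater. The paper evaluates $\mathcal P(H^3\xi)=-qr+\tfrac12H^3\xi$ and $\mathcal P(H^3)=qH+\tfrac12H^3-q\xi$ from the operator formula or the multiplication table. You instead write the derivatives as $\tfrac12-p_0$ and $\tfrac12(\xi-H)-p_1$, so you only need $p_0\star1=p_0$, $H\star p_0=0$ and $\xi\star p_0=p_1$.
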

\begin{proof}
The expressions \eqref{aux:p0}--\eqref{aux:p1} give
\[
 q\partial_qp_0=r\partial_rp_0=\frac{H^3\xi}{4qr},\qquad
 q\partial_qp_1=\frac{H^3}{4q},\qquad r\partial_rp_1=0.
\]
Using \eqref{aux:Poperator}, or the multiplication table, gives
\[
 \mathcal P(H^3\xi)=-qr+\tfrac12H^3\xi,\qquad
 \mathcal P(H^3)=qH+\tfrac12H^3-q\xi.
\]
Subtracting these projections yields \eqref{aux:secondforms}. Since
$\mathcal P^2=\mathcal P$, the results lie in its kernel. The component
of $v_0$ in degree zero is nonzero while $v_1$ has only degrees one and
three; $v_1\ne0$. Their independence proves the ranks.
\end{proof}

\begin{remark}[Scope of the obstruction]
The quadratic factor $u^2-4r$ does not define an independently varying
quantum subsystem of rank two in this spectral frame. The theorem is
compatible with a $z$-dependent formal normal form, sectorial
diagonalization, and the projective bundle decompositions in the
parameter regimes considered in \cite{IK,SS,HYYZ}.
\end{remark}

\paragraph{Formal correction terms.}
Let $s$ be either square root of $r$. Define the primitive idempotent
\begin{equation}\label{aux:es}
 e_s=\frac12(p_0+s^{-1}p_1)
 =\frac14+\frac{\xi-H}{4s}-\frac{H^3}{8qs}
                         -\frac{H^3\xi}{8qr}.
\end{equation}
It has eigenvalue $u_s=2s$ for $C$ and square norm
$(e_s,e_s)=-1/(4qr)$. On a simply connected neighborhood of positive parameters, a normalized
eigenvector is
$\psi_s=2\sqrt{-1}\sqrt{qr}\,e_s$.

\begin{lemma}\label{aux:gradingblock}
The grading operator has no component inside this block:
\[
 \mathcal P\mu\mathcal P=0.
\]
More explicitly, $\mu p_0=-4v_0$ and $\mu p_1=-2v_1$.
\end{lemma}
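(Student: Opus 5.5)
The plan is a direct computation in the fixed cup frame, with the multiplication table of Theorem~\ref{fixedframe} and the grading $\mu=\operatorname{diag}(-2,-1,0,1,-1,0,1,2)$ from \eqref{grading}.

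First I would apply $\mu$ to $p_0$ and $p_1$. The summands of $p_0=\tfrac12-H^3\xi/(4qr)$ have degrees $0$ and $4$, so
\[
 \mu p_0=-1-\frac{2H^3\xi}{4qr}=-4\Bigl(\frac14+\frac{H^3\xi}{8qr}\Bigr)=-4v_0 .
\]
The summands of $p_1=(\xi-H)/2-H^3/(4q)$ have degrees $1$ and $3$, so
\[
 \mu p_1=-\frac{\xi-H}{2}-\frac{H^3}{4q}=-2\Bigl(\frac{\xi-H}{4}+\frac{H^3}{8q}\Bigr)=-2v_1 .
\]
These agree with the vectors $v_0,v_1$ defined in Theorem~\ref{aux:leakage}.

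Second, that theorem already shows $v_0,v_1\in\ker\mathcal P$. If one prefers not to rely on it, the check is direct. Using $\mathcal P(H^3\xi)=-qr+\tfrac12H^3\xi$, obtained from \eqref{aux:Poperator} and the table, together with $\mathcal P(1)=p_0$, one gets
\[
 \mathcal P v_0=\tfrac14 p_0+\tfrac1{8qr}\bigl(-qr+\tfrac12H^3\xi\bigr)=0 .
\]
For $v_1$ I would compute $\mathcal P(\xi)$, $\mathcal P(H)$ and $\mathcal P(H^3)$ in the same way, using $\mathcal P(H^3)=qH+\tfrac12H^3-q\xi$, and check that the combination vanishes. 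Since $\mathcal P p_i=p_i$, this gives $\mathcal P\mu\mathcal P p_i=0$ for $i=0,1$.

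Third, I would conclude: $\mathcal P\mu\mathcal P$ is zero on $\operatorname{im}\mathcal P$ and zero on $\ker\mathcal P$, so it vanishes identically. As a conceptual cross-check, $\mu$ is skew-adjoint for the Poincar\'e pairing and $\mathcal P$ is self-adjoint, so $\mathcal P\mu\mathcal P$ is skew on the rank-two image. The pairing on that image is diagonal by \eqref{aux:blockpairing}, and $\mu$ changes the parity of complex degree modulo two only through even shifts, so the $p_0$--$p_1$ entries have to vanish by degree parity. The diagonal entries vanish by skewness.

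I expect the only real step to be the projection of $v_1$. It requires $\mathcal P(H)$ and $\mathcal P(\xi)$ from $M_H^3M_\xi$, and these I would read off the table in Theorem~\ref{fixedframe}.
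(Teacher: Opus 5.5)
Your proposal is correct and is essentially the paper's proof: you grade $p_0,p_1$ termwise to get $\mu p_0=-4v_0$ and $\mu p_1=-2v_1$, then use $v_0,v_1\in\ker\mathcal P$ from Theorem~\ref{aux:leakage} and the splitting of the space into the image and kernel of $\mathcal P$. Your direct check of $\mathcal P v_0=\mathcal P v_1=0$ does go through, since $\mathcal P(H)=0$ and $\mathcal P(\xi)=p_1$, and your skewness/parity cross-check is also valid. In that cross-check, however, the relevant point is simply that $\mu$ preserves complex degree; it does not shift parity.
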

\begin{proof}
Apply the diagonal grading \textup{(\ref{grading})} to
\eqref{aux:p0}--\eqref{aux:p1}, and use Theorem~\ref{aux:leakage}.
\end{proof}

\begin{proposition}[Two formal correction terms]\label{aux:formalquadratic}
Fix a positive point away from $r/q=16/27$, and normalize the leading
term to $e_s$. The canonical formal solution of the $z$-equation at
$u_s=2s$ begins
\begin{equation}\label{aux:formalexpansion}
 e^{-2s/z}\bigl(e_s+z a_s+z^2 b_s+O(z^3)\bigr),
\end{equation}
where
\begin{equation}\label{aux:abformal}
 a_s=-\frac{H^2}{16qs}-\frac{H^2\xi}{16qr},\qquad
 b_s=-\frac{H}{32qs}-\frac{H\xi}{32qr}.
\end{equation}
Multiplying by $2\sqrt{-1}\sqrt{qr}$ gives the expansion with normalized
leading eigenvector $\psi_s$.
\end{proposition}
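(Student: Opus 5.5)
The plan is to solve the formal recursion for the $z$-equation directly in the cup frame. I will use the block structure from Proposition~\ref{aux:proj}, Theorem~\ref{aux:leakage} and Lemma~\ref{aux:gradingblock}. Write the formal solution as $e^{-u/z}\sum_k y_kz^k$ with $u=2s$ and $y_0=e_s$. Substituting into $\nabla_{z\partial_z}=z\partial_z-C/z+\mu$ and collecting powers of $z$ gives
\[
 (C-u)y_0=0,\qquad (C-u)y_{k+1}=(k+\mu)y_k\quad(k\geq0).
\]
The sign conventions follow Corollary~\ref{connectionfull}, and the factor $e^{-u/z}$ contributes $u/z$. Solvability at each step requires the right side to lie in $\operatorname{im}(C-u)$. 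Equivalently, it must be orthogonal to $e_s$, because $C$ is self-adjoint and $u$ is a simple eigenvalue away from the caustic.

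At the first step the condition is $(e_s,\mu e_s)=0$, which holds by skew-adjointness of $\mu$. In fact Lemma~\ref{aux:gradingblock} gives $\mu e_s=-2v_0-s^{-1}v_1$, which lies in $\ker\mathcal P$. Hence
\[
 a_s=(C-u)^{-1}\mu e_s+\alpha_1e_s.
\]
The first term is computed on $\ker\mathcal P$, where $C-u$ is invertible because it acts through the six-point branch together with the other quadratic point $-2s$. The constant $\alpha_1$ is fixed by the solvability condition at the next order,
\[
 (e_s,(1+\mu)a_s)=0,\quad\text{that is}\quad \alpha_1(e_s,e_s)=-(e_s,\mu a_s).
\]
The second correction is obtained in the same way: $b_s$ is the particular solution of $(C-u)b_s=(1+\mu)a_s$, corrected by $\alpha_2e_s$, where $\alpha_2$ is determined one order higher by $(e_s,(2+\mu)b_s)=0$. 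This is the standard canonical normalization of the formal solution.

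The computational core is inverting $C-2s$ on the explicit vectors, using the multiplication table of Theorem~\ref{fixedframe}. To avoid inverting an $8\times8$ matrix, I will verify the proposed answer instead. Apply $C=2(M_H+M_\xi)$ to the displayed $a_s$ and check that $(C-2s)a_s=\mu e_s$. Then check that the coefficient of $e_s$ in $a_s$ is correct, namely that $(e_s,(1+\mu)a_s)=0$. Repeat the same two checks for $b_s$, with right-hand side $(1+\mu)a_s$. Each check uses only the table rows for $H^2$, $H^2\xi$, $H$, $H\xi$, the relations $s^2=r$ and $\xi\star\xi=H\star\xi+r$, and the pairings of Section~\ref{sec:conventions}.

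The displayed $a_s$ and $b_s$ have components only in degrees $1,2,3$. This is consistent with $a_s$ and $b_s$ having $e_s$-component zero after normalization, and I will confirm that directly. The final statement follows by scaling by the constant $2\sqrt{-1}\sqrt{qr}$, which is legitimate because the recursion is linear.

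I expect the main obstacle to be sign and normalization bookkeeping rather than any conceptual step. This includes the sign of $\mu$ in the recursion, the distinction between the flat and dual equations, and the correct fixing of the free multiples $\alpha_k e_s$. As a cross-check I will compare with the scalar expansion \eqref{firstscalartwo}. There, pairing with $1$ gives vanishing scalar corrections at orders $z$ and $z^2$. Indeed $(1,a_s)$ and $(1,b_s)$ vanish, since $\int H^2\xi$ and $\int H\xi$ are zero in degree four, and this agrees with Theorem~\ref{spinorscalar}.
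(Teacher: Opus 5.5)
Your proposal is correct and takes essentially the paper's route. The paper also derives $(C-u_s)v_{k+1}=(\mu+k)v_k$, checks $(C-2s)a_s=\mu e_s$ and $(C-2s)b_s=(\mu+1)a_s$ against the multiplication table, and fixes the free $e_s$-multiples by the next-order solvability conditions; it phrases these as $\mathcal P a_s=\mathcal P b_s=\mathcal P\mu a_s=\mathcal P\mu b_s=0$, which yields your conditions $(e_s,(1+\mu)a_s)=(e_s,(2+\mu)b_s)=0$. One small slip: $\ker\mathcal P$ is only the six-point branch, and the $-2s$ line lies in $\operatorname{im}\mathcal P$. This does not affect your argument, since $\mu e_s\in\ker\mathcal P$ and $C-2s$ is invertible there.
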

\begin{proof}
For $v(z)=\sum_{k\geq0}v_kz^k$, substitution of
$e^{-u_s/z}v(z)$ in \textup{(\ref{qconnection})} gives
\begin{equation}\label{aux:formalrecursion}
 (C-u_s)v_{k+1}=(\mu+k)v_k.
\end{equation}
The multiplication table gives
\[
 (C-2s)a_s=\mu e_s,\qquad
 (C-2s)b_s=(\mu+1)a_s.
\]
Moreover $\mathcal Pa_s=\mathcal Pb_s=0$ and
$\mathcal P\mu a_s=\mathcal P\mu b_s=0$.
The last equalities show that the next solvability conditions in the
$e_s$ direction hold without adding a multiple of $e_s$ to either of
these two coefficients. Indeed the Frobenius pairing makes $C$
self-adjoint and $\mu$ skew-adjoint. At each positive order, projection
of \eqref{aux:formalrecursion} to the $e_s$ line fixes the remaining scalar
multiple through the nonzero integer $k$. At order zero the condition
is $(e_s,\mu e_s)=0$, automatic by skew-adjointness. Since the eigenvalue
is simple, this is the usual unique formal recursion with fixed leading
term. It proves \eqref{aux:formalexpansion}.
\end{proof}

\begin{remark}[Formal coefficients and the Gamma marking]
Determining the Gamma objects at the regular end requires the analytic
argument of Section~\ref{sec:spinorMB}: the exact Mellin--Bessel identities
identify the two spinor markings, and the sector estimates determine
their asymptotic columns. The sign used there at $s=-\sqrt r$ is the
opposite of the convention above for $\psi_s$. The formal coefficients
alone determine neither these Gamma objects nor an integral thimble
identification.
\end{remark}

\section{Period asymptotics and integral structures}
\label{app:periodhelix}
This appendix studies the quantitative asymptotics of the normalized
period and the integral constraints imposed by the Euler form and the
helix. The full-turn identities give necessary compatibility relations;
identifying the canonical columns still requires the analysis of
Section~\ref{sec:transportStokes}.

\subsection{Quantitative Gamma I asymptotics}\label{app:periodasymptotics}
\paragraph{The positive sector.}
The normalized period gives a quantitative complement to the proof by
a positive integral in Section~\ref{sec:markedjump}. Besides the scalar
concentration studied in \cite{LKH}, the argument must control the
cohomology-valued direction and the terms of negative relative degree. Set $z=1$ and follow the anticanonical ray $q=r=t^2$. Write
\[
 J_X(t):=J(t^2,t^2,1)=t^{2H+2\xi}\sum_{d,e\geq0}t^{2(d+e)}A_{d,e}.
\]
Let $G(t)$ be its coefficient of $1$ in classical cohomology. Terms with $e<d$ contain $\xi-H$ and have zero scalar part, so
\begin{equation}\label{aux:period}
 G(t)=\sum_{e\geq d\geq0}
 \frac{(2d)!}{(d!)^5e!(e-d)!}\,t^{2(d+e)}.
\end{equation}

\begin{lemma}[Unique saddle]\label{aux:saddle}
For general $q,r>0$, the scalar series with weights $q^d r^e t^{2(d+e)}$ has a unique interior saddle $(A,B)$, characterized by
\begin{equation}\label{aux:saddleEq}
 A^3=4q(B-A),\qquad B(B-A)=r,\qquad B>A>0.
\end{equation}
Its exponential rate is $2(A+B)=T(q,r)$, and
\begin{equation}\label{aux:Gasym}
 G_{q,r}(t)\sim C(q,r)t^{-2}e^{T(q,r)t},\qquad C(q,r)>0.
\end{equation}
\end{lemma}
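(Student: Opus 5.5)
We prove the lemma by a direct two-dimensional discrete Laplace method on the positive series, with exponents $d=xt$, $e=yt$. As a cross-check on the constants we compare with Proposition~\ref{aux:constanttermperiod}.

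\emph{Step 1: saddle equations.} Write $c_{d,e}=(2d)!/((d!)^5e!(e-d)!)\,q^dr^e t^{2(d+e)}$ on the cone $e\ge d\ge0$. Apply Stirling's formula $\log n!=n\log n-n+\tfrac12\log(2\pi n)+O(1/n)$ to each factorial. In the variables $(x,y)$ the $t\log t$ terms cancel, since $2x-(5x+y+(y-x))+2(x+y)=0$. This leaves $\log c_{d,e}=t\Phi(x,y)+\text{(prefactor)}$ with
\[
 \Phi(x,y)=2x\log2x-5x\log x-y\log y-(y-x)\log(y-x)+x\log q+y\log r+2x+2y.
\]
Here the linear part collects the $-n$ Stirling terms: $-2x$ from $(2d)!$ and $+5x+y+(y-x)$ from the denominator.

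Differentiation gives $\Phi_x=\log\bigl(4q(y-x)/x^3\bigr)$ and $\Phi_y=\log\bigl(r/(y(y-x))\bigr)$. The critical equations are therefore exactly \eqref{aux:saddleEq}. The Hessian of $\Phi$ is
\[
 \begin{pmatrix}-3/x-1/(y-x)&1/(y-x)\\ 1/(y-x)&-1/y-1/(y-x)\end{pmatrix},
\]
whose trace is negative and whose determinant is $3/(xy)+3/(x(y-x))+1/(y(y-x))>0$. Hence $\Phi$ is strictly concave on the open cone $y>x>0$. It tends to values below its supremum at the boundary, since $\Phi_x\to+\infty$ as $x\downarrow0$ and $\Phi_y\to+\infty$ as $y\downarrow x$, and it tends to $-\infty$ at infinity by superlinear decay. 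So there is a unique interior maximum $(A,B)$ with $B>A>0$.

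\emph{Step 2: rate and identification with $T$.} The entropy terms are homogeneous of degree one, so Euler's identity gives $\Phi=x\Phi_x+y\Phi_y+2(x+y)$. At the saddle this yields $\Phi(A,B)=2(A+B)$. Setting $h=A$ and $v=B-A$ turns \eqref{aux:saddleEq} into \eqref{positivesaddlehv}. Hence $2(A+B)=4h+2v=T(q,r)$, which is the same positive root selected in Theorem~\ref{spectral}.

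\emph{Step 3: Gaussian summation and the power $t^{-2}$.} The Stirling prefactor
\[
 \sqrt{4\pi d}\,(2\pi d)^{-5/2}(2\pi e)^{-1/2}(2\pi(e-d))^{-1/2}
\]
is of order $t^{-3}$ near the saddle. The lattice sum over a window of size $t^{1/2+\varepsilon}$ is replaced by $t^2\iint e^{t\Phi}\,dx\,dy\sim 2\pi\,t\,(\det(-\Phi''))^{-1/2}e^{Tt}$. The product is $C(q,r)t^{-2}e^{Tt}$ with an explicit positive constant
\[
 C=\frac{2\pi\cdot 2\sqrt\pi}{(2\pi)^{7/2}}\,A^{-2}B^{-1/2}(B-A)^{-1/2}\det(-\Phi''(A,B))^{-1/2}.
\]
The error from replacing the sum by the integral is $O(t^{-1/2})$ relatively, by Euler--Maclaurin on the smooth log-concave profile.

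\emph{Main obstacle: tails.} The main work is controlling the terms away from the saddle, including those near the edge $e=d$ and near $d=0$, where Stirling's formula for $(e-d)!$ or $d!$ is not uniform. We plan to use the exact term ratios $c_{d+1,e}/c_{d,e}$ and $c_{d,e+1}/c_{d,e}$, which are rational and monotone in each index. These show that $\log c_{d,e}$ is discretely concave, so outside the window the terms are bounded by $e^{Tt-\delta t^{2\varepsilon}}$ times a geometric factor. Combined with the factorial majorant of Lemma~\ref{smallconv}, this bounds the far tail by $e^{(T-\delta)t}$ and gives \eqref{aux:Gasym}. As a cross-check, Proposition~\ref{aux:constanttermperiod} identifies $G$ with the constant-term period, whose Laplace asymptotics at the Hessian \eqref{positiveHessian} should reproduce the same rate $T$.
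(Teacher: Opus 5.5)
Your proposal is correct and follows essentially the paper's route: the same Stirling entropy and saddle equations, strict concavity from the same Hessian, the value $\Phi(A,B)=2(A+B)$, and the Stirling prefactor times the lattice Gaussian factor $2\pi t\,\det(-\Phi''(A,B))^{-1/2}$, which gives exactly the paper's constant $C(q,r)$ once $B(B-A)=r$ is used. The only difference is the tail control: you use global concavity of $\log c_{d,e}$, while the paper splits the lattice into a saddle neighborhood, a compact triangle and an unbounded tail.

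Two justifications should be tightened, although both claims are true. First, the $n\log n$ terms are not homogeneous of degree one: their coefficients sum to $-2(x+y)$. It is this defect, not homogeneity, that produces the $2(x+y)$ in $\Phi=x\Phi_x+y\Phi_y+2(x+y)$. Second, decreasing ratios in each index separately do not imply two-dimensional concavity. That concavity instead follows from the negative definite trigamma Hessian of the $\log\Gamma$ extension of $\log c_{d,e}$ on the convex cone $e\ge d\ge0$.
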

\begin{proof}
For $d/t\to A$ and $e/t\to B$, Stirling's formula gives the entropy
\[
 \Phi(A,B)=A\log(4q)+B\log r-3A\log A-B\log B
 -(B-A)\log(B-A)+2(A+B).
\]
Its gradient vanishes exactly at \eqref{aux:saddleEq}. On $B>A>0$, the negative Hessian is
\[
 K=\begin{pmatrix}
 3/A+1/(B-A)&-1/(B-A)\\
 -1/(B-A)&1/B+1/(B-A)
 \end{pmatrix},
\]
which is positive definite. The entropy tends to minus infinity at infinity, and its directional
derivatives at the boundary force the maximum to lie in the interior.
Strict concavity therefore gives a unique critical point. Eliminating $B$ yields $A^6+4qA^4=16q^2r$, and hence the spectral rate \textup{(\ref{T})}. Substitution into the entropy gives $\Phi(A,B)=2(A+B)$.

Stirling's formula gives the following prefactor near the saddle:
\[
 \frac{\sqrt2}{(2\pi)^3}t^{-3}A^{-2}[B(B-A)]^{-1/2}(1+O(t^{-1})).
\]
A two-dimensional lattice Laplace expansion multiplies this by $2\pi t/\sqrt{\det K}$. Thus \eqref{aux:Gasym} holds, with
\[
 C(q,r)=\frac{\sqrt2}{(2\pi)^2 A^2\sqrt r\sqrt{\det K}}.
\]
To justify the localization, split the lattice into a compact
neighborhood of $(A,B)$, its complement in a large compact triangle, and
the unbounded tail. Strict concavity gives an exponential gap on the
second region. Stirling bounds and the term $-2(A+B)\log(A+B)$ control
the tail. The entropy extends continuously to the boundary, where its
maximum is strictly smaller. These bounds also allow fixed polynomial
or logarithmic weights in the summand.
\end{proof}

\paragraph{The Gamma direction.}
For $e\geq d$, the coefficient has the exact expression
\begin{equation}\label{aux:gammaratio}
 A_{d,e}=\Ga_X
 \frac{\Gamma(1+2H+2d)}{\Gamma(1+H+d)^5
 \Gamma(1+\xi+e)\Gamma(1+\xi-H+e-d)}.
\end{equation}
At $d/t\to A$, $e/t\to B>A$, the ratio to its scalar factorial coefficient is
\[
 \Ga_X\left[\frac{4(e-d)}{d^3}\right]^H
 \left[\frac1{e(e-d)}\right]^\xi(1+O(t^{-1})),
\]
as an equality in the finite-dimensional classical cohomology algebra. At the saddle, \eqref{aux:saddleEq} makes this
\[
 \Ga_X\,t^{-2H-2\xi}q^{-H}r^{-\xi}(1+O(t^{-1})).
\]
The divisor exponential cancels these factors. Applying the preceding Laplace argument with these weights gives
\begin{equation}\label{aux:positiveGamma}
 \frac{q^H r^\xi t^{2H+2\xi}
 \sum_{e\geq d}q^d r^e t^{2(d+e)}A_{d,e}}
 {G_{q,r}(t)}\longrightarrow\Ga_X.
\end{equation}
The ratio expansions are uniform in an interior saddle neighborhood. Outside it, the finite Taylor coefficients in nilpotent cohomology variables introduce at most polynomial and logarithmic factors, absorbed by the entropy gap. Thus the limit holds in the full cohomology algebra.

\paragraph{Negative relative degrees.}
\begin{lemma}[An exponential bound]\label{aux:negative}
For a fixed norm on $H^*(X,\C)$ and some fixed integer $M$,
\[
 \left\|t^{2H+2\xi}\sum_{0\leq e<d}t^{2(d+e)}A_{d,e}\right\|
 \leq C(1+t)^M e^{6t}\qquad(t\geq1).
\]
\end{lemma}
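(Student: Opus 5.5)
The plan is to bound the terms with $e<d$ coefficientwise and then sum them. The sum is then controlled by the exponential growth of the base series alone, at rate $6=4\cdot\tfrac32$, instead of the coupled rate $T(1,1)$. By \eqref{aux:gammaratio} in the regime $e<d$, the factor $R_{e-d}(\xi-H,1)=\prod_{m=e-d+1}^{0}(\xi-H+m)$ is a polynomial, not a denominator. Every term carries the class $\xi-H$ from the factor $m=0$. The remaining factors $(\xi-H-j)$ for $1\le j\le d-e-1$ have norm at most a constant times $(d-e-1)!$, up to a factor $(1+\log d)^{M}$ coming from the nilpotent expansion. Likewise, the nilpotent parts of $\prod_{m\le 2d}(2H+m)$, $\prod_{m\le d}(H+m)^{-5}$ and $\prod_{m\le e}(\xi+m)^{-1}$ change the scalar factorials only by factors polynomial in $\log(d+2)$, as in the proof of Lemma~\ref{smallconv}. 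The prefactor $t^{2H+2\xi}$ is a polynomial in $\log t$ of degree at most four. All these corrections therefore fit into $(1+t)^M$ once the main sum is shown to have polynomial-times-$e^{6t}$ size.

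The scalar majorant is
\[
 c_{d,e}=\frac{(2d)!\,(d-e-1)!}{(d!)^5\,e!},\qquad 0\le e<d .
\]
I would not use the crude bound $4^d/((d!)^2(e!)^2)$ from Lemma~\ref{smallconv}, since it overshoots the exponential rate. Instead I would write $n=d-e\ge1$ and use $(d-e-1)!/e!\le (d-1)!/\bigl(e!\,(n)!\bigr)\cdot n!\,(n-1)!/(d-1)!$. More simply, I would use $(n-1)!\,/\,e! \le \binom{d}{e}^{-1}\cdot d!/(e!)^2\cdot\ldots$ and estimate directly. The aim is $c_{d,e}\le \binom{2d}{d}(d!)^{-4}\cdot (d!)/(e!\,(d-e)!)\cdot\frac{(d-e)!(d-e-1)!}{d!}\cdot\frac{1}{1}$, which gives
\[
 c_{d,e}\;\le\;\frac{4^d}{(d!)^4}\binom{d}{e}\frac{(n-1)!\,n!}{d!}\;\le\;\frac{4^d}{(d!)^4}\binom{d}{e}\,(d-1)!\cdot\frac{1}{(d-n+1)\cdots}\,,
\]
and ultimately $c_{d,e}\le 2^d\,4^d/(d!)^3$, since $(n-1)!\,n!\le d!\,(d-1)!/(e!\,\cdot)$ is bounded by $(d!)^{2}$ up to a binomial factor. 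Because these steps are not yet checked, the concrete target is the clean estimate
\[
 \sum_{e<d} c_{d,e}\,t^{2(d+e)} \;\le\; C(1+t)^M\sum_{d} \frac{8^d\,t^{2d}\cdot t^{2e}}{(d!)^{3}}\Big|_{\text{weighted}},
\]
with the exponent controlled by a Stirling/Laplace maximization.

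The decisive step, and the main obstacle, is the exponential rate. I would do this by the continuous entropy method of Lemma~\ref{aux:saddle}. With $d=At$ and $e=Bt$, $0\le B\le A$, Stirling gives the exponent
\[
 \Psi(A,B)=A\log4-3A\log A+(A-B)\log(A-B)-B\log B+A+B+2A+2B\;-\;(\text{linear terms}).
\]
The $-3A\log A$ comes from $(2d)!/(d!)^5$, which behaves like $4^d d^{-3d}e^{3d}$. The $+(A-B)\log(A-B)$ comes from $(d-e-1)!$, and the $-B\log B$ from $e!$. The factor $t^{2(d+e)}$ contributes the $\log t$ terms, and these cancel exactly against the total homogeneity $-3A+(A-B)-B+2A+2B=0$. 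That cancellation is why the growth is $e^{\text{const}\cdot t}$. The plan is then to maximize $\Psi$ on the closed triangle. The $B$-derivative is $-\log(A-B)-\log B$, which vanishes when $B(A-B)=1$. The $A$-derivative gives $A^3=4(A-B)$. I then need to show that the maximal value is at most $6$; note that $\max\Psi$ equals the value $2(A+B)$ in the fashion of Lemma~\ref{aux:saddle}. The crude bound $B\le A$ together with $A^3\le 4A$ gives $A\le2$, $B\le1$, hence $2(A+B)\le6$. This is exactly the source of the constant $6$ in the statement, and I would prove it through this inequality, including the boundary faces $B=0$ and $B=A$.

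To finish, localization converts the entropy bound into a sum bound, exactly as in the proof of Lemma~\ref{aux:saddle}. The lattice points in a large triangle number $O(t^2)$, and each is bounded by $e^{t\max\Psi}$ times a polynomial. The tail, where $d\gg t$, is killed by the superexponential $-A\log A$ term. Combining this with the nilpotent logarithmic factors from the first paragraph gives the stated bound with a fixed $M$. The remaining bookkeeping is to check that the Stirling error terms for $(d-e-1)!$ near $n=1$ contribute only polynomial factors, which is handled by using $(n-1)!\le n!$.
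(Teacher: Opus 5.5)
\textbf{There is a genuine gap: your decisive rate bound is not established, although the entropy approach can be repaired.}

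Your inequalities $A^3\le 4A$ and $B\le 1$ come from the interior critical equations $B(A-B)=1$ and $A^3=4(A-B)$. That system has no solution with $0<B<A$:
\begin{itemize}
\item the first equation needs $A^2/4\ge B(A-B)=1$, i.e.\ $A\ge 2$;
\item the second gives $A^3<4A$, i.e.\ $A<2$.
\end{itemize}
So $\Psi$ has no interior critical point; indeed $\partial_B\Psi=-\log(B(A-B))>0$ whenever $A<2$. Its maximum therefore lies on a face, where your equations do not hold.
\begin{itemize}
\item On $B=0$, $\Psi(A,0)=A\log4-2A\log A+2A$, with maximum $4$ at $A=2$.
\item On $B=A$, $\Psi(A,A)=A\log4-4A\log A+4A$, with maximum $4\sqrt2$ at $A=B=\sqrt2$. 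This contradicts your claim $B\le 1$.
\end{itemize}
Even granting $A\le2$, the constraint $B\le A$ only gives $2(A+B)\le 8$. So ``the source of the constant $6$'' is misidentified, and $\max\Psi\le 6$ is unsupported by your argument.

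The entropy route can be repaired by doing this face analysis. The identity $A\partial_A\Psi+B\partial_B\Psi=\Psi-2(A+B)$ shows that $\Psi=2(A+B)$ at any maximizer on a ray through the origin. This yields the sharper rate $4\sqrt2<6$. You would still need uniform Stirling inequalities near $e=0$, for small $d-e$, and in the tail $d\gg t$ to localize the lattice sum.

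\textbf{The preparatory factorial chain is also faulty.} Your target $c_{d,e}\le 8^d/(d!)^3$ is false at $e=0$. There $c_{d,0}=\binom{2d}{d}/(d\,(d!)^2)$, which decays only like $(d!)^{-2}$, not $(d!)^{-3}$.

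\textbf{You discarded the bound that actually proves the lemma.} From $(d-e-1)!/d!\le 1/e!$ and $\binom{2d}{d}\le 4^d$ one gets the majorant $c_{d,e}\le 4^d/((d!)^2(e!)^2)$. It does not overshoot: with the weights $t^{2(d+e)}$ it sums exactly to $I_0(4t)I_0(2t)\le e^{6t}$. The logarithmic or polynomial weights in $d,e$ cost only a power of $t$, via Euler derivatives of these Bessel series. This is the paper's entire proof. The constant $6$ is just the base rate $4$ plus the fiber rate $2$, i.e.\ $R(q,r)=4\sqrt q+2\sqrt r$ at $q=r=1$, not a saddle value.
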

\begin{proof}
Write $E=\xi-H$ and $n=d-e\geq1$. The extra numerator is
\[
 R_{-n}(E,1)=E\prod_{j=1}^{n-1}(E-j).
\]
In a nilpotent algebra truncated in complex degree four, expansion of all factors in \textup{(\ref{I})} gives
\[
 \|A_{d,e}\|\leq C(1+\log(d+e+2))^M
 \frac{(2d)!(d-e-1)!}{(d!)^5e!}.
\]
Indeed, after factoring out the factorials, the remaining Taylor coefficients are polynomials of bounded degree in harmonic sums. Now
\[
 \frac{(d-e-1)!}{d!}\leq\frac1{e!},\qquad
 \binom{2d}{d}\leq4^d,
\]
so the factorial term is bounded by $4^d/((d!)^2(e!)^2)$. Summing the resulting majorant gives a product of modified Bessel series,
\[
 \sum_{d,e\geq0}\frac{(4t^2)^d(t^2)^e}{(d!)^2(e!)^2}
 =I_0(4t)I_0(2t)\leq e^{6t}.
\]
Polynomial weights in $d,e$ are controlled by fixed Euler derivatives of these series and contribute a polynomial in $t$. The prefactor $t^{2H+2\xi}$ is polynomial in $\log t$. Increasing $M$ proves the claim.
\end{proof}

\begin{theorem}[Gamma I at the unit parameter point]\label{aux:GammaSeed}
For $X$ as above,
\begin{equation}\label{aux:GammaLimit}
 \lim_{t\to+\infty}\frac{J_X(t)}{G(t)}=\Ga_X.
\end{equation}
Together with Theorem \ref{spectral}, this proves the original Gamma conjecture I for $X$ at $q=r=1$.
\end{theorem}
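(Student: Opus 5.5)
The plan is to split $J_X(t)$ into the part with $e\geq d$ and the part with $e<d$, and treat them separately. For the first part, \eqref{aux:positiveGamma} applies at $q=r=1$, where the weights $q^Hr^\xi$ are trivial. It gives
\[
 t^{2H+2\xi}\sum_{e\geq d}t^{2(d+e)}A_{d,e}=G(t)\bigl(\Ga_X+o(1)\bigr)
\]
in the finite-dimensional cup algebra. Here $G(t)$ is exactly the scalar series \eqref{aux:period}, because the terms with $e<d$ have no scalar part. I would re-check that the exponent of the divisor prefactor $t^{2H+2\xi}$ matches the saddle factor $t^{-2H-2\xi}q^{-H}r^{-\xi}$ that Stirling's formula produces at the saddle. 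The localization argument of Lemma~\ref{aux:saddle} should carry over unchanged with nilpotent-valued weights: the weights are polynomial in $\log d,\log e$, so the entropy gap absorbs them.

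For the second part I would use Lemma~\ref{aux:negative}. It bounds the remainder by $C(1+t)^Me^{6t}$, whereas Lemma~\ref{aux:saddle} gives $G(t)\sim C(1,1)t^{-2}e^{T(1,1)t}$ with $C(1,1)>0$. The step that needs checking is the strict inequality $T(1,1)>6$. At $k=1$, \eqref{a} reads $a^3+4a^2=16$. Since $1.7^3+4\cdot1.7^2=4.913+11.56=16.473>16$ and $1.6^3+4\cdot 1.6^2=14.336<16$, the root satisfies $a\in(1.6,1.7)$. Then \eqref{T} gives $T=\sqrt a(4+a/2)>\sqrt{1.6}\cdot4.8\approx6.07>6$. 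The same bound also follows from Lemma~\ref{realparameter09}, using $\tau>9/5$ and $T=4\tau-2/\tau>7.2-1.12$. The gap is thin, so I would state the bound with a rational witness rather than a decimal approximation. The ratio of the negative part to $G(t)$ is then $O(t^{M+2}e^{-(T-6)t})\to0$. Adding the two parts gives \eqref{aux:GammaLimit}.

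For the final sentence, Theorem~\ref{spectral}(iii) shows that $T$ is a simple eigenvalue at $q=r=1$ and that the peripheral spectrum is $\{T,-T\}$, so Property $\mathcal O$ holds there. Gamma I in the sense of \cite{GGI} asks that $J_X(t)/G(t)$ tend to $\Ga_X$ along the anticanonical ray $q=r=t^2$, $z=1$, and this is exactly \eqref{aux:GammaLimit}. It is consistent with Theorem~\ref{positiveGammaall}. I expect the main obstacle to be the uniformity of the vector-valued Laplace localization in \eqref{aux:positiveGamma}, in particular the claim that the relative $O(t^{-1})$ errors and the nilpotent Taylor terms stay uniform near the boundary line $e=d$. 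If that part is delicate, I would fall back on bounding the boundary strip $e-d\leq\eta t$ directly by the entropy gap of Lemma~\ref{aux:saddle}.
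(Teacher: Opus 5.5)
Your proposal is correct and follows the paper's proof essentially step for step: the $e\geq d$ sector is handled by \eqref{aux:Gasym} and \eqref{aux:positiveGamma} at $q=r=1$, and the $e<d$ sector is made negligible by Lemma~\ref{aux:negative} once $T(1,1)>6$ is checked from $a>8/5$, which the paper writes as $T^2=a(a+8)^2/4>4608/125>36$, the same witness as yours. One nuance in your last paragraph: Gamma conjecture I is formulated through the principal asymptotic class, and its equivalence with the limit \eqref{aux:GammaLimit} is Corollary~3.6.9 of Galkin--Golyshev--Iritani, which is where the Property $\mathcal O$ input from Theorem~\ref{spectral} is actually used.
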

\begin{proof}
At $q=r=1$, the positive root $a$ of \textup{(\ref{a})} satisfies $a>8/5$, since $(8/5)^3+4(8/5)^2<16$. Consequently
\[
 T^2=\frac{a(a+8)^2}{4}>\frac{4608}{125}>36,
\]
so $T>6$. Equations \eqref{aux:Gasym} and \eqref{aux:positiveGamma} identify the limit of the $e\geq d$ sector. Lemma \ref{aux:negative} makes the $e<d$ sector exponentially negligible after dividing by $G(t)$. This proves \eqref{aux:GammaLimit}. The limit identifies the principal asymptotic class by
\cite[Corollary 3.6.9]{GGI}, whose Property $\mathcal O$ hypothesis was
verified above.
\end{proof}

\paragraph{A uniform parameter region.}
For $q,r>0$ define
\[
 J_{q,r}(t)=J(qt^2,rt^2,1),\qquad
 R(q,r)=4\sqrt q+2\sqrt r.
\]
The norm estimate in Lemma~\ref{aux:negative}, with the weights restored, gives
\begin{equation}\label{aux:weightednegative}
 \left\|q^Hr^\xi t^{2H+2\xi}
   \sum_{e<d}q^dr^et^{2(d+e)}A_{d,e}\right\|
 \leq C_K(1+t)^M e^{R(q,r)t}
\end{equation}
on compact subsets $K$ of the positive quadrant. Indeed, the scalar
majorant is $I_0(4\sqrt q\,t)I_0(2\sqrt r\,t)$, and the logarithmic
and derivative factors are uniformly controlled on $K$.

\begin{theorem}[Uniform normalized Gamma limit]\label{aux:uniformgamma}
Let
\[
 \mathcal W=\{(q,r)\in\mathbb R_{>0}^2:T(q,r)>R(q,r)\}.
\]
For every compact $K\subset\mathcal W$ there are constants
$c_K,C_K>0$ and an integer $M$ such that, uniformly on $K$ as $t\to\infty$,
\begin{align}
 G_{q,r}(t)&=C(q,r)t^{-2}e^{T(q,r)t}(1+O_K(t^{-1})),\label{aux:uniformperiod}\\
 \left\|\frac{J_{q,r}(t)}{G_{q,r}(t)}-\Ga_X\right\|
 &\leq \frac{C_K}{t}+C_Kt^{M+2}e^{-c_Kt}.
 \label{aux:uniformbound}
\end{align}
In particular the error is $O_K(t^{-1})$. The set $\mathcal W$ is open,
is invariant under simultaneous positive rescaling of $q,r$, and contains
$(1,1)$.
\end{theorem}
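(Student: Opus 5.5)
The plan is to make uniform in $(q,r)$ the three estimates that were proved pointwise in Lemma~\ref{aux:saddle}, the Gamma-direction limit \eqref{aux:positiveGamma}, and Lemma~\ref{aux:negative}. Then we combine them using the spectral gap $T-R>0$ on $K$.

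\textbf{Structural claims.} These are immediate.
\begin{itemize}
\item $\mathcal W$ is open, since $T$ and $R$ are continuous. For $T$ this follows from \eqref{T}, because $a$ is a simple positive root of \eqref{a}.
\item Under $(q,r)\mapsto(\lambda q,\lambda r)$ the ratio $k$ is fixed. So $a$ is fixed, and both $T$ and $R$ scale by $\sqrt\lambda$.
\item At $(1,1)$ we have $R=6$, while $T>6$ by the proof of Theorem~\ref{aux:GammaSeed}.
\end{itemize}

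\textbf{Step one: the $e\ge d$ sector.} We redo the lattice Laplace argument of Lemma~\ref{aux:saddle} with parameters.
\begin{itemize}
\item The entropy $\Phi_{q,r}$ is strictly concave. Its saddle $(A,B)$ depends real-analytically on $(q,r)$ through \eqref{aux:saddleEq}, via the implicit function theorem with $\det K\ne0$.
\item On compact $K$ the following are all uniform: the Hessian bounds, the Stirling remainders on a neighbourhood $|d/t-A|,|e/t-B|<\eta$, the entropy gap $\delta_K>0$ off that neighbourhood, and the tail bound from $-2(A+B)\log(A+B)$.
\item This gives \eqref{aux:uniformperiod}, with the $O_K(t^{-1})$ coming from the first Euler--Maclaurin and Stirling correction.
\item Repeating the argument with the cohomology-valued weights of \eqref{aux:gammaratio} gives the same expansion with leading coefficient $\Ga_X\,C(q,r)t^{-2}e^{Tt}$. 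The ratio expansion $\Ga_X[4(e-d)/d^3]^H[1/(e(e-d))]^\xi$ has a uniform $O(t^{-1})$ error near the saddle, and only polynomial and logarithmic growth elsewhere, which the gap absorbs.
\end{itemize}
Dividing the two expansions, the $e\ge d$ part of $J_{q,r}/G_{q,r}$ equals $\Ga_X+O_K(t^{-1})$.

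\textbf{Step two: the $e<d$ sector.} By \eqref{aux:weightednegative} this sector has norm at most $C_K(1+t)^Me^{Rt}$. Dividing by $G_{q,r}(t)\ge c\,t^{-2}e^{Tt}$, which holds uniformly by step one since $C(q,r)$ is continuous and positive on $K$, gives the bound $C_Kt^{M+2}e^{-(T-R)t}$. We take $c_K=\min_K(T-R)>0$, which is positive by compactness. Adding the two contributions yields \eqref{aux:uniformbound}.

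\textbf{Main obstacle.} The main difficulty is the uniformity of the Laplace localization in step one. Specifically, one must show that the entropy gap off the saddle neighbourhood, and the tail estimate at infinity and near the boundary lines $A=0$ and $B=A$, hold with constants independent of $(q,r)\in K$.

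I would handle this as follows. Note that $\Phi_{q,r}(A,B)-\Phi_{q_0,r_0}(A,B)=A\log(q/q_0)+B\log(r/r_0)$, which is linear, and that $\Phi$ is jointly continuous on the closed cone after extension to the boundary. Cover $K$ by finitely many small balls. On each ball, the linear perturbation is dominated by $\epsilon(A+B)$. That term is absorbed by the superlinear decay $-(A+B)\log(A+B)$ at infinity, and by the strict boundary deficit at finite distance.
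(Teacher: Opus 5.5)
Your proposal is correct and follows the paper's proof essentially step for step. Both arguments use the same ingredients:
\begin{itemize}
\item uniform lattice Laplace localization for the $e\ge d$ sector, with the saddle varying smoothly in $(q,r)$ and uniform Hessian, gap and tail bounds;
\item the same expansion applied to the Gamma ratio amplitude of \eqref{aux:gammaratio};
\item division of \eqref{aux:weightednegative} by the uniform period asymptotic, with $c_K=\min_K(T-R)$;
\item continuity, degree-$1/2$ homogeneity and $T(1,1)>6$ for the structural claims.
\end{itemize}
One small correction concerns where the $O_K(t^{-1})$ rate comes from. It is not an Euler--Maclaurin correction, since replacing the lattice sum by the Gaussian integral costs only $O(t^{-N})$. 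It comes from the fact that the order-$t^{-1/2}$ terms in the rescaled variables $y=(d-At,e-Bt)/\sqrt t$ are odd in $y$ and integrate to zero; the paper states this explicitly.
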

\begin{proof}
The saddle $(A,B)$ of Lemma~\ref{aux:saddle} depends smoothly on $q,r$.
On a compact positive parameter set it stays in a compact subset of
$0<A<B$; the Hessian is uniformly positive and the prefactor in
\eqref{aux:Gasym} is bounded away from zero. The tail estimates of that lemma
are consequently uniform.

To estimate the error in the localized sum, set
$y_1=(d-At)/\sqrt t$, $y_2=(e-Bt)/\sqrt t$ and restrict first to
$|y|\leq t^{1/10}$. Taylor expansion of the entropy and the Stirling
prefactor gives a Gaussian times a smooth amplitude. The terms of
order $t^{-1/2}$ are odd polynomials in $y$; their integrals vanish.
Replacing integrals of a polynomial times a Gaussian by the
corresponding shifted lattice sums changes the normalized expression by $O(t^{-N})$ for every
fixed $N$, as follows by taking Fourier transforms of Gaussians.
The Taylor remainder of order four and the uniform Stirling remainder
are $O(t^{-1})$ after integration. The complement of this local region
inside an interior saddle neighborhood is exponentially small in a
positive power of $t$, and the rest has the uniform entropy gap already
proved. This proves \eqref{aux:uniformperiod}.

The same expansion applies coefficientwise to the Gamma ratio amplitude
in \eqref{aux:gammaratio}. After multiplication by
$q^Hr^\xi t^{2H+2\xi}$ it is smooth at the saddle, has leading value
$\Ga_X$, and has a uniform $O(t^{-1})$ asymptotic expansion there. The finite nilpotent expansion gives logarithmic factors of bounded
degree away from the saddle, which are absorbed by the entropy gap. Thus the normalized
positive sector is $\Ga_X+O_K(t^{-1})$.

Finally $\min_K(T-R)>0$. Divide \eqref{aux:weightednegative} by
\eqref{aux:uniformperiod}, decreasing this positive minimum if necessary,
to obtain \eqref{aux:uniformbound}. The assertions about $\mathcal W$ follow
from continuity and the degree-$1/2$ homogeneity of $T,R$. The strict
inequality at $(1,1)$ was proved in Theorem~\ref{aux:GammaSeed}.
\end{proof}

\begin{remark}
On the region $\mathcal W$, the absolute majorant proves the estimate. Outside it, the bound on terms of negative relative degree is
too coarse to compare exponents, so it gives no obstruction to Gamma I.
The next corollary establishes Gamma I beyond this region by a separate
propagation theorem.
\end{remark}

\begin{corollary}[Gamma I at positive parameters by propagation]\label{aux:GlobalCor}
The global Gamma I theorem of Hu--Ke--Li--Su \cite[Theorem 1.1]{Global}
implies that $X$ has the Gamma I property at every $q,r>0$, including
$r=16q/27$.
\end{corollary}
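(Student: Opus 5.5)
The plan is to verify the hypotheses of the Hu--Ke--Li--Su propagation theorem \cite[Theorem 1.1]{Global} on the connected positive quadrant, seed it at a known Gamma I point, and then propagate. As I understand that theorem, the Gamma I property propagates within a connected region where a spectral condition of type $(\SR)$ holds. That condition asks for a simple rightmost eigenvalue $T$ of $c_1(X)\star$ varying continuously, with the asymptotic class staying in the corresponding fastest-decaying line. No semisimplicity is needed, which is essential because $r=16q/27$ is nonsemisimple.

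\textbf{Step 1: the spectral condition.} Theorem~\ref{spectral} shows that for every $q,r>0$ the eigenvalue $T(q,r)$ is simple and strictly rightmost. This includes the caustic $k=16/27$, where only the nonreal pair collides. By \eqref{a}--\eqref{T}, $T$ is real-analytic in $(q,r)$. Hence the open quadrant $\mathbb R_{>0}^2$, which is connected, lies in a single $(\SR)$-region. The fastest-decaying flat line is then well defined and varies continuously along any path in the quadrant.

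\textbf{Step 2: the seed.} Theorem~\ref{aux:GammaSeed} gives Gamma I at $(1,1)$. Theorem~\ref{aux:uniformgamma} gives it on the open cone $\mathcal W$, so a seed is available. The propagation theorem then yields the Gamma I property at every $(q,r)$ in the quadrant, including the caustic.

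\textbf{Step 3: consistency check.} Theorem~\ref{positiveGammaall} proves the same conclusion directly, so I would note that the two agree. Both identify the structure-sheaf Gamma section $\mathfrak s(\OO)$ with the fastest-decaying line.

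\textbf{Main obstacle.} The hardest part is matching conventions. The definition of $(\SR)$ in \cite{Global} may be phrased for the big or small quantum parameter and for a specific normalization of $z$. I would check that our small connection \eqref{qconnection} with real logarithms fits that definition. I would also check that its hypotheses at a nonsemisimple point only require simplicity of the rightmost eigenvalue, which is exactly what Theorem~\ref{spectral} supplies.
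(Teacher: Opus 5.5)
Your proposal is correct and follows the paper's proof: Theorem~\ref{spectral} places the whole positive quadrant, including the caustic $r=16q/27$, in the region where the rightmost eigenvalue is simple; Theorem~\ref{aux:GammaSeed} supplies the seed at $(1,1)$; and the cited propagation theorem carries Gamma I along a path. The paper states the connectivity step a little more carefully. It lifts the quadrant by real logarithms to the small K\"ahler space and notes that each positive path has a connected complex neighborhood inside the open $(\SR)$ region, which is exactly what your ``single $(\SR)$-region'' claim relies on.
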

\begin{proof}
Use real logarithms to lift the positive quadrant to the small K\"ahler parameter space. Theorem \ref{spectral} places every such point in the region $(\SR)$
where the rightmost eigenvalue is simple. Any positive path from $(1,1)$ to a target point is contained in a connected complex neighborhood inside that open region. Theorem \ref{aux:GammaSeed} supplies the seed and the cited theorem propagates it along the path. No semisimplicity of the other eigenvalues is required. The coordinate axes themselves are not included.
\end{proof}

\begin{remark}[A direct proof at positive parameters]
The propagation argument uses \cite{Global}. The exact positive central
charge integral in Section~\ref{sec:markedjump} gives an independent
direct proof of Gamma I at positive parameters. These proofs establish
the limiting direction; the uniform error estimate for the normalized
$J$-function on $\mathcal W$ remains a separate quantitative result.
\end{remark}

\subsection{Helix monodromy and the fixed-phase basis change}\label{app:helix}
Use column coordinates in the exceptional $K$-basis $\mathcal F$ and put
$L=[\OO_X(H)]$, $U=[\OO_\pi(1)]$. To keep matrix notation distinct from
Chern characters, let $A$ denote the base Euler matrix and $P$ the matrix
of tensoring by $\OO_{Q^3}(1)$ in the base foundation:
\begin{equation}\label{APhelix}
 A=\begin{pmatrix}
 1&4&16&40\\0&1&5&14\\0&0&1&5\\0&0&0&1
 \end{pmatrix},\qquad
 P=\begin{pmatrix}
 -1&0&0&-4\\4&0&0&15\\0&1&0&-11\\0&0&1&5
 \end{pmatrix}.
\end{equation}
Then the Euler matrix is $G=\begin{psmallmatrix}A&B\\0&A\end{psmallmatrix}$,
where
\begin{equation}\label{BfromP}
 B=A(I+P)=\begin{pmatrix}
 16&20&56&120\\4&6&19&44\\0&1&6&19\\0&0&1&6
 \end{pmatrix}.
\end{equation}
The identity matrix of size four is implicit in the block formulas.

\begin{theorem}[The integral helix and monodromy at the regular end]\label{helixmatrix}
Tensoring by the two line bundles and by the anticanonical bundle gives
the matrices
\begin{align}
 T_H&=\begin{pmatrix}P&0\\0&P\end{pmatrix},&
 T_U&=\begin{pmatrix}0&-P\\I&I+P\end{pmatrix},\label{twistmatrices}\\
 \mathcal R=T_H^2T_U^2
 &=\begin{pmatrix}
 -P^3&-P^3(I+P)\\
 P^2(I+P)&P^2(I+P+P^2)
 \end{pmatrix}.
 \label{Rblocks}
\end{align}
They satisfy
\begin{equation}\label{Serreidentities}
 \mathcal R=G^{-\mathsf T}G,\qquad
 [\mathsf S_X]=\mathcal R^{-1}=G^{-1}G^{\mathsf T},\qquad
 \mathcal R^{\mathsf T}G\mathcal R=G.
\end{equation}
Moving the first object through the next seven by
\eqref{rightKmutation}, and repeating this operation eight times, induces
$\mathcal R$ on the entire foundation. Its rational Jordan form is
$J_5(1)\oplus J_3(1)$.
\end{theorem}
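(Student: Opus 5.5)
The statement is a finite integral matrix computation in the basis $\mathcal F$, so I will verify it block by block in $K_0(X)\otimes\Q$. I work throughout in the classical ring \eqref{classical}, where the rational Chern character is injective.

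The first step is the matrix $P$ on the base. In $(S,\OO,\OO(1),\OO(2))$, tensoring by $\OO(1)$ sends $\OO\mapsto\OO(1)$ and $\OO(1)\mapsto\OO(2)$. It sends $\OO(2)\mapsto\OO(3)$, which is given by \eqref{baseKrelation}, and $S\mapsto S(1)=4[\OO]-[S]$ by the spinor sequence. These columns are exactly $P$.

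Pulling back, $T_H=\operatorname{diag}(P,P)$, because $\pi^*$ commutes with tensoring and $U$ is untouched. For $T_U$, the untwisted block is sent to the twisted block, which gives the columns $(0,I)$. On the twisted block I need $U^2$. From the relation $U^2-(1+L)U+L=0$ (the $K$-theoretic form of $\xi(\xi-H)=0$, already used in Lemma~\ref{Gammareflection} and Proposition~\ref{MBhelix}) I get $E\otimes U^2=-E\otimes L+(E+E\otimes L)\otimes U$, which is the column block $(-P,I+P)$.

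Next, $\mathcal R=T_H^2T_U^2$ by $\omega_X^{-1}=L^2U^2$ from \eqref{helixperiod}. Squaring $T_U$ and multiplying by $\operatorname{diag}(P^2,P^2)$ produces \eqref{Rblocks}; this uses that $T_H$ and $T_U$ commute, since both blocks are polynomials in $P$. The identity \eqref{BfromP}, $B=A(I+P)$, holds because $B_{ij}=\chi(E_i,E_j\otimes(\OO\oplus\OO(1)))$ and the Euler form pairs with the $P$-image through $A$ (column convention).

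For \eqref{Serreidentities}, Serre duality gives $\chi(E,F\otimes\omega_X^{-1})=\chi(F,E)$, since the shift $[4]$ is even. In matrix form this reads $G\mathcal R=G^{\mathsf T}$, and invertibility of $G$ (it is unitriangular) yields $\mathcal R=G^{-1}G^{\mathsf T}$. I have to watch the convention here: depending on whether columns express the twisted classes, the formula is $G^{-1}G^{\mathsf T}$ or $G^{-\mathsf T}G$. The claimed $\mathcal R=G^{-\mathsf T}G$ should follow from $\chi(E\otimes\omega^{-1},F)=\chi(F,E\otimes\omega^{2})$ combined with the adjoint relation. Concretely, I fix the convention by checking the first column of $\mathcal R$ against $G$ directly, because the paper's statement fixes both forms. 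The relation $[\mathsf S_X]=\mathcal R^{-1}$ then follows, and $\mathcal R^{\mathsf T}G\mathcal R=G$ holds because tensoring both arguments by a line bundle preserves $\chi$.

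The helix statement is standard \cite{CDGHelix}. Moving the first object $F_1$ through $F_2,\dots,F_8$ by right mutations gives $F_1\otimes\omega_X^{-1}$ up to shift, and on $K$-classes the shift $[-4]$ is invisible. After eight repetitions the whole foundation is replaced by its twist by $\omega_X^{-1}$, so the induced basis change is $\mathcal R$.

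The Jordan form comes from multiplication by $e^{c_1}=e^{2(H+\xi)}$ under the Chern character. Its unipotent part has the same Jordan type as the nilpotent operator $c_1\cup$, since $e^{x}-1=x\cdot(\text{unit})$. By hard Lefschetz with Betti numbers $(1,2,2,2,1)$ this type is $J_5\oplus J_3$, which I confirm by checking that $(H+\xi)^4\ne0$ and that the rank sequence is $(6,4,2,1,0)$.

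The main obstacle is not conceptual. It is the bookkeeping of transpose and column conventions in $\mathcal R=G^{-\mathsf T}G$ versus $G^{-1}G^{\mathsf T}$. I will resolve it by one explicit column check, using $\chi(\pi^*S,\cdot)$ entries from Proposition~\ref{naturalEuler}.
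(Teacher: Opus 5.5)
Your route is the paper's: $P$ from the spinor sequence and \eqref{baseKrelation}, $T_U$ from $U^2-(1+L)U+L=0$, the block product for $\mathcal R$, Serre duality for \eqref{Serreidentities}, the helix identity \eqref{helixperiod}, and the Chern-character/Lefschetz argument for $J_5(1)\oplus J_3(1)$. All of these steps are fine except Serre duality, which is wrong as written.

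\textbf{The Serre duality identity.} The identity $\chi(E,F\otimes\omega_X^{-1})=\chi(F,E)$ is false. Take $E=F=\OO_X$. The left side is
$\chi(X,\omega_X^{-1})=\chi(Q^3,\OO(2)\oplus\OO(3)\oplus\OO(4))=14+30+55=99$, while the right side is $1$. In dimension four Serre duality reads $\chi(E,F)=\chi(F,E\otimes\omega_X)$, equivalently $\chi(E\otimes\omega_X^{-1},F)=\chi(F,E)$. Your version gives $G\mathcal R=G^{\mathsf T}$, i.e.\ $\mathcal R=G^{-1}G^{\mathsf T}$, which is exactly $\mathcal R^{-1}$. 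The proposed repair $\chi(E\otimes\omega^{-1},F)=\chi(F,E\otimes\omega^{2})$ is also incorrect.

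\textbf{It is not a convention issue.} The statement fixes $G_{ij}=\chi(F_i,F_j)$, upper unitriangular by Proposition~\ref{naturalEuler}, and uses image classes as the columns of $T_H,T_U,\mathcal R$. In that setting $G^{-1}G^{\mathsf T}$ and $G^{-\mathsf T}G$ are distinct, mutually inverse unipotent matrices. Your derivation would therefore contradict the theorem. A one-column check would only expose the contradiction, not supply a proof.

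\textbf{The correct derivation.} With the right form it is one line. Put $E=F_i$, $F=F_j$ and expand $[F_i\otimes\omega_X^{-1}]=\sum_k\mathcal R_{ki}[F_k]$. This gives $\mathcal R^{\mathsf T}G=G^{\mathsf T}$, hence $\mathcal R=G^{-\mathsf T}G$. Likewise $\chi(F_i,F_j\otimes\omega_X)=G_{ji}$ gives $G\mathcal R^{-1}=G^{\mathsf T}$, hence $[\mathsf S_X]=\mathcal R^{-1}=G^{-1}G^{\mathsf T}$. This is the paper's argument, phrased there as $\chi(a,b)=\chi(b,\mathsf S_Xa)$, which yields $G=K^{\mathsf T}G^{\mathsf T}$ for $K=[\mathsf S_X]$.
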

\begin{proof}
The spinor relation $[S(1)]=4[\OO]-[S]$, the two shifts
$\OO\mapsto\OO(1)\mapsto\OO(2)$, and the relation for $\OO(3)$
in \eqref{baseKrelation} give exactly the four columns of $P$.
The projective bundle relation $U^2-(1+L)U+L=0$ gives $T_U$, while
pullback gives $T_H$. Squaring and multiplying yields \eqref{Rblocks}.

Serre duality fixes the order of the transposes in
\eqref{Serreidentities}, independently of terminology for a Coxeter matrix.
If $K$ is the matrix of $\mathsf S_X$, then
$\chi(a,b)=\chi(b,Ka)$ implies
$G=K^{\mathsf T}G^{\mathsf T}$, hence $K=G^{-1}G^{\mathsf T}$.
Since $[4]$ acts trivially on $K_0$, the inverse of $K$ is tensoring by
$\omega_X^{-1}$, namely $\mathcal R$. The isometry follows either from
Serre duality or from tensoring both arguments by the same line bundle.
The helix identity \eqref{helixperiod} proves the full-turn statement.

Under the rational Chern character, $\mathcal R$ becomes multiplication
by $e^{c_1(X)}$. Multiplication by $c_1(X)$ has two Lefschetz strings,
of lengths five and three, since the even Betti numbers are
$1,2,2,2,1$ and $c_1(X)$ is ample. Exponentiation preserves the nilpotent
Jordan block lengths. Alternatively direct multiplication gives
\begin{equation}\label{Rranks}
 \rank(\mathcal R-I)^k=6,4,2,1,0\quad(k=1,2,3,4,5),
\end{equation}
which proves the asserted Jordan form.
\end{proof}

\begin{proposition}[Full turn of the Gamma framing]\label{Gammamonodromy}
Fix $q,r$ and a branch of the standard large-radius normalization
$\mathcal L$. In the Gamma-framed solution space, continuation around a
large circle with $\log z\mapsto\log z+2\pi i$ has matrix $\mathcal R$.
This describes monodromy at the regular end without assuming Gamma II.
\end{proposition}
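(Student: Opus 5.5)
The plan is to compute the monodromy directly from the definition of the Gamma framing,
\[
 \mathfrak s(E)=(2\pi)^{-2}\mathcal L(q,r,z)z^{-\mu}z^\rho\bigl(\widehat\Gamma_X\Ch(E)\bigr),
\]
and to track how each factor behaves under $\log z\mapsto\log z+2\pi i$ at fixed $q,r$. I would not go through any asymptotic identification. Only the regular-singular factorization is used, so Gamma II is not needed.

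The first step is to show that the fundamental solution $\mathcal L(q,r,z)$, normalized at large radius, is single-valued in $z$. For fixed $q,r$ it is a convergent series in $z^{-1}$ with coefficients polynomial in $q,r$ and their logarithms. It has no $\log z$, because the divisor exponential $q^{H/z}r^{\xi/z}$ has already been split off. This follows from Lemma~\ref{smallconv} together with the standard description $\mathcal L(z)=I+O(z^{-1})$ from the $J$-function reconstruction of Theorem~\ref{fixedframe}. Next, $z^{-\mu}$ has integer eigenvalues by \eqref{grading}, so a full turn multiplies it by $e^{-2\pi i\mu}=I$. The factor $z^\rho$ becomes $z^\rho e^{2\pi i\rho}$. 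Hence continuation sends $\mathfrak s(E)$ to
\[
 (2\pi)^{-2}\mathcal L z^{-\mu}z^\rho\, e^{2\pi i c_1(X)}\widehat\Gamma_X\Ch(E),
\]
where the factor $e^{2\pi i c_1(X)}$ is a cup product.

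The second step identifies this class. Since $\Ch$ is multiplicative and $\Ch(\omega_X^{-1})=e^{2\pi i c_1(X)}$, the continued section equals $\mathfrak s(E\otimes\omega_X^{-1})$. The Gamma class commutes with cup multiplication, so no correction arises. A consistency check is the scalar master formula \eqref{scalarmasterGamma}: a positive $z$-turn lowers $\log q$ and $\log r$ each by $4\pi i$ and leaves $z^{-2}$ fixed. By Proposition~\ref{MBhelix} this gives $\mathcal M_q^2\mathcal M_r^2$, that is, tensoring by $L^2U^2=\omega_X^{-1}$.

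The third step puts this in matrix form. The framing $E\mapsto\mathfrak s(E)$ is linear and injective on $K_0(X)\otimes\C$. Its image is the full solution space, by the independence remark after Proposition~\ref{MBhelix}. Therefore the monodromy matrix in the Gamma-framed basis indexed by $\mathcal F$ is the matrix of $\otimes\,\omega_X^{-1}$ on $K_0(X)$. By Theorem~\ref{helixmatrix} this matrix is $\mathcal R=T_H^2T_U^2$.

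The main obstacle is the single-valuedness of $\mathcal L$ in $z$, together with fixing the direction conventions. The step that needs care is checking that with the paper's convention, under which flat sections carry $z^{-\mu}z^\rho$, the turn produces $e^{+2\pi i\rho}$ rather than its inverse. If the sign came out inverted, the monodromy would be $\mathcal R^{-1}=[\mathsf S_X]$ instead. I would settle this using the degree identity $\mathcal L(q,r,z)=z^{-\mu}\mathcal L(q/z^2,r/z^2,1)z^{\mu}$, which follows from homogeneity. Combined with the transported logarithms of \eqref{scalarmasterGamma}, this pins down the orientation.
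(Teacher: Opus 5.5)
Your argument is correct and is essentially the paper's own proof. $\mathcal L$ is single-valued in $z$, $e^{-2\pi i\mu}=I$, the factor $z^\rho$ contributes $e^{2\pi i\rho}$, and $e^{2\pi i c_1(X)}\widehat\Gamma_X\Ch(E)=\widehat\Gamma_X\Ch(E\otimes\omega_X^{-1})$ then gives $\mathcal R$ by Theorem~\ref{helixmatrix}.

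The orientation is already fixed by $z^\rho=e^{\rho\log z}$, so your extra check is unnecessary. Moreover, the homogeneity identity you propose for that check is misstated. With transported logarithms it should read $\mathcal L(q,r,z)z^{-\mu}z^{\rho}=z^{-\mu}\mathcal L(q/z^2,r/z^2,1)$. Your version would make $\mathcal L$ multivalued in $z$, contradicting your first step.
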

\begin{proof}
Use the Gamma framing stated in Section~\ref{sec:Euler}:
\[
 \mathfrak s(E)=(2\pi)^{-2}\mathcal L(q,r,z)
 z^{-\mu}z^\rho\bigl(\widehat\Gamma_X\Ch(E)\bigr),\qquad
 \rho=c_1(X)\cup.
\]
The normalized matrix $\mathcal L$ is single-valued near the regular end.
Since every eigenvalue of $\mu$ is integral, $e^{-2\pi i\mu}=I$.
Continuation therefore multiplies the right-hand initial data by
$e^{2\pi i\rho}$. But
\[
 e^{2\pi i c_1(X)}\widehat\Gamma_X\Ch(E)
   =\widehat\Gamma_X\Ch(E\otimes\omega_X^{-1}).
\]
This proves the assertion by Theorem~\ref{helixmatrix}. A positive loop
in the local coordinate $1/z$ has the opposite orientation and gives
$\mathcal R^{-1}$. This orientation distinction is independent of the
choice of convention for the Stokes matrix \cite{Integral,GGI,CDGHelix}.
\end{proof}

\begin{remark}[Full-turn compatibility and Gamma II]
The identity $G^{-\mathsf T}G=\mathcal R$ expresses the compatibility
between the Euler pairing and the prescribed full-turn monodromy. If a
Stokes Gram matrix is compared in this same convention and marking,
the product of its opposite Stokes factors has this form. An inverse or
a conjugate appears if the orientation or basis is changed. These
full-turn identities alone do not identify $G$, the Gram matrix
of the unmutated natural foundation, with the Stokes matrix at a
prescribed phase. Theorem~\ref{mainUnit09} identifies the Gamma columns
analytically and gives $S_{\phi_0}=D_1^{\mathsf T}GD_1$ in the specified
mutated foundation. A monodromy conjugacy class or characteristic
polynomial contains insufficient information for this identification. Formula \eqref{Rblocks} gives the full
turn in the natural foundation.
\end{remark}

In the natural foundation $\mathcal F$, the basis change at the unit point
and chosen phase is
\begin{equation}\label{unitbasischange}
 D_1=\begin{pmatrix}
 4&0&0&1&0&0&0&0\\
 -11&1&1&-4&0&0&0&0\\
 5&0&0&0&0&-1&1&0\\
 -1&0&0&0&0&0&0&0\\
 0&-4&0&0&1&0&0&0\\
 0&11&0&0&0&-5&0&1\\
 0&-5&0&0&0&1&0&0\\
 0&1&0&0&0&0&0&0
 \end{pmatrix}.
\end{equation}
Indeed, the first column is
$[L^{-1}]=4[S_0]-11[\OO]+5[L]-[L^2]$; tensoring this identity by $U$
gives the second column through $[B_1]=[\OO]-[L^{-1}U]$.
The remaining columns are the classes in \eqref{objectclasses09}.
Thus $S_{\phi_0}=D_1^{\mathsf T}GD_1$, and the full-turn action in this
foundation is $D_1^{-1}\mathcal R D_1$, as in \eqref{unitdethelix09}.

\subsection{The integral lattice generated by line bundles}\label{app:integrallattice}
\begin{proposition}[The line lattice over odd quadrics]\label{familylattice}
Let $\nu=2m+1\geq3$, and let $\Lambda_{\mathrm{line},\nu}$ be the
subgroup of $K_0(X_\nu)$ generated by all line bundles. Then
\begin{equation}\label{familylatticeeq}
 K_0(X_\nu)/\Lambda_{\mathrm{line},\nu}
       \cong (\Z/2^{m+1}\Z)^2.
\end{equation}
The two generators are the classes of $S_{\nu,0}$ and $S_{\nu,1}$.
Tensoring by $L$ acts as $-I$ on this quotient, tensoring by $U$ exchanges
the generators, and the anticanonical full turn acts as the identity.
\end{proposition}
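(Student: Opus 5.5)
We plan to work on the level of Chern characters and pass back to $K_0$ by integrality. By Orlov's formula $K_0(X_\nu)$ is free on $\pi^*E_i$ and $\pi^*E_i\otimes U$, with $(E_i)$ the Kapranov foundation $(S_\nu,\OO,\OO(1),\ldots,\OO(\nu-1))$ of $Q^\nu$. This gives $K_0(X_\nu)=K_0(Q^\nu)\oplus K_0(Q^\nu)U$. The line bundles on $X_\nu$ are $L^aU^b$, and $U^2=(1+L)U-L$, so $\Lambda_{\mathrm{line},\nu}=\Lambda_Q\oplus\Lambda_QU$, where $\Lambda_Q\subset K_0(Q^\nu)$ is generated by the $\OO(a)$. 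The statement therefore reduces to the base claim $K_0(Q^\nu)/\Lambda_Q\cong\Z/2^{m+1}\Z$, generated by $[S_\nu]$. The two generators $S_{\nu,0},S_{\nu,1}$ are then the images of $[S_\nu]$ in the two summands.

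For the base, we will use the relation
\[
 [\OO(\nu)]=c[S_\nu]+\sum_{j=0}^{\nu-1}n_j[\OO(j)],
\]
which generalizes \eqref{baseKrelation}. Together with the classes $[\OO(a)]$ for $0\le a\le\nu-1$, the relations for all twists $\OO(a)$, $a\in\Z$, generate $\Lambda_Q$ inside the free group on the foundation. The index is then $|c|$. To compute $c$ we pass to $\ch$: $\ch(\OO(j))=e^{jH}$, while $\ch(S_\nu)=2^{m+1}/(1+e^H)$ has leading term $2^m$. We will extract $c$ by pairing with a linear functional that kills the line classes. A natural choice is $\chi(-,S_\nu^\vee\otimes\omega)$, or equivalently the Euler pairing against the dual foundation element: since $\chi(\OO(j),\cdot)$ is dual to the line part, the coefficient of $[S_\nu]$ is $\chi(\mathsf L\text{-dual},\OO(\nu))$. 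A more direct route uses the spinor exact sequence $0\to S_\nu\to\OO^{2^{m+1}}\to S_\nu(1)\to0$ and $S_\nu^\vee\cong S_\nu(1)$. Induction on twists gives $[S_\nu(a)]\equiv(-1)^a[S_\nu]$ modulo lines. The coefficient $c$ is obtained from the Koszul-type resolution of the quadric: $\OO(\nu)$ differs from a combination of $\OO(j)$ by the spinor contribution, with $c=\pm 2^{m+1}$. For $\nu=3$ this agrees with $c=-4$ in \eqref{baseKrelation}.

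The step we expect to be the main obstacle is pinning $|c|=2^{m+1}$ exactly in general $\nu$. Our first attempt will use the relation $\sum_k(-1)^k\binom{\nu+2}{k}[\OO(k)]=[\OO_{Q^\nu}]$-type restriction from $\PP^{\nu+1}$, namely the Koszul class of the quadric hypersurface $(1-L)^{\nu+2}=0$ together with $(1-L^{-1})$ relations. These express the line sublattice, and we compare it with the full lattice via the determinant of the Chern character matrix, using the factor $\ch(S_\nu)=2^{m+1}/(1+e^H)$. Concretely, in $\Q[H]/(H^{\nu+1})$ the lines span $\Q[e^H]$, and the lattice they generate has index in $\ch(K_0)$ equal to the denominator making $2^{m+1}(1+e^H)^{-1}$ integral in the basis $(e^H-1)^k$. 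Writing $1+e^H=2+t$ with $t=e^H-1$, we have $2^{m+1}/(2+t)=2^m\sum(-t/2)^k$. Truncation at $t^{\nu+1}=0$, with $t^{m+1}$ integrally divisible by $2$ on the quadric (as $H^{m+1}$ is twice a generator for odd quadrics), shows that the order of $[S_\nu]$ modulo lines is exactly $2^{m+1}$. We expect this part to require care.

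The remaining claims are then formal. Tensoring by $L$ sends $[S_\nu]\mapsto[S_\nu(1)]=2^{m+1}[\OO]-[S_\nu]\equiv-[S_\nu]$, so it acts as $-I$ on the quotient. Tensoring by $U$ uses $T_U=\begin{psmallmatrix}0&-P\\I&I+P\end{psmallmatrix}$: the class $S_{\nu,0}$ goes to $S_{\nu,1}$, and $S_{\nu,1}$ goes to $-PS_{\nu,0}+(I+P)S_{\nu,1}\equiv S_{\nu,0}$ modulo lines, since $P\equiv-I$ there. Hence $U$ exchanges the generators. Finally $\omega^{-1}=L^{2m}U^2$ acts as $(-1)^{2m}\cdot I=I$ on the quotient.
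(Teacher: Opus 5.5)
Your argument is correct in substance. The reduction to the base via Orlov's splitting, and your final treatment of $L$, $U$ and $\omega_{X_\nu}^{-1}$ on the quotient, match the paper. The decisive base computation, however, goes by a different route.

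The paper stays integral throughout. From $(1+L)[S_\nu]=2^{m+1}[\OO]$, $(L-1)^{\nu+1}=0$ and the monic integer polynomial $P_\nu(x)=((x-1)^{\nu+1}-2^{\nu+1})/(x+1)$, it obtains $(1+L)(P_\nu(L)+2^{m+1}[S_\nu])=0$. It then cancels the injective factor $1+L$, which gives the integral relation $L^\nu=-2^{m+1}[S_\nu]+\sum_{j<\nu}c_jL^j$ and hence $\Lambda_Q=\langle 1,L,\ldots,L^{\nu-1},2^{m+1}[S_\nu]\rangle$.

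You instead expand $[S_\nu]=\sum_{k=0}^{\nu}(-1)^k2^{m-k}t^k$ in $K_0(Q^\nu)\otimes\mathbb Q$, with $t=L-1$, and read off denominators. This is the same identity rewritten, since $2^{m+1}[S_\nu]=-P_\nu(L)$ exactly. It also gives your constant in one line: comparing coefficients of $t^\nu$ in $L^\nu=c[S_\nu]+\sum_{j<\nu}n_jL^j$ yields $c=-2^{m+1}$, consistent with $-4$ in \eqref{baseKrelation}.

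To make it a proof, state the two inputs explicitly.
\begin{itemize}
\item $K_0(Q^\nu)=\Lambda_Q+\Z[S_\nu]$, from the Kapranov basis.
\item $\Lambda_Q=\bigoplus_{k=0}^{\nu}\Z t^k$. This requires $t^{\nu+1}=0$ integrally (torsion-freeness plus rational injectivity of $\ch$), so that each $L^a$, $a\in\Z$, is a finite integral binomial combination of the $t^k$.
\end{itemize}
Then $n[S_\nu]\in\Lambda_Q$ exactly when $2^{m+1}\mid n$, because the largest denominator is that of $t^\nu$. Both routes rest on the same inputs. The paper's never leaves $K_0$ and produces an explicit integral relation; yours is shorter and makes the cyclic structure and the order visible at once.

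There are two corrections.

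First, your parenthetical claim that $t^{m+1}$ is divisible by $2$ in $K_0(Q^\nu)$ is false. Since $m+1\le\nu-1$, the class $t^{m+1}=\sum_{j}\binom{m+1}{j}(-1)^{m+1-j}[\OO(j)]$ has coefficients $\pm1$ on members of the Kapranov basis, so it is primitive. For $\nu=3$, for instance, $t^2=[\OO(2)]-2[\OO(1)]+[\OO]$. The fact that $H^{m+1}$ is twice a generator holds in integral cohomology, equivalently in the associated graded of the topological filtration, not in $K_0$ itself. It plays no role in your denominator count and should be deleted, together with the unexecuted Koszul and pairing attempts.

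Second, the equality $\Lambda_{\mathrm{line},\nu}=\Lambda_Q\oplus\Lambda_QU$ also needs negative powers of $U$, not just $U^2$. These follow from $U^{-1}=L^{-1}(1+L-U)$.
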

\begin{proof}
The full exceptional collection on the odd quadric
\cite{Kapranov,HK} gives the free integral basis
$(S_\nu,\OO,L,\ldots,L^{\nu-1})$ for $K_0(Q^\nu)$.
Put $d=2^{m+1}$. The spinor sequence gives
$(1+L)[S_\nu]=d[\OO]$. Injectivity of the rational Chern character and
$H^{\nu+1}=0$ imply $(L-1)^{\nu+1}=0$ in the integral $K$-group.
The monic integer polynomial
\[
 P_\nu(x)=\frac{(x-1)^{\nu+1}-2^{\nu+1}}{x+1}
\]
is defined because $\nu+1$ is even. Since $d^2=2^{\nu+1}$,
$(1+L)(P_\nu(L)+d[S_\nu])=0$. Multiplication by $1+L$ is injective:
after rational Chern character it is multiplication by the unit
$1+e^H$, and the $K$-group is torsion-free. Consequently
\[
 L^\nu=-d[S_\nu]+\sum_{j=0}^{\nu-1}c_jL^j,
 \qquad c_j\in\Z.
\]
All positive and negative powers of $L$ are generated by
$1,L,\ldots,L^\nu$, using $(L-1)^{\nu+1}=0$ and its unit constant
term. The line lattice on the base is therefore precisely
$\langle1,L,\ldots,L^{\nu-1},d[S_\nu]\rangle$.
Its quotient is cyclic of order $d$.

Orlov's formula splits $K_0(X_\nu)$ integrally as
$K_0(Q^\nu)\oplus U K_0(Q^\nu)$. The relation
$U^2-(1+L)U+L=0$, including $U^{-1}=(1+L-U)L^{-1}$, shows that
its line subgroup splits into the corresponding two base line
subgroups. This proves \eqref{familylatticeeq} and the two stated
generators. Modulo lines, $L[S_{\nu,0}]=-[S_{\nu,0}]$ and likewise
for $S_{\nu,1}$. The projective bundle relation gives
$U[S_{\nu,0}]=[S_{\nu,1}]$ and
$U[S_{\nu,1}]=[S_{\nu,0}]$. Finally
$\omega_{X_\nu}^{-1}=L^{\nu-1}U^2$, and $\nu-1$ is even.
\end{proof}

In particular, the two Gamma-marked sections of
Theorem~\ref{familytwocolumn} represent the two missing cyclic directions
of the quantum Gamma lattice generated by line bundles. This describes their arithmetic role without asserting that they belong
to a common asymptotic basis. The proposition follows integrally from
the known exceptional and spinor relations and makes no claim to a new
general structure theorem in $K$-theory.
For $\nu=3$ it specializes to the following fourfold computation.

\begin{proposition}[Index of the lattice generated by line bundles]\label{aux:lattice}
Let $L_X\subset K_0(X)$ be the subgroup generated by all line bundles on $X$. Then
\[
 K_0(X)/L_X\cong(\Z/4\Z)^2,\qquad [K_0(X):L_X]=16.
\]
Thus tensoring the structure sheaf by line bundles and applying integral
mutations among their classes cannot produce a full integral exceptional
basis.
\end{proposition}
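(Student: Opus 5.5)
The plan is to specialize Proposition~\ref{familylattice} to $\nu=3$, $m=1$, and to recheck the key integral identities directly in the fourfold so that the argument does not depend on the general case. For $K_0(Q^3)$ take the free basis $(S,\OO,L,L^2)$ from the Kapranov collection. The spinor sequence $0\to S\to\OO^{\oplus4}\to S(1)\to0$ gives $(1+L)[S]=4[\OO]$. Relation \eqref{baseKrelation} expresses $L^3=-4[S]+15-11L+5L^2$. From this we read off two facts. First, $L^3$ lies in $\langle1,L,L^2,4[S]\rangle$. Second, $L^{-1}=4[S]-11+5L-L^2$, which is the first column of \eqref{unitbasischange} restricted to the base. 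By induction using these two identities, every power $L^k$, $k\in\Z$, lies in the sublattice $\Lambda_Q=\langle1,L,L^2,4[S]\rangle$. Conversely, $4[S]=-L^3+15-11L+5L^2$ is a combination of line classes. Hence the base line lattice is exactly $\Lambda_Q$, and $K_0(Q^3)/\Lambda_Q\cong\Z/4$, generated by $[S]$.

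Next, Orlov's decomposition gives $K_0(X)=K_0(Q^3)\oplus U\,K_0(Q^3)$. Every line bundle on $X$ has the form $L^aU^b$, since $\operatorname{Pic}X=\Z H\oplus\Z\xi$. We need to show that the line lattice $L_X$ is exactly $\Lambda_Q\oplus U\Lambda_Q$. For the inclusion $\supseteq$, the classes $L^a$ and $L^aU$ are line classes, so their span contains $\Lambda_Q\oplus U\Lambda_Q$, because $4[S]$ and $4[S]U$ are combinations of them. For $\subseteq$, use $U^2=(1+L)U-L$ and $U^{-1}=(1+L-U)L^{-1}$. These show, by induction on $|b|$, that $L^aU^b$ is a $\Z[L^{\pm1}]$-combination of $1$ and $U$ whose coefficients are Laurent polynomials in $L$. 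Such coefficients lie in $\Lambda_Q$, because $\Lambda_Q$ is closed under multiplication by $L^{\pm1}$: we have $L\cdot4[S]=16-4[S]$. Therefore $K_0(X)/L_X\cong(\Z/4)^2$, with generators $[S_0]$ and $[S_1]$, and the index is $16$.

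For the final sentence, observe that any mutation of line-bundle classes via \eqref{rightKmutation} produces integral combinations of classes already in $L_X$. Starting from the structure sheaf and tensoring by line bundles, one therefore stays inside $L_X$. Since $L_X$ is a proper sublattice of index $16$, no eight classes in $L_X$ can form a $\Z$-basis of $K_0(X)$, so no full exceptional collection can be reached this way.

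The main obstacle is the closure step: checking that $\Lambda_Q$ is stable under $L^{\pm1}$, and in particular that the coefficient $-4$ of $[S]$ in $L^3$ and $L^{-1}$ appears with exactly the multiple of $4$ required. This rests on the integral validity of \eqref{baseKrelation}, which the paper has already established through injectivity of the Chern character.
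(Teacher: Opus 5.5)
Your proposal is correct and follows the paper's proof. In both arguments the base line lattice is $\langle 1,L,L^2,4[S]\rangle$, of index $4$ by \eqref{baseKrelation}, and Orlov's splitting together with $U^2-(1+L)U+L=0$ doubles this lattice to give the quotient $(\Z/4\Z)^2$. The only difference is cosmetic: you check closure under $L^{\pm1}$ directly, using $L\cdot4[S]=16-4[S]$ and the explicit formula for $L^{-1}$, whereas the paper uses the finite-difference recursion coming from $(L-1)^4=0$.
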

\begin{proof}
On $Q^3$, the Chern character gives the integral relation
\[
 [\OO(3)]=-4[S]+15[\OO]-11[\OO(1)]+5[\OO(2)].
\]
The line classes $\OO,\OO(1),\OO(2),\OO(3)$ therefore have determinant of absolute value $4$ in the exceptional $K$-basis. They generate every $\OO(j)$, $j\in\Z$: writing $L=[\OO(1)]$, the relation $(L-1)^4=0$ implies the usual integral finite-difference recursion, including negative powers. This identity holds integrally because $K_0(Q^3)$ is torsion-free and its Chern character is rationally injective. Thus the sublattice generated by line bundles on $Q^3$ has index $4$.

On $X$, put $U=[\OO_\pi(1)]$. The integral projective bundle relation is
\[
 U^2-(1+L)U+L=0.
\]
Since $L$ is a unit, every positive or negative power of $U$ reduces to a linear combination of $1,U$ with coefficients in $\Z[L,L^{-1}]$. Moreover
\[
 K_0(X)=K_0(Q^3)\oplus U K_0(Q^3).
\]
All line bundles on $X$ are $L^jU^\ell$. Their span is exactly the direct sum of two copies of the base line lattice. The base quotient is cyclic of order four, generated by the spinor class, because the other three members of the exceptional basis are line classes and the displayed relation imposes exactly $4[S]=0$ in the quotient. Hence the total quotient is $(\Z/4\Z)^2$, with generators the classes of $\pi^*S$ and $\pi^*S\otimes\OO_\pi(1)$. Mutations act by integral linear combinations and do not escape the line subgroup.
\end{proof}

\begin{remark}[The integral subgroup and split-generation]
The proposition rules out a full exceptional collection of line bundles
and any construction of an integral basis that stays inside $L_X$.
It does not rule out generating a category by cones and arbitrary direct
summands, since taking a direct summand need not preserve a subgroup of
$K_0$. The index calculation therefore gives no stronger claim about
categorical generation.
\end{remark}

\paragraph{The action on the quotient.}
Let $L_X$ continue to denote the subgroup generated by all line bundles.
Write $\sigma=[\pi^*S]$, $\upsilon=[\pi^*S\otimes\OO_\pi(1)]$ in
$K_0(X)/L_X$.
\begin{proposition}[Full-turn action on the finite quotient]
\label{aux:helixquotient}
In the ordered quotient basis $(\sigma,\upsilon)$ of
$(\mathbb Z/4\mathbb Z)^2$, the actions are
\begin{equation}\label{aux:quotientactions}
 \overline T_H=-I_2,\qquad
 \overline T_U=\begin{pmatrix}0&1\\1&0\end{pmatrix},\qquad
 \overline{\mathcal R}=I_2.
\end{equation}
Consequently the anticanonical full turn preserves the proper line
sublattice of index 16 and acts trivially on its quotient. Compatibility
with the full turn therefore does not show that a proposed integral
basis generates the quotient.
\end{proposition}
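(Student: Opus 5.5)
The plan is to compute all three actions directly in $K_0(X)/L_X$, using the quotient description of Proposition~\ref{aux:lattice}. There $\sigma=[\pi^*S]$ and $\upsilon=[\pi^*S\otimes\OO_\pi(1)]$ generate $(\Z/4\Z)^2$. Since $L_X$ is stable under tensoring by line bundles, $T_H$ and $T_U$ preserve $L_X$ and descend to the quotient. It therefore suffices to compute $L\sigma$, $L\upsilon$, $U\sigma$ and $U\upsilon$ modulo line classes.

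The first step is $\overline T_H$. The spinor relation $[S(1)]=4[\OO]-[S]$ on $Q^3$ was used in Lemma~\ref{Gammareflection} and in the columns of $P$ in Theorem~\ref{helixmatrix}. Pulling it back gives $L\sigma=4[\OO]-\sigma\equiv-\sigma$. Tensoring by $U$ gives $L\upsilon=4[U]-\upsilon\equiv-\upsilon$. Hence $\overline T_H=-I_2$.

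The second step is $\overline T_U$. By definition $U\sigma=\upsilon$. For $U\upsilon=U^2\sigma$, use the projective bundle relation $U^2=(1+L)U-L$. This gives
\[
 U^2\sigma=\upsilon+L\upsilon-L\sigma.
\]
By the first step this is congruent to $\upsilon-\upsilon+\sigma=\sigma$. So $\overline T_U$ is the swap matrix. Equivalently, one can read these congruences off the block matrix $T_U$ in \eqref{twistmatrices}: reduce the spinor rows of $P$ modulo the line coordinates, so that $P$ acts as $-1$ on the spinor coordinate.

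Finally, by \eqref{Rblocks}, $\overline{\mathcal R}=\overline T_H^{\,2}\,\overline T_U^{\,2}=I_2\cdot I_2=I_2$. Equivalently, $\omega_X^{-1}=L^2U^2$ with an even power of $L$, as in the last line of Proposition~\ref{familylattice}. It follows that the full turn preserves the proper sublattice $L_X$ of index $16$ and acts trivially on the quotient. The final remark is then immediate: a basis all of whose classes lie in $L_X$, or any $\mathcal R$-stable family inside $L_X$, satisfies every full-turn compatibility relation. Consequently the full turn cannot detect whether the quotient is generated.

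No step is a real obstacle. The only point needing care is the sign in the relation $[S(1)]=4[\OO]-[S]$, because it determines $-I_2$ rather than $I_2$. I would check it against the $\OO$ and $S$ entries of the first column of $P$ in \eqref{APhelix}, namely $(-1,4,0,0)$.
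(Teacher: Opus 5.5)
Your proposal is correct and matches the paper's proof. You use the spinor relation $[S(1)]=4[\OO]-[S]$ to get $\overline T_H=-I_2$, the projective-bundle relation $U^2=(1+L)U-L$ to get $U\upsilon\equiv\sigma$ modulo $L_X$, and the fact that both reduced actions square to the identity to get $\overline{\mathcal R}=\overline T_H^{\,2}\,\overline T_U^{\,2}=I_2$.
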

\begin{proof}
The spinor relation gives $L\sigma=-\sigma$ modulo line classes,
and similarly $L\upsilon=-\upsilon$. Since $U\sigma=\upsilon$,
\[
 U\upsilon=U^2\sigma=(1+L)U\sigma-L\sigma=\sigma
 \quad\bmod L_X.
\]
Both actions square to the identity, proving the assertion for
$\mathcal R=L^2U^2$. In particular helix extension of line classes by
mutations and anticanonical twists cannot leave $L_X$.
\end{proof}

An integral Gamma II proof must account for this quotient. Constructing
two complex solutions at the added critical points does not establish
integral saturation. The quotient also contains arithmetic information
that the Jordan form in Theorem~\ref{helixmatrix} does not retain.

\subsection{Pairings and saturation}
The explicit full turn reduces the calculation of the cross block when
the integral basis and its marking are fixed.
\begin{lemma}[Recovery of the cross block in a fixed foundation]
\label{aux:crossrecovery}
Suppose a candidate Gram matrix for a bilinear pairing or Stokes data,
expressed in a specified ordered integral foundation with two blocks,
has the form
\[
 S_X=\begin{pmatrix}A&X_0\\0&A\end{pmatrix}.
\]
If its full-turn matrix in \emph{the same marked basis} is
$S_X^{-\mathsf T}S_X=\mathcal R$ from \textup{(\ref{Rblocks})}, then
\begin{equation}\label{aux:crossunique}
 X_0=A^{\mathsf T}\mathcal R_{12}=A(I+P)=B.
\end{equation}
\end{lemma}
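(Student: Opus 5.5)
The plan is to compute the full-turn matrix $S_X^{-\mathsf T}S_X$ directly from the assumed block form and compare its upper-right block with that of $\mathcal R$ in \eqref{Rblocks}. First, $A$ is upper unitriangular, hence invertible, and
\[
 S_X^{-1}=\begin{pmatrix}A^{-1}&-A^{-1}X_0A^{-1}\\0&A^{-1}\end{pmatrix}.
\]
Transposing gives
\[
 S_X^{-\mathsf T}=\begin{pmatrix}A^{-\mathsf T}&0\\-A^{-\mathsf T}X_0^{\mathsf T}A^{-\mathsf T}&A^{-\mathsf T}\end{pmatrix}.
\]
Multiplying by $S_X$, the $(1,2)$ block of $S_X^{-\mathsf T}S_X$ is $A^{-\mathsf T}X_0$. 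The other blocks are $A^{-\mathsf T}A$ on the diagonal in the $(1,1)$ position, and they are not needed for the recovery.

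Equating this block with $\mathcal R_{12}$ gives $X_0=A^{\mathsf T}\mathcal R_{12}$. This is the first equality in \eqref{aux:crossunique}, and it uses only the hypothesis in the same marked basis. To obtain the second equality, I would not multiply out $A^{\mathsf T}\mathcal R_{12}=-A^{\mathsf T}P^3(I+P)$ by hand. Instead I would use Theorem~\ref{helixmatrix}: the natural Euler matrix $G$ has exactly this block shape, with $B=A(I+P)$ by \eqref{BfromP}, and it satisfies $G^{-\mathsf T}G=\mathcal R$ by \eqref{Serreidentities}. Applying the same block computation to $G$ gives $A^{-\mathsf T}B=\mathcal R_{12}$, so $A^{\mathsf T}\mathcal R_{12}=B=A(I+P)$.

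As a consistency check on the $(1,1)$ block, $A^{-\mathsf T}A=-P^3$ should agree with \eqref{Rblocks}; this again follows from the same identity for $G$. The relation $A^{-\mathsf T}A(I+P)=-P^3(I+P)$ is then automatic.

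The main obstacle is conceptual rather than computational. The recovery of $X_0$ is valid only when the lower-left block of $S_X$ vanishes and the diagonal blocks are literally $A$ in the same marked foundation; I would state this explicitly. Beyond that, the argument needs only invertibility of $A$ and the already proved identity for $G$.
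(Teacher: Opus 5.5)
Your proof is correct, and the first equality is obtained essentially as in the paper. The paper writes the hypothesis as $S_X^{\mathsf T}\mathcal R=S_X$ and reads off the upper-right block $A^{\mathsf T}\mathcal R_{12}=X_0$, which avoids inverting $S_X$; your computation $(S_X^{-\mathsf T}S_X)_{12}=A^{-\mathsf T}X_0$ is equivalent.

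The two routes differ at the last step.

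\begin{itemize}
\item The paper uses the base Serre identity $-A^{\mathsf T}P^3=A$ on $Q^3$. This is a $4\times4$ relation that can be checked directly from \eqref{APhelix}. It gives $A^{\mathsf T}\mathcal R_{12}=-A^{\mathsf T}P^3(I+P)=A(I+P)$.
\item You instead observe that $G$ itself satisfies the hypotheses of the lemma, by \eqref{Serreidentities}. Injectivity of $X_0\mapsto A^{-\mathsf T}X_0$ then forces $X_0=B$.
\end{itemize}

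Your version needs no matrix multiplication. It also makes the lemma visibly a uniqueness statement: the cross block is determined by $A$ and $\mathcal R_{12}$, and $G$ is one solution. It does, however, rely on the fourfold Serre identity $G^{-\mathsf T}G=\mathcal R$. The paper's argument uses only the base category, through a small identity that is independently verifiable. That same identity also explains why the $(1,1)$ block $A^{-\mathsf T}A=-P^3$ of the hypothesis holds automatically.

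Your use of Theorem~\ref{helixmatrix} is not circular, since its proof does not depend on this lemma.
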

\begin{proof}
The assumed equation is equivalent to $S_X^{\mathsf T}\mathcal R=S_X$.
Its upper-right block is $A^{\mathsf T}\mathcal R_{12}=X_0$.
Since $\mathcal R_{12}=-P^3(I+P)$ and $-A^{\mathsf T}P^3=A$,
this is \eqref{aux:crossunique}. The last identity follows from the base
Serre formula in dimension three, or from \textup{(\ref{APhelix})}.
\end{proof}

Recovering $X_0$ requires the full monodromy matrix in the chosen basis;
its eigenvalues or a conjugacy $M\sim\mathcal R$ are insufficient. One
must also identify the two diagonal blocks with $A$ for rapid-decay
cycles in the total space, which has not been done here. A marked
integral transform from the two known base foundations would reduce
the sixteen cross coefficients to a comparison of full turns.
Constructing that transform remains an analytic and integral problem.

\begin{lemma}[An integral saturation criterion]\label{aux:saturationtest}
Let $\Lambda$ be a free integral lattice of rank eight with an integral
bilinear form. If eight elements span a full-rank subgroup $\Lambda_0$
and their Gram matrix has determinant $1$ or $-1$, then
$\Lambda_0=\Lambda$.
\end{lemma}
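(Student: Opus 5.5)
The plan is to use the standard index--determinant relation for sublattices. Since $\Lambda_0\subset\Lambda$ has full rank eight, the quotient $\Lambda/\Lambda_0$ is finite. Its order equals $|\det M|$, where $M$ is the integer matrix expressing the eight chosen elements in a fixed $\Z$-basis $(f_1,\ldots,f_8)$ of $\Lambda$. This follows from the Smith normal form of $M$, or from the elementary divisor theorem for subgroups of free abelian groups of finite rank.

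Next I compare Gram matrices. Let $G_\Lambda$ be the matrix of the bilinear form in the basis $(f_i)$; it has integer entries because the form is integral. If the eight elements are the columns $Mf$, their Gram matrix is $M^{\mathsf T}G_\Lambda M$. Taking determinants gives
\[
 \det(M^{\mathsf T}G_\Lambda M)=(\det M)^2\det G_\Lambda .
\]
By hypothesis the left side is $\pm1$. The factor $\det G_\Lambda$ is an integer and $(\det M)^2$ is a nonnegative integer, so both have absolute value one. In particular $|\det M|=1$, hence $[\Lambda:\Lambda_0]=1$ and $\Lambda_0=\Lambda$.

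No serious obstacle is expected. The one point to keep in view is that the argument needs only integrality of the form on all of $\Lambda$, not on $\Lambda_0$ alone, and does not need symmetry or nondegeneracy of $G_\Lambda$. Nondegeneracy in fact comes out as a byproduct: $|\det G_\Lambda|=1$. This is what makes the criterion usable for the nonsymmetric Euler form and for Stokes matrices. In the application, $\det S_{\phi_0}=1$ in \eqref{unitdethelix09} then gives integral saturation of the eight Gamma-marked classes in $K_0(X)$.
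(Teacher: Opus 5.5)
Your argument is correct and is essentially the paper's proof: express the eight elements as the columns of an integral matrix in a basis of $\Lambda$, use $\det(D^{\mathsf T}G_\Lambda D)=(\det D)^2\det G_\Lambda$ with integer factors to get $|\det D|=1$, and identify $|\det D|$ with the index. Your closing application to $K_0(X)$ is valid but adds nothing, since fullness of the collection already makes those classes a basis; the paper intends the criterion for a hypothetical integral lattice of rapid-decay cycles, whose integrality has not yet been established.
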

\begin{proof}
Write the eight elements as columns of an integral matrix $D$ in a basis
of $\Lambda$. If $G_\Lambda$ is the ambient Gram matrix, then
$\det G_{\Lambda_0}=(\det D)^2\det G_\Lambda$. All factors are integers,
so $|\det D|=1$. This is the index of $\Lambda_0$ in $\Lambda$.
\end{proof}
To apply the criterion to rapid decay cycles, one must first establish
a free integral group of rank eight and an integral self-pairing,
including the transport that identifies opposite decay sectors. The
complex rank computation in Section~\ref{aux:sec:transform} does not
establish these integral properties. Once they are known, a verified
Gram matrix $G$ would imply saturation because $\det G=1$.

\section*{Acknowledgement.}
\addcontentsline{toc}{section}{Acknowledgement}
The author would like to thank Prof. Bohui Chen, An-Min Li, and Guosong Zhao for
their constant support. The author is particularly grateful to Andrea Brini,
Todor Milanov, Kenta Osuga, Arpan Saha, Peng Zhao, and Jingling Yang for
inspiring online seminar talks during the pandemic, and to Doan Nhat Minh and
Van Nguyen for enjoyable and fruitful collaborations. The author has also
benefited from conversations and discussions with Prof. Jianxun Hu, Huazhong Ke,
Bohan Fang, and Junxiao Wang. In particular, Jianxun Hu suggested initiating this investigation from Gamma II rather than Conjecture O during discussions after his seminar talk at Sichuan University; Huazhong Ke proposed adopting an alternative example at a Gromov-Witten conference in Shandong University. The paper expands on the author's oral
presentation at Symmetry 2025 (The 5th International Conference on Symmetry), held in Hangzhou. The author also thanks Sung-Soo Kim, Kimyeong Lee, Futoshi Yagi, 
Satoshi Nawata, and Rui-Dong Zhu for useful discussions and collaborations in mathematical
physics, as well as friends and colleagues for stimulating discussions at 
various conferences. Finally, the author gratefully acknowledges the
hospitality and support of the Mainz Institute for Theoretical Physics (MITP)
of the Cluster of Excellence PRISMA+ (Project ID 390831469).

\section*{Statements and Declarations}
\addcontentsline{toc}{section}{Statements and Declarations}
\paragraph{Funding.}
This work was supported by the National Natural Science Foundation of China
(NSFC) under Grant Nos. 11501470, 11426187, and 11791240561, and was partially
supported by NSFC Grant No. 11671328, the Chengdu Science and Technology
Program under Grant No. 2025-YF09-00007-SN, and the Fundamental Research
Funds for the Central Universities under Grant Nos. 2682021ZTPY043,
2682025ZTPY001, 2682025ZTPY057, and 2682025ZTO002.

\paragraph{Competing Interests.}
The author declares that there are no competing interests relevant to
the content of this article.

\paragraph{Data availability.}
This article is theoretical. The proofs and explicit algebraic calculations
are contained in the text; no empirical data are used.

\begingroup\small\raggedright
\phantomsection

\endgroup
\end{document}